\documentclass{amsart}
\usepackage{amsmath,amsfonts,amsthm,amscd, amssymb, latexsym}
\newtheorem{thm}{Theorem}[section]
\newtheorem{prop}[thm]{Proposition}
\newtheorem{cor}[thm]{Corollary}
\newtheorem{lem}[thm]{Lemma}
\newtheorem{claim}[thm]{Claim}
\newtheorem{rem}[thm]{Remark}
\newtheorem{defn}[thm]{Definition}

\newcommand{\pikm}{\pi_{k,m}}
\newcommand{\RR}{\mathbb{R}}
\newcommand{\ZZ}{\mathbb{Z}}

\newcommand{\CC}{\mathbb{C}}
\newcommand{\Ztwo}{\mathbb{Z}/2\mathbb{Z}}

\newcommand{\FF}{\mathbb{F}}
\newcommand{\MsigX}{ M \times_{\Sigma_k} X^{(k)}}
\newcommand{\MtsigX}{ M \rtimes_{\Sigma_k} X^{(k)}}
\newcommand{\MsigSd}{ M \times_{\Sigma_k} (S^d)^{(k)}}
\newcommand{\MtsigSd}{ M \rtimes_{\Sigma_k} (S^d)^{(k)}}

\begin{document}
\title{Configuration spaces and braid groups}

\author{Fred Cohen and Jonathan Pakianathan}

\begin{abstract}
The main thrust of these notes is 3-fold:
(1) An analysis of certain $K(\pi,1)$'s that arise from the connections
between
configuration spaces, braid groups, and mapping class groups,
(2) a function space interpretation of these results, and
(3) a homological analysis of the
cohomology of some of these groups for genus zero, one, and two
surfaces possibly with marked points, 
as well as the cohomology of certain associated function
spaces.

An example of the type of results given here is an analysis of
the space k particles moving on a punctured torus up to
equivalence by the natural $SL(2,\mathbb{Z})$ action.

\end{abstract}

\maketitle

\section{Introduction}

The main thrust of these notes is 3-fold: \\
\noindent
(1) An analysis of certain $K(\pi,1)$-spaces that arise from the connections
between
configuration spaces, braid groups, and mapping class groups. \\
\noindent
(2) A function space interpretation of these results, and \\
\noindent
(3) A homological analysis of the
cohomology of some of these groups for genus zero, one, and two
surfaces (possibly with marked points).
\vskip .2in

These notes address certain properties of configuration spaces of
surfaces, such as their connections to mapping class groups, as well as
their connections to classical homotopy theory that emerged over 15 years
ago. These constructions have various useful properties. For
example, they give easy ways to compute the cohomology of certain
related discrete groups, as well as give interpretations of this cohomology 
in terms of related mathematics.

\vskip .2in

In  particular certain explicit models for Eilenberg-MacLane
spaces of type $K(\pi,1)$ are given for certain kinds of 
braid groups and mapping class groups. For example,
we recall the classical result that 
configuration spaces of surfaces that are neither the 2-sphere nor
the real projective plane are $K(\pi,1)$ spaces. The
analogous configuration spaces for the the 2-sphere and
the real projective plane are not $K(\pi,1)$'s, but this is remedied
by considering natural actions of certain groups on these
surfaces and forming the associated Borel constructions.
\vskip .2in

For example, the group $SO(3)$ acts on the 2-sphere by rotations. Hence,
this group acts on the configuration space. 
Thus, there is an induced action of $S^3$, the non-trivial
double cover of $SO(3)$, on these configuration spaces. A common theme
throughout these notes is the structure of the Borel construction
for these types of actions. For example, the case of the  
$S^3$ Borel construction for the natural
action of $S^3$ on points in the 2-sphere or the projective plane
 give  $K(\pi,1)$ spaces whose fundamental group is the braid group of the
2-sphere or real projective plane respectively.  The resulting spaces are
elementary flavors of moduli spaces and may be described in
elementary terms as spaces of polynomials. These were investigated in
[].
\vskip .2in
One feature of the view here is that when calculations ``work'',
they do so easily, and give global descriptions of certain
cohomology groups. For example, the cohomology of the genus zero
mapping class group with marked points gives a (sometimes)
computable version of cyclic homology. Some concrete calculations
are given.

\vskip .2in
Also, in genus one, there is a version of a based
mapping class group. In this case the cohomology of these groups
with all possible marked points admits an accessible and simple
description. In additon, the genus 2 mapping class group
has a very simple ``configuration-like'' description from which the
torsion in the cohomology follows at once. Some of this has found
application
to the integral cohomology of $Sp(4,\mathbb{Z})$, the $4 \times 4$ 
integral matrices that
preserve the symplectic form of $\RR^4$.
Most of the work here is directed at calculating torsion
in the integral cohomology.
\vskip .2in

 TABLE OF CONTENTS
\begin{itemize}

\item Section   [1]:  Introduction
\item Section   [2]:  Definitions of configuration spaces and related
spaces
\item Section   [3]:  Braid groups and mapping class groups/ Smale's
theorem, Earle-Eells/ configuration spaces as homogeneous spaces
\item Section   [4]:  $K(\pi,1)$'s obtained from configuration spaces
\item Section   [5]:  Orbit configuration spaces ala'  Xicotencatl
\item Section   [6]:  Function spaces and configuration spaces/Dold-Thom
\item Section   [7]:  Labeled configuration spaces
\item Section   [8]:  Homological and topological splitings of function
spaces
\item Section   [9]:  Homological consequences
\item Section   [10]:  Homology of function spaces from a surface to the
2-sphere,
and the ``generalized Heisenberg group''
\item Section  [11]:  Based mapping class groups, particles on a punctured
torus, and automorphic forms
\item Section  [12]:  Homological calculations for braid groups
\item Section  [13]:  Central extensions
\item Section  [14]:  Geometric analogues of central extensions and
applications to
mapping class groups
\item Section  [15]:  The genus two mapping class group and the unitary
group
\item Section  [16]:  Lie algebras, and central extensions/
automorphisms of free groups and Artin's theorem, Falk-Randell theory
\item Section  [17]:  Samelson products, and Vassiliev invariants
of braids.
\end{itemize}

\section{Configuration spaces}
\label{sec: conf}

Throughout these notes, we will assume $X$ is a Hausdorff space
and furthermore if $X$ has a basepoint, we will assume it is
non-degenerate. (This means that the inclusion of the basepoint into
the space is a cofibration. We will discuss this more when we need it.) 
We will frequently also assume that $X$ is of the homotopy type of a
CW-complex.

	We will use $X^k$ to denote the Cartesian product of $k$ copies
of $X$. We will be interested in studying certain
``configuration spaces''
associated to $X$, so let us define these next.

\begin{defn}
\label{defn: configuration}
Given a topological space $X$, and a positive integer $k$, let 
$$
F(X,k) = \{ (x_1,\dots,x_k) \in X^k : x_i \neq x_j \text{ for } i \neq j \}.
$$
This is the $k$-configuration space of $X$.
\end{defn}

So we see that elements in the $k$-configuration space of $X$, correspond
to $k$ distinct, ordered points from $X$.

Now, it is easy to see that $\Sigma_k$, the symmetric group on $k$ letters,
acts on $X^k$ by permuting the coordinates. If we restrict this action
to $F(X,k)$, it is easy to check that it is free (no nonidentity element
fixes a point). Thus we can form the quotient space 
\begin{align}
\label{symconfig}
SF(X,k) = F(X,k)/\Sigma_k
\end{align}
and the quotient map $\pi : F(X,k) \rightarrow SF(X,k)$ is a covering map.
An element of $SF(X,k)$ is a set of $k$ distinct (unordered) points from $X$.

\begin{rem}
We have of course that $F(X,1)=SF(X,1)=X$.
(In these notes, $=$ will mean homeomorphic and we will use $\simeq$ to
stand for homotopy equivalent).
\end{rem}

\begin{rem}
It is easy to see that $F(-,k)$ defines a covariant
functor from the category of topological spaces and continuous
injective maps, to itself. This is not a homotopy functor. For example
the unit interval, $[0,1]$, is homotopy equivalent to a point. However
$F([0,1],2)$ is a nonempty space while $F(point,2)$ is an
empty set.
\end{rem}

\begin{defn}
Given a space $X$, we will find it convenient to let $Q_m$ denote a set
of $m$ distinct points in $X$.
\end{defn}

Notice that for $k \geq m \geq 1$ there is a natural map 
$$
\pikm : F(X,k) \rightarrow F(X,m)
$$
obtained by projecting to the first $m$ factors. 

This map is very useful in studying the nature of $F(X,k)$ especially
when $X$ is a manifold without boundary. This is due to the following
fundamental theorem:

\begin{thm}[Fadell and Neuwirth]
\label{thm: fib}
If $M$ is a manifold without boundary (not necessarily compact) and $k \geq
m \geq 1$, then the map $\pikm$ is a fibration with fiber $F(M - Q_m, k-m)$.
\end{thm}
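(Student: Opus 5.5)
The plan is to show that $\pikm$ is a locally trivial fiber bundle with fiber $F(M-Q_m,k-m)$. The paper works with Hausdorff spaces, and we take $M$ to be a manifold in the usual sense (paracompact, in particular). Over such a base, a locally trivial bundle is a Hurewicz fibration by the standard Dold/Hurewicz uniformization theorem, so it suffices to construct local trivializations. The fiber over a point $(q_1,\dots,q_m)\in F(M,m)$ is
$$\{(x_{m+1},\dots,x_k): x_i\neq x_j,\ x_i\notin\{q_1,\dots,q_m\}\} = F(M-Q_m,k-m),$$
so the identification of the fiber is immediate. The real content is local triviality.

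Fix $q=(q_1,\dots,q_m)\in F(M,m)$ and choose pairwise disjoint closed coordinate balls $B_i\subset M$ with $q_i$ in the interior $U_i$ of $B_i$. The key construction is a continuous map
$$\theta_i: U_i \to \mathrm{Homeo}(M), \qquad y\mapsto\theta_i(y),$$
with $\theta_i(y)(q_i)=y$ and $\theta_i(y)$ equal to the identity outside $B_i$. To build it, work in the ball $B_i\cong D^n$ with $q_i=0$. For $y$ in the open ball, let $\theta_i(y)$ send $0$ to $y$ and map each radial segment from $0$ to a boundary point $z$ linearly onto the segment from $y$ to $z$. This fixes $\partial D^n$ pointwise, so it extends by the identity to a homeomorphism of $M$. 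Its inverse is given by the analogous formula, and both depend continuously on $y$.

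With these maps in hand, set $U=U_1\times\dots\times U_m$, an open neighborhood of $q$ in $F(M,m)$, and for $y\in U$ put $\theta(y)=\theta_1(y_1)\circ\cdots\circ\theta_m(y_m)$. These factors have disjoint supports, so they commute, and $\theta(y)(q_i)=y_i$ for every $i$. Define
$$\Phi: U\times F(M-Q_m,k-m)\to \pikm^{-1}(U),\qquad (y,x)\mapsto (y,\theta(y)(x_{m+1}),\dots,\theta(y)(x_k)).$$
Because $\theta(y)$ is a bijection carrying $Q_m$ onto $\{y_1,\dots,y_m\}$, the image lands in $F(M,k)$. The inverse is $(y,w)\mapsto(y,\theta(y)^{-1}(w))$, so $\Phi$ is a homeomorphism commuting with the projections to $U$.

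The main obstacle is continuity of $\Phi$ and of its inverse. This amounts to joint continuity of $(y,p)\mapsto\theta_i(y)(p)$ and of $(y,p)\mapsto\theta_i(y)^{-1}(p)$. Outside $B_i$ both maps are the identity, and inside $B_i$ they are given by the explicit radial formulas, so both are continuous. I would check this carefully near the center of the ball, where the radial parametrization degenerates, and near $\partial B_i$, where the map must glue continuously to the identity. The hypothesis that $M$ has no boundary is exactly what guarantees coordinate balls around every point, so the construction applies for every choice of $q$.
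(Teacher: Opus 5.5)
Your proposal is correct and is the classical Fadell--Neuwirth argument. The paper only cites this theorem without proof, but your key device is the one it uses in Lemma~\ref{lem: diskmap} and attributes to Fadell and Neuwirth: a continuous local section $y\mapsto\theta(y)$ of evaluation at $Q_m$, built from homeomorphisms supported in disjoint coordinate discs, which corresponds to the disk map $\theta$ and local section $\phi$ there.

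The one step worth an extra line is continuity of $(y,w)\mapsto\theta_i(y)^{-1}(w)$, since the inverse is not literally of the same linear form. It follows either by solving the quadratic for the radial parameter explicitly, or by noting that $(y,p)\mapsto(y,\theta_i(y)(p))$ is a proper continuous bijection of $U_i\times D^n$ onto itself, hence a homeomorphism.
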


We will use this theorem, to get our first insight into the nature of
these configuration spaces. 

\begin{defn}
A $K(\pi,1)$-space $X$ is a path connected space 
where $\pi_i(X)=0$ for $i \geq 2$ and 
$\pi_1(X)=\pi$. It is well known, that the homotopy type of such a space
is completely determined by its fundamental group (recall all our
spaces are of the homotopy type of a CW complex).
\end{defn}

Let us first look at configuration spaces
for 2-dimensional manifolds. 
Our first result, will be to show that most of these are
$K(\pi,1)$-spaces. 

\begin{thm}
\label{thm: Kpi1}
Let $M$ be either $\mathbb{R}^2$ or a closed 2-manifold of genus $\geq 1$
(not necessarily orientable).
$F(M - Q_m,k)$ has no higher homotopy for all $k \geq 1, m \geq 0$.
In other words $F(M - Q_m,k)$ is a $K(\pi,1)$-space.
\end{thm}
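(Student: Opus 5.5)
We argue by induction on $k$, with $m \geq 0$ arbitrary, using the Fadell--Neuwirth fibration of Theorem~\ref{thm: fib} and the long exact homotopy sequence.

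\emph{Base case $k=1$.} Here $F(M-Q_m,1)=M-Q_m$. If $M=\RR^2$, then $M-Q_m$ is contractible for $m=0$ and homotopy equivalent to a wedge of $m$ circles for $m\geq 1$. If $M$ is a closed surface of genus $\geq 1$ and $m=0$, the universal cover of $M$ is $\RR^2$: by uniformization or the classification of surfaces, closed surfaces other than $S^2$ and $\RR P^2$ have contractible universal cover. Hence $\pi_i(M)=\pi_i(\RR^2)=0$ for $i\geq 2$. If $m\geq 1$, the punctured surface $M-Q_m$ deformation retracts onto a graph (a wedge of circles), so it is again aspherical. In every case $M-Q_m$ is path connected and has no higher homotopy.

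\emph{Inductive step.} Suppose $F(N-Q_j,k-1)$ is a $K(\pi,1)$ for all $j\geq 0$, where $N$ is $M$ or $\RR^2$. Note that $M-Q_m$ is again a manifold without boundary. Applying Theorem~\ref{thm: fib} with base $F(M-Q_m,1)=M-Q_m$ gives the fibration
$$F(M-Q_m-Q_1,k-1) \to F(M-Q_m,k) \xrightarrow{\pi_{k,1}} M-Q_m.$$
The fiber equals $F(M-Q_{m+1},k-1)$, which is aspherical by the induction hypothesis. The base is aspherical by the base case. For $i\geq 2$ the long exact sequence contains the segment
$$\pi_i(\mathrm{fiber})\to \pi_i(F(M-Q_m,k))\to \pi_i(M-Q_m),$$
whose outer terms vanish, so $\pi_i(F(M-Q_m,k))=0$.

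\emph{Path connectedness.} The sequence $\pi_0(\mathrm{fiber})\to\pi_0(\mathrm{total})\to\pi_0(\mathrm{base})$ is exact, and both fiber and base are connected. The fiber is connected by induction; at the base level this holds because removing finitely many points from a connected manifold of dimension $2$ leaves it connected. Hence the total space is connected. We also need that the total space has the homotopy type of a CW complex, so that it is a genuine $K(\pi,1)$. This holds because it is an open subset of a manifold, hence itself a manifold.

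\emph{Main obstacle.} The substantive input is the base case for closed surfaces: we must know that closed surfaces of genus $\geq 1$, including non-orientable ones of genus $\geq 2$ (Klein bottle and beyond), have contractible universal cover, and that punctured surfaces are homotopy equivalent to graphs. I would cite the classical facts that $\RR^2$ covers each such surface, either via a flat or hyperbolic metric or via the polygon-gluing model, and that a compact surface with one point removed deformation retracts onto the $1$-skeleton of its standard CW structure. Everything else is formal from Theorem~\ref{thm: fib}. Note also why $S^2$ and $\RR P^2$ are excluded: there the base case fails, since $\pi_2(S^2)\neq 0$ and $\pi_2(\RR P^2)\neq 0$.
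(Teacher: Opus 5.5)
Your proof is correct and is essentially the paper's own argument. Like the paper, you induct on $k$ with $m$ arbitrary, use the Fadell--Neuwirth fibration $\pi_{k,1}\colon F(M-Q_m,k)\to M-Q_m$ with fiber $F(M-Q_{m+1},k-1)$, and use the same base cases. Your remarks on path connectedness and on CW homotopy type fill in points the paper leaves implicit; it treats connectedness in the separate lemma that follows.
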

\begin{proof}
We will prove it by induction on $k$. First the case $k=1$. If $m=0$,
we just have to note 
that $M=F(M,1)$ is a $K(\pi,1)$-space and for 
$m \geq 1$, $M - Q_m \simeq \text{ bouquet of circles }$,
and so is a $K(F, 1)$-space where $F$ is a free group of finite rank.

So we can assume $k > 1$ and that the theorem holds for all smaller values. 
By theorem~\ref{thm: fib}, the map $\pi_{(k,1)} : F(M - Q_m, k)
\rightarrow F(M - Q_m, 1)$ is a fibration with fiber
$F(M - Q_{m+1}, k-1)$. However, by induction both the base
and the fiber of this fibration are $K(\pi,1)$-spaces, and so by the homotopy
long exact sequence of the fibration, we can conclude the total space
is also a $K(\pi,1)$-space and so we are done.

\end{proof}

It follows, from theorem~\ref{thm: Kpi1}, that for any closed 2-dimensional
manifold $M$ besides the sphere $S^2$ and the projective plane $\mathbb{R}P^2$,
the homotopy type of $F(M,k)$ is completely determined by its fundamental 
group. So our next goal should be to understand that. It turns out there
is quite a beautiful picture for the fundamental group of a configuration
space in terms of braids and we shall explore this in the next section. 

For now, let us state a lemma that can be used to ensure that a configuration
space is path connected so that one does not have to worry about base
points when talking about the fundamental group.

\begin{lem}
Let $M$ be a connected manifold (without boundary) such that $M$ remains
connected when punctured at $k-1 \geq 0$ points. Then $F(M,k)$ is
path connected. So in particular, if $M$ is a connected manifold
of dimension at least 2, all of its configuration spaces are path connected.
\end{lem}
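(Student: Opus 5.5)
We argue by induction on $k$, using the Fadell--Neuwirth fibration of Theorem~\ref{thm: fib} together with the long exact homotopy sequence in low degrees. For $k=1$ we have $F(M,1)=M$, which is path connected because it is a connected manifold and hence locally path connected. Now let $k\geq 2$, and suppose the statement holds for all smaller numbers of points.

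Consider $\pi_{k,1}: F(M,k)\to F(M,1)=M$. By Theorem~\ref{thm: fib} this is a fibration with fiber $F(M-Q_1,k-1)$. The space $M-Q_1$ is again a manifold without boundary, and it is connected by hypothesis. Removing $k-2$ further points from $M-Q_1$ amounts to removing $k-1$ points from $M$, which by hypothesis leaves it connected. Since manifolds are locally path connected, connected and path connected coincide here. Thus $M-Q_1$ satisfies the hypothesis of the lemma with $k-1$ in place of $k$, so by induction the fiber $F(M-Q_1,k-1)$ is path connected.

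The exact sequence
$$\pi_0(F(M-Q_1,k-1))\to \pi_0(F(M,k))\to \pi_0(M)$$
has both outer terms trivial, so the middle term is trivial as well. Concretely, this means the following. Given two configurations, first move the first point of one along a path in $M$ to the first point of the other, using path lifting for the fibration. Then connect the remaining $k-1$ points inside the fiber over that common first point. One needs $F(M,k)$ to be nonempty, which holds whenever $M$ has at least $k$ points. This is automatic if $\dim M\geq 1$. If $\dim M=0$, then $M$ is a single point, and it is disconnected after one puncture (empty) unless $k=1$; I would treat that degenerate case separately or note that the empty convention for "connected" excludes it.

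For the final sentence, let $M$ be connected with $\dim M=n\geq 2$. Removing a finite set from a connected $n$-manifold with $n\geq 2$ leaves it connected. To see this, take a small coordinate ball around each removed point; the ball minus its center is connected when $n\geq 2$. Any path in $M$ can then be rerouted around the punctures inside these punctured balls. Alternatively, there is a general-position argument: a path can be perturbed to miss finitely many points in dimension $\geq 2$. Hence the hypothesis holds for every $k$, and the first part of the lemma applies.

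The only mildly delicate step is the rerouting argument, that is, making the path-avoidance rigorous. I would do it by covering a path with finitely many coordinate balls and modifying it inside those balls containing punctures, where a punctured $n$-ball is path connected for $n\geq 2$.
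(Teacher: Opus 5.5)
Your proof is correct and follows the paper's argument: induction on $k$ using the Fadell--Neuwirth fibration $\pi_{k,1}: F(M,k)\to M$ with fiber $F(M-Q_1,k-1)$, then concluding that the total space is path connected because base and fiber are. You also check that $M-Q_1$ satisfies the inductive hypothesis and justify the dimension-$\geq 2$ clause, both of which the paper leaves implicit.
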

\begin{proof}
The proof is by induction on $k$. When $k=1$, it follows easily from the
hypothesis. So we can assume $k>1$ and that we have proved it for smaller
values. Then by theorem~\ref{thm: fib}, 
$\pi_{k,1} : F(M,k) \rightarrow F(M,1)$ is a fibration with fiber
$F(M - Q_1, k-1)$. Thus by induction, both the base and the fiber are
path connected and hence so is the total space. 
\end{proof}

\section{Braid groups}

Let $I$ denote the unit interval $[0,1] \subset \mathbb{R}$.
If $X$ is a space, 
we can view $X \times \{0\}$ as the bottom of the ``cylinder'' $X \times I$,
and similarly $X \times \{1\}$ as the top.

Let $I_k$ be the space which consists of $k$ disjoint copies of $I$ where
the copies are labeled
from $1$ to $k$. Then 
let $0_i \in I_i$ be the point in the $i$th copy 
of $I$ corresponding to $0$ and similarly let $1_i \in I_i$ be the point in the
$i$th copy of $I$ corresponding to $1$.

Let $E=(e_1, \dots, e_k)$ be an element in $F(X,k)$, 
and let $\pi_I : X \times I
\rightarrow I$ be the projection map to the second factor. We are
now ready to define what we mean by a pure braid in $X$.

\begin{defn}
A pure $k$-stranded braid in $X$ (based at $E$) is a continuous, one to one
map $f: I_k \rightarrow X \times I$ which satisfies: \\
\noindent
(a) $\pi_I \circ f : I_k \rightarrow I$ is the identity map on each component
of $I_k$ and \\
\noindent
(b) $f(0_i)=(e_i,0)$, $f(1_i)=(e_i,1)$ for all $1 \leq i \leq k$.
\end{defn}

Let us look at an example to make the formal definition above intuitively clear.
Let us take $X=\mathbb{R}^2$ and $k=3$. We can picture $X \times I$ as
the subspace of $\mathbb{R}^3$ where
the $z$-coordinate satisfies $0 \leq z \leq 1$.
The following is a typical picture of a pure $3$-stranded braid of 
$\mathbb{R}^2$.

\setlength{\unitlength}{1.0 cm}
\begin{picture}(9,8)
\multiput(2,2)(2,0){3}{\circle*{0.1}}
\multiput(2,6)(2,0){3}{\circle*{0.1}}
\put(1.7,2){\line(1,0){4.6}}
\put(1.7,6){\line(1,0){4.6}}
\put(7,2){z=0}
\put(7,6){z=1}
\linethickness{0.4mm}
\qbezier(6,2)(6.3,3)(6,4)
\qbezier(6,4)(5.7,5)(6,6)
\qbezier(2,2.1)(3,3)(3.8,3.5)
\qbezier(2,2)(1.95,2.05)(2,2.1)
\qbezier(4.2,3.8)(5,4)(2,6)
\qbezier(4,2)(5.2,3.4)(4,3.7)
\qbezier(4,3.7)(2,4)(3,5)
\qbezier(3.35,5.25)(4,5.5)(4,6)
\end{picture}


It is obvious from the picture, that we would like to say two braids
are equivalent if we can ``deform'' one to the other. Thus we define:

\begin{defn}
Let $f_0$ and $f_1$ be two pure $k$-stranded braids based at $E$. Then
we say $f_0$ is equivalent to $f_1$ if there exists a homotopy 
$F: I_k \times I \rightarrow X \times I$ between them, such that $F$
restricted to $I_k \times \{t\}$ is a pure $k$-stranded braid for all $t \in I$. 
\end{defn}

It is easy to see that the equivalence above, indeed gives an equivalence
relation on the set of all pure $k$-stranded braids of $X$.

Now let us explore the natural correspondence between pure $k$-stranded
braids based at $E$ and loops in $F(X,k)$ based at $E$.

One can view such a loop as a map $\theta : I \rightarrow F(X,k)$ with 
$\theta(0)=\theta(1)=E$. However, $F(X,k) \subseteq X^k$ so we can take the 
components
of the map $\theta$ to get maps $\theta_i: I \rightarrow X$ for $1 \leq i
\leq k$. We can then define $f_i : I \rightarrow X \times I$ by
$$
f_i(t) = (\theta_i(t),t).
$$
Finally we can take these maps and put them together to get
a map $f: I_k \rightarrow X \times I$. 

It is a routine exercise to check
that the map $f$ obtained is indeed a pure $k$-stranded braid based at $E$
and that this establishes a one to one correspondence between loops
in $F(X,k)$ based at $E$ and pure $k$-stranded braids of $X$ based at $E$. 

Similarly, it is easy to check that two such loops are (base point preserving)
 homotopic if and only if the corresponding pure braids are equivalent.
Thus we see a one to one corespondence between $\pi_1(F(X,k) ; E)$ and
$PB_k(X ;E)$, the set of equivalence classes of pure $k$-stranded braids in $X$,
based at $E$. 

Of course this implies that $PB_k(X ; E)$ inherits the structure of a group 
from $\pi_1(F(X,k) ; E)$, but of course,
we can also describe this multiplication naturally on the level of the braids.  

Given two braids $f_0$ and $f_1$, we can  
think of $f_0$ as a braid between $X \times \{0\}$ and $X \times \{\frac{1}{2}
\}$ and
$f_1$ as a braid between $X \times \{\frac{1}{2}\}$ and $X \times \{1\}$, and then
$f_0 * f_1$ is the braid obtained by stacking the braid $f_1$ on top of the
braid $f_0$ as illustrated in the diagram below for the case 
$X=\mathbb{R}^2$ and $k=3$. It also follows from the earlier correspondence,
that the inverse of a pure braid
is just obtained by turning the braid upside down. 

\setlength{\unitlength}{0.6cm}
\begin{picture}(18,13)
\put(0,4){$f_0$}
\multiput(2,2)(2,0){3}{\circle*{0.1}}
\multiput(2,6)(2,0){3}{\circle*{0.1}}
\multiput(2,8)(2,0){3}{\circle*{0.1}}
\multiput(2,12)(2,0){3}{\circle*{0.1}}
\multiput(12,3)(2,0){3}{\circle*{0.1}}
\multiput(12,7)(2,0){3}{\circle*{0.1}}
\multiput(12,11)(2,0){3}{\circle*{0.1}}
\put(9.5,7){$f_0 * f_1$}

\put(7,4){\vector(2,1){2.5}}
\put(7,10){\vector(2,-1){2.5}}

\put(11.7,11){\line(1,0){4.6}}
\put(17,11){z=1}
\put(1.7,8){\line(1,0){4.6}}
\put(1.7,12){\line(1,0){4.6}}
\put(7,8){z=0}
\put(7,12){z=1}
\put(11.7,3){\line(1,0){4.6}}
\put(11.7,7){\line(1,0){4.6}}
\put(1.7,2){\line(1,0){4.6}}
\put(1.7,6){\line(1,0){4.6}}
\put(7,2){z=0}
\put(7,6){z=1}
\linethickness{0.4mm}
\qbezier(6,2)(6.3,3)(6,4)
\qbezier(6,4)(5.7,5)(6,6)
\qbezier(2,2.1)(3,3)(3.8,3.5)
\qbezier(2,2)(1.95,2.05)(2,2.1)
\qbezier(4.2,3.8)(5,4)(2,6)
\qbezier(4,2)(5.2,3.4)(4,3.7)
\qbezier(4,3.7)(2,4)(3,5)
\qbezier(3.35,5.25)(4,5.5)(4,6)

\put(0,10){$f_1$}

\qbezier(2,8)(1.7,9)(2,10)
\qbezier(2,10)(2.3,10.8)(1.8,11.5)
\qbezier(1.8,11.5)(1.9,11.9)(2,12)
\qbezier(6,8)(5,9)(4.2,9.5)
\qbezier(3.8,9.8)(3,10)(6,11.9)
\qbezier(6,11.9)(6.05,11.95)(6,12)
\qbezier(4,8)(2.8,9.4)(4,9.7)
\qbezier(4,9.7)(6,10)(5,11)
\qbezier(4.65,11.25)(4,11.5)(4,11.9)
\qbezier(4,11.9)(4.05,11.95)(4,12)

\put(17,3){z=0}
\put(17,7){$z=\frac{1}{2}$}
\qbezier(16,3)(16.3,4)(16,5)
\qbezier(16,5)(15.7,6)(16,7)
\qbezier(12,3.1)(13,4)(13.8,4.5)
\qbezier(12,3)(11.95,3.05)(12,3.1)
\qbezier(14.2,4.8)(15,5)(12,7)
\qbezier(14,3)(15.2,4.4)(14,4.7)
\qbezier(14,4.7)(12,5)(13,6)
\qbezier(13.35,6.25)(14,6.5)(14,7)

\qbezier(12,7)(11.7,8)(12,9)
\qbezier(12,9)(12.3,9.8)(11.8,10.5)
\qbezier(11.8,10.5)(11.9,10.9)(12,11)
\qbezier(16,7)(15,8)(14.2,8.5)
\qbezier(13.8,8.8)(13,9)(16,10.9)
\qbezier(16,10.9)(16.05,10.95)(16,11)
\qbezier(14,7)(12.8,8.4)(14,8.7)
\qbezier(14,8.7)(16,9)(15,10)
\qbezier(14.65,10.25)(14,10.5)(14,10.9)
\qbezier(14,10.9)(14.05,10.95)(14,11)

\end{picture}

\begin{defn}
We will call $PB_k(X ; E)$, the pure $k$-stranded braid group of $X$.
We will usually surpress the basepoint $E$ when it is obvious and write
$PB_k(X)$.
\end{defn}

\begin{rem}
Notice of course that $PB_1(X)=\pi_1(X)$ for any path connected space $X$.
\end{rem}

So we see that in general $\pi_1(F(X,k))$
can be interpreted as the pure $k$-stranded braid group $PB_k(X)$, and
this will allow us to picture many relations among the elements of this
group.

We saw, in section~\ref{sec: conf} that there are many examples where
configuration spaces are $K(\pi, 1)$-spaces. This means that
in these cases, there will be a very strong connection between the 
configuration space
and the corresponding pure braid group. We will begin to exploit this
correspondence in the next section.

\begin{defn}
$\pi_1(SF(X,k))$ is called the $k$-stranded braid group of $X$ and
is denoted $B_k(X)$.
\end{defn}

Recall that we have a covering map $\pi : F(X,k) \rightarrow SF(X,k)$,
obtained by forming the quotient space under the free action of $\Sigma_k$
on $F(X,k)$. Assuming that $F(X,k)$ is path connected, from covering space 
theory, 
this implies that we have the
following short exact sequence of groups:

$$
1 \rightarrow PB_k(X) \rightarrow B_k(X) 
\overset{\lambda}{\rightarrow}
 \Sigma_k 
\rightarrow 1
$$

Fixing $E=(e_1,\dots,e_k) \in F(X,k)$, from covering space theory we can 
identify
elements of $\pi_1(SF(X,k) ; \pi(E))$ with homotopy classes of 
paths in $F(X,k)$ starting at
$E$ and ending at a point of $F(X,k)$ in the same $\Sigma_k$-orbit of $E$.
Using the same correspondence that associated loops in $F(X,k)$ with
pure braids, we see that these paths are naturally associated to $k$-stranded
braids of $X$ which start at the points $\{e_1, \dots, e_k \}$ at the bottom
of $X \times I$ and end at the same set of points on the top, but where
it is possible that the points get permuted. 

It is easy to check that there is a well defined multiplication on the 
equivalence classes of these braids, just as before, which corresponds
to the group structure of $\pi_1(SF(X,k))$. Thus $B_k(X)$ is just the
group of $k$-stranded braids in $X$ (based at some set of $k$ distinct points) 
which are allowed to induce a permutation of the points. The map $\lambda$
in the short exact sequence above is just the map that assigns to every
braid, the permutation it induces on the $k$ points. The pure braids
are hence exactly the elements in the kernel of this map. 

\begin{defn}
$B_k(\mathbb{R}^2)$ is called Artin's $k$-stranded braid group
and correspondingly, $PB_k(\mathbb{R}^2)$ is called Artin's $k$-stranded
pure braid group.
It follows from theorem~\ref{thm: Kpi1} that
$$
F(\mathbb{R}^2,k) = K(PB_k(\mathbb{R}^2),1)
$$
and
$$
SF(\mathbb{R}^2,k) = K(B_k(\mathbb{R}^2),1).
$$
\end{defn}

\section{Cohomology of groups}
\subsection{Basic concepts}

This section is provided to recall the relevant basic facts about the cohomology
of groups that we will need for the sequel. The reader is encouraged to read
when these results are needed and used.

Recall for every ``nice'' topological group G (Lie groups and all groups 
equipped with the
discrete topology are ``nice''), we have a universal principal $G$-bundle,
$$
EG \overset{\pi}{\rightarrow} BG
$$
where $EG$ is contractible and $G$ acts freely on it. 
The isomorphism classes of principal $G$-bundles over 
any 
paracompact space $X$, are then in bijective correspondence to $[X,BG]$,
the free homotopy classes of maps from $X$ into $BG$. Furthermore, it is
well known that $BG$ is unique up to homotopy equivalence.

Furthermore the correspondence $G \rightarrow BG$ can be made functorial,
so for every homomorphism of (topological) 
groups, $\lambda: G \rightarrow H$ we get a map
$B(\lambda): BG \rightarrow BH$. This map is always well defined up to 
free homotopy. 

If $G$ is discrete, then $\pi$ is a covering map and $BG$ is a $K(G,1)$-space.
Since $BG$ is unique up to homotopy equivalence, 
we may speak of the cohomology of a (discrete) group by defining
$$
H^*(G;M) = H^*((K(G,1);M).
$$
In general, we will also want to consider
``twisted'' coefficients in some $G$-module $M$.

One can also define the cohomology of a group, purely in the language of
homological algebra. 

\begin{defn}
Given a group $G$, and a $G$-module $M$, we define 
$$
H^*(G;M) = Ext^*_{\mathbb{Z}G}(\mathbb{Z}, M) 
$$
and
$$
H_*(G;M) = Tor_*^{\mathbb{Z}G}(\mathbb{Z}, M)
$$
where $\mathbb{Z}G$ is the integral group ring of $G$, and $\mathbb{Z}$
is given the structure of a $G$-module where every element of $G$ acts 
trivially.
\end{defn}

In other words, to calculate the cohomology of a group $G$, take any
 projective resolution of $\mathbb{Z}$ over the ring $\mathbb{Z}G$, 
apply $Hom_{\mathbb{Z}G}(-,M)$ to it, and calculate the cohomology of the resulting complex.

This definition, is equivalent to the topological one, see for 
example~\cite{Brown}.

\begin{rem}
It is an elementary fact that for any $G$-module $M$, one has
$$
H^0(G;M)=M^G=\{m \in M \, | \, gm=m \text{ for all } g \in G \}.
$$
$M^G$ is called the group of invariants of the $G$-module $M$.
\end{rem}

\begin{rem}
It is also easy to see that one has
$$
H_0(G;\mathbb{Z})=\mathbb{Z}$$
and
$$
H_1(G;\mathbb{Z})=G_{ab}
$$
where $G_{ab}$ is the abelianization of $G$.
\end{rem}

\subsection{Examples}
Here are some basic calculations:

\noindent
(a) Let $G=\mathbb{Z}/n$ be the cyclic group of order $n$. 
We can view $\mathbb{Z}/n$ as the $n$th
roots of unity in $\mathbb{C}$ and have it act on $\mathbb{C}^\infty$ by
coordinatewise multiplication. This action restricts to a free action on
 $S^\infty$, the
subspace of unit vectors whose coordinates are eventually zero and the
quotient space under this action is an infinite Lens space which is a
$K(\mathbb{Z}/n,1)$.
 
One calculates:
$$
H^k(\mathbb{Z}/n;\mathbb{Z}) = 
\begin{cases}
\mathbb{Z}/n &\text{ if } k > 0 , k \text{ even } \\
\mathbb{Z}   &\text{ if } k=0 \\
0 &\text{ otherwise }.
\end{cases}
$$ 

\noindent
(b) let $G=F_n$ be a free group of rank $n$.
In this case, a bouquet of $n$ circles is a $K(F_n,1)$. Since this is 
1-dimensional, it is easy
to see that $H^*(F_n;M)=0$ for all $* > 1$ and any coefficient $M$.
Furthermore, $H^1(F_n;\mathbb{Z})$ is a free abelian group of rank $n$.

\subsection{Cohomological dimension}
As we saw before, the free groups have the property
that $H^*(G;M)=0$ when $*>1$, so we might say they have cohomological dimension
$1$. In general we define:

\begin{defn}
The cohomological dimension of $G$ is denoted $cd(G)$. 
It is the maximum dimension $n$ such
that $H^*(G;M) \neq 0$ for some $G$-module $M$. Of course, if there is
no such maximum $n$, we say $cd(G)=\infty$. 
\end{defn}

Thus we see the cohomological dimension of a nontrivial free group is
one while $cd(\mathbb{Z}/n)=\infty$ for any 
$n \geq 2$.

Again, there is also a more direct description of cohomological dimension
using homological algebra.

\begin{defn}
Given a ring $R$, a projective resolution of a $R$-module $M$ is a 
exact sequence of $R$-modules
$$
\dots \rightarrow P_n \rightarrow \dots \rightarrow P_1 \rightarrow P_0
\rightarrow M \rightarrow 0
$$
where the $P_i$ are projective $R$-modules.
If $P_n \neq 0$ and $P_k=0$ for $k>n$ we say the resolution has length $n$.
\end{defn}

\begin{defn}
Given a ring $R$ and an $R$-module $M$, we define $proj_R(M)$, the projective
dimension of $M$, as the mimimum length of a projective resolution
of $M$. Of course, it is possible $proj_R(M)=\infty$.
\end{defn}

\begin{prop}
For any group $G$, 
$$
cd(G) = proj_{\mathbb{Z}G}(\mathbb{Z})
$$
\end{prop}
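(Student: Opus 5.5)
Write $n = proj_{\mathbb{Z}G}(\mathbb{Z})$ and $d = cd(G)$, where $H^*(G;M) = Ext^*_{\mathbb{Z}G}(\mathbb{Z},M)$. I will prove the two inequalities $d \le n$ and $n \le d$ separately. Both will be allowed to be infinite.

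The inequality $d \le n$ is immediate. If $n<\infty$, choose a projective resolution $P_\ast \to \mathbb{Z}$ of length $n$. Since $Ext$ is independent of the choice of projective resolution, we may compute $H^*(G;M)$ as the cohomology of $Hom_{\mathbb{Z}G}(P_\ast,M)$. This cochain complex vanishes in degrees $>n$, so $H^*(G;M)=0$ for $*>n$ and every $M$, giving $d\le n$.

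For $n \le d$, assume $d<\infty$ and take any projective resolution
$$\cdots \to P_d \xrightarrow{\partial_d} P_{d-1} \to \cdots \to P_0 \to \mathbb{Z} \to 0,$$
for instance the bar resolution. Set $K = \mathrm{im}(\partial_d)\subseteq P_{d-1}$ and $Z = \mathrm{im}(\partial_{d+1}) = \ker \partial_d$. The main step is to show that $K$ is projective; then $0\to K \to P_{d-1}\to\cdots\to P_0\to\mathbb{Z}\to 0$ is a projective resolution of length at most $d$, so $n\le d$.

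To show $K$ is projective, I will use dimension shifting. The truncated complex $\cdots\to P_{d+1}\to P_d\to K\to 0$ is a projective resolution of $K$. Comparing it with the original resolution gives natural isomorphisms $Ext^{1}_{\mathbb{Z}G}(K,M)\cong Ext^{d+1}_{\mathbb{Z}G}(\mathbb{Z},M) = H^{d+1}(G;M)=0$ for every $\mathbb{Z}G$-module $M$. The degenerate case $d=0$ is handled the same way, with $K$ the kernel of $P_0\to\mathbb{Z}$ shifted appropriately; equivalently, one shows directly that $\mathbb{Z}$ itself is projective. Now take $M = Z$ in the long exact $Ext$ sequence of $0\to Z\to P_d\to K\to 0$. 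The vanishing of $Ext^1(K,Z)$ means the extension class in $Ext^1(K,Z)$ is zero, so the sequence splits. Concretely, the map $Hom(P_d,Z)\to Hom(Z,Z)$ is surjective, so the identity of $Z$ extends to a retraction $P_d \to Z$. Hence $K$ is a direct summand of $P_d$, and is therefore projective.

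The only point that needs care is this dimension-shifting step: I must justify $Ext^1(K,-)\cong Ext^{d+1}(\mathbb{Z},-)$ and correctly identify the indexing of the truncated resolution. This follows from the standard long exact sequences for the short exact sequences $0\to \ker\partial_i \to P_i\to \mathrm{im}\,\partial_i\to0$, since $Ext^{\ge1}(P,-)=0$ for projective $P$. Finally, if $d=\infty$ then $n=\infty$ by the first inequality, so equality holds in all cases.
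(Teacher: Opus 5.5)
Your argument is correct. The paper gives no proof of its own and simply cites \cite{Brown}, and what you wrote is the standard dimension-shifting argument found there: $d\le n$ comes from computing $Ext$ with a short resolution, and $n\le d$ holds because $Ext^1_{\mathbb{Z}G}(K,-)\cong H^{d+1}(G;-)=0$ forces $0\to Z\to P_d\to K\to 0$ to split. One small cleanup: your phrasing of the case $d=0$ is muddled. Set $\partial_0=\epsilon$ and $P_{-1}=\mathbb{Z}$, so that $K=\mathbb{Z}$ and $Z=\ker\epsilon$; the same splitting then shows directly that $\mathbb{Z}$ is projective.
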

\begin{proof}
See \cite{Brown}.
\end{proof}

\begin{rem}
Given a $G$-module $M$, and a subgroup $H \le G$, we can regard
$M$ as a $H$-module in the obvious way. It is easy to check that
if $M$ is a free (projective) $\mathbb{Z}G$-module, then 
$M$ is also a free (projective) $\mathbb{Z}H$-module.
\end{rem}

\begin{prop}
For any group $G$, and subgroup $H \le G$ we have $cd(H) \leq cd(G)$.
Thus if $cd(G) < \infty$, $G$ is torsion-free.
\end{prop}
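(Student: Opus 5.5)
The plan is to work with the algebraic characterization $cd(G) = proj_{\mathbb{Z}G}(\mathbb{Z})$ from the preceding proposition, together with the remark that restriction to a subgroup preserves freeness and projectivity. The torsion statement will then follow from the monotonicity statement applied to cyclic subgroups, using the computation of $H^*(\mathbb{Z}/n;\mathbb{Z})$ from the examples.

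\emph{Step 1: the inequality $cd(H) \leq cd(G)$.} If $cd(G)=\infty$ there is nothing to prove, so assume $cd(G)=n<\infty$. By the proposition, $\mathbb{Z}$ has a projective resolution over $\mathbb{Z}G$ of length $n$:
$$0 \rightarrow P_n \rightarrow \dots \rightarrow P_0 \rightarrow \mathbb{Z} \rightarrow 0.$$
Now restrict scalars along $\mathbb{Z}H \subseteq \mathbb{Z}G$. The sequence stays exact, since exactness depends only on the underlying abelian groups. The augmentation module $\mathbb{Z}$ restricts to the trivial $H$-module $\mathbb{Z}$. By the remark, each $P_i$ is still projective over $\mathbb{Z}H$. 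The one point that needs a line of justification is this last one. Because $G$ is a disjoint union of right cosets $Hg$, the ring $\mathbb{Z}G$ is a free $\mathbb{Z}H$-module. Hence free $\mathbb{Z}G$-modules restrict to free $\mathbb{Z}H$-modules, and projectives, being direct summands of frees, restrict to projectives. We therefore have a projective $\mathbb{Z}H$-resolution of $\mathbb{Z}$ of length at most $n$. So $proj_{\mathbb{Z}H}(\mathbb{Z}) \leq n$, and the proposition gives $cd(H)\leq n$.

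\emph{Step 2: torsion-freeness.} Suppose $cd(G)<\infty$ and that $g \in G$ is a nontrivial element of finite order $m \geq 2$. The subgroup $H=\langle g\rangle \cong \mathbb{Z}/m$ has $cd(H)\leq cd(G)<\infty$ by Step 1. However, example (a) gives $H^{2j}(\mathbb{Z}/m;\mathbb{Z})=\mathbb{Z}/m\neq 0$ for every $j\geq 1$, with $\mathbb{Z}$ as the (trivial) module, so $cd(\mathbb{Z}/m)=\infty$. This is a contradiction, so $G$ has no nontrivial torsion.

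I do not expect a real obstacle here. The only substantive input is the freeness of $\mathbb{Z}G$ over $\mathbb{Z}H$ via a choice of coset representatives, and the nonvanishing of the cohomology of finite cyclic groups, which is already recorded in the examples. Both can simply be cited.
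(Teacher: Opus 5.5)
Your proof is correct and follows the paper's argument: you restrict a length-$n$ projective $\mathbb{Z}G$-resolution of $\mathbb{Z}$ to $\mathbb{Z}H$, then apply monotonicity to a finite cyclic subgroup, using $cd(\mathbb{Z}/m)=\infty$. Your one addition is to spell out the coset decomposition showing that $\mathbb{Z}G$ is free over $\mathbb{Z}H$, so projectives restrict to projectives, which the paper leaves as ``easy to check.''
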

\begin{proof}
It is easy to check that a projective resolution of $\mathbb{Z}$ over
$\mathbb{Z}G$ restricts to a projective resolution of $\mathbb{Z}$ over
 $\mathbb{Z}H$. So the first statement
follows. If $G$ has nontrivial torsion it contains $\mathbb{Z}/n$ for some
$n \geq 2$ and the second statement follows from the first as 
$cd(\mathbb{Z}/n)=\infty$. 
\end{proof}

It is an elementary result that the only group that has cohomological 
dimension equal to zero 
is the trivial group. On the other hand we have seen that any nontrivial
free group has cohomological dimension 1. It is a deep result of Stallings
and Swan that the converse is also true, i.e.,
$$
cd(G) \leq 1 \iff G \text{ is a free group. }
$$
Thus we can think of $cd(G)$ as measuring how far a group $G$ is from being
free. If $cd(G) < \infty$, at least it is torsion-free, as we have seen.

We also have the following topological picture of cohomological dimension,
given by the following proposition, whose proof can be found in~\cite{Brown}.

\begin{prop}
Let the geometric dimension of $G$, denoted by $geod(G)$,
 be the minimum dimension of a $K(G,1)$-CW-complex. 
Then we have $geod(G)=cd(G)$ except possibly for the case
where $cd(G)=2$ and $geod(G)=3$.
\end{prop}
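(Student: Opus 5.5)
The plan is to prove the two inequalities $cd(G) \leq geod(G)$ and, apart from the exceptional case, $geod(G) \leq cd(G)$, by comparing cellular chains of a $K(G,1)$ with projective resolutions of $\mathbb{Z}$ over $\mathbb{Z}G$. Throughout we use the earlier Proposition that $cd(G) = proj_{\mathbb{Z}G}(\mathbb{Z})$.

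\emph{The easy inequality $cd(G)\leq geod(G)$.} Let $X$ be an $n$-dimensional $K(G,1)$-CW-complex and let $\widetilde X$ be its universal cover. Then $G$ acts freely on $\widetilde X$ and permutes its cells. Hence each cellular chain group $C_i(\widetilde X)$ is a free $\mathbb{Z}G$-module, with one generator for each $i$-cell of $X$. Since $\widetilde X$ is contractible, the augmented cellular chain complex
$$0\to C_n(\widetilde X)\to\cdots\to C_0(\widetilde X)\to\mathbb{Z}\to 0$$
is exact. It is therefore a free resolution of $\mathbb{Z}$ of length at most $n$, which gives $cd(G)\le n$.

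\emph{The reverse inequality.} Suppose $cd(G)=n$ with $n\ge 3$. I would build a $K(G,1)$ of dimension $n$ as follows.
\begin{enumerate}
\item Start from any $K(G,1)$-CW-complex $Y$, for instance the standard bar construction.
\item Consider its $(n-1)$-skeleton $Y^{(n-1)}$. Its universal cover $\widetilde Y^{(n-1)}$ is $(n-2)$-connected, and its chains $C_*(\widetilde Y^{(n-1)})$ give a free resolution of $\mathbb{Z}$ truncated at degree $n-1$.
\item Let $Z_{n-1}=\ker\bigl(C_{n-1}\to C_{n-2}\bigr)$. Because $proj_{\mathbb{Z}G}(\mathbb{Z})=n$, a Schanuel's lemma argument shows that $Z_{n-1}$ is projective.
\item By the Eilenberg swindle, replace $Y^{(n-1)}$ by its wedge with infinitely many $(n-1)$-spheres. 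This adds a free summand $F$ to both $C_{n-1}$ and $Z_{n-1}$, so that $Z_{n-1}\oplus F$ becomes free.
\item By Hurewicz, using $n-1\ge 2$, we have $\pi_{n-1}=H_{n-1}(\widetilde Y^{(n-1)})=Z_{n-1}$, the last module now free. Attach free $G$-orbits of $n$-cells along a $\mathbb{Z}G$-basis of $Z_{n-1}$.
\end{enumerate}
The resulting complex $W$ has universal cover with acyclic cellular chains: $H_{n-1}$ is killed, and $H_n=0$ because the attaching maps form a basis. The universal cover is also simply connected, since $n\ge 3$ means $\pi_1$ is untouched. Hence it is contractible by Hurewicz and Whitehead, and $W$ is an $n$-dimensional $K(G,1)$.

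\emph{Low dimensions.} The case $n=0$ is immediate, since then $G$ is trivial. The case $n=1$ is the Stallings--Swan theorem quoted above: $G$ is free, and a bouquet of circles serves as a $1$-dimensional $K(G,1)$. The case $n=2$ is exactly the exception the statement allows, so no claim is needed there. Any leftover small case can be absorbed by taking a product with a contractible complex, which raises the dimension to $3$ if needed.

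\emph{Main obstacle.} The key step is the fourth and fifth items: realizing the algebraic projectivity of $Z_{n-1}$ geometrically. This requires the swindle to make the module free, and the Hurewicz identification, which is exactly where $n\ge 3$ is used. For $n=2$ this identification fails: $\pi_1$ need not be preserved, and $\pi_2$ versus $H_2$ is the relevant issue, which is precisely the Eilenberg--Ganea problem. For the full details, the paper's own reference would be \cite{Brown}.
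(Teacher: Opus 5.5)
You are following the standard Eilenberg--Ganea argument, which is what the paper's reference \cite{Brown} gives. The paper itself offers no proof beyond that citation. Your treatment of $cd(G)\le geod(G)$, of $cd(G)\ge 3$, and of $cd(G)\le 1$ (via Stallings--Swan) is sound.

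\textbf{The gap is at $cd(G)=2$.} The statement does not leave this case unconstrained. Read literally, it says that if $cd(G)=2$ then $geod(G)\in\{2,3\}$. So you still have to prove $geod(G)\le 3$, ruling out, for example, $cd(G)=2$ with $geod(G)=7$ or $geod(G)=\infty$. Your remark that ``no claim is needed there'' misreads the statement. Taking a product with a contractible complex does not help either, because that presupposes you already have a $K(G,1)$ of dimension at most $3$.

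\textbf{The fix is already inside your construction.} Steps 2--5 use only $proj_{\mathbb{Z}G}(\mathbb{Z})\le n$, not equality. In any partial projective resolution $P_{n-1}\to\cdots\to P_0\to\mathbb{Z}\to 0$, the kernel of $P_{n-1}\to P_{n-2}$ is projective as soon as the projective dimension is at most $n$. So when $cd(G)=2$, run the construction with $n=3$:
\begin{itemize}
\item The universal cover of the $2$-skeleton is simply connected.
\item Since $\ker(C_1\to C_0)$ is projective and $C_2$ surjects onto it, $Z_2=\ker(C_2\to C_1)$ is a direct summand of $C_2$, hence projective.
\item After the swindle, Hurewicz gives $\pi_2\cong H_2=Z_2$, now free.
\item Attach free orbits of $3$-cells along a basis. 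Attaching $3$-cells does not change $\pi_1$.
\end{itemize}
This produces a $3$-dimensional $K(G,1)$. That is exactly the content the statement encodes: a $K(G,1)$ of dimension $\max(cd(G),3)$ always exists.

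\textbf{A smaller point.} In step 4 you need sufficiently many spheres, not just countably many, when $G$ is uncountable. The number required is the rank of a free module having $Z_{n-1}$ as a direct summand, times $\aleph_0$.
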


\begin{rem} Whether the exceptional case $cd(G)=2$ and $geod(G)=3$ can
occur is unknown. The conjecture that $cd(G)=geod(G)$ in general is known
as the Eilenberg-Ganea conjecture.
\end{rem}

We will also need the following basic theorem on subgroups of finite
index whose proof can be found in ~\cite{Brown}.

\begin{thm}
\label{thm: cdfinindex}
Let $G$ be a torsion-free group and $H$ a subgroup of finite index.
Then $cd(G)=cd(H)$.
\end{thm}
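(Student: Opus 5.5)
The inequality $cd(H)\le cd(G)$ is already given by the earlier proposition on subgroups. So the work is the reverse inequality, which I would split into two steps: first show $cd(G)<\infty$, then upgrade finiteness to equality. If $cd(H)=\infty$ there is nothing to prove, so assume $cd(H)=n<\infty$. Let $[G:H]=m$.

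\emph{Step 1: $cd(G)\le nm<\infty$ (Serre's argument).} I would proceed as follows.
\begin{itemize}
\item Choose a free contractible $H$-CW-complex $Y$ of dimension $n$. If the exceptional case of the proposition on geometric dimension intervenes, use dimension $\max(n,3)$; only finiteness matters here.
\item Let $G$ act on the $m$ left cosets of $H$ and choose coset representatives. This gives the standard embedding $G\hookrightarrow H\wr\Sigma_m = H^m\rtimes\Sigma_m$, with $G\cap H^m$ equal to the core of $H$.
\item The wreath product acts on $Y^m$: $H^m$ acts factorwise and $\Sigma_m$ permutes the factors. So $G$ acts on the contractible, finite-dimensional complex $Y^m$.
\item For a point $y\in Y^m$, its stabilizer in $G$ meets $H^m$ trivially, because $H^m$ acts freely on $Y^m$. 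Hence the stabilizer injects into $\Sigma_m$ and is finite. Since $G$ is torsion-free, the stabilizer is trivial, so $G$ acts freely on $Y^m$.
\end{itemize}

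The main technical obstacle is making this free action cellular, so that the cellular chain complex of $Y^m$ is a finite-length free $\mathbb{Z}G$-resolution of $\mathbb{Z}$. Permuting factors does not act cellularly on the product cell structure in a way that fixes cells pointwise. I would first try taking $Y$ to be a free simplicial $H$-complex, for example by replacing $Y$ with the geometric realization of a simplicial model. The product $Y^m$ then has an equivariant triangulation, or one passes to a subdivision after which the action is admissible. With that in place, the chain complex is free over $\mathbb{Z}G$ because the action on points is free. This gives $cd(G)\le \dim Y^m<\infty$.

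\emph{Step 2: equality.} For a group $G$ of finite cohomological dimension, I would use the standard fact that $cd(G)=\max\{k : H^k(G;F)\ne 0 \text{ for some free } \mathbb{Z}G\text{-module } F\}$. The argument is short. If $d=cd(G)$, then $H^d(G;-)$ is right exact, so it is nonzero on some free module, since every module is a quotient of a free one.

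Now take a free $\mathbb{Z}G$-module $F$ with $H^d(G;F)\neq 0$. Because $[G:H]<\infty$, induced and coinduced modules from $H$ to $G$ agree, so $\mathbb{Z}G\cong \mathrm{Hom}_{\mathbb{Z}H}(\mathbb{Z}G,\mathbb{Z}H)$. Reducing to $F=\mathbb{Z}G$ and applying Shapiro's lemma gives $H^d(G;\mathbb{Z}G)\cong H^d(H;\mathbb{Z}H)$. A general free $F$ can be handled the same way, using that $H$ has finite type issues only through direct sums, and sums commute with coinduction for finite index. Therefore $H^d(H;-)\ne 0$, so $cd(H)\ge d=cd(G)$. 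Combined with the first inequality, this gives $cd(G)=cd(H)$.
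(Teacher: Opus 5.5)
Your proposal is correct and is essentially Serre's argument as presented in Brown, which is the reference the paper cites for this theorem. In both, finiteness of $cd(G)$ comes from the free $G$-action on $Y^m$ via $G\hookrightarrow H\wr\Sigma_m$, and equality then follows from the free-module characterization of $cd$ together with Shapiro's lemma, applied to an arbitrary free $F\cong \mathrm{Coind}_H^G(\bigoplus_I \mathbb{Z}H)$ and not just to $\mathbb{Z}G$.

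The cellularity issue you flag in Step 1 is not a real obstacle. Your stabilizer argument applies verbatim to product cells $e_1\times\cdots\times e_m$: their setwise stabilizers meet $H^m$ trivially, hence are finite, hence trivial. So $C_*(Y)^{\otimes m}$ with the wreath-product action is already a free $\mathbb{Z}G$-resolution of $\mathbb{Z}$ of length $m\dim Y$, and no triangulation or subdivision is needed.
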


\subsection{$FP_\infty$ groups}

\begin{defn}
A group $G$ is of type $FP_n$ if there is a projective resolution
$\{ P_i \}_{i=0}^\infty$ of $\mathbb{Z}$ over $\mathbb{Z}G$ such that 
$P_j$ is finitely
generated for $j=0,\dots,n$. A group $G$ is of type $FP_\infty$ if it is
of type $FP_n$ for all $n$.
\end{defn}

It is elementary to show that every group is $FP_0$ and that a group
is of type $FP_1$ if and only if it is finitely generated.
So we can think of $FP_n$ for higher $n$ as strengthenings 
of the condition that $G$ be finitely generated. 

It is also true that if $G$ is finitely presented,
then $G$ is of type $FP_2$. For a long time, it was conjectured the converse
was true but Bestvina and Brady provided a counterexample - a group that
is $FP_2$ but not finitely presented.

\begin{rem}
It follows immediately, that if $G$ is of type $FP_n$ then for any
$G$-module $M$, which is finitely generated as an abelian group, we have
$H_i(G;M)$ and $H^i(G;M)$ finitely generated for all $i \leq n$.
\end{rem}

One has the following topological picture: 

\begin{prop}
A finitely presented group $G$ is of type $FP_n$ if and only if
there exists a $K(G,1)$-CW-complex such that the $n$-skeleton is finite. 
If $G$ is finitely presented, then it is of type $FP_\infty$ if and only if
there exists a $K(G,1)$-CW-complex such that the $n$-skeleton is finite
for all $n$.
\end{prop}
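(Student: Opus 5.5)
The approach is to pass between the algebraic condition (a finitely generated partial resolution) and the geometric one (a finite skeleton) through the cellular chain complex of the universal cover. Easy direction first: suppose $Y$ is a $K(G,1)$-CW-complex with finite $n$-skeleton. The universal cover $\tilde Y$ is contractible and carries a free cellular $G$-action. Its cellular chain complex $C_*(\tilde Y)$ is therefore a free resolution of $\mathbb{Z}$ over $\mathbb{Z}G$. The module $C_i(\tilde Y)$ is free on the $i$-cells of $Y$, one lift chosen for each. So $C_i$ is finitely generated for $i\le n$, and $G$ is of type $FP_n$. Applying this for every $n$ also gives the ``if'' half of the $FP_\infty$ statement.

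For the converse, assume $G$ is finitely presented and of type $FP_n$, and take $n\ge 2$; the cases $n\le 2$ are immediate from the presentation complex. Let $Y^{(2)}$ be the presentation 2-complex of a finite presentation. It is finite and has fundamental group $G$. I then build $Y$ inductively, adding only finitely many cells in each dimension up to $n$, and then arbitrarily many cells in higher dimensions to kill the remaining homotopy. At each stage, with $Y^{(k)}$ finite and $(k-1)$-connected universal cover ($k\ge 2$), Hurewicz gives $\pi_k(Y^{(k)})\cong\pi_k(\tilde Y^{(k)})\cong H_k(\tilde Y^{(k)})=Z_k$. Here $Z_k=\ker(C_k\to C_{k-1})$, and it is a $\mathbb{Z}G$-module. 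Attaching $(k+1)$-cells along $G$-module generators of $Z_k$ makes $\tilde Y^{(k+1)}$ $k$-connected. So the whole construction reduces to showing that $Z_k$ is finitely generated over $\mathbb{Z}G$ for $k<n$.

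The main obstacle is this finite generation of the cycle modules. I would handle it with the generalized Schanuel lemma, using the definition of $FP_n$ in the paper. The complex $C_k\to\cdots\to C_0\to\mathbb{Z}\to 0$ is a partial free resolution by finitely generated free modules with kernel $Z_k$. Comparing it with a resolution $P_*$ that is finitely generated in degrees $\le n$, Schanuel gives
\[
Z_k\oplus P_k\oplus C_{k-1}\oplus\cdots \;\cong\; K_k\oplus C_k\oplus P_{k-1}\oplus\cdots
\]
where $K_k=\ker(P_k\to P_{k-1})$. The standard fact (Brown, VIII.4) says that for $k<n$ the kernel $K_k$ is finitely generated, because it is the image of $P_{k+1}$. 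Hence the right side is finitely generated, so its direct summand $Z_k$ is finitely generated too. This lets the induction run up through dimension $n$ with finite skeleta. The $FP_\infty$ statement then follows by applying the construction for all $n$ simultaneously, since the inductive step is the same at every stage. Finite presentability is used only to start with a finite 2-skeleton.
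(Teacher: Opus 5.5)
Your proof is correct. The paper states this proposition without proof, treating it as a standard result (see Chapter VIII of Brown's \emph{Cohomology of Groups}), and your argument is essentially that standard one: cellular chains of the universal cover for the easy direction, and for the converse an inductive construction of finite skeleta, using Hurewicz to identify $\pi_k(Y^{(k)})$ with the cycle module $Z_k$ and the generalized Schanuel lemma to show $Z_k$ is finitely generated for $k<n$. As you note, stage $k$ only uses $FP_{k+1}$, so in the $FP_\infty$ case the same induction produces a single $K(G,1)$ with all skeleta finite.
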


\begin{defn}
A group is of type $FP$ if it is of type $FP_\infty$ and has finite
cohomological dimension. This happens exactly when there is a projective
resolution $\{ P_i \}_{i=0}^\infty$ of $\mathbb{Z}$ over $\mathbb{Z}G$
which has finite length and such that each $P_i$ is finitely generated.
\end{defn}

If we are given such a finite projective resolution, it does not necessarily
mean that we can find a finite length resolution using free modules of finite
rank. Thus we define:

\begin{defn}
A group is of type $FL$ if there is a resolution of finite length for
$\mathbb{Z}$ over
$\mathbb{Z}G$ using finitely generated free modules.
\end{defn}

Again, we have a more concrete topological picture.

\begin{prop}
Let $G$ be a finitely presented group. Then $G$ is of type $FP$ if and only
if there exists a finitely dominated $K(G,1)$-CW-complex. Similarly,
$G$ is of type $FL$ if and only if there exists a finite $K(G,1)$-CW complex.
\end{prop}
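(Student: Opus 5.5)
Both equivalences will come from translating between free $\mathbb{Z}G$-resolutions of $\mathbb{Z}$ and free cellular $G$-actions on the universal cover $\widetilde{Y}$ of a $K(G,1)$-complex $Y$. The cellular chain complex $C_*(\widetilde{Y})$ is a free $\mathbb{Z}G$-resolution of $\mathbb{Z}$, since $\widetilde{Y}$ is contractible and $G$ permutes the cells freely. Its rank in degree $i$ is the number of $i$-cells of $Y$. So the easy directions are immediate. If $Y$ is a finite $K(G,1)$, then $C_*(\widetilde{Y})$ is a finite-length resolution by finitely generated free modules, so $G$ is $FL$. If $Y$ is finitely dominated, $G$ is $FP$: there is a finite complex $K$ with $Y$ a retract of $K$ up to homotopy. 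Lifting to covers exhibits $C_*(\widetilde{Y})$ as chain homotopy equivalent to a retract of a finite free $\mathbb{Z}G$-complex. Hence it is chain equivalent to a finite-length complex of finitely generated projectives. This makes $\mathbb{Z}$ of type $FP_\infty$ with finite projective dimension, and by the proposition $cd(G)=proj_{\mathbb{Z}G}(\mathbb{Z})$ this is $FP$.

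For the converses I would build a $K(G,1)$ cell by cell, realizing an algebraic resolution geometrically, as in Wall's finiteness theorem. Finite presentation gives a finite $2$-complex $Y^{(2)}$ with $\pi_1=G$. Suppose $G$ is $FL$, with a finite free resolution $F_*$ of length $n$. I would inductively attach finitely many $k$-cells to kill $\pi_{k-1}$. By Hurewicz, $\pi_{k-1}$ of the $(k-1)$-skeleton equals $H_{k-1}(\widetilde{Y}^{(k-1)})$. By Schanuel's lemma comparing the partial resolution $C_*(\widetilde{Y}^{(k-1)})$ with $F_*$, this kernel is finitely generated projective, hence finitely generated, so finitely many cells suffice. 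This gives a $K(G,1)$ with finite skeleta, the earlier $FP_\infty$ proposition. At dimension $\max(n,3)$ the top kernel is stably free. Adding trivially attached cancelling pairs of cells makes it free, and the remaining higher cells can be folded away. What remains is a finite complex of that dimension.

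In the $FP$ case the same construction stops with a finitely generated projective, possibly non-free, top kernel $P$. The plan is to take $Y\vee\bigvee S^{m}$ for a finite wedge of spheres and add cells realizing the idempotent defining $P$. The resulting infinite complex is a $K(G,1)$ that is a homotopy retract of a finite complex, namely a mapping telescope argument. This gives finite domination.

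I expect the main obstacle to be the top-dimensional step: converting a projective or stably free kernel into actual cells while keeping the complex aspherical and finite (respectively finitely dominated). This is where the Wall obstruction in $\tilde K_0(\mathbb{Z}G)$ lives, and where dimension $2$ needs separate care via the extra cancelling $3$-cells. I would handle it by the standard trick of passing to dimension at least $3$ before realizing chain maps by cellular maps.
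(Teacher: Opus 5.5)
The paper states this proposition without proof, as background from the literature (Brown, Wall), so I am comparing your sketch with the standard argument. Your two easy directions and your $FL$ converse essentially follow that argument. The $FP$ converse, however, has a genuine gap.

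The phrases ``add cells realizing the idempotent defining $P$'' and ``a homotopy retract of a finite complex, namely a mapping telescope argument'' do not identify the dominating finite complex. Nor do they give a map from your infinite $K(G,1)$ back into it, and that map is the whole content of finite domination. Maps \emph{into} a $K(G,1)$ are controlled by $\pi_1$; maps from an infinite-dimensional complex into a finite non-aspherical one are not. A telescope does not supply this map. To dominate the telescope of a self-map $h$ of $Y\vee\bigvee S^{m}$ by that finite complex, you would first need $h$ to be a homotopy idempotent that is the identity on $\pi_1$ and kills the top homology of the universal cover. Building such an $h$ is the same problem.

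The clean route uses $cd(G)<\infty$ directly and never mentions $P$. Your cell-by-cell construction already gives a $K(G,1)$ $X$ with finite skeleta. Put $n=cd(G)$ and $q=\max(n,2)$, and extend the identity of $X^{(q)}$ over $X$ skeleton by skeleton. For $k\ge q$, the obstruction to crossing the $(k+1)$-cells lies in $H^{k+1}(X;\pi_k(X^{(q)}))=H^{k+1}(G;\pi_k(X^{(q)}))$ (local coefficients), which is zero since $k+1>cd(G)$. When the obstruction cocycle is only a coboundary, change the map on $k$-cells rel the $(k-1)$-skeleton. Each skeleton is altered at most once, so you obtain $g\colon X\to X^{(q)}$ equal to the inclusion on the $1$-skeleton. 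The composite $X\xrightarrow{g}X^{(q)}\hookrightarrow X$ is then the identity on $\pi_1$, hence homotopic to $\mathrm{id}_X$ because $X$ is aspherical. So $X$ is dominated by the finite complex $X^{(q)}$. The class of $P$ in $\tilde K_0(\mathbb{Z}G)$ matters only if you want $X$ itself to be finite.

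There are also three smaller corrections.

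First, the intermediate kernels $\ker(C_{k-1}\to C_{k-2})$ are syzygies of $\mathbb{Z}$ below degree $n$ and need not be projective. For $G=\mathbb{Z}^4$, $\pi_2$ of a presentation complex is a third syzygy, which cannot be projective since $cd(\mathbb{Z}^4)=4$. Schanuel's lemma only makes each kernel a direct summand of a finitely generated module, but finite generation is all you use.

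Second, at the stage $d=\max(n,3)$, cancelling pairs of cells leave the kernel unchanged. Instead, wedge on $r$ copies of $S^{d-1}$; this is harmless for $\pi_1$ since $d-1\ge 2$. The stably free kernel $K=\pi_{d-1}$ of the $(d-1)$-skeleton then becomes $K\oplus\mathbb{Z}G^{r}\cong\mathbb{Z}G^{s}$. Attach $s$ cells of dimension $d$ along a basis. The universal cover is then simply connected and acyclic, hence contractible, and no higher cells remain to be ``folded away.''

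Third, in the forward $FP$ direction, replacing a chain-homotopy retract of a finite free complex by a finite projective complex is Wall's algebraic lemma. It should be cited rather than asserted.
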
 

Thus for example $F_n$ is $FL$ as we can take $K(F_n,1)$
to be the bouquet of $n$ circles.

We will also find the following proposition useful:

\begin{prop}
\label{prop: seqFP}
If we have a short exact sequence of groups
$$
1 \rightarrow \Gamma_0 \rightarrow \Gamma \rightarrow \Gamma_1 \rightarrow 1
$$
then if $\Gamma_0$ and $\Gamma_1$ are of type $FP$ (respectively $FL$) then
so is $\Gamma$.
\end{prop}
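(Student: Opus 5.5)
The strategy is to build a resolution for $\Gamma$ out of resolutions for $\Gamma_0$ and $\Gamma_1$, using a tensor-product (or, topologically, a fibration/twisted-product) construction. The $FL$ case is the cleanest, and the $FP$ case is handled by the same bookkeeping with projective modules in place of free ones. Write $\pi:\Gamma\to\Gamma_1$ for the quotient map.

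First, take a finite-length resolution $Q_\ast\to\mathbb{Z}$ over $\mathbb{Z}\Gamma_1$ by finitely generated free (resp.\ projective) modules. Regard it as a complex of $\mathbb{Z}\Gamma$-modules via $\pi$. Each $Q_j$ is a direct summand of a finite sum of copies of $\mathbb{Z}\Gamma_1=\mathbb{Z}[\Gamma/\Gamma_0]=\mathbb{Z}\Gamma\otimes_{\mathbb{Z}\Gamma_0}\mathbb{Z}$. Next, take a finite-length resolution $P_\ast\to\mathbb{Z}$ over $\mathbb{Z}\Gamma_0$ by finitely generated free (resp.\ projective) modules. Inducing up, $\mathbb{Z}\Gamma\otimes_{\mathbb{Z}\Gamma_0}P_\ast$ is a resolution of $\mathbb{Z}\Gamma_1$ over $\mathbb{Z}\Gamma$, because $\mathbb{Z}\Gamma$ is free as a right $\mathbb{Z}\Gamma_0$-module, so induction is exact. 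Its terms are finitely generated free (resp.\ projective) $\mathbb{Z}\Gamma$-modules. Taking direct sums and summands, every $Q_j$ therefore admits a finite-length resolution over $\mathbb{Z}\Gamma$ by finitely generated projectives, which are free in the $FL$ case when $Q_j$ is free.

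Second, assemble these resolutions of the $Q_j$ into a resolution of $\mathbb{Z}$ over $\mathbb{Z}\Gamma$. I would do this by the standard double complex/mapping cone argument. Lift the differentials of $Q_\ast$ to chain maps between the chosen resolutions of the $Q_j$; this is possible by the comparison theorem, since the resolutions consist of projectives and the targets are acyclic. This yields a double complex whose total complex resolves $\mathbb{Z}$. In general the lifts only square to zero up to homotopy, so one must use Wall's perturbation lemma, or equivalently an iterated mapping cone, filtering by $j$ and adding one $Q_j$ at a time. The total complex has finite length (at most $cd\,\Gamma_0+cd\,\Gamma_1$) and its terms are finite sums of finitely generated projective (resp.\ free) modules, which proves the statement.

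The main obstacle is exactly this assembly step: making the perturbed total complex have square-zero differential while keeping finite generation. Using iterated mapping cones avoids the need for explicit homotopies, since each cone of a chain map between finite-length finitely generated projective complexes is again such a complex, and exactness is checked by the long exact sequence in homology. As an alternative for $FL$ with finitely presented groups, one can argue topologically. Form the Borel construction $E\Gamma\times_\Gamma(E\Gamma_1\times K(\Gamma_0,1)$-universal cover$)$, or simply note that there is a fibration $K(\Gamma_0,1)\to K(\Gamma,1)\to K(\Gamma_1,1)$ with finite base and finite fiber. A cell structure on the total space with cells being products of cells of the base and the fiber is then finite. Making the cell structure honest is the same technical point, so I would present the algebraic argument above.
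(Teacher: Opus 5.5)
The paper states this proposition without proof, so there is no internal argument to compare with. Your proposal is correct, and it is essentially Wall's construction of a resolution for a group extension. Inducing a finite resolution of $\mathbb{Z}$ over $\mathbb{Z}\Gamma_0$ shows that $\mathbb{Z}\Gamma_1=\mathbb{Z}\Gamma\otimes_{\mathbb{Z}\Gamma_0}\mathbb{Z}$ has a finite resolution by finitely generated free (resp.\ projective) $\mathbb{Z}\Gamma$-modules. You then assemble these along a finite resolution $Q_\ast$ of $\mathbb{Z}$ over $\mathbb{Z}\Gamma_1$.

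Two points should be tightened.

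\emph{The assembly step.} Your sentence that the lifted differentials ``yield a double complex'' is wrong as written, since the lifts compose to zero only up to homotopy, as you then note. The cleanest rigorous version is a \emph{downward} induction through the short exact sequences $0\to Z_j\to Q_j\to Z_{j-1}\to 0$. Here $Z_{-1}=\mathbb{Z}$, and $Z_{N-1}\cong Q_N$, where $N$ is the length of $Q_\ast$. Suppose $A_\ast\to Z_j$ and $B_\ast\to Q_j$ are finite resolutions and $f\colon A_\ast\to B_\ast$ lifts the inclusion; the ordinary comparison theorem suffices for this. The long exact sequence then shows that $\mathrm{cone}(f)$ resolves $Z_{j-1}$. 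Its terms $B_n\oplus A_{n-1}$ are free whenever those of $A_\ast$ and $B_\ast$ are. So the $FL$ case needs no homotopies, no perturbation lemma, and no $\tilde K_0$ bookkeeping.

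\emph{The summand step in the $FP$ case.} ``Taking summands'' uses the standard lemma that a direct summand of a module with a finite resolution by finitely generated projectives has one too. The reason: such a summand is of type $FP_\infty$, its projective dimension does not increase, and truncating at that dimension leaves a finitely generated projective kernel.

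As for what your route buys, consider the alternative that proves $FP_\infty$ by dimension shifting and bounds $cd\,\Gamma\le cd\,\Gamma_0+cd\,\Gamma_1$ separately with the LHS spectral sequence. The paper uses that kind of bound in Proposition~\ref{prop: baspoly}(a). That route gives $FP$ but says nothing about freeness, so $FL$ would need a separate argument. Your explicit complex handles both cases at once. In the $FL$ case its $n$-th term has rank $\sum_{i+j=n}\mathrm{rank}\,P_i\cdot\mathrm{rank}\,Q_j$. This gives $\chi(\Gamma)=\chi(\Gamma_0)\chi(\Gamma_1)$ immediately, which is the identity the paper later extracts from a spectral sequence in Lemma~\ref{lem: lowbound}. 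The topological alternative you sketch is unnecessary and, as you say, hides the same difficulty.
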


\begin{defn}
If a group $G$ is of type $FP$, we define $\chi(G)$, the Euler characteristic 
of $G$ to be the Euler characteristic of a $K(G,1)$. This makes sense
since $H^*(G;\mathbb{Z})$ is finitely generated in each dimension, and
is zero for $* > cd(G)$.
\end{defn}

The following proposition recalls well-known facts about subgroups of finite
index.

\begin{prop}
\label{prop: Eulerfinindex}
If $G$ is a torsion-free group, and $H$ is a subgroup of finite 
index. Then 
$G$ is of type $FP$ if and only if $H$ is. Furthermore in this case, 
$$
\chi(H) = |G:H|\chi(G).
$$ 
\end{prop}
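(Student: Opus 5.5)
The plan is to prove the two assertions of Proposition~\ref{prop: Eulerfinindex} separately: first the equivalence of the $FP$ conditions for $G$ and $H$, then the multiplicativity of the Euler characteristic. Throughout, write $n=|G:H|$.

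\textbf{The $FP$ equivalence.} The direction $G \Rightarrow H$ comes from restriction. Take a finite-length resolution $P_\ast \to \mathbb{Z}$ over $\mathbb{Z}G$ by finitely generated projectives. Since $\mathbb{Z}G$ is a free $\mathbb{Z}H$-module of rank $n$ (on coset representatives), each $P_i$ restricts to a finitely generated projective $\mathbb{Z}H$-module. By the earlier remark on restriction, the result is a finite-length resolution of $\mathbb{Z}$ over $\mathbb{Z}H$, so $H$ is of type $FP$. For the direction $H \Rightarrow G$, I would treat finite cohomological dimension and $FP_\infty$ separately:
\begin{itemize}
\item Since $G$ is torsion-free, Theorem~\ref{thm: cdfinindex} gives $cd(G)=cd(H)<\infty$.
\item For $FP_\infty$, I would use Bieri--Brown's criterion: $G$ is $FP_n$ if and only if $H^i(G;-)$ commutes with direct limits for $i<n$ (equivalently, $H_i(G;\prod \mathbb{Z}G)=0$ and so on).
\item By Shapiro's lemma, $H^\ast(G;M)\cong H^\ast(H;\mathrm{Coind}_H^G M)$ when we pass to the subgroup. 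In the other direction, the finite-index condition makes induction and coinduction agree: $\mathrm{Ind}_H^G=\mathrm{Coind}_H^G$.
\item Alternatively and more simply, a finitely dominated $K(H,1)$ can be promoted, up to homotopy, to a finitely dominated $K(G,1)$. This works because $K(G,1)$ is covered $n$-fold by $K(H,1)$.
\end{itemize}
Since the paper allows citing \cite{Brown}, I would quote Brown's result that $FP_\infty$ passes to finite-index overgroups rather than reproduce it.

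\textbf{The Euler characteristic formula.} The cleanest route is to define $\chi$ via a finite projective resolution $P_\ast$ over $\mathbb{Z}G$. With this definition, $\chi(G)=\sum(-1)^i \operatorname{rank}_\mathbb{Z}(\mathbb{Z}\otimes_{\mathbb{Z}G}P_i)$, which equals $\sum(-1)^i \operatorname{rank} H_i(G;\mathbb{Q})$. Restricting the same resolution to $H$ gives $\mathbb{Z}\otimes_{\mathbb{Z}H}P_i \cong \mathbb{Z}[G/H]\otimes_{\mathbb{Z}G}P_i$. For $P_i$ free of rank $r$ this has $\mathbb{Z}$-rank $nr$. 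For $P_i$ merely projective, I would use the Hattori--Stallings rank, or reduce to the free case via Eilenberg's trick by replacing $P_\ast$ with an $FL$-type resolution after adding $F\oplus P$ terms, which keeps the alternating sum unchanged. Either way the alternating sum is multiplied by $n$.

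Topologically, the same conclusion is the statement that an $n$-sheeted covering of a finite complex multiplies $\chi$ by $n$. The main obstacle is the gap between $FP$ and $FL$: a finite $K(G,1)$ may not exist, so the covering argument does not directly apply. I would handle this by computing $\chi$ with rational coefficients, $\chi(G)=\sum (-1)^i\dim_\mathbb{Q} H_i(G;\mathbb{Q})$, where every $\mathbb{Q}G$-projective module of finite type has a well-defined rank compatible with restriction. Should that fail, I would fall back on citing \cite{Brown}.
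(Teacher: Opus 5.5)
\textbf{Gap: the Euler characteristic for non-free projectives.} The problem is the Euler characteristic step when $P$ is finitely generated projective but not free. You need $\operatorname{rank}_{\mathbb{Z}}(\mathbb{Z}\otimes_{\mathbb{Z}H}P)=n\operatorname{rank}_{\mathbb{Z}}(\mathbb{Z}\otimes_{\mathbb{Z}G}P)$, and none of your three remedies gives it.
\begin{itemize}
\item The Eilenberg trick cannot produce a finite free resolution. Adding $Q$ (with $P\oplus Q$ free) in two adjacent degrees keeps the alternating sum, but it only pushes a non-free projective one degree higher. A finite free resolution exists only when the top projective is stably free, which is exactly the $FP$ versus $FL$ issue you flagged yourself.
\item The Hattori--Stallings rank only makes the coefficient $r_P(1)$ visibly multiplicative under restriction. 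The quantity you need is the augmentation $\epsilon(r_P)=\operatorname{rank}_{\mathbb{Z}}(\mathbb{Z}\otimes_{\mathbb{Z}G}P)$. After restriction this becomes $\sum_{[g]}r_P(g)\cdot\#\{\text{cosets of } H \text{ fixed by } g\}$, so the terms with $g\neq 1$ still have to be killed.
\item Your rational claim is false as stated. For $G=\mathbb{Z}/2$, the trivial module $\mathbb{Q}$ is a projective $\mathbb{Q}G$-module whose augmentation rank is $1$ both over $G$ and over the trivial subgroup, not $1$ and $2$. Integrality of $P$ is what makes the statement true.
\end{itemize}

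\textbf{The missing ingredient is Swan's theorem:} for a finite group $Q$, every finitely generated projective $\mathbb{Z}Q$-module becomes free after tensoring with $\mathbb{Q}$. Let $N$ be the normal core of $H$ in $G$ and $Q=G/N$. Then $\bar P=\mathbb{Z}Q\otimes_{\mathbb{Z}G}P$ is a finitely generated projective $\mathbb{Z}Q$-module, so $\mathbb{Q}\otimes\bar P\cong(\mathbb{Q}Q)^r$ for some $r$. Since $N$ acts trivially on both $\mathbb{Z}$ and $\mathbb{Z}[G/H]$:
\begin{itemize}
\item $\mathbb{Z}\otimes_{\mathbb{Z}G}P\cong\mathbb{Z}\otimes_{\mathbb{Z}Q}\bar P$ has rank $r$;
\item $\mathbb{Z}\otimes_{\mathbb{Z}H}P\cong\mathbb{Z}[G/H]\otimes_{\mathbb{Z}Q}\bar P$ has rank $nr$.
\end{itemize}
Summing with signs over a finite projective resolution gives $\chi(H)=n\chi(G)$.

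\textbf{Comparison with the paper and two smaller corrections.} The paper gives no proof; it recalls the proposition as a standard fact from \cite{Brown}. So your fallback of citing Brown is exactly what the paper does, but the argument you sketch does not close without Swan's theorem. Your outline of the $FP$ equivalence is fine at that level: restriction for $G\Rightarrow H$, and Theorem~\ref{thm: cdfinindex} plus Brown's finite-index result for $FP_\infty$ for $H\Rightarrow G$. Two things need fixing there.
\begin{itemize}
\item Shapiro's lemma reads $H^*(H;M)\cong H^*(G;\mathrm{Coind}_H^G M)$ for an $H$-module $M$, not the version you wrote.
\item Your topological alternative is not an argument. A finitely dominated finite cover does not make the base finitely dominated, as $H=1\leq G=\mathbb{Z}/2$ with $K(G,1)=\mathbb{R}P^\infty$ shows. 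Torsion-freeness, via Serre's theorem, has to enter that step as well.
\end{itemize}
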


\subsection{The five lemma}
We state a refined form of the five lemma here as a 
convenient quick reference. The reader can supply the usual
diagram chasing proof if they feel inclined!

\begin{lem}
\label{lem: 5lem}
Given a commutative diagram of (not neccessarily abelian) groups
and homomorphisms:

$$
\begin{CD}
H_1 @>>> H_2 @>>> H_3 @>>> H_4 @>>> H_5 \\
@VV \mu_1 V @VV \mu_2 V @VV \mu_3 V @VV \mu_4 V @VV \mu_5 V \\
G_1 @>>> G_2 @>>> G_3 @>>> G_4 @>>> G_5
\end{CD}
$$
where the rows are exact, the following is true: \\

\noindent
(a) If $\mu_2,\mu_4$ are monomorphisms and $\mu_1$ is an
epimorphism then $\mu_3$ is a monomorphism. \\

\noindent
(b) If $\mu_2,\mu_4$ are epimorphisms and $\mu_5$ is a
monomorphism then $\mu_3$ is an epimorphism. \\

\noindent
(c) If $\mu_1,\mu_2,\mu_4$ and $\mu_5$ are isomorphisms,
then so is $\mu_3$.
\end{lem}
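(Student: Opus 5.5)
We prove the three parts by direct diagram chasing, using only exactness at $H_2,H_3,H_4$ and $G_2,G_3,G_4$ and commutativity of the squares. Since the groups need not be abelian, we never subtract. Instead we use that exactness at a spot means the image of the incoming map equals the kernel of the outgoing map, and that this kernel is a normal subgroup. Write $h_i\colon H_i\to H_{i+1}$ and $g_i\colon G_i\to G_{i+1}$ for the horizontal maps.

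For (a), a homomorphism is injective iff it has trivial kernel, so take $x\in H_3$ with $\mu_3(x)=1$. Then $\mu_4 h_3(x)=g_3\mu_3(x)=1$, and since $\mu_4$ is injective, $h_3(x)=1$. By exactness, $x=h_2(y)$ for some $y\in H_2$. Now $g_2\mu_2(y)=\mu_3(x)=1$, so $\mu_2(y)=g_1(z)$ for some $z\in G_1$. Because $\mu_1$ is onto, $z=\mu_1(w)$, and hence $\mu_2(h_1(w))=g_1\mu_1(w)=\mu_2(y)$. Injectivity of $\mu_2$ gives $y=h_1(w)$, so $x=h_2h_1(w)=1$.

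For (b), take $c\in G_3$. Since $\mu_4$ is onto, $g_3(c)=\mu_4(d)$ for some $d\in H_4$. Then $\mu_5h_4(d)=g_4\mu_4(d)=g_4g_3(c)=1$, and injectivity of $\mu_5$ gives $h_4(d)=1$. So $d=h_3(x)$ for some $x\in H_3$. Consider $c'=\mu_3(x)^{-1}c$. We have $g_3(c')=\mu_4(h_3(x))^{-1}g_3(c)=\mu_4(d)^{-1}\mu_4(d)=1$, so $c'=g_2(e)$ for some $e\in G_2$. Write $e=\mu_2(f)$ using surjectivity of $\mu_2$. Then $c'=\mu_3(h_2(f))$, and therefore $c=\mu_3(x\,h_2(f))$. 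The only point needing care is the order of multiplication when forming $c'$: it must be $\mu_3(x)^{-1}c$ rather than an additive difference, and the order is chosen so that applying $g_3$ cancels correctly. This is the step I expect to be the main obstacle, mild as it is.

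Part (c) follows at once, since isomorphisms satisfy the hypotheses of both (a) and (b).
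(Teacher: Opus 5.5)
Your diagram chase is correct and is exactly the ``usual diagram chasing proof'' that the paper leaves to the reader without writing it out. One small quibble: the order in $c'$ is not actually forced. Since $g_3$ is a homomorphism, $c\,\mu_3(x)^{-1}$ works just as well and gives $c=\mu_3(h_2(f)\,x)$.
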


\subsection{The Lyndon-Hochschild-Serre (LHS) spectral sequence}
For any short exact sequence of groups
$$
1 \rightarrow N \overset{i}{\rightarrow} G \overset{\pi}{\rightarrow}
 Q \rightarrow 1
$$
and $G$-module $M$, we have a $E_2$-spectral sequence abutting to 
$H^*(G;M)$ and whose $E_2$ term is given by
$$
E_2^{p,q} = H^p(Q;H^q(N;M)).
$$
Of course, there is also a similar spectral sequence in homology.
(Here, recall that in general, $H^q(N;M)$ is a nontrivial
$Q$-module where the $Q$ action is induced by the conjugation
action of $G$ on $N$.)

This spectral sequence has a topological origin, for if you have
such an exact sequence of groups, then 
$$
BN \overset{Bi}{\rightarrow} BG \overset{B\pi}{\rightarrow} BQ
$$
is a fibration, and the LHS-spectral sequence is nothing more than the
Serre spectral sequence for this fibration, possibly using twisted coefficients.

\section{Polyfree groups}

\begin{defn}
\label{defn: normalseries}
A normal series for a group $G$ is a sequence of subgroups 
$$
1=G_0 \le G_1 \le \dots \le G_n=G
$$
where each $G_i$ is normal in $G$. $\Gamma_i=G_i/G_{i-1}$ 
is referred to as the $i$th factor for $1 \leq i \leq n$. 
The length of a normal series is 
the number of nontrivial factors.
\end{defn}

\begin{defn}
A polyfree group is a group which has a normal series where all the factors
are finitely generated free groups. Such a normal series will be refered to
as a polyfree series and the rank of the $k$th factor will be called $d_k$,
the $k$th exponent.
\end{defn}

\begin{rem}
Apriori, we do not know whether every polyfree series of a given polyfree group
will have the same length, nor do we know if the exponents may vary with
the polyfree series chosen.
We will return to this point shortly.
\end{rem}

Our interest in polyfree groups stems from the fact that many of the pure
braid groups we have considered up to now, are polyfree.

\begin{thm}
\label{thm: motpoly}
Let $M$ be equal to $\mathbb{R}^2$ or a closed 2-manifold of genus $g \geq 1$. 
Then $PB_k(M-Q_m)$ is polyfree for any $m,k \geq 1$. (Notice that
we do have to puncture the manifold at least once in general). Furthermore
there is a polyfree series of length $k$ where the rank of the $i$th factor
is equal to 
$2g+m-1 -i + k$ for $1 \leq i \leq k$. (These formulas also work for 
$M=\mathbb{R}^2$ if we set $g=\frac{1}{2}$).
\end{thm}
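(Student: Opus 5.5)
The plan is to build the polyfree series from the tower of Fadell--Neuwirth fibrations obtained by forgetting points one at a time. Theorem~\ref{thm: fib} is stated for projection onto the first coordinates. Applying it to $M-Q_m$ (itself a manifold without boundary), the projection
$$
p_j : F(M-Q_m, j) \rightarrow F(M-Q_m, j-1)
$$
that forgets the last point is a fibration with fiber $M - Q_{m+j-1}$.

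First I compute the fundamental groups of the fibers. For $n \geq 1$, the surface $M-Q_n$ deformation retracts onto a bouquet of circles. For $M$ orientable of genus $g$ there are $2g+n-1$ circles, and for $M=\mathbb{R}^2$ there are $n$ circles, which is the same formula with $g=\frac{1}{2}$. In the non-orientable case the count is $g+n-1$ with $g$ the number of cross-caps, so ``genus'' in the formula must be read with the convention that makes the Euler characteristic $2-2g$. Since $m \geq 1$, every fiber $M-Q_{m+j-1}$ with $j \geq 1$ is punctured. Hence its fundamental group is free of rank $2g+m+j-2$.

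Next I pass to short exact sequences of groups. By Theorem~\ref{thm: Kpi1} the base $F(M-Q_m,j-1)$ is a $K(\pi,1)$, so $\pi_2$ of the base vanishes. The fiber is path connected, so $\pi_0$ of the fiber is trivial. The long exact homotopy sequence of $p_j$ therefore gives
$$
1 \rightarrow \pi_1(M-Q_{m+j-1}) \rightarrow PB_j(M-Q_m) \overset{(p_j)_*}{\rightarrow} PB_{j-1}(M-Q_m) \rightarrow 1 .
$$
For $j=1$ the base is a point and this just says $PB_1(M-Q_m)=\pi_1(M-Q_m)$. Basepoints can be chosen compatibly, since all the spaces involved are path connected by the lemma in Section~\ref{sec: conf}.

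Then I define the series. Let $q_i : PB_k(M-Q_m) \rightarrow PB_{k-i}(M-Q_m)$ be induced by forgetting the last $i$ points, and set $G_i = \ker q_i$ for $0 \leq i \leq k$. Here $G_0=1$ and $G_k$ is the whole group, because $PB_0$ is trivial. Each $G_i$ is the kernel of a homomorphism, hence normal, and $G_{i-1} \le G_i$ since $q_i$ factors through $q_{i-1}$.

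To identify the factor $G_i/G_{i-1}$, consider the composite $r = p_{k-i+2}\circ\cdots\circ p_k$, which is the surjection $PB_k(M-Q_m) \rightarrow PB_{k-i+1}(M-Q_m)$ forgetting the last $i-1$ points; it is surjective because each $(p_j)_*$ is. Its kernel is $G_{i-1}$. It maps $G_i$ onto $\ker\bigl(PB_{k-i+1} \rightarrow PB_{k-i}\bigr)$. The image lands in this kernel because $q_i$ factors through $r$. It is all of the kernel because any element of that kernel lifts by surjectivity of $r$, and the lift lies in $G_i$. Hence
$$
G_i/G_{i-1} \cong \pi_1(M-Q_{m+k-i}),
$$
which is free of rank $2g+m+k-i-1$, exactly the claimed exponent.

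The main point needing care is this identification of the successive quotients: it uses the surjectivity of the composite projections together with a small diagram chase. The other place requiring attention is the rank count for non-orientable surfaces and for $\mathbb{R}^2$, which depends on the genus convention. The remark in the statement that the manifold must be punctured corresponds to needing $m \geq 1$ so that the last factor $\pi_1(M-Q_m)$ is free rather than a closed surface group.
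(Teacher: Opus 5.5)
Your proposal is correct and follows the paper's proof. The paper also uses the Fadell--Neuwirth fibration $F(M-Q_m,k)\to F(M-Q_m,k-1)$ with fiber $M-Q_{m+k-1}$, together with the vanishing of $\pi_2$ of the base (Theorem~\ref{thm: Kpi1}), to get $1\to F_{2g+m+k-2}\to PB_k(M-Q_m)\to PB_{k-1}(M-Q_m)\to 1$, and then inducts on $k$ by pulling back the series of $PB_{k-1}(M-Q_m)$; your explicit series $G_i=\ker q_i$ is exactly what that induction produces when unrolled, and your remark that in the non-orientable case the genus must be read via $\chi=2-2g$ makes explicit a point the paper leaves implicit.
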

\begin{proof}
As usual, we will induct on $k$. For $k=1$, $F(M-Q_m,k)=M-Q_m$ is a
$K(F_{2g+m-1},1)$ where $F_{2g + m - 1}$ is a free group of rank $2g+m-1$. 
Thus the theorem
follows easily in this case. So we may assume $k>1$ and that the theorem
is proven for smaller values of $k$. Then by theorem~\ref{thm: fib}, we
have
$$
\pi_{k,k-1} : F(M-Q_m,k) \rightarrow F(M-Q_m,k-1)
$$
is a fibration with fiber $F(M-Q_{m+k-1},1)$. Since these spaces have
trivial $\pi_2$ by theorem~\ref{thm: Kpi1}, we get a short exact sequence
of groups
$$
1 \rightarrow F_{2g + m - 2 + k} \rightarrow PB_k(M-Q_m) \rightarrow
PB_{k-1}(M-Q_m) \rightarrow 1.
$$
By induction $PB_{k-1}(M-Q_m)$ has a polyfree series of length $k-1$
with the claimed exponents and so it follows from the short exact sequence of 
groups above, that
$PB_k(M-Q_m)$ has a polyfree series of length $k$. It is an easy exercise
which will be left to the reader, to check that the factors have the ranks 
claimed.
\end{proof}

\begin{rem}
For $k \geq 2$, by considering the map 
$\pi_{k,1} : F(\mathbb{R}^2,k) \rightarrow 
F(\mathbb{R}^2,1)=\mathbb{R}^2$ we see that 
$$
F(\mathbb{R}^2,k) = \mathbb{R}^2 \times F(\mathbb{R}^2-Q_1,k-1)
$$
so it follows that $PB_k(\mathbb{R}^2)=PB_{k-1}(\mathbb{R}^2-Q_1)$
and so the pure $k$-stranded Artin braid group is polyfree with a polyfree 
series
of length $k-1$ given by the theorem above.
\end{rem}

So now we have a lot of motivation to study the class of polyfree groups.
Let us begin with some elementary results.
First recall the following classical theorem of P. Hall:

\begin{thm}[P. Hall]
\label{thm: Hall}
If 
$$
1 \rightarrow N \rightarrow G \rightarrow Q \rightarrow 1 
$$
is a short exact sequence of groups, with $N$ and $Q$ finitely presented,
then $G$ is finitely presented.
\end{thm}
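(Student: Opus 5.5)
The plan is to build an explicit finite presentation of $G$ from finite presentations of $N$ and $Q$. Fix finite presentations
$$
N = \langle x_1,\dots,x_a \mid r_1,\dots,r_b \rangle, \qquad Q = \langle y_1,\dots,y_c \mid s_1,\dots,s_d \rangle,
$$
write $i : N \rightarrow G$ and $\pi : G \rightarrow Q$ for the maps in the sequence, and choose lifts $\tilde y_j \in G$ with $\pi(\tilde y_j) = y_j$. The generating set for $G$ will be $\{x_i\} \cup \{\tilde y_j\}$, where we identify $x_i$ with $i(x_i)$. These generate $G$: for $g \in G$, write $\pi(g)$ as a word $w$ in the $y_j$. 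Then $g\, w(\tilde y)^{-1}$ lies in $\ker \pi = i(N)$, so it is a word in the $x_i$.

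The relations are of three kinds, and each can be written down because $N$ is normal and is the kernel of $\pi$.

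\begin{itemize}
\item[(i)] The relations $r_k(x)=1$ for $1 \leq k \leq b$.
\item[(ii)] For each relator $s_l$ of $Q$: since $\pi(s_l(\tilde y)) = 1$, the element $s_l(\tilde y)$ lies in $N$. So we choose a word $u_l$ in the $x_i$ and impose $s_l(\tilde y) = u_l(x)$.
\item[(iii)] Since $N$ is normal, we choose words $v_{ij}^{\pm}$ in the $x$'s and impose the conjugation relations $\tilde y_j^{\pm 1} x_i \tilde y_j^{\mp 1} = v_{ij}^{\pm}(x)$.
\end{itemize}

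This is a finite list of relations, and all of them hold in $G$. So there is a surjection $\phi : \tilde G \rightarrow G$, where $\tilde G$ is the group they present. It remains to show $\phi$ is injective.

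For injectivity, let $\tilde N \le \tilde G$ be the subgroup generated by the $x_i$. Relations (iii) make $\tilde N$ normal in $\tilde G$: it is closed under conjugation by the generators and by their inverses. The quotient $\tilde G/\tilde N$ is generated by the images of the $\tilde y_j$. Relations (ii) show that these images satisfy the $s_l$, so there is a surjection $Q \rightarrow \tilde G/\tilde N$. Composing with the map $\tilde G/\tilde N \rightarrow G/N = Q$ induced by $\phi$ gives the identity on generators, hence the identity on $Q$. Therefore $\tilde G/\tilde N \rightarrow Q$ is an isomorphism. On the kernels, relations (i) give a surjection $N \rightarrow \tilde N$, and its composite with $\phi|_{\tilde N} : \tilde N \rightarrow N$ is the identity. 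So $\phi|_{\tilde N}$ is an isomorphism onto $i(N)$.

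We now have a commutative diagram of short exact sequences
$$
\begin{CD}
1 @>>> \tilde N @>>> \tilde G @>>> \tilde G/\tilde N @>>> 1 \\
@. @VVV @VV \phi V @VVV @. \\
1 @>>> N @>>> G @>>> Q @>>> 1
\end{CD}
$$
whose outer vertical maps are isomorphisms. Lemma~\ref{lem: 5lem}(c) then shows that $\phi$ is an isomorphism, so $G$ is finitely presented.

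The main obstacle is the bookkeeping in the injectivity step. One must make sure that normality of $\tilde N$ really follows from the finitely many conjugation relations. This is why the relations in (iii) are imposed for conjugation by both $\tilde y_j$ and $\tilde y_j^{-1}$, as well as the $x$'s. One must also make sure that the maps $N \rightarrow \tilde N$ and $Q \rightarrow \tilde G/\tilde N$ are well defined, which holds because the relators $r_k$ and $s_l$ are killed by relations (i) and (ii) respectively.
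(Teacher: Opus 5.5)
Your proof is correct and complete. The surjection $\phi$, the normality of $\tilde N$ (which is why you need the conjugation relations for both $\tilde y_j$ and $\tilde y_j^{-1}$), the identifications $\tilde G/\tilde N \cong Q$ and $\tilde N \cong N$, and the final application of Lemma~\ref{lem: 5lem}(c) to the two short exact sequences (each read as a five-term row with trivial end groups) all go through. The paper gives no proof of its own here and only cites Robinson; your direct construction of a finite presentation from those of $N$ and $Q$ is the standard argument found there.
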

\begin{proof}
See e.g.~\cite{Robin}
\end{proof}

\begin{prop}
\label{prop: baspoly}
Suppose $G$ is a polyfree group with a polyfree series of length $n$.
Then: \\
(a) $cd(G) \leq n$ and in particular $G$ is torsion-free. \\
(b) $G$ is of type $FL$. \\
(c) $G$ is finitely presented. \\
(d) If the exponents for the polyfree series satisfy $d_k \geq 2$ for
all $1 \leq k \leq n$ then $G$ has trivial center.
\end{prop}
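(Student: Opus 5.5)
All four parts go by induction on the length $n$ of the polyfree series $1=G_0 \le G_1 \le \dots \le G_n=G$. If some factor is trivial we simply delete that term, so we may assume every factor is a nontrivial finitely generated free group. The base case $n=0$ is the trivial group, and $n=1$ is a finitely generated free group $F_{d_1}$, for which everything is known. For the inductive step the key observation is the short exact sequence
$$
1 \rightarrow G_{n-1} \rightarrow G \rightarrow \Gamma_n \rightarrow 1 .
$$
Here $G_{n-1}$ is polyfree with the series $G_0 \le \dots \le G_{n-1}$ of length $n-1$ (each $G_i$ is normal in $G$, hence in $G_{n-1}$), and $\Gamma_n$ is free of rank $d_n$. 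It is also useful to have the sequence $1 \to G_1 \to G \to G/G_1 \to 1$, where $G/G_1$ is polyfree of length $n-1$ via the images $G_i/G_1$, which are normal with the same factors. Part (d) uses this second sequence.

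For (a), I would bound cohomological dimension by the LHS spectral sequence: $E_2^{p,q}=H^p(\Gamma_n;H^q(G_{n-1};M))$ vanishes unless $p\le 1$ and $q\le n-1$, so $H^k(G;M)=0$ for $k>n$. Torsion-freeness then follows from the earlier proposition that finite $cd$ implies torsion-free. For (b), Proposition~\ref{prop: seqFP} applied to the extension, with $G_{n-1}$ of type $FL$ by induction and $\Gamma_n$ of type $FL$ via a bouquet of circles, gives $FL$. For (c), Theorem~\ref{thm: Hall} applies, since both $G_{n-1}$ (by induction) and $\Gamma_n$ are finitely presented.

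Part (d) is the main point. Let $z\in Z(G)$; I will show $z=1$ by induction using $N=G_1$, which is free of rank $d_1\ge 2$, and $Q=G/G_1$. The image of $z$ in $Q$ is central. By induction $Q$ has trivial center, since its exponents are $d_2,\dots,d_n\ge 2$ (for $n=1$, $Q$ is trivial). Hence $z\in G_1$. But $z$ commutes with all of $G_1$, so $z\in Z(G_1)$, and a free group of rank $\ge 2$ has trivial center. I would justify that last fact quickly: in a free group the centralizer of a nontrivial element is cyclic, and $F_d$ for $d\ge 2$ is not cyclic. So $z=1$.

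The only subtle point is bookkeeping: checking that the quotient and subgroup series really are polyfree series with the stated exponents and length, including when trivial factors are removed. In (d) no factor is trivial, since all $d_k\ge 2$.
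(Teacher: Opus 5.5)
Your proof is correct and follows the paper's argument for (a)--(c): the LHS spectral sequence, Proposition~\ref{prop: seqFP}, and Theorem~\ref{thm: Hall}, each applied to $1 \to G_{n-1} \to G \to \Gamma_n \to 1$. For (d) the paper inducts on $G_{n-1}$ with free quotient $F_{d_n}$, while you peel off $G_1$ and induct on $G/G_1$; this is only a cosmetic difference, since both rest on the fact that an extension of a centerless group by a centerless group is centerless.
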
 
\begin{proof}
We will induct on $n$, the length of the polyfree series. If $n=1$, then
$G$ is free of rank $d_1$. The theorem then follows readily once we note
that a free group of rank $d_1 \geq 2$ has trivial center. (Since for
example it is a nontrivial free product.)

So we can assume $n>1$ and that we have proved the proposition for all smaller
$n$. Let $1=G_0 \le G_1 \le \dots \le G_n=G$ be the given polyfree series.
Notice we have a short exact sequence
$$
1 \rightarrow G_{n-1} \rightarrow G \overset{\pi}{\rightarrow} 
F_{d_n} \rightarrow 1
$$
and that $G_{n-1}$ is polyfree with a series of length $n-1$.
Thus by induction $G_{n-1}$ is $FL$ and finitely presented and hence
so is $G$ by theorem~\ref{thm: Hall} and proposition~\ref{prop: seqFP}.
So we have proven (b) and (c). 

For (d), if we assume $d_k \geq 2$ for all
$k$ then by induction $G_{n-1}$ has trivial center and of course so
does $F_{d_n}$. Now if $c \in G$ is a central element, then it is easy
to see that $\pi(c)$ will be central in $F_{d_n}$ and hence trivial.
Thus $c \in G_{n-1}$ and so $c$ is a central element in $G_{n-1}$ which
finally lets us conclude $c=1$. Thus we have proven (d).

So it remains only to prove (a). By induction, $cd(G_{n-1}) \leq n-1$.
Now suppose $M$ is any $G$-module. Then we can apply the LHS-spectral
sequence to the short exact sequence above. 
Now $E_2^{p,q} = H^p(F_{d_n};H^q(G_{n-1};M))$ is zero if either $p > 1$
or $q >n-1$, as $cd(G_{n-1}) \leq n-1$. Thus we see there can be
no nontrivial differentials in this spectral sequence and so $E_2=E_\infty$.

However this spectral sequence converges to $H^*(G;M)$ and so we can conclude
easily that $H^*(G;M)=0$ for $* > n$. Since this holds for any $G$-module
$M$, we can conclude $cd(G) \leq n$.
\end{proof}

\begin{rem}
Given a polyfree group $G$ with a polyfree series of length $n$, 
proposition~\ref{prop: baspoly} guarantees the existance of a resolution of
$\mathbb{Z}$ over $\mathbb{Z}G$ of length less than or equal to $n$,
using finitely generated, free $\mathbb{Z}G$-modules.

A nice resolution of this sort 
was constructed in ~\cite{Dan} using Fox free derivatives
in the case where the polyfree group satisfies an extra splitting condition
which we will consider later. The naturality of this resolution was also
exploited to recover many interesting representations which we will
also look at later.
\end{rem}

Next we consider subgroups of finite index in a polyfree group.

\begin{prop}
\label{prop: polyfin}
Let $G$ be a polyfree group with a polyfree series of length $n$ and
let $H$ be a subgroup of finite index in $G$. Then $H$ is polyfree
with a polyfree series of length $n$.
\end{prop}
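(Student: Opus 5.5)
The plan is to intersect the given polyfree series of $G$ with $H$. Let $1=G_0 \le G_1 \le \dots \le G_n=G$ be a polyfree series for $G$ with exponents $d_1,\dots,d_n$. Set $H_i = H \cap G_i$. Each $G_i$ is normal in $G$, so each $H_i$ is normal in $H$. This gives a chain
$$
1=H_0 \le H_1 \le \dots \le H_n = H
$$
which satisfies the normality requirement of Definition~\ref{defn: normalseries} for $H$.

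Next I identify the factors. The inclusion $H_i \to G_i$ induces a map $H_i/H_{i-1} \to G_i/G_{i-1}$. Its kernel is $(H\cap G_i)\cap G_{i-1}/H_{i-1}$, which is trivial, so the map is injective. Its image is $H_iG_{i-1}/G_{i-1}$. Hence $H_i/H_{i-1}$ is isomorphic to a subgroup of the free group $F_{d_i}=G_i/G_{i-1}$. By the Nielsen--Schreier theorem, every such subgroup is free.

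Then I show finite generation by bounding the index. We have $[G_i : H\cap G_i] \le [G:H] < \infty$, since the cosets $g(H\cap G_i)$ with $g \in G_i$ inject into the cosets $gH$. It follows that the image $H_iG_{i-1}/G_{i-1}$ has finite index in $G_i/G_{i-1}$, at most $[G:H]$. The Schreier index formula then shows that an index $j$ subgroup of $F_{d}$ is free of rank $j(d-1)+1$, hence finitely generated. So every factor $H_i/H_{i-1}$ is a finitely generated free group, and $H$ is polyfree.

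Finally, the length must be checked, since length counts only nontrivial factors. If $d_i \ge 1$, then $F_{d_i}$ is infinite, so a finite index subgroup of it is nontrivial. If $d_i = 0$, the corresponding factor of $H$ is trivial as well. Therefore the series $\{H_i\}$ has exactly as many nontrivial factors as the original series, namely length $n$.

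I expect no serious obstacle. The only points needing care are the index comparison $[G_i : H\cap G_i] \le [G:H]$ and the observation that taking intersections preserves normality in $H$ rather than merely subnormality. The real input is the classical Nielsen--Schreier theorem together with Schreier's rank formula, which I will quote.
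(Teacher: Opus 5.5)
Your proposal is correct and follows essentially the same route as the paper: intersect the polyfree series with $H$, note that $H_i/H_{i-1}$ embeds as a finite-index subgroup of the free group $G_i/G_{i-1}$ (because $G_i/H_i$ injects into $G/H$), and conclude by Nielsen--Schreier that each factor is a nontrivial free group of finite rank. The paper nominally frames this as an induction on $n$ but never uses the inductive hypothesis, so your direct argument is the same proof, with slightly more care about the index bound and the length count.
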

\begin{proof}
We proceed by induction on $n$. If $n=1$ then $G$ is a nontrivial free group
of finite rank. It follows then from the Nielsen-Schreier theorem 
(see \cite{Robin}) that
$H$ is also free of finite rank and so this case follows.

So we may assume $n > 1$ and that we have proved the proposition for smaller
values. Let $1 = G_0 < G_1 < \dots < G_n =G$ be the polyfree series for $G$.
Then if we set $H_i=G_i \cap H$ for $0 \leq i \leq n$, it is easy to check
$1=H_0 < H_1 < \dots <H_n=H$ is a normal series for $H$. Furthermore,
if $H_i/H_{i-1} \rightarrow G_i/G_{i-1}$ is the map induced by inclusion
of $H$ in $G$, it is easy to check this map is a well defined monomorphism
(injection) of groups. On the other hand the image of this map has finite index
in $G_i/G_{i-1}$ as $G_i/H_i$ injects (as sets) into the finite set $G/H$ 
via the map induced by inclusion. Thus we see each $H_i/H_{i-1}$ can be
viewed as a subgroup of finite index in the nontrivial finitely generated 
free group
$G_i/G_{i-1}$ and hence is itself a nontrivial free group of finite rank,
again by the Nielsen-Schreier theorem. 
Thus $1=H_0 < H_1 < \dots <H_n=H$ is a polyfree series for
$H$ of length $n$.
\end{proof}

\begin{rem}
An arbitrary subgroup of a free group of finite rank need not have finite rank
and so we see that arbitrary subgroups of a polyfree group need not be
polyfree.
\end{rem}

Now let us show that for any given polyfree group $G$, all polyfree series
for $G$ have the same length. First we will need a technical lemma.
Let $\mathbb{F}_2$ be the field with two elements.

\begin{lem}
\label{lem: lowbound}
Let $G$ be a polyfree group with a polyfree series 
$1=G_0 < G_1 <\dots<G_n=G$ of length $n$.
Then there exists a subgroup $T$ of finite index such that 
$H^n(T;\mathbb{F}_2) \neq 0$. Furthermore, $T$ contains $G_1$ and
$\chi(T) = \chi(G_1)\chi(T/G_1)$. Thus $\chi(G)=\chi(G_1)\chi(G/G_1)$.
\end{lem}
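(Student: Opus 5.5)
The plan is to induct on the length $n$ of the polyfree series and to locate the nonvanishing class in the corner of a Lyndon--Hochschild--Serre spectral sequence. For $n=1$, $G=G_1$ is free of rank $d_1\geq 1$. We take $T=G$, so $H^1(T;\mathbb{F}_2)=\mathbb{F}_2^{d_1}\neq 0$, and the Euler characteristic formula is trivial.

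For $n>1$, put $Q=G/G_1$. The images $G_i/G_1$ form a polyfree series of $Q$ of length $n-1$, since each $G_i/G_1$ is normal in $Q$ and the factors are unchanged. Let $M=H^1(G_1;\mathbb{F}_2)\cong \mathbb{F}_2^{d_1}$. Conjugation gives an action of $G$ on $M$, and $G_1$ acts trivially (inner automorphisms act trivially on group cohomology), so this is an action of $Q$. Since $GL(M)$ is finite, the kernel $K$ of $Q\to GL(M)$ has finite index in $Q$. By Proposition~\ref{prop: polyfin}, $K$ is polyfree with a series of length $n-1$. By induction there is a finite index subgroup $T'\le K$ with $H^{n-1}(T';\mathbb{F}_2)\neq 0$. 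Let $T$ be the preimage of $T'$ in $G$. Then $T$ has finite index in $G$, contains $G_1$, and sits in an extension
$$1\rightarrow G_1\rightarrow T\rightarrow T'\rightarrow 1,$$
with $T'$ acting trivially on $M$.

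Now apply the LHS spectral sequence with $\mathbb{F}_2$ coefficients. By Proposition~\ref{prop: baspoly}(a), $cd(T')\leq n-1$, and $cd(G_1)\leq 1$. Hence $E_2^{p,q}=0$ unless $p\leq n-1$ and $q\leq 1$. The entry $E_2^{n-1,1}$ is therefore a corner: the differentials into it come from $E_r^{n-1-r,r}$ with $r\geq 2$, which vanish, and those out of it land in $E_r^{n-1+r,2-r}$ with $p>n-1$, which also vanish. It follows that
$$H^n(T;\mathbb{F}_2)\cong H^{n-1}(T';M)\cong H^{n-1}(T';\mathbb{F}_2)^{d_1}\neq 0,$$
where the middle isomorphism uses the triviality of the action. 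This trivialization is the main point of the argument: without it, twisted coefficients could kill the top cohomology. That is exactly why we pass to the kernel $K$ before invoking the induction.

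For the Euler characteristic, $T$ and $T'$ are of type $FL$ by Proposition~\ref{prop: baspoly}, so we may compute $\chi$ using $\mathbb{F}_2$-Betti numbers, which agree with the rational ones for a finite complex. Euler characteristics are preserved through the pages of a spectral sequence of finite-dimensional vector spaces, so
$$\chi(T)=\sum_{p,q}(-1)^{p+q}\dim E_2^{p,q}.$$
For any finite-dimensional $\mathbb{F}_2[T']$-module $N$, computing with a finite free resolution of $\mathbb{Z}$ over $\mathbb{Z}T'$ gives $\sum_p(-1)^p\dim H^p(T';N)=\chi(T')\dim N$, because $\mathrm{Hom}(P_i,N)$ has dimension $\mathrm{rank}(P_i)\dim N$. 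Summing over $q=0,1$ yields
$$\chi(T)=\chi(T')\bigl(\dim H^0(G_1;\mathbb{F}_2)-\dim H^1(G_1;\mathbb{F}_2)\bigr)=\chi(G_1)\chi(T/G_1).$$
The same computation applied directly to $1\rightarrow G_1\rightarrow G\rightarrow G/G_1\rightarrow 1$ gives $\chi(G)=\chi(G_1)\chi(G/G_1)$. Alternatively, this follows from the formula for $T$ via Proposition~\ref{prop: Eulerfinindex}, using $|G:T|=|G/G_1:T/G_1|$.
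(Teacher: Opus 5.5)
Your proposal is correct and follows the paper's proof step for step. You pass to the finite-index kernel of the action of $G/G_1$ on $H^1(G_1;\mathbb{F}_2)$, apply induction there, read off $H^n(T;\mathbb{F}_2)$ from the corner $E_2^{n-1,1}$ of the LHS spectral sequence, and compare the Euler characteristics of $E_2$ and $E_\infty$. Your extra observation that $\sum_p(-1)^p\dim H^p(T';N)=\chi(T')\dim N$ also holds for twisted $N$ gives $\chi(G)=\chi(G_1)\chi(G/G_1)$ directly, bypassing Proposition~\ref{prop: Eulerfinindex}. One small correction: only the alternating sums agree, not the individual $\mathbb{F}_2$- and rational Betti numbers as you state.
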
 
\begin{proof}
As usual we will proceed by induction on $n$. If $n=1$, $G$ is a nontrivial 
free group of finite
rank. Thus we can take $T=G=G_1$ as $H^1(G;\mathbb{F}_2)=Hom(G;\mathbb{F}_2)$
is nonzero. (The statement about Euler characteristics follows as
$\chi(1)=1$.) 

So we can assume $n>1$ and that we have proved the lemma for smaller $n$.
Then notice that $G/G_1$
has a polyfree series of length $n-1$.

Now $G$ acts by conjugation on $G_1$ and hence on the finite vector space
$$
H^1(G_1;\mathbb{F}_2)=Hom(G_1,\mathbb{F}_2).
$$ 
If we let $K$ be the kernel of this action on
$H^1(G_1;\mathbb{F}_2)$, then $K$ is normal and of finite index in $G$
and furthermore $G_1 \subseteq K$.

Now $K/G_1 \subseteq G/G_1$ is a subgroup of finite index and so by 
proposition~\ref{prop: polyfin}, $K/G_1$ is polyfree with
a polyfree series of length $n-1$. So, by hypothesis, we may find $T/G_1$ with 
$H^{n-1}(T/G_1;\mathbb{F}_2) \neq 0$ and $T$ a subgroup of $K$ of finite index.
However $T/G_1 \subseteq G/G_1$ and so $cd(T/G_1) \leq n-1$ by 
proposition~\ref{prop: baspoly}. So if we look at the LHS-spectral sequence
for the short exact sequence
$$
1 \rightarrow G_1 \rightarrow T \rightarrow T/G_1 \rightarrow 1
$$
we see that $E_2^{p,q}=H^p(T/G_1;H^q(G_1;\mathbb{F}_2))$ is zero if
$p>n-1$ or $q>1$ and $E_2^{n-1,1}$ is nonzero. (Here we use that 
$T \subseteq K$ so that 
$E_2^{*,*}=H^*(T/G_1;\mathbb{F}_2) \otimes H^*(G_1;\mathbb{F}_2)$.)
It is easy to see that
we must have $E_2^{n-1,1}=E_{\infty}^{n-1,1}$ and so we can conclude
$H^n(T;\mathbb{F}_2)$ is nonzero.
$T$ is obviously of finite index in $G$ so it only remains to prove the
statement about $\chi(T)$.

In the spectral sequence above, the $E_2$ term has Euler characteristic
$$
\chi(G_1)\chi(T/G_1).
$$
On the other hand the $E_{\infty}$ term abuts
to $H^*(T;\mathbb{F}_2)$ which has Euler characteristic
$\chi(T)$ and so the equality $\chi(T)=\chi(G_1)\chi(T/G_1)$ follows
from the general fact that the Euler characteristic remains constant
on each page of the LHS-spectral sequence. Thus our induction on $n$ goes
through. 

Finally the equality $\chi(G)=\chi(G_1)\chi(G/G_1)$ follows from
the previous one which had $T$ instead of $G$,
 theorem~\ref{prop: Eulerfinindex}, and the fact that
the index of $T$ in $G$ is the same as the index of $T/G_1$ in $G/G_1$.

\end{proof}

\begin{prop}
Let $G$ be a polyfree group with a polyfree series of length $n$
and exponents $d_k$, $1 \leq k \leq n$.
Then $cd(G)=n$ and so every polyfree series of $G$ has length $n$.
Furthermore $\chi(G)=\prod_{k=1}^n(1-d_k)$.
\end{prop}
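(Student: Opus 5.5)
We combine the upper bound from Proposition~\ref{prop: baspoly}(a) with the lower bound from Lemma~\ref{lem: lowbound}. Then we compute $\chi(G)$ by induction on $n$, using the multiplicativity formula at the end of that lemma.

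\emph{Step 1: $cd(G)=n$.} Proposition~\ref{prop: baspoly}(a) gives $cd(G)\le n$. For the reverse inequality, Lemma~\ref{lem: lowbound} produces a finite index subgroup $T\le G$ with $H^n(T;\mathbb{F}_2)\neq 0$, so $cd(T)\ge n$. Since $G$ is torsion-free (again by Proposition~\ref{prop: baspoly}(a)), Theorem~\ref{thm: cdfinindex} gives $cd(G)=cd(T)\ge n$. One could also just use $cd(T)\le cd(G)$ for subgroups, which avoids needing finite index. Hence $cd(G)=n$.

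\emph{Step 2: the length is an invariant.} Suppose $G$ has another polyfree series, of length $m$. Applying Step 1 to that series gives $cd(G)=m$, so $m=n$. Note that a ``polyfree series'' here has all factors nontrivial free groups of finite rank, so the length is exactly the number of terms. This is the notion used in Lemma~\ref{lem: lowbound}, where the strict inclusions $G_0<G_1<\cdots$ ensure $H^1(G_1;\mathbb{F}_2)\neq 0$.

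\emph{Step 3: Euler characteristic.} We induct on $n$. For $n=1$, $G=F_{d_1}$ is realized by a bouquet of $d_1$ circles, so $\chi(G)=1-d_1$. For $n>1$, Lemma~\ref{lem: lowbound} gives $\chi(G)=\chi(G_1)\chi(G/G_1)$, with $\chi(G_1)=1-d_1$. The quotient $G/G_1$ has the polyfree series $G_i/G_1$, $1\le i\le n$, of length $n-1$. Its factors are $(G_i/G_1)/(G_{i-1}/G_1)\cong G_i/G_{i-1}$, so its exponents are $d_2,\dots,d_n$. By induction, $\chi(G/G_1)=\prod_{k=2}^n(1-d_k)$, and multiplying gives the product formula.

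The only delicate point is the reindexing in Step 3: one must check that $\{G_i/G_1\}$ is a genuine normal series of $G/G_1$, i.e.\ that each $G_i/G_1$ is normal in $G/G_1$. This follows from normality of each $G_i$ in $G$ and the third isomorphism theorem. All the substantial work is already contained in Lemma~\ref{lem: lowbound}, so the proposition is essentially a bookkeeping corollary of it.
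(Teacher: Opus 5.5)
Your proof is correct and follows the paper's argument essentially step for step. You get the upper bound $cd(G)\le n$ from Proposition~\ref{prop: baspoly}(a), the lower bound from the finite-index subgroup $T$ of Lemma~\ref{lem: lowbound}, and $\chi(G)$ by induction on $n$ via $\chi(G)=\chi(G_1)\chi(G/G_1)$, with the quotient series $G_i/G_1$ having exponents $d_2,\dots,d_n$. Your remark that monotonicity $cd(T)\le cd(G)$ already gives $n\le cd(G)$, making Theorem~\ref{thm: cdfinindex} unnecessary, is a valid minor simplification.
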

\begin{proof}
By proposition~\ref{prop: baspoly}, we have $cd(G) \leq n$.
On the other hand, by lemma~\ref{lem: lowbound}, we have a subgroup $T$ of
$G$ of finite index, such that $cd(T) \geq n$. Thus the first
part follows as $cd(G)=cd(T)$ by proposition~\ref{thm: cdfinindex}.

Now let us prove the formula for $\chi(G)$ by induction on $n$. For $n=1$,
it follows as a bouquet of $d_1$ circles is a $K(G,1)$. So as usual we
can assume $n>1$ and that we have the formula for smaller $n$.

 Again by lemma~\ref{lem: lowbound}, we have
$\chi(G) = \chi(G_1)\chi(G/G_1) = (1-d_1)\chi(G/G_1)$.
The formula now follows readily once we observe that $G/G_1$ has a polyfree
series of length $n-1$ with exponents $d_2,\dots,d_n$ and so by induction,
$$
\chi(G/G_1)=\prod_{k=2}^n(1-d_k).
$$
\end{proof}
 
\begin{rem}
So from what we have seen, all polyfree series for a given polyfree
group $G$, have the same length which is equal to $cd(G)$. However
the exponents of different polyfree series need not be the same.

For example we saw that $PB_4(\mathbb{R}^2)=PB_3(\mathbb{R}^2 - Q_1)$
is polyfree with exponents $(3,2,1)$ by theorem~\ref{thm: motpoly}.
However it is known that this group also admits a polyfree series with
exponents $(5,2,1)$ (see \cite{Dan}).

However $\prod_{k=1}^n(1-d_k)$ is independent of the polyfree series chosen
as it is equal to $\chi(G)$ as we have seen. 
\end{rem}

\section{Configuration spaces for the 2-sphere}
	We have seen in theorem ~\ref{thm: Kpi1} 
that for any closed 2-dimensional manifold $M$ besides
the sphere $S^2$ and the projective plane $\mathbb{R}P^2$, $F(M,k)$
is a $K(PB_k(M),1)$. Thus the configuration space is a good model
for the pure braid group of $M$.
	Now let us see what we can say in the case where $M$ is $S^2$ or
$\mathbb{R}P^2$.  
 	Let us first look at $S^2$. First note that $SO(3)$ acts naturally
on $\mathbb{R}^3$ and this action preserves the unit sphere $S^2$. 
Hence $SO(3)$ acts naturally on $F(S^2,k)$ for any $k$ via a diagonal action.
We will describe these configuration spaces now up to 
homotopy equivalence (which will be denoted by $\simeq$).
In fact we will find homotopy equivalences which preserve the $SO(3)$ actions.

\begin{prop}
\label{pro: configsphere}
$$
F(S^2,k) \simeq
\begin{cases}
S^2 &\text{ if } k=1,2 \\
SO(3) &\text{ if } k=3 \\
SO(3) \times F(S^2 - Q_3, k-3) &\text{ if } k>3.
\end{cases}
$$
Furthermore, there are homotopy equivalences which are equivariant with
respect to the natural $SO(3)$ actions on $F(S^2,k)$ and the action of
$SO(3)$ on itself by left multiplication. (Here $SO(3)$
acts on $SO(3) \times F(S^2 - Q_3, k-3)$ by acting entirely on the left
factor via left multiplication.) 
\end{prop}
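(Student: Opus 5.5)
The plan is to go case by case, building explicit $SO(3)$-equivariant maps.

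\emph{Case $k=1$.} Here $F(S^2,1)=S^2$, so there is nothing to prove.

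\emph{Case $k=2$.} Use the map $F(S^2,2)\to S^2$, $(x_1,x_2)\mapsto x_1$. It is equivariant, and by Theorem~\ref{thm: fib} it is a fibration with fiber $S^2-Q_1\cong\RR^2$, which is contractible. To get an explicit equivariant homotopy inverse, send $x$ to $(x,-x)$. The composite on $F(S^2,2)$ is equivariantly homotopic to the identity via the straight-line homotopy that moves $x_2$ to $-x_1$ along the great circle arc avoiding $x_1$. Concretely, take the normalized $(1-t)x_2+t(-x_1)$. This vector is never zero, since $x_2\neq x_1$, and it never equals $x_1$. It commutes with rotations because it is defined by linear operations and the norm.

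\emph{Case $k=3$.} Define an equivariant map $\phi: SO(3)\to F(S^2,3)$ by $\phi(A)=(Ae_1,Ae_2,-Ae_1)$, using a fixed standard configuration. Define $\psi: F(S^2,3)\to SO(3)$ by Gram--Schmidt: from $(x_1,x_2,x_3)$ form the orthonormal frame $u_1=x_1$, $u_2=$ the normalized component of $x_2-x_3$ orthogonal to $x_1$, and $u_3=u_1\times u_2$.

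The main obstacle is choosing a formula for $u_2$ that never degenerates. The orthogonal component of $x_2-x_3$ can vanish, and I would address this first. An alternative is to use a Möbius-type normalization. Since $PSL(2,\CC)$ acts simply transitively on ordered triples, $F(S^2,3)\cong PSL(2,\CC)$. The Iwasawa/polar decomposition $PSL(2,\CC)\simeq SO(3)$ is equivariant for the $SO(3)=PSU(2)$ action by left multiplication, and it deformation retracts $PSL(2,\CC)$ onto $PSU(2)$. So I would use this route: $F(S^2,3)\cong PSL(2,\CC)=PSU(2)\cdot B$ with $B$ contractible, which gives an equivariant homotopy equivalence with $SO(3)$ acting by left multiplication.

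\emph{Case $k>3$.} Use the same identification. The map $F(S^2,k)\to F(S^2,3)\times F(S^2-Q_3,k-3)$ given by
$$(x_1,\dots,x_k)\mapsto \big((x_1,x_2,x_3),\ g^{-1}(x_4,\dots,x_k)\big)$$
is a homeomorphism. Here $g\in PSL(2,\CC)$ is the unique Möbius transformation carrying a fixed triple $Q_3$ to $(x_1,x_2,x_3)$. The inverse is $(g,y)\mapsto (gQ_3, gy)$. Both maps are continuous because $g$ depends continuously on the triple.

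The homeomorphism $F(S^2,k)\cong PSL(2,\CC)\times F(S^2-Q_3,k-3)$ is equivariant when the group acts on the left factor by left multiplication, because $h\cdot(gQ_3,gy)=(hgQ_3,hgy)$. Composing with the equivariant retraction $PSL(2,\CC)\simeq SO(3)$ from the case $k=3$ gives the claimed equivariant equivalence. The only things left to check are that the polar-decomposition retraction is left $SU(2)$-equivariant, which it is (for $g=UP$ one has $hg=(hU)P$), and that it descends to the projectivizations.
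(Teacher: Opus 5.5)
Your proof is correct, but it takes a genuinely different route from the paper's.

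\textbf{The paper's route.} The paper never constructs a map out of $F(S^2,k)$. It uses the orbit maps $\lambda(\alpha)=(\alpha p_1,\alpha p_2,\alpha p_3)$ and $\lambda(\alpha,a)=(\alpha p_1,\alpha p_2,\alpha p_3,\alpha a_4,\dots,\alpha a_k)$, which are equivariant by construction. It shows these are homotopy equivalences as follows:
\begin{enumerate}
\item it maps $SO(2)\to SO(3)\to S^2$ (respectively the projection $SO(3)\times F(S^2-Q_3,k-3)\to SO(3)$) into the Fadell--Neuwirth fibration $\pi_{3,1}$ (respectively $\pi_{k,3}$);
\item it checks by hand that the loop of rotations about $p_1$ generates $\pi_1(S^2-\{p_1,p_2\})$;
\item it concludes with the five lemma and Whitehead's theorem.
\end{enumerate}
For $k=2$ the paper only notes that the fiber $\RR^2$ is contractible.

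\textbf{Your route.} You exploit $S^2=\CC P^1$. Simple transitivity of $PSL(2,\CC)$ on ordered triples gives an equivariant homeomorphism $F(S^2,k)\cong PSL(2,\CC)\times F(S^2-Q_3,k-3)$. The polar decomposition then gives an equivariant strong deformation retraction of $PSL(2,\CC)$ onto $PSU(2)=SO(3)$.

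\textbf{What your route buys.} The homotopy inverse of your map, $(A,y)\mapsto(AQ_3,Ay)$, is exactly the paper's $\lambda$. So you prove more: the image of $\lambda$ is an $SO(3)$-equivariant strong deformation retract of $F(S^2,k)$, with equivariant inverse and equivariant homotopies, not merely an equivariant map that is a weak equivalence. You also show that $\pi_{k,3}$ is a globally trivial bundle, and no homotopy-group comparison is needed.

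\textbf{What it costs.} It relies on structure special to the $2$-sphere, and you should state both facts explicitly:
\begin{enumerate}
\item stereographic projection intertwines rotations with the M\"obius action of $PSU(2)$;
\item the M\"obius map carrying $Q_3$ to $(x_1,x_2,x_3)$ depends continuously on the triple (via the cross-ratio, or via local lifts to $\CC^2$).
\end{enumerate}
The paper's argument, by contrast, uses only Theorem~\ref{thm: fib} and elementary facts about $SO(3)$. That is the same fibration-comparison technique the notes reuse for the Borel constructions for $S^2$ and $\RR P^2$.

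\textbf{Other points.} Your $k=2$ argument, with the explicit inverse $x\mapsto(x,-x)$ and the normalized linear homotopy, is also fine. You were right to discard the Gram--Schmidt map. The component of $x_2-x_3$ orthogonal to $x_1$ really does vanish, for example when $x_3$ is the reflection of $x_2$ in the plane $x_1^{\perp}$. Drop that sketch from the written proof.
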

\begin{proof}
The $k=1$ case is trivial so let us look at the $k=2$ case first. 
By theorem~\ref{thm: fib}, 
$\pi: F(S^2,2) \rightarrow F(S^2,1)=S^2$ is a fibration with
contractible fiber $F(S^2 - Q_1, 1)=R^2$. Thus $\pi$ is a homotopy
equivalence and it is trivial to see it is equivariant with respect to the
natural actions of $SO(3)$ on $F(S^2,2)$ and $S^2$.

Now let us look at the case where $k=3$.
Again by theorem~\ref{thm: fib}, there is a fibration
$$
\pi: F(S^2,3) \rightarrow S^2
$$ 
obtained by projecting onto the first factor.

Fix a point $p_1 \in S^2$. Let $F=\pi^{-1}(p_1)$, the fiber over the point
$p_1$. 
Then again by theorem~\ref{thm: fib},
since $F=F(S^2 - Q_1,2)$ there is a fibration 
$$
\hat{\pi} : F \rightarrow (S^2-Q_1)=
\mathbb{R}^2
$$ 
with fiber $S^2-Q_2$, obtained by projecting onto a factor. 

Since
the base space of the fibration $\hat{\pi}$ is contractible, we conclude that
$$
F = \mathbb{R}^2 \times (S^2 - Q_2) \simeq S^1.
$$

Now we will define a map $\lambda : SO(3) \rightarrow F(S^2,3)$. Fix
$\bar{p}=(p_1,p_2,p_3) \in F(S^2,3)$. Specifically, we will take
$p_1=(0,0,1), p_2=-p_1$ and $p_3=(1,0,0)$. Then we can set
$$
\lambda(\alpha) = (\alpha(p_1),\alpha(p_2),\alpha(p_3))
$$ 
for all $\alpha \in
SO(3)$. 

It is easy to see that $\lambda$ is a $SO(3)$-equivariant, continuous
map from $SO(3)$ to $F(S^2,3)$. We now want to show it is a homotopy
equivalence. Let $I$ denote the isotropy group of the point $p_1 \in S^2$
under the $SO(3)$ action. Then $I$ is isomorphic to $SO(2)$ as a topological
group and $\lambda|_I$ maps $I$ into $F$. 

\begin{claim}
The map $\lambda|_I : I \rightarrow F$ is a homotopy equivalence.
\end{claim}
\begin{proof}
The action of $I$ on $S^2$ is given
by rotation about the $z$-axis fixing $p_1$ which can be thought of as
the north pole of $S^2$. Notice that since we chose
$p_2=-p_1$, any element of $I$ will also fix $p_2$ (which is the south pole).
Recall we had a fibration $\hat{\pi}: F \rightarrow S^2 - \{p_1\}$ and let
us set $F' = \hat{\pi}^{-1}(p_2)$, the fiber above the point $p_2$. We saw
above that $F' = S^2 - \{p_1,p_2\} \simeq S^1$ 
and that the 
inclusion
of $F'$ into $F$ is a homotopy equivalence. 

Now notice, that $\lambda|_I$ actually maps $I$ into $F'$ since any element
of $I$ fixes $p_2$. If we let $\alpha_{\theta}$ denote the element of
$I$ which corresponds to rotation through an angle $\theta$, then the
loop $\{ \alpha_{\theta} : 0 \leq \theta \leq 2\pi \}$ gets mapped
by $\lambda$ to the generator of $\pi_1(F')$. Thus we see
that $\lambda |_{I} : I \rightarrow F'$ is a homotopy equivalence since
both $I$ and $F'$ are homotopic to $S^1$ and $\lambda$ is surjective
on the $\pi_1$ level. 
Hence $\lambda |_{I} : I \rightarrow F$ is a homotopy equivalence
since the inclusion of $F'$ into $F$ is a homotopy equivalence.  
\end{proof}

Now one can check easily that we get the following commutative diagram:

$$
\begin{CD}
I @>>> SO(3) @> f >> S^2 \\
@VV \lambda |_I V @VV \lambda V @VV Id V \\
F @>>> F(S^2,3) @> \pi >> S^2
\end{CD}
$$
where $Id$ is the identity map and $f(\alpha) = \alpha(p_1)$ for $\alpha \in
SO(3)$. Furthermore, each row is a fibration. Comparing  
the long exact sequence in homotopy for the two fibrations, one sees
that the vertical maps induce an isomorphism on homotopy groups
on the base and fiber levels of these fibrations and hence (by the five lemma)
 also on the level
of total spaces. Thus, 
$\lambda: SO(3) \rightarrow F(S^2,3)$
induces an isomorphism on homotopy groups and hence is a homotopy
equivalence and so we have the result for the case $k=3$.
 
	Now for the final case when $k > 3$. Let $\bar{p}=(p_1,\dots,p_k)
\in F(S^2,k)$ extend $(p_1,p_2,p_3)$ chosen in the previous case.
Then again we can define a map 
$$
\lambda: SO(3) \times F(S^2 - \{p_1,p_2,p_3\},k-3) \rightarrow F(S^2,k)
$$
by 
$$
\lambda(\alpha, (a_4,\dots, a_k)) = (\alpha(p_1),\alpha(p_2),\alpha(p_3),
\alpha(a_4),\dots,\alpha(a_k))
$$
for $\alpha \in SO(3)$ and $(a_4,\dots,a_k) \in F(S^2-\{p_1,p_2,p_3\},k-3)$.

It is easy to see that this map is continuous and $SO(3)$-equivariant
under the $SO(3)$-actions described in the statement of this lemma.
Furthermore, we get the following commutative diagram:

$$
\begin{CD}
\pi^{-1}(Id) @>>> SO(3) \times F(S^2-\{p_1,p_2,p_3\},k-3)
@> \pi >> SO(3) \\
@V Id VV @V \lambda VV @V \lambda VV \\
\pi_{k,3}^{-1}(p_1,p_2,p_3) @>>> F(S^2,k) @> \pi_{k,3} >> F(S^2,3)
\end{CD}
$$
where as usual $Id$ stands for an identity map and $\pi$ is projection onto
the first factor. Furthermore both rows are fibrations. We have
seen that the vertical map on the base level is a homotopy equivalence
and also trivially the vertical map on the fiber level is also a homotopy
equivalence and so it follows that $\lambda$ is indeed a homotopy equivalence
(by looking at the long exact sequences in homotopy for the fibrations,
and using the five-lemma).
So this concludes the proof of the lemma.
\end{proof}

Proposition~\ref{pro: configsphere} has the following immediate
corollary:

\begin{cor}
$$
PB_k(S^2) =
\begin{cases}
1 &\text{ if } k=1,2 \\
\mathbb{Z}/2\mathbb{Z} &\text{ if } k=3 \\
\mathbb{Z}/2\mathbb{Z} \times PB_{k-3}(\mathbb{R}^2-Q_2)
&\text{ if } k>3
\end{cases}
$$
\end{cor}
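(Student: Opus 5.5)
Since $PB_k(S^2)=\pi_1(F(S^2,k))$ by the correspondence of the braid group section, the plan is to apply $\pi_1$ to the homotopy equivalences of Proposition~\ref{pro: configsphere}. Homotopy equivalences induce isomorphisms on fundamental groups, provided basepoints correspond. The spaces are path connected by the lemma of Section~\ref{sec: conf}, so the choice of basepoint does not matter.

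For $k=1,2$ we get $\pi_1(S^2)=1$. For $k=3$ we get $\pi_1(SO(3))$. Here I would use the identification $SO(3)\cong \mathbb{R}P^3$, equivalently the double cover $S^3\to SO(3)$ mentioned in the introduction: $S^3$ is simply connected and the covering has deck group $\mathbb{Z}/2\mathbb{Z}$, so $\pi_1(SO(3))=\mathbb{Z}/2\mathbb{Z}$.

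For $k>3$ I would use $\pi_1(A\times B)=\pi_1(A)\times\pi_1(B)$, which gives
$$
PB_k(S^2)\cong \mathbb{Z}/2\mathbb{Z}\times \pi_1(F(S^2-Q_3,k-3)) = \mathbb{Z}/2\mathbb{Z}\times PB_{k-3}(S^2-Q_3).
$$
It remains to identify $S^2-Q_3$ with $\mathbb{R}^2-Q_2$. Stereographic projection from one of the three removed points is a homeomorphism $S^2-\{p\}\to\mathbb{R}^2$, and it carries $S^2-Q_3$ onto $\mathbb{R}^2-Q_2$. Since $F(-,k-3)$ is functorial for injective maps, a homeomorphism induces a homeomorphism $F(S^2-Q_3,k-3)\cong F(\mathbb{R}^2-Q_2,k-3)$, and hence an isomorphism of pure braid groups.

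No step is a real obstacle; the only non-formal input is $\pi_1(SO(3))=\mathbb{Z}/2\mathbb{Z}$, which is standard. The one point needing care is the basepoint bookkeeping in the $k>3$ product decomposition, and path connectedness handles it.
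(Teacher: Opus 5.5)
Your proposal is correct and follows the paper's approach: the paper calls this an immediate corollary of Proposition~\ref{pro: configsphere}, obtained by taking $\pi_1$ of the homotopy equivalences there. The details you add are the routine steps the paper leaves implicit: $\pi_1(SO(3))\cong\mathbb{Z}/2\mathbb{Z}$ via the double cover $S^3\to SO(3)$, path connectedness to dispose of basepoints, and $S^2-Q_3\cong\mathbb{R}^2-Q_2$ by stereographic projection from one of the removed points.
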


Notice that $F(S^2,k)$ is never a $K(\pi,1)$-space due to the higher
homotopy in $SO(3)$ and $S^2$. However, we will see in the next subsection,
that one can perform a natural construction to $F(S^2,k)$ for $k \geq 3$
and obtain a useful $K(\pi,1)$-space. 

\subsection{The Borel Construction}
We will now recall a very important construction. First some
elementary definitions:

\begin{defn}
If $G$ is a topological group with identity element $e$, then a (left) 
$G$-space is a space $X$
together with a continuous map $\mu : G \times X \rightarrow X$ which 
satisfies: \\
\noindent
(a) $g_1 \cdot (g_2 \cdot x) = (g_1g_2)\cdot x$ and \\
\noindent 
(b) $e \cdot x = x$ \\ 
\noindent
for all $g_1,g_2 \in G$, $x \in X$. (Here we use the notation
$g \cdot x$ for $\mu(g,x)$ and $e$ denotes the identity element of $G$). 
Of course, similarly there is a notion
of right $G$-space where the action is on the right instead of on
the left.
\end{defn}

\begin{defn}
Keeping the notation above, we say a $G$-space $X$ is free if 
$g \cdot x = x$ implies that $g=e$.
\end{defn}

Recall that for a
``nice'' topological group $G$, there is a universal $G$-bundle:

$$
G \rightarrow EG \rightarrow BG
$$

where $EG$ is a contractible free (right) $G$-space.

\begin{defn}
Given a (left) $G$-space $X$, one can perform the 
Borel construction:
$$
EG \times_G X = EG \times X / \sim
$$
where $\sim$ is the equivalence relation generated by
$ (tg,x) \sim (t,gx) $ for all $t \in EG, x \in X, g \in G$. 
\end{defn}

\begin{rem}
\label{rem: basborel}
It is easy to see from this definition, that
$EG \times_G G = EG \simeq *$.
Here $G$ is viewed as a left $G$-space via left multiplication on
itself and we are using the standard notation of $*$ to denote
a space which consists of a single point. 
\end{rem}

The next lemma collects some well-known elementary facts about
the Borel construction:

\begin{lem}
Let $G$ be a topological group and $X$ be a $G$-space
then 
$$
\pi_1: EG \times_G X \rightarrow BG=EG/G,
$$ 
the map induced
by projection onto the first factor, is a bundle
map with fiber $X$.

Furthermore, if $X$ is a free $G$-space, then the map
$$
\pi_2: EG \times_G X \rightarrow X/G,
$$ 
induced by projection
onto the second factor is a bundle map with fiber $EG$. Since
$EG$ is contractible, this implies $\pi_2$ is a (weak) homotopy equivalence.
\end{lem}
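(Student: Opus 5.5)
The plan is to realize both maps as maps of fiber bundles associated to principal bundles, and then use that a bundle with contractible fiber is a weak homotopy equivalence.

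\emph{The first map.} Start from the universal principal bundle $G \to EG \to BG$, with $EG$ a free right $G$-space and $BG = EG/G$. The map $\pi_1([t,x]) = [t]$ is well defined, because $(tg,x)\sim(t,gx)$ and both classes project to the same orbit $[t]=[tg]$. To see that it is a bundle with fiber $X$, I will use local triviality of $EG \to BG$. Over an open set $U \subseteq BG$ there is a $G$-equivariant homeomorphism $p^{-1}(U) \cong U \times G$, with $G$ acting on the right factor by right multiplication. Taking $\times_G X$ then gives
$$\pi_1^{-1}(U) \cong (U\times G)\times_G X \cong U \times X.$$
The second homeomorphism is induced by $(u,g,x)\mapsto (u,gx)$, and its inverse is $(u,x)\mapsto [u,e,x]$. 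Continuity of the inverse uses the quotient topology, which is the one point that needs care. I will handle it by noting that $G\times_G X \to X$, $[g,x]\mapsto gx$, has the continuous section $x \mapsto [e,x]$. The fiber over $[t]$ is therefore identified with $X$ via $x \mapsto [t,x]$.

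\emph{The second map.} Now assume $X$ is free. The map $\pi_2([t,x]) = [x] \in X/G$ is well defined, since $[gx]=[x]$. Its fiber over $[x]$ is $\{[t,x]: t\in EG\}$. Because the action is free, $[t,x]=[t',x]$ forces $t=t'$: any $g$ carrying one representative to the other must satisfy $gx=x$, hence $g=e$. So the fiber is a copy of $EG$. For local triviality I will exploit the symmetry of the construction: $EG\times_G X$ is also the bundle with fiber $EG$ associated to the principal $G$-bundle $X \to X/G$. This requires $X \to X/G$ to be a locally trivial principal bundle. That holds for the actions in the intended applications, such as free actions of compact Lie groups like $SO(3)$ or $S^3$ on manifolds, and for free properly discontinuous actions of discrete groups. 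I expect this to be the main obstacle in general. I will state it as a standing hypothesis implicit in ``nice'' and in ``free $G$-space'', in the sense of a free action admitting local slices. Granting it, the same computation as in the first part gives $\pi_2^{-1}(V)\cong EG \times V$ over trivializing sets $V\subseteq X/G$.

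\emph{The homotopy equivalence.} Apply the long exact homotopy sequence of the fibration $\pi_2$, which is valid because bundles over paracompact bases, or more generally locally trivial bundles, are Serre fibrations. Since $\pi_i(EG)=0$ for all $i$, the map $\pi_2$ induces isomorphisms on all homotopy groups, including $\pi_0$, and for every choice of basepoint. So $\pi_2$ is a weak homotopy equivalence. When the spaces have the homotopy type of CW complexes, Whitehead's theorem upgrades this to a homotopy equivalence.
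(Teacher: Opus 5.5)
The paper gives no proof of this lemma; it is quoted as ``well-known elementary facts'' about the Borel construction, so there is no argument to compare against. Your proof is the standard associated-bundle argument, and it is correct:
\begin{itemize}
\item locally $\pi_1^{-1}(U)\cong (U\times G)\times_G X\cong U\times X$;
\item the same computation, with the roles of $EG$ and $X$ exchanged, trivializes $\pi_2$;
\item the long exact sequence gives the weak equivalence, since fiber bundles are Serre fibrations over any base, and path lifting together with connectedness of the fiber handles $\pi_0$.
\end{itemize}
One small adjustment. The inverse $(u,x)\mapsto[u,e,x]$ is trivially continuous, being a composite with the quotient map. The point that actually deserves a sentence is that $\pi_1^{-1}(U)$, with the subspace topology from $EG\times_G X$, carries the quotient topology of $p^{-1}(U)\times X$. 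This holds because that set is open and saturated, and the restriction of a quotient map to an open saturated set is again a quotient map.

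Your extra hypothesis in the second part is not a weakness of your proof; it is a necessary correction to the statement. Take $G=\mathbb{R}$ with the discrete topology acting on $X=\mathbb{R}$ by translation. The action is free, every space involved is Hausdorff, and $X/G$ is a point. Yet $EG\times_G X\to BG$ has contractible fiber, so $EG\times_G X$ is weakly equivalent to $BG$, whose fundamental group is $\mathbb{R}$; hence $\pi_2$ is not a weak equivalence. In this example $t\mapsto[t,x_0]$ is a continuous bijection of $EG$ onto the fiber of $\pi_2$ that is not a homeomorphism, which is exactly what freeness alone cannot exclude. So the lemma should be read, as you propose, with $X\to X/G$ a principal $G$-bundle. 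This holds in the situations the paper actually needs: $G$ acting on itself by left multiplication, and free actions of finite groups such as $\Sigma_k$ on Hausdorff spaces, which are covering-space actions.
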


Let $G_1,G_2$ be two topological groups and let 
$\mu : G_1 \rightarrow G_2$ be a homomorphism of topological groups.
Suppose further that you are given a $\mu$-equivariant map 
$$
f: X_1 \rightarrow X_2,
$$
where $X_i$ is a left $G_i$-space for $i=1,2$. In other words, $f$
satisfies:
$$
f(g \cdot x) = \mu(g) \cdot f(x)
$$ 
for all $g \in G_1, x \in X_1$.

Furthermore recall, from the functoriality of the construction of the
universal principal $G$-bundle, one has the map 
$E(\mu): EG_1 \rightarrow EG_2$
which is also $\mu$-equivariant which induces the map 
$B(\mu): BG_1 \rightarrow BG_2$ on the level of the quotient spaces.

Thus one can look at
$$
E(\mu) \times f : EG_1 \times X_1 \rightarrow EG_2 \times X_2.
$$
It is easy to check that this map respects the equivalence relations
used in forming the 
respective Borel constructions and thus one gets a map:

$$
E(\mu) \bar{\times} f : EG_1 \times_{G_1} X_1 \rightarrow EG_2 \times_{G_2}
X_2.
$$ 

There is a commutative diagram in this context that is very useful.
We state it as the next lemma.

\begin{lem}
\label{lem: CD for Borel}
Given $\mu: G_1 \rightarrow G_2$ a homomorphism of topological groups
and $f: X_1 \rightarrow X_2$ a $\mu$-equivariant map one has
the following commutative diagram:
$$
\begin{CD}
X_1 @>>> EG_1 \times_{G_1} X_1 @>>> BG_1 \\
@VV f V @VV E(\mu) \bar{\times} f V @VV B(\mu) V \\
X_2 @>>> EG_2 \times_{G_2} X_2 @>>> BG_2 
\end{CD}
$$
where each row is a fiber bundle.
So in particular, if $B(\mu)$ and $f$ are weak homotopy equivalences,
then so is $E(\mu) \bar{\times} f$. 
\end{lem}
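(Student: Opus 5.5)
The plan has three parts: build the two fiber bundles, check that the diagram commutes, and then compare the long exact sequences in homotopy.

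\emph{The bundles.} The first row $X_i \to EG_i \times_{G_i} X_i \to BG_i$ is a fiber bundle by the preceding lemma on Borel constructions. That lemma identifies the fiber over the basepoint $[t_0] \in BG_i$ with $X_i$. Concretely, the fiber is the inclusion $x \mapsto [t_0, x]$, and it is a homeomorphism onto the fiber because $G_i$ acts freely on $EG_i$. For the left square I choose basepoints compatibly, taking $t_0 \in EG_1$ and $E(\mu)(t_0) \in EG_2$.

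\emph{Commutativity.} The right square commutes because $E(\mu)\bar\times f$ sends $[t,x]$ to $[E(\mu)(t), f(x)]$. Projecting this to $BG_2$ gives $B(\mu)[t]$, and $B(\mu)$ is by definition the map induced by $E(\mu)$ on quotients. The left square commutes because $f(x) \mapsto [E(\mu)(t_0), f(x)]$, which equals the image of $[t_0, x]$. Both checks are routine unwinding of the definitions, and well-definedness of $E(\mu)\bar\times f$ has already been noted in the paper.

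\emph{Weak equivalence.} Assume $f$ and $B(\mu)$ are weak homotopy equivalences. Fiber bundles over paracompact bases such as $BG$ are Serre fibrations, so each row yields a long exact sequence in homotopy, and the vertical maps give a commutative ladder between them. I apply Lemma~\ref{lem: 5lem}(c) to the segment
\[
\pi_{n+1}(BG_1)\to\pi_n(X_1)\to\pi_n(E_1)\to\pi_n(BG_1)\to\pi_{n-1}(X_1),
\]
where $E_1 = EG_1 \times_{G_1} X_1$, together with its image segment for $E_2 = EG_2 \times_{G_2} X_2$. This shows $E(\mu)\bar\times f$ induces isomorphisms on $\pi_n$ for $n \ge 1$ at every basepoint of the fiber. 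The paper's form of the five lemma allows non-abelian groups, which covers the $\pi_1$ terms.

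The main obstacle is the low end of the sequence, namely $\pi_0$ and the transition $\pi_1(BG)\to\pi_0(X)$, where the terms are only pointed sets. I handle it in three steps.

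1. $\pi_0(E_i)\to\pi_0(BG_i)$ is surjective because the projection is onto. Every path component of $E_i$ meets the fiber over the basepoint, since paths in $BG_i$ lift.
2. Two points of that fiber lie in the same component of $E_i$ exactly when they lie in the same orbit of the $\pi_1(BG_i)$-action on $\pi_0(X_i)$.
3. Since $f$ is compatible with these actions and is bijective on $\pi_0$, while $B(\mu)$ is bijective on $\pi_0$ and $\pi_1$, the induced map on $\pi_0$ of the total spaces is a bijection.

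Finally, to get isomorphisms on $\pi_n$ at every basepoint of $E_1$ and not only those in the chosen fiber, I move the basepoint along a path into the fiber. This is legitimate because $B(\mu)$ is bijective on $\pi_0$, so every component of $BG_1$ contains a point mapped to a point of $BG_2$ in the right component. Repeating the argument with a basepoint of $BG_1$ in each component then yields the result.
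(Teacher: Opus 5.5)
Your proposal is correct and takes the same route as the paper. The paper simply declares the commutativity check routine and applies the five lemma to the long exact homotopy sequences of the two bundles.

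Your treatment of $\pi_0$ (via the $\pi_1(BG_i)$-orbits on $\pi_0(X_i)$) and of basepoints fills in details the paper leaves implicit. Two small notes: for $n=1$ the fifth term $\pi_0(X_1)$ is only a pointed set, so you are really using the diagram chase behind Lemma~\ref{lem: 5lem}(b) with $\mu_5$ an injective map of pointed sets, which goes through unchanged. Also, $BG_1$ is always path connected, so your final ``each component'' step is vacuous.
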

\begin{proof}
It is a routine exercise to check that the diagram commutes.
The last sentence then follows once again from the five lemma
applied to the long exact sequences in homotopy for the
two bundles.
\end{proof}

Lemma~\ref{lem: CD for Borel} gives us a convenient way to restate 
part of the
results of proposition~\ref{pro: configsphere}.

\begin{prop}
\label{pro: BSKpi1}
For $k \geq 3$ one has that 
$$
ESO(3) \times_{SO(3)} F(S^2,k) = K(PB_{k-3}(\mathbb{R}^2 - Q_2),1).
$$
Here we are using the natural action of $SO(3)$ on $F(S^2,k)$
described before. (We are using the convention, that $PB_0(X)$ denotes
the trivial group.) 
\end{prop}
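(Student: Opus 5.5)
We apply Lemma~\ref{lem: CD for Borel} to the $SO(3)$-equivariant homotopy equivalence of Proposition~\ref{pro: configsphere}, then collapse the Borel construction using Remark~\ref{rem: basborel}.

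First take $G_1=G_2=SO(3)$ and $\mu$ the identity, so $B(\mu)$ is the identity of $BSO(3)$. For $k>3$ take
$$
f=\lambda: SO(3)\times F(S^2-Q_3,k-3)\rightarrow F(S^2,k),
$$
and for $k=3$ take $\lambda: SO(3)\rightarrow F(S^2,3)$. In both cases Proposition~\ref{pro: configsphere} says $\lambda$ is $SO(3)$-equivariant, with $SO(3)$ acting on the source by left multiplication on the first factor, and that $\lambda$ is a homotopy equivalence. Lemma~\ref{lem: CD for Borel} then gives a weak homotopy equivalence
$$
ESO(3)\times_{SO(3)}\bigl(SO(3)\times F(S^2-Q_3,k-3)\bigr)\rightarrow ESO(3)\times_{SO(3)}F(S^2,k).
$$

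Next we identify the source. Since $SO(3)$ acts trivially on the second factor, the Borel construction splits as
$$
\bigl(ESO(3)\times_{SO(3)}SO(3)\bigr)\times F(S^2-Q_3,k-3),
$$
which is the routine check that the equivalence relation only involves the first two coordinates. By Remark~\ref{rem: basborel} the first factor is $ESO(3)$, which is contractible. So the source is homotopy equivalent to $F(S^2-Q_3,k-3)$; for $k=3$ it is contractible, which matches $PB_0=1$.

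Finally, $S^2-Q_3\cong\mathbb{R}^2-Q_2$ by stereographic projection from one of the three points. Theorem~\ref{thm: Kpi1} (with $M=\mathbb{R}^2$, $m=2$) shows that $F(\mathbb{R}^2-Q_2,k-3)$ is a $K(\pi,1)$ with $\pi=PB_{k-3}(\mathbb{R}^2-Q_2)$, this identification of $\pi$ coming from Section~3. A weak equivalence preserves all homotopy groups, and the spaces have CW homotopy type, so the Borel construction is a $K(PB_{k-3}(\mathbb{R}^2-Q_2),1)$.

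The only delicate point is bookkeeping: the left/right action conventions in the Borel construction, so that $ESO(3)\times_{SO(3)}SO(3)\cong ESO(3)$ genuinely holds for the left multiplication action used in Proposition~\ref{pro: configsphere}, and confirming the product splitting. I expect this to be straightforward from the definitions.
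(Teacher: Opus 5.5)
Your proposal is correct and follows the paper's proof step for step. You apply Lemma~\ref{lem: CD for Borel} with $\mu=\mathrm{Id}$ to the equivariant equivalence $\lambda$ of Proposition~\ref{pro: configsphere}, collapse $ESO(3)\times_{SO(3)}(SO(3)\times Y)$ to $ESO(3)\times Y\simeq Y$ via Remark~\ref{rem: basborel}, and finish with Theorem~\ref{thm: Kpi1}; your explicit identification $S^2-Q_3\cong\mathbb{R}^2-Q_2$ and the $k=3$ check are details the paper leaves implicit.
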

\begin{proof}
Fix $k \geq 3$, then from proposition~\ref{pro: configsphere}, 
one has a $SO(3)$-equivariant
homotopy equivalence 
$$
\lambda: SO(3) \times Y \rightarrow F(S^2,k). 
$$ 
where we have denoted $F(S^2-Q_3,k-3)$ by $Y$ for convenience.
(This means that $Y$ is a point when $k=3$.)

Applying lemma~\ref{lem: CD for Borel} using $Id: SO(3) \rightarrow
SO(3)$ as $\mu$, and $\lambda$ as the equivariant map, one sees immediately
that since $B(\mu)$ and $\lambda$ are homotopy equivalences, then
$$
E(Id) \bar{\times} \lambda : ESO(3) \times_{SO(3)} (SO(3) \times
Y ) \rightarrow ESO(3) \times_{SO(3)} F(S^2,k)
$$
is a (weak) homotopy equivalence. However,
\begin{align*}
\begin{split}
ESO(3) \times_{SO(3)} (SO(3) \times Y)
&= (ESO(3) \times_{SO(3)} SO(3)) \times Y \\
&= ESO(3) \times Y \\
&\simeq Y 
\end{split}
\end{align*}
by remark~\ref{rem: basborel} and the contractibility of $ESO(3)$.
Theorem~\ref{thm: Kpi1} then gives us the desired result.
\end{proof}

Now notice the following. If $X$ is a left $G$-space, then
$F(X,k)$ is also a $G$-space via a diagonal action (as any group
action takes distinct points to distinct points). On the other
hand $\Sigma_k$ acts on $F(X,k)$ on the right by permuting
coordinates. It is easy to see that the $\Sigma_k$-action
on $F(X,k)$ commutes with the $G$-action. Thus if we look
at the $\Sigma_k$ action on $EG \times F(X,k)$ where
$\Sigma_k$ acts purely on the right factor by permuting coordinates,
then it is easy to check that this action descends to an
action of $\Sigma_k$ on the Borel construction $EG \times_G F(X,k)$.
It is also routine to check that this action of $\Sigma_k$ on
the Borel contruction is still free and that
$$
(EG \times_G F(X,k))/\Sigma_k = EG \times_G SF(X,k)
$$
where recall that $SF(X,k)$ denotes $F(X,k)/\Sigma_k$.

We summarize this useful fact in the following remark:

\begin{rem}
\label{rem: Borelcover}
Let $G$ be a topological group and let $X$ be a left $G$-space
then there is a natural $\Sigma_k$-covering map:
$$
EG \times_G F(X,k) \overset{\pi}{\rightarrow} EG \times_G SF(X,k)
$$
where the natural diagonal actions of $G$ on $F(X,k)$ and $SF(X,k)$
are used to form the Borel constructions. 
\end{rem}

Remark~\ref{rem: Borelcover} and proposition~\ref{pro: BSKpi1}
have the following immediate corollary:

\begin{cor}
\label{cor: BSKpi1}
$ESO(3) \times_{SO(3)} SF(S^2,k)$ is a $K(\pi,1)$ space for all $k \geq 3$.
\end{cor}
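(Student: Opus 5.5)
The plan is to deduce the statement directly from Remark~\ref{rem: Borelcover} and Proposition~\ref{pro: BSKpi1}, using the fact that a space with a covering which is a $K(\pi,1)$ is itself a $K(\pi,1)$.

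\emph{Step 1.} Apply Remark~\ref{rem: Borelcover} with $G=SO(3)$ and $X=S^2$. This gives a $\Sigma_k$-covering map
$$
\pi : ESO(3)\times_{SO(3)} F(S^2,k) \rightarrow ESO(3)\times_{SO(3)} SF(S^2,k).
$$

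\emph{Step 2.} By Proposition~\ref{pro: BSKpi1}, for $k\geq 3$ the total space is a $K(PB_{k-3}(\mathbb{R}^2-Q_2),1)$. In particular it is path connected and has $\pi_i=0$ for $i\geq 2$.

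\emph{Step 3.} A covering map induces isomorphisms on $\pi_i$ for all $i\geq 2$. This follows from the long exact homotopy sequence of the covering, viewed as a fibration with discrete fiber $\Sigma_k$, or from path lifting. Hence the base also has $\pi_i=0$ for $i\geq 2$.

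\emph{Step 4.} The base is path connected, being the continuous image of a path connected space under $\pi$. It therefore remains a $K(\pi,1)$, as claimed.

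\emph{Step 5 (optional).} The low-degree part of the same exact sequence gives
$$
1\rightarrow \pi_1(ESO(3)\times_{SO(3)}F(S^2,k))\rightarrow \pi_1(ESO(3)\times_{SO(3)}SF(S^2,k))\rightarrow \Sigma_k\rightarrow 1,
$$
so the fundamental group of the base is an extension of $\Sigma_k$ by $PB_{k-3}(\mathbb{R}^2-Q_2)$. Exactness on the right uses that the total space is path connected.

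There is no real obstacle here. The only points needing care are that the Borel-construction quotient is genuinely a covering map and that the spaces are of CW homotopy type, so that vanishing of higher homotopy characterizes a $K(\pi,1)$. The first is the content of Remark~\ref{rem: Borelcover}, since $\Sigma_k$ acts freely on $F(S^2,k)$ and the action commutes with $SO(3)$. The second is the paper's standing assumption, and I would take both as given.
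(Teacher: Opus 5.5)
Your proposal is correct and is exactly the paper's argument: the paper states this as an immediate corollary of Remark~\ref{rem: Borelcover} and Proposition~\ref{pro: BSKpi1}. The $\Sigma_k$-covering $ESO(3)\times_{SO(3)}F(S^2,k)\to ESO(3)\times_{SO(3)}SF(S^2,k)$ has path-connected, aspherical total space, so the base has vanishing higher homotopy. Your Step 5, which identifies $\pi_1$ of the base as an extension of $\Sigma_k$ by $PB_{k-3}(\mathbb{R}^2-Q_2)$, is a correct addition but is not needed for the statement.
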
 

One bad thing about the Borel constructions so far, is that although
they produced $K(\pi,1)$ spaces, these spaces did not have $PB_k(S^2)$
as their fundamental group.
We will now fix this point.

If a topological group $G$ has a universal cover $\tilde{G}$ (which
we always assume is connected), then it is elementary to show that
$\tilde{G}$ has the structure of a topological group such that
the covering map $\mu: \tilde{G} \rightarrow G$ is a homomorphism
of topological groups. $\tilde{G}$ is called the covering group
of $G$.

Now given a (left) $G$-space $X$, $X$ can also be viewed as a
$\tilde{G}$-space via $\mu$. Thus the identity map of $X$ is
$\mu$-equivariant in our previous terminology.
It then follows from lemma~\ref{lem: CD for Borel} that we have
the following commutative diagram where each row is a fibration:

$$
\begin{CD}
X @>>> E\tilde{G} \times_{\tilde{G}} X @>>> B\tilde{G} \\
@VV Id V @VV E(\mu) \bar{\times} Id V @VV B(\mu) V \\
X @>>> EG \times_G X @>>> BG 
\end{CD}
$$

We wish to study the vertical map in the middle.
To do this, we need to look at the vertical maps on the sides
first. It is obvious that the identity map is a homotopy equivalence,
 so let
us first look at $B(\mu)$. Recall the following elementary remarks:

\begin{rem}
For any topological group $G$, $BG$ is always path connected and
$\pi_n(BG) \cong \pi_{n-1}(G)$ for $n>1$.
\end{rem}
\begin{proof}
We have the universal principal $G$-bundle $EG \rightarrow BG$.
The result follows immediately from the long exact sequence
in homotopy for this fibration and the contractibility of $EG$.
\end{proof} 

\begin{rem}
A covering map $\pi: \tilde{X} \rightarrow X$ induces
isomorphisms between $\pi_n(\tilde{X})$ and $\pi_n(X)$
for all $n>1$. 
\end{rem}

It follows from these remarks that $\mu : \tilde{G} \rightarrow G$
induces isomorphisms between the higher homotopy groups ($\pi_n$ for
$n>1$) and hence that $B(\mu)$ induces isomorphisms in $\pi_n$ for
all $n \neq 2$. (Here recall that we insist that $\tilde{G}$ and
hence also $G$ is 
path connected.) Of course it induces the zero map on the $\pi_2$ level
as 
$$
\pi_2(B\tilde{G}) = \pi_1(\tilde{G}) = 0.
$$

With this knowledge of $B(\mu)$ and by
using the usual argument of applying the five-lemma to the
vertical maps between the long exact 
sequences in homotopy for the two fibrations, one finds easily that
$E(\mu) \bar{\times} Id$ induces an isomorphism in $\pi_n$
for all $n \neq 1,2$. Using the refined form of the five-lemma
(stated in lemma~\ref{lem: 5lem}), it also follows easily that
$E(\mu) \bar{\times} Id$ induces a monomorphism on the $\pi_2$
level. 

This is the main step in proving the following useful lemma:

\begin{lem}
\label{lem: compareBorel}
Let $G$ be a path connected, topological group.
Let $\tilde{G}$ be the universal covering group with
$\mu: \tilde{G} \rightarrow G$ the covering map. Furthermore
suppose $X$ is a $G$-space. Then
$E(\mu) \bar{\times} Id$ induces isomorphisms
$$
\pi_n(E\tilde{G} \times_{\tilde{G}} X) \cong \pi_n(EG \times_G X)
$$
for all $n \neq 1,2$ and a monomorphism
$$
\pi_2(E\tilde{G} \times_{\tilde{G}} X) \overset{1-1}{\rightarrow}
 \pi_2(EG \times_G X).
$$
Furthermore,
$$
\pi_1(E\tilde{G} \times_{\tilde{G}} X) \cong \pi_1(X) 
$$
\end{lem}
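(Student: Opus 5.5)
The first two assertions are essentially the five-lemma argument sketched just before the statement, so I would organize the proof around the map of fibrations already displayed,
$$
\begin{CD}
X @>>> E\tilde{G} \times_{\tilde{G}} X @>>> B\tilde{G} \\
@VV Id V @VV E(\mu) \bar{\times} Id V @VV B(\mu) V \\
X @>>> EG \times_G X @>>> BG
\end{CD}
$$
and compare the two long exact homotopy sequences using Lemma~\ref{lem: 5lem}. First I would record what $B(\mu)$ does. Since $\pi_n(BH)\cong\pi_{n-1}(H)$ for any topological group $H$, and $\mu$ is a covering, $B(\mu)$ is an isomorphism on $\pi_n$ for $n\geq 3$. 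For $n=1$ both groups are trivial, because $\tilde{G}$ and $G$ are connected. For $n=2$ the source is $\pi_1(\tilde{G})=0$, so $B(\mu)$ is trivially a monomorphism on $\pi_2$.

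Next I would apply the refined five lemma at each degree. For $n\geq 3$, the relevant five-term segment is
$$\pi_{n+1}(B\tilde{G}) \to \pi_n(X) \to \pi_n(E\tilde{G}\times_{\tilde{G}}X) \to \pi_n(B\tilde{G}) \to \pi_{n-1}(X).$$
Here the outer four vertical maps are isomorphisms: those on the $X$ terms are the identity, and $B(\mu)$ is an isomorphism on $\pi_{n+1}$ and $\pi_n$ since both indices are at least $3$. So part (c) of Lemma~\ref{lem: 5lem} gives an isomorphism in the middle. For $n=2$, I would use part (a). The maps on $\pi_2(X)$ and $\pi_2(B\tilde{G})$ are monomorphisms, the first being the identity and the second the map computed above. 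The left-hand term is $\pi_3$ of the base, where $B(\mu)$ is an isomorphism and in particular an epimorphism. Hence $\pi_2(E\tilde{G}\times_{\tilde{G}}X)\to\pi_2(EG\times_GX)$ is injective.

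For the final claim I would ignore the comparison map and use only the top fibration. Its long exact sequence contains
$$\pi_2(B\tilde{G}) \to \pi_1(X) \to \pi_1(E\tilde{G}\times_{\tilde{G}}X) \to \pi_1(B\tilde{G}).$$
The outer terms are $\pi_1(\tilde{G})=0$ and $\pi_0(\tilde{G})=0$, again because $\tilde{G}$ is simply connected and connected. Therefore the fiber inclusion induces $\pi_1(X)\cong\pi_1(E\tilde{G}\times_{\tilde{G}}X)$.

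The main point needing care is the low-degree bookkeeping rather than any real obstacle. The exact sequences end in pointed sets, and $\pi_1$ may be nonabelian, which is exactly why the paper states Lemma~\ref{lem: 5lem} for non-abelian groups. I would also check surjectivity onto $\pi_1$ carefully: it follows from the vanishing of $\pi_1(B\tilde{G})$, and one must note that $X$ is assumed path connected, or else interpret the statement componentwise via $\pi_0$. Finally, I would remark that $\pi_1$ is generally not preserved under passage to $EG\times_GX$, because $\pi_2(BG)=\pi_1(G)$ can be nonzero. This explains why the statement singles out $n=1,2$.
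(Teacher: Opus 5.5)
Your proposal is correct and is essentially the paper's own argument. You compare the two Borel fibrations over $B\tilde{G}$ and $BG$ via the (refined) five lemma, using that $B(\mu)$ is an isomorphism on $\pi_n$ for $n\geq 3$ and that $\pi_2(B\tilde{G})\cong\pi_1(\tilde{G})=0$, and you read off $\pi_1$ from the top fibration alone since $\pi_2(B\tilde{G})=\pi_1(B\tilde{G})=0$. The only case you (like the paper) leave implicit is $n=0$. It is immediate: both bases are simply connected, so both total spaces have $\pi_0$ equal to $\pi_0(X)$.
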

\begin{proof}
We have already proved everything besides the final isomorphism listed.
To prove this, look at the fiber bundle
$$
X \rightarrow E\tilde{G} \times_{\tilde{G}} X \rightarrow B\tilde{G}.
$$ 
Now we saw, in the paragraph preceding the lemma, that 
$\pi_2(B\tilde{G})=0$, but one also has
$$
\pi_1(B\tilde{G}) \cong \pi_{0}(\tilde{G})=0.
$$
Using this, it is easy to obtain our desired result 
from the
long exact sequence in homotopy for the fiber bundle above.
\end{proof}

Now it is a well known fact that $\widetilde{SO(3)}$ is $S^3$,
the group of unit quarternions, which is topologically a 3-sphere.
Thus applying lemma~\ref{lem: compareBorel} to this group and
using proposition~\ref{pro: BSKpi1} and
corollary~\ref{cor: BSKpi1}, one easily obtains:

\begin{cor}
$$
ES^3 \times_{S^3} F(S^2,k) = K(PB_k(S^2),1)
$$
and 
$$
ES^3 \times_{S^3} SF(S^2,k) = K(B_k(S^2),1),
$$
for $k \geq 3$.
\end{cor}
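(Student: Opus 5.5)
Fix $k \geq 3$ and write $Y_k = ESO(3) \times_{SO(3)} F(S^2,k)$ and $\tilde{Y}_k = ES^3 \times_{S^3} F(S^2,k)$, where $S^3$ acts on $F(S^2,k)$ through the double cover $\mu: S^3 \rightarrow SO(3)$. By proposition~\ref{pro: BSKpi1}, $Y_k$ is a $K(\pi,1)$, so $\pi_n(Y_k)=0$ for all $n \geq 2$. Lemma~\ref{lem: compareBorel} applies with $G=SO(3)$, $\tilde{G}=S^3$ and $X=F(S^2,k)$. It gives $\pi_n(\tilde{Y}_k)\cong\pi_n(Y_k)=0$ for $n\geq 3$, and an injection $\pi_2(\tilde{Y}_k)\hookrightarrow\pi_2(Y_k)=0$, so $\pi_2(\tilde{Y}_k)=0$. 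The same lemma gives $\pi_1(\tilde{Y}_k)\cong\pi_1(F(S^2,k))=PB_k(S^2)$.

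For this to be meaningful we need $\tilde{Y}_k$ to be path connected. That holds because it fibres over the connected space $BS^3$ with fibre $F(S^2,k)$, which is connected by the earlier lemma on path connectivity of configuration spaces. Hence $\tilde{Y}_k=K(PB_k(S^2),1)$, which is the first statement.

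For the unordered version we use remark~\ref{rem: Borelcover} with $G=S^3$. It gives a $\Sigma_k$-covering $\tilde{Y}_k \rightarrow ES^3\times_{S^3}SF(S^2,k)$. A covering map induces isomorphisms on $\pi_n$ for $n\ge 2$, so the base is also a $K(\pi,1)$.

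It remains to identify its fundamental group with $B_k(S^2)=\pi_1(SF(S^2,k))$. I would use the bundle
$$
SF(S^2,k)\rightarrow ES^3\times_{S^3}SF(S^2,k)\rightarrow BS^3.
$$
Since $\pi_1(BS^3)=\pi_2(BS^3)=0$, the long exact sequence gives $\pi_1$ of the total space $\cong\pi_1(SF(S^2,k))=B_k(S^2)$. This is exactly the last step in the proof of lemma~\ref{lem: compareBorel}, repeated with $X=SF(S^2,k)$.

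The only point needing care is the claim that $\pi_1(BS^3)$ and $\pi_2(BS^3)$ vanish. This holds because $\pi_n(BS^3) \cong \pi_{n-1}(S^3)$ and $S^3$ is $2$-connected. Everything else is a direct combination of the cited results, so I expect no real obstacle.
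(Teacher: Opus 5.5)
Your proof is correct and is essentially the paper's argument: apply Lemma~\ref{lem: compareBorel} with $G=SO(3)$ and $\tilde{G}=S^3$ to the $K(\pi,1)$ spaces supplied by Proposition~\ref{pro: BSKpi1} and Corollary~\ref{cor: BSKpi1}. The only difference is cosmetic. In the unordered case you pass to the $\Sigma_k$-cover of Remark~\ref{rem: Borelcover} at the $S^3$ level, whereas the paper uses Corollary~\ref{cor: BSKpi1} at the $SO(3)$ level and then compares, which amounts to the same thing.
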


Thus, once again we see that the configuration space provides
a good model (this time using a Borel construction)
for the (pure) braid group of the underlying space.

Thus the only surface whose configuration space we have not studied
yet is $\mathbb{R}P^2$. We will do this now, in the next section.  

\section{Configuration spaces for the real projective plane}

\subsection{Orbit configuration spaces}

In our study of the configuration spaces of $\mathbb{R}P^2$,
we will find the concept of an orbit configuration space quite
useful. These were introduced by M. Xicot$\acute{e}$ncatl and
are defined as follows:

\begin{defn}
Let $G$ be a group and $X$ a $G$-space. Then 
$$
F_G(X,k) = \{(x_1,\dots, x_k) \in X^k | x_i,x_j \text{ are in different
 G orbits if } i \neq j \}.
$$
$F_G(X,k)$ is called the $k$-fold orbit configuration space of $X$.
\end{defn}

\begin{rem}
Thus we see that $F_G(X,k)$ is the space of $k$-tuples of points 
which lie in distinct $G$ orbits of $X$.
Note that $F_G(X,1)=X$ and $F_1(X,k)=F(X,k)$ for the action of
the trivial group 1 on $X$.
\end{rem}

We will now prove some elementary but useful properties of these orbit
configuration spaces. First some necessary definitions.

\begin{defn}
Given two groups $G_1$ and $G_2$, 
a left $G_1$-space $X$ is said to have a compatible $G_2$ action if \\
(a) $X$ is a left $G_2$-space. \\
(b) For every $g_1 \in G_1, g_2 \in G_2, x \in X$, there
exists $g_1' \in G_1$ such that
$$
g_2 \cdot (g_1 \cdot x) = g_1' \cdot (g_2 \cdot x).
$$ 
If it is always possible to take $g_1'=g_1$, we say that the two actions
commute.
\end{defn}

\begin{rem}
\label{rem: compatorbit}
Notice that if a $G_1$-space $X$ has a compatible $G_2$ action
then the $G_2$ action takes $G_1$-orbits to $G_1$-orbits
and hence induces a $G_2$ action on $X/G_1$. (To guarantee the continuity 
of this action, one should assume that $G_2$ is locally compact, Hausdorff 
so that $G_2 \times X \rightarrow G_2 \times X/G_1$ is a quotient map. This 
holds for all Lie groups and hence discrete groups.)
\end{rem}

\begin{rem}
The most important examples of compatible actions are: \\
(a) commuting actions. \\
(b) When $G_1=G_2=G$ and both groups act in the same way on $X$.
These are compatible actions as
$$
g_2 \cdot (g_1 \cdot x) = (g_2 g_1 g_2^{-1}) \cdot (g_2 \cdot x)
$$
and so we can take $g_1'=g_2 g_1 g_2^{-1}$.
\end{rem}

\begin{lem}
\label{lem: actiononorbit}
If $X$ is a $G_1$-space which supports a compatible $G_2$-action,
then there is a natural $G_2^k$ action on $F_{G_1}(X,k)$ defined via
$$
(g_1,\dots,g_k) \cdot (x_1,\dots,x_k) = (g_1\cdot x_1,\dots,g_k\cdot x_k)
$$
for all $(g_1,\dots,g_k) \in G_2^k$ and $(x_1,\dots,x_k) \in F_{G_1}(X,k)$.
Notice of course, this also gives a $G_2$ action on $F_{G_1}(X,k)$ obtained
by precomposing the above action with the diagonal homomorphism 
$\Delta : G_2 \rightarrow G_2^k$ which sends $g$ to $(g,\cdots,g)$.
\end{lem}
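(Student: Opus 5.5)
The plan is to separate the statement into two parts. The first part is the purely formal claim that the displayed formula defines an action of $G_2^k$. The second part is the real content: this action carries $F_{G_1}(X,k)$ into itself.

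For the first part, nothing about orbits is used. Since $X$ is a left $G_2$-space, the product $X^k$ is a left $G_2^k$-space via
$$(g_1,\dots,g_k)\cdot(x_1,\dots,x_k)=(g_1\cdot x_1,\dots,g_k\cdot x_k).$$
The axioms $(a)$ and $(b)$ of a left action hold coordinatewise. Continuity holds because the map is a product of the continuous maps $G_2\times X\to X$. So it remains only to show that $F_{G_1}(X,k)\subseteq X^k$ is an invariant subspace; the restricted map is then automatically a continuous action.

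For invariance, the key tool is the compatibility condition. Read as in Remark~\ref{rem: compatorbit}, it says that each $g\in G_2$ sends the $G_1$-orbit $G_1x$ into the $G_1$-orbit $G_1(g\cdot x)$. Applying the same condition to $g^{-1}$ shows that $g$ induces a bijection on the set of $G_1$-orbits, i.e.\ on $X/G_1$. Now take $(x_1,\dots,x_k)\in F_{G_1}(X,k)$ and suppose $g_i\cdot x_i$ and $g_j\cdot x_j$ lie in the same $G_1$-orbit for some $i\neq j$. In the diagonal case $g_i=g_j=g$, applying $g^{-1}$ and injectivity on $X/G_1$ gives $G_1x_i=G_1x_j$, a contradiction. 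This settles the diagonal $G_2$-action mentioned at the end of the statement, once the full action exists, since it is the composite with $\Delta$.

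The main obstacle is the general case $g_i\neq g_j$. There the orbit bijection induced by $g_i$ and the one induced by $g_j$ are different maps, and injectivity of each separately is not enough. The extra ingredient I would try first is that each $g\in G_2$ acts trivially on $X/G_1$, i.e.\ $g\cdot x\in G_1x$. This holds in the motivating case (b) of the preceding remark, where $G_1=G_2=G$ with the same action. It also holds whenever the $G_2$-action is through elements of $G_1$. Under it, $G_1(g_i\cdot x_i)=G_1x_i\neq G_1x_j=G_1(g_j\cdot x_j)$, and invariance is immediate.

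I would then check whether compatibility alone can force this. I expect it cannot: take $G_1$ trivial (so $F_{G_1}(X,k)=F(X,k)$) and let $G_2$ act nontrivially on $X$. Then $(g,1)$ can move $x_1$ onto $x_2$, leaving $F(X,k)$. So the proof of the $G_2^k$ statement as worded must invoke an orbit-preserving hypothesis of this kind; only the diagonal action survives under bare compatibility. I would make that hypothesis explicit at this step and note that it holds in the case the paper uses, where the group acts on itself compatibly and hence on orbits trivially.
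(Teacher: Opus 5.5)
Your analysis is correct, and it is more careful than the paper's. The paper's entire proof is a pointer to Remark~\ref{rem: compatorbit}: each $g\in G_2$ carries $G_1$-orbits to $G_1$-orbits, so $G_2$ acts on $X/G_1$. That is exactly your orbit-bijection argument, and it justifies only the diagonal $G_2$-action on $F_{G_1}(X,k)$.

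Your counterexample correctly shows that compatibility alone does not make the coordinatewise $G_2^k$-action preserve $F_{G_1}(X,k)$. It fails even in the paper's main example of commuting actions. Let $\Ztwo$ act antipodally and $SO(3)$ by rotations on $S^2$, and pick a rotation $g$ with $g e_1=e_2$. Then $(g,1)\cdot(e_1,e_2)=(e_2,e_2)\notin F_{\Ztwo}(S^2,2)$.

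Your extra hypothesis is that each $g\in G_2$ acts trivially on $X/G_1$. This is exactly what holds in case (b) of the preceding remark, $G_1=G_2=G$ with the same action. That is the only place the paper uses the full $G^k$-action, namely Proposition~\ref{pro: orbittonormalconfig}. The $SO(3)$-actions on $F_{\Ztwo}(S^2,k)$ and $F(\RR P^2,k)$ are only ever used diagonally, and your argument covers that case.

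So your version is the right statement: the diagonal action exists under compatibility, and the $G_2^k$-action exists under the orbit-preserving hypothesis. Nothing downstream in the paper is affected by the correction.
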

\begin{proof}
The proof follows easily from remark~\ref{rem: compatorbit} and
is left to the reader.
\end{proof}

>From lemma~\ref{lem: actiononorbit}, we see that there is always a natural
action of $G^k$ on $F_G(X,k)$. If the original $G$-action on $X$ is
reasonable, we can say something about this $G^k$-action on $F_G(X,k)$: 

\begin{prop}
\label{pro: orbittonormalconfig}
If $\pi: X \rightarrow X/G$ is a principal $G$-bundle, 
there is a map
$$
\bar{\pi}: F_G(X,k) \rightarrow F(X/G,k)
$$
which is a principal $G^k$-bundle, where 
$$
\bar{\pi}(x_1,\dots,x_k)=
(\pi(x_1),\dots,\pi(x_k))
$$ 
and the $G^k$ action on $F_G(X,k)$
is that which is described in lemma~\ref{lem: actiononorbit}.
\end{prop}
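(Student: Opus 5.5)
The plan is to realize $F_G(X,k)$ as the restriction of the product bundle $X^k \to (X/G)^k$ to an open subset of the base. First, $\pi^k = \pi\times\cdots\times\pi : X^k \to (X/G)^k$ is a principal $G^k$-bundle. This holds because a product of principal bundles is principal for the product group. Concretely, if $U_i\subseteq X/G$ are trivializing opens with $G$-equivariant homeomorphisms $\pi^{-1}(U_i)\cong U_i\times G$, then their product gives a $G^k$-equivariant homeomorphism $(\pi^k)^{-1}(U_1\times\cdots\times U_k)\cong (U_1\times\cdots\times U_k)\times G^k$. The sets $U_1\times\cdots\times U_k$ form an open cover of $(X/G)^k$. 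The $G^k$-action here is exactly the coordinatewise action of lemma~\ref{lem: actiononorbit}, with $G_1=G_2=G$ acting in the same way.

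Next I will identify $F_G(X,k)$ as a full preimage. Since $\pi(x)=\pi(y)$ exactly when $x,y$ lie in the same $G$-orbit, a tuple $(x_1,\dots,x_k)$ lies in $F_G(X,k)$ if and only if the $\pi(x_i)$ are pairwise distinct. So
$$
F_G(X,k) = (\pi^k)^{-1}\big(F(X/G,k)\big),
$$
and $\bar{\pi}$ is the restriction of $\pi^k$. In particular $\bar{\pi}$ is well defined, continuous, and surjective because $\pi$ is. The set $F_G(X,k)$ is $G^k$-invariant, since each coordinate stays in its own orbit.

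Finally, a principal bundle restricted to the full preimage of any subspace of the base is again a principal bundle. Intersect the trivializing opens above with $F(X/G,k)$, and the local trivializations restrict to $G^k$-equivariant homeomorphisms $\bar{\pi}^{-1}(V)\cong V\times G^k$. The fibers are exactly the $G^k$-orbits, and the action on them is free, because $G$ acts freely on $X$ in a principal bundle.

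The main obstacle is only point-set care. One point is that the principal bundle hypothesis should mean local triviality with equivariant charts, so that products and restrictions behave as claimed. The other is that the product topology on $(X/G)^k$ agrees with what the charts require. Both are handled by working with the local product charts directly instead of with quotient topologies, so I expect no real difficulty.
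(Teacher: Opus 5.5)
Your proposal is correct and follows the paper's argument. The paper also starts from the product principal $G^k$-bundle $X^k \rightarrow (X/G)^k$ and pulls it back along the inclusion $F(X/G,k)\rightarrow (X/G)^k$, which is the same as your restriction to the full preimage $(\pi^k)^{-1}(F(X/G,k))=F_G(X,k)$; the identification of the pullback with $F_G(X,k)$ that the paper leaves to the reader is exactly the observation you make explicit.
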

\begin{proof}
First notice that by taking a $k$-fold product we get a principal
$G^k$-bundle 
$$
G^k \rightarrow X^k \overset{\times \pi}{\rightarrow} (X/G)^k.
$$
Then we can take the pullback of this bundle with respect to the inclusion map
$i: F(X/G,k) \rightarrow (X/G)^k$ and hence get the commutative diagram:
$$
\begin{CD}
G^k @>> Id > G^k \\
@VVV @VVV \\
E @>>> X^k \\
@VV \bar{\pi} V @VV \times \pi V \\
F(X/G,k) @>> i > (X/G)^k
\end{CD}
$$
Thus the left hand column is also a principal $G^k$-bundle and
the reader can easily check from the definition of a pullback
(see \cite{Bred}),
 that $E$ is naturally identified with  
$F_G(X,k)$ and that with this identification, $\bar{\pi}$
has the form stated above and the $G^k$ action on $E=F_G(X,k)$ is that
which is described in lemma~\ref{lem: actiononorbit}.
\end{proof}

\begin{rem}
Recall, that when $G$ is discrete, a principal $G$-bundle is the same thing
as a regular cover with covering group $G$.
With this, it follows easily from proposition~\ref{pro: orbittonormalconfig},
that if $G$ is a finite group and $X$ a free $G$-space, then there is a
covering map
$$
\pi: F_G(X,k) \rightarrow F(X/G,k)
$$
with covering group $G^k$. 
\end{rem}

Proposition~\ref{pro: orbittonormalconfig} shows us immediately how
orbit configuration spaces can help us understand the configuration
space of $\mathbb{R}P^2$. To see this, we can take 
$S^2$ as a $\mathbb{Z}/2\mathbb{Z}$-space where the nontrivial 
element 
of $\mathbb{Z}/2\mathbb{Z}$ is acting via the
antipodal map. Then proposition~\ref{pro: orbittonormalconfig}
gives us the following covering map
$$
(\mathbb{Z}/2\mathbb{Z})^k \rightarrow 
F_{\mathbb{Z}/2\mathbb{Z}}(S^2,k) \overset{\pi}{\rightarrow} 
F(\mathbb{R}P^2,k).
$$

Thus we need to study $F_{\mathbb{Z}/2\mathbb{Z}}(S^2,k)$. To do
this, we will need to use the analogue of the Fadell-Neuwirth theorem
(theorem~\ref{thm: fib}) for orbit configuration spaces. We will
state and prove this analogue next.

\begin{defn}
Given a free $G$-space $X$, 
we will use the notation $O_k$ to denote the disjoint union
of $k$ distinct $G$-orbits. Notice $X-O_k$ is hence also
a $G$-space.
\end{defn}

\begin{thm}
Let $G$ be a finite group and $X$ be a free $G$-space. Suppose
further that $X$ is a connected manifold without boundary with
dimension $\geq 1$.
Then 
for all $n > k$, there
are fibrations
$$
F_G(X-O_k,n-k) \rightarrow F_G(X,n) \overset{\pi}{\rightarrow} F_G(X,k)
$$
where $\pi$ is projection onto the first $k$ factors.
\end{thm}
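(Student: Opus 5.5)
The plan is to reduce the statement to the Fadell--Neuwirth theorem (theorem~\ref{thm: fib}) applied to the quotient manifold $M = X/G$, using the covering of proposition~\ref{pro: orbittonormalconfig}. Since $G$ is finite and acts freely on the manifold $X$, the quotient $\pi: X \rightarrow X/G$ is a covering map. We will assume, as is implicit in the statement, that the action is properly discontinuous (automatic for a free action of a finite group on a Hausdorff space). Then $M = X/G$ is a manifold without boundary of the same dimension, and $\pi$ is a principal $G$-bundle. By proposition~\ref{pro: orbittonormalconfig}, the map
$$
\bar{\pi}_n : F_G(X,n) \rightarrow F(M,n)
$$
is a principal $G^n$-bundle, hence an $|G|^n$-sheeted covering. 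The same holds for $\bar{\pi}_k$.

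We then form the commutative square whose top row is the projection $p: F_G(X,n) \rightarrow F_G(X,k)$, whose bottom row is $\pi_{n,k}: F(M,n) \rightarrow F(M,k)$, and whose vertical maps are $\bar{\pi}_n$ and $\bar{\pi}_k$. By theorem~\ref{thm: fib}, the bottom row is a fibration, in fact a locally trivial bundle in the Fadell--Neuwirth setting. The main claim is that this square is a pullback up to a covering factor. Concretely, $F_G(X,n)$ is identified with the fiber product of $F_G(X,k)$ and $F_G$-lifts over $F(M,n)$, namely the last $n-k$ coordinates taken in $X$ with distinct orbits avoiding the first $k$ orbits. Rather than fuss with pullbacks, the cleanest route is to prove local triviality of $p$ directly by lifting the Fadell--Neuwirth local trivializations.

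For the local trivialization, take $y=(y_1,\dots,y_k)\in F_G(X,k)$. Choose pairwise disjoint evenly covered neighborhoods $U_i$ of $\pi(y_i)$ in $M$, each contained in a coordinate ball, and let $V_i$ be the sheet over $U_i$ containing $y_i$. Fadell and Neuwirth's argument provides, for $z \in U = \prod U_i$ near $\pi(y)$, a homeomorphism $h_z$ of $M$ that is compactly supported in $\bigcup U_i$, depends continuously on $z$, and sends $\pi(y_i)$ to $z_i$. Because each $U_i$ is evenly covered and $h_z$ is supported in $\bigcup U_i$, the map $h_z$ lifts uniquely to a $G$-equivariant homeomorphism $\tilde h_z$ of $X$ that is the identity outside $\pi^{-1}(\bigcup U_i)$ and acts on each sheet over $U_i$ by the conjugate of $h_z$. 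This lift depends continuously on $z$. Over the neighborhood $W=\prod V_i$ of $y$ in $F_G(X,k)$, we then define
$$
W \times F_G(X - O_k, n-k) \rightarrow p^{-1}(W), \qquad (w,(x_{k+1},\dots,x_n)) \mapsto (w, \tilde h_{\pi(w)}(x_{k+1}),\dots,\tilde h_{\pi(w)}(x_n)),
$$
where $O_k$ is the union of the orbits of the $y_i$. This map is well defined because $\tilde h$ is equivariant: it carries orbits to orbits and carries the orbits of the $y_i$ to those of the $w_i$. It is a homeomorphism, with inverse given by $\tilde h^{-1}$. Hence $p$ is a locally trivial bundle with fiber $F_G(X-O_k,n-k)$, and therefore a fibration.

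The main obstacle is the construction of the continuously varying, compactly supported isotopies $h_z$ and their equivariant lifts. This is exactly the heart of the Fadell--Neuwirth proof, so we would cite that construction, applied to the manifold $M$, rather than reprove it. The point to check carefully is that lifting preserves support and continuity, which follows from even covering of the disjoint $U_i$. Connectedness and $\dim \geq 1$ are used only to ensure that $M$ is a manifold to which theorem~\ref{thm: fib} applies and that fibers over different points are homeomorphic.
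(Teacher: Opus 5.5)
Your argument is correct, but it takes a genuinely different route from the paper.

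\emph{The paper's route.} The paper never passes to the quotient $X/G$. It reduces to the case $n=k+1$, using that compositions of fibrations are fibrations. It then defines $\lambda\colon F_G(X,k)\to F(X,|G|k)$ by listing each $x_i$ together with all of its $G$-translates; freeness and the distinct-orbit condition make these $|G|k$ points distinct. Finally, it observes that pulling back the Fadell--Neuwirth fibration $F(X,|G|k+1)\to F(X,|G|k)$ for $X$ itself along $\lambda$ gives a space homeomorphic to $F_G(X,k+1)$, with the pulled-back map equal to the coordinate projection. So Fadell--Neuwirth is used purely as a black box, and no homeomorphisms are ever constructed.

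\emph{Your route.} You open up that black box. You take the compactly supported, continuously varying homeomorphisms $h_z$ of $M=X/G$ from the Fadell--Neuwirth proof. You lift them sheetwise over evenly covered neighborhoods to $G$-equivariant homeomorphisms $\tilde h_z$ of $X$, and use these to write down local trivializations of $F_G(X,n)\to F_G(X,k)$ directly. The checks go through:
\begin{itemize}
\item equivariance follows from the sheetwise definition;
\item $\tilde h_{\pi(w)}(y_i)=w_i$ because $w_i$ lies in the sheet $V_i$;
\item the lift is continuous because the support is compact inside the evenly covered set;
\item $(z,x)\mapsto h_z^{-1}(x)$ is jointly continuous for the explicit disk construction, which you need for the inverse trivialization.
\end{itemize}
This is essentially the ``considerably more messy'' proof the authors say they replaced. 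Your opening square over $F(M,n)\to F(M,k)$ and the appeal to the principal $G^n$-bundle are never used in the final argument.

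\emph{What each buys.} The paper's proof is short and involves no point-set work. Yours has three advantages:
\begin{itemize}
\item It handles all $n>k$ at once, with no composition step.
\item It exhibits an honest locally trivial bundle with explicit equivariant trivializations.
\item It uses finiteness of $G$ only to make $X\to X/G$ a covering. So it works verbatim for free properly discontinuous actions of infinite discrete groups, such as the lattice acting on $\mathbb{C}$ or a Fuchsian group on the upper half-plane, both of which appear later in the paper. There the paper's map into the finite configuration space $F(X,|G|k)$ is not available.
\end{itemize}
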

\label{thm: orbit fib}
\begin{proof}
Once one has verified that $\pi$ is a fibration, it is an easy exercise
to see that the fiber is as described in the statement of this theorem.
Thus, we will only show that $\pi$ is a fibration. 

Since the composition
of fibrations is a fibration, it is enough to show that $\pi$ is
a fibration in the case when $n=k+1$, since an arbitrary projection
from $F_G(X,n)$ to $F_G(X,k)$ can be broken up as a composition of maps
of this sort. Thus, we assume that $n=k+1$ from now on. 

Let the order of $G$ be $s$ and write $G=\{g_1,\dots,g_s\}$. 
Define $\lambda: F_G(X,k) \rightarrow F(X,sk)$ by 
$$
\lambda(x_1,\dots,x_k) = 
(g_1x_1,g_2x_1,\dots,g_sx_1,g_1x_2,\dots,g_sx_2,\dots,g_1x_k,\dots,g_sx_k).
$$

The following diagram commutes:

$$
\begin{CD}
F(M-Q_{ks},1) @>>> F(M, ks+1) @> \mu >> F(M,ks) \\ 
@AA Id A @AA \bar{\lambda} A @AA \lambda A \\
F_G(M-O_k,1) @>>> E @> \bar{\mu} >> F_G(M,k)
\end{CD}
$$
Here the top row is a fibration by theorem~\ref{thm: fib}, 
where $\mu$ is projection onto the
first $ks$ factors. The bottom row is the pullback of this fibration
under
the map $\lambda$. 

Thus $\bar{\mu}$ is a fibration. It remains only to show that the pullback
$E$
is naturally homeomorphic to $F_G(M,k+1)$. Recall that 
$$
E=\{(x,y) \in F_G(M,k) \times F(M,ks+1) | \lambda(x)=\mu(y)\}.
$$

Define $\theta: F_G(M,k+1) \rightarrow F_G(M,k) \times F(M,ks+1)$
by
$$
\theta(x_1,\dots,x_{k+1})=
((x_1,\dots,x_k), (g_1x_1,\dots,g_sx_1,\dots,g_1x_k,\dots,g_sx_k,x_{k+1})).
$$
It is easy to check that in fact $\theta: F_G(M,k+1) \rightarrow E$.

Now, $\theta: F_G(M,k+1) \rightarrow E$ has an inverse given by
$$
((x_1,\dots,x_k),(g_1x_1,\dots,g_sx_k,x_{k+1}))
\rightarrow
(x_1,\dots,x_k,x_{k+1}).
$$

Thus, $\theta$ is our desired homeomorphism and furthermore
$\bar{\mu} \circ \theta: F_G(X,k+1) \rightarrow F_G(X,k)$ is indeed 
projection onto the first $k$ coordinates.
This is a fibration as $\bar{\mu}$ is. Thus the theorem is proven. 
\end{proof}

(The authors would like to thank Dan Cohen for informing us about
the nice proof above. Previous proofs consisted of going through the
proof of theorem~\ref{thm: fib} carefully and doing the necessary
modifications, which is considerably more messy.)

\subsection{Application of orbit configuration spaces to 
$F(\RR P^2,k)$}

First, we need to study $F_{\mathbb{Z}/2\mathbb{Z}}(S^2,k)$.

\begin{lem}
\label{lem: orbitKpi1}
Let $S^2$ be given a $\mathbb{Z}/2\mathbb{Z}$ action
via the antipodal map. 

Then $F_{\mathbb{Z}/2\mathbb{Z}}(S^2 - O_n,k)$ is a $K(\pi,1)$
space for all $n,k \geq 1$. 

Furthermore $\pi_1(F_{\mathbb{Z}/2\mathbb{Z}}(S^2 - O_n,k))$ is
a polyfree group of cohomological dimension $k$ and it has
a polyfree series with exponents 
$$
(2(n+k-1)-1,\dots,2(n+1)-1,2(n+0)-1).
$$
\end{lem}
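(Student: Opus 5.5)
Induction on $k$, mirroring the proofs of Theorem~\ref{thm: Kpi1} and Theorem~\ref{thm: motpoly}, with the orbit Fadell--Neuwirth theorem in place of Theorem~\ref{thm: fib}.

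\emph{Base case $k=1$.} Here $F_{\mathbb{Z}/2\mathbb{Z}}(S^2-O_n,1)=S^2-O_n$. Since $O_n$ is $n$ antipodal pairs, this is $S^2$ with $2n$ points removed. It is therefore homotopy equivalent to a bouquet of $2n-1$ circles, hence a $K(F_{2n-1},1)$. Its fundamental group is free of rank $2(n+0)-1$, giving a polyfree series of length one with the stated exponent and cohomological dimension $1$. We need $n\ge 1$ here: otherwise we would get $S^2$ itself, which is not aspherical.

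\emph{Inductive step.} Assume $k>1$ and the result for $k-1$ and all $n\geq 1$. Apply the orbit Fadell--Neuwirth theorem with $X=S^2-O_n$, which is a connected free $\mathbb{Z}/2\mathbb{Z}$-manifold without boundary. Project onto the first coordinate:
$$
F_{\mathbb{Z}/2\mathbb{Z}}\bigl((S^2-O_n)-O_1,k-1\bigr)\rightarrow F_{\mathbb{Z}/2\mathbb{Z}}(S^2-O_n,k)\rightarrow S^2-O_n .
$$
The fiber is $F_{\mathbb{Z}/2\mathbb{Z}}(S^2-O_{n+1},k-1)$, which by induction is a $K(\pi,1)$. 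The base is a $K(F_{2n-1},1)$. The long exact homotopy sequence then shows the total space has $\pi_i=0$ for $i\geq 2$. It also gives a short exact sequence
$$
1\rightarrow \pi_1(F_{\mathbb{Z}/2\mathbb{Z}}(S^2-O_{n+1},k-1))\rightarrow \pi_1(F_{\mathbb{Z}/2\mathbb{Z}}(S^2-O_n,k))\rightarrow F_{2n-1}\rightarrow 1,
$$
where injectivity on the left uses $\pi_2$ of the base being zero. Path connectedness of the total space follows from connectedness of the fiber and base, exactly as in the path-connectedness lemma of Section~\ref{sec: conf}.

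\emph{Polyfree structure.} By induction the kernel is polyfree of length $k-1$ with exponents $(2(n+k-1)-1,\dots,2(n+1)-1)$. Let $N$ be the kernel. Its polyfree series $N_0\le\cdots\le N_{k-1}=N$ has terms normal in $N$, but normality in the whole group is what Definition~\ref{defn: normalseries} requires. This is the step I expect to need the most care.

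My first approach is to choose the series geometrically, so that normality is automatic. Let $N_i$ be the kernel of the projection of the total space onto its first $k-i$ coordinates. Each such projection is a fibration by the orbit Fadell--Neuwirth theorem, and each has aspherical fiber and base by induction. So each $N_i$ is the kernel of a homomorphism defined on the whole group, hence normal in it.

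To identify the factors, compare consecutive projections (onto the first $j+1$ and first $j$ coordinates). This identifies $N_{i}/N_{i-1}$ with the fundamental group of the fiber of the projection from $F_{\mathbb{Z}/2\mathbb{Z}}(S^2-O_n,j+1)$ to $F_{\mathbb{Z}/2\mathbb{Z}}(S^2-O_n,j)$, where $j=k-i$. That fiber is $S^2-O_{n+j}$, a sphere minus $2(n+j)$ points, so the factor is free of rank $2(n+j)-1$. Appending the quotient $F_{2n-1}$ gives a normal series of length $k$ with exactly the listed exponents.

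\emph{Cohomological dimension.} Finally, the proposition following Lemma~\ref{lem: lowbound} gives cohomological dimension exactly $k$ for a polyfree group with a polyfree series of length $k$.
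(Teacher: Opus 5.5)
Your proof is correct, but it is built on a different fibration than the paper's.

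The paper inducts via the projection onto the first $k-1$ coordinates,
$$F_{\Ztwo}(S^2-O_{n+k-1},1)\rightarrow F_{\Ztwo}(S^2-O_n,k)\rightarrow F_{\Ztwo}(S^2-O_n,k-1).$$
The fiber is just a sphere with $2(n+k-1)$ points removed, so the kernel is the free group $F_{2(n+k-1)-1}$. The quotient is handled by the inductive hypothesis with the same $n$. The polyfree series of the whole group is then the free kernel at the bottom, followed by the preimages of the quotient's series. Preimages of normal subgroups under a surjection are normal, so the normality problem you flagged never arises.

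Your first-coordinate fibration reverses the roles: the complicated group sits in the kernel and the free group in the quotient. That is why the kernel's inductive series is a priori normal only in the kernel. Your fix is valid: take $N_i$ to be the kernel of the projection onto the first $k-i$ coordinates, and identify consecutive quotients through the fibers $S^2-O_{n+j}$. However, this is exactly the paper's series written out all at once. It also makes the first-coordinate fibration redundant except for asphericity, which the last-coordinate fibration gives equally well.

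So your route costs three things the paper avoids:
\begin{itemize}
\item an induction hypothesis over all $n$, since you need the case $(n+1,k-1)$;
\item a separate argument for normality;
\item a separate identification of the factors.
\end{itemize}
For that identification you should record two facts. First, $\pi_1F_{\Ztwo}(S^2-O_n,k)\to\pi_1F_{\Ztwo}(S^2-O_n,j+1)$ is surjective, because the fibers are path connected. Second, $\pi_2F_{\Ztwo}(S^2-O_n,j)=0$. Together these give $N_i/N_{i-1}\cong\ker\bigl(\pi_1F_{\Ztwo}(S^2-O_n,j+1)\to\pi_1F_{\Ztwo}(S^2-O_n,j)\bigr)\cong\pi_1(S^2-O_{n+j})$. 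The paper's choice of projection gets all of this in one step.
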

\begin{proof}
We will prove this lemma by induction on $k$. First the case when
$k=1$. Here we have that
$$
F_{\mathbb{Z}/2\mathbb{Z}}(S^2-O_n,1) = S^2-O_n = \mathbb{R}^2 - Q_{2n-1}
= K(F_{2n-1},1).
$$
Thus the result follows as $F_{2n-1}$, the free group on $2n-1$ generators,
is polyfree, of cohomological dimension one with the stated exponent.

So without loss of generality, $k>1$ and we can assume the theorem
is proven for numbers smaller than $k$.

Now by theorem~\ref{thm: orbit fib}, we have a fibration
$$
F_{\mathbb{Z}/2\mathbb{Z}}(S^2-O_{n+k-1},1) \rightarrow
F_{\mathbb{Z}/2\mathbb{Z}}(S^2-O_n,k) \rightarrow
F_{\mathbb{Z}/2\mathbb{Z}}(S^2-O_n,k-1).
$$
By induction, the fiber and the base space are $K(\pi,1)$ spaces
and hence, by the long exact sequence in homotopy, so is the total space.  

Furthermore, on the level of fundamental groups, the long exact
sequence in homotopy for the fibration above gives us a short exact
sequence of groups:

$$
\pi_1(F_{\mathbb{Z}/2\mathbb{Z}}(S^2-O_{n+k-1},1)) \rightarrow
\pi_1(F_{\mathbb{Z}/2\mathbb{Z}}(S^2-O_n,k)) \rightarrow
\pi_1(F_{\mathbb{Z}/2\mathbb{Z}}(S^2-O_n,k-1)).
$$

By induction, the base (quotient) group is polyfree, of cohomological 
dimension $k-1$,
with exponents:
$$
(2(n+k-2)-1,\dots,2(n+0)-1)
$$
and also the fiber group (kernel) is a free group of rank $2(n+k-1)-1$.
>From this, the properties of $\pi_1(F_{\mathbb{Z}/2\mathbb{Z}}(S^2-O_n,k))$
stated in the lemma, follow easily.
\end{proof}

Unfortunately, lemma~\ref{lem: orbitKpi1} does not tell us anything
about $F_{\Ztwo}(S^2,k)$ directly - the lemma only applies if at least one 
$\Ztwo$-orbit
has been removed from $S^2$. We have to do some genuine geometric
analysis to go any further, so let us analyze $F_{\Ztwo}(S^2,2)$
and show it is homotopy equivalent to $SO(3)$ in a nice way.

First, let us notice that if we view $S^2$ as the unit vectors
in $\RR^3$, then $F_{\Ztwo}(S^2,2)$ is naturally identified as 
the space of pairs of linearly independent unit vectors of $\RR^3$.  

Thus, inside of $F_{\Ztwo}(S^2,2)$ is the subspace
$$
U = \{(x,y) | x \text{ is orthogonal to } y \text{ and } x,y \in S^2\}
$$
of pairs of orthogonal unit vectors. Notice that $U$ can be 
naturally identified with
the unit tangent bundle of $S^2$ (We will not need this fact).

$SO(3)$ acts naturally on $S^2$ and this action commutes with the
antipodal map, thus by lemma~\ref{lem: actiononorbit}, $SO(3)$
acts diagonally on $F_{\Ztwo}(S^2,2)$.

It is easy to see that this action preserves the subspace $U$.
In fact, even more is true. Let $e_i$ be the ith standard 
unit vector of $\RR^3$
for $i=1,2,3$. Then given $(x,y) \in U$, since there are elements of
$O(3)$ mapping $(e_1,e_2,e_3)$ to any other given orthonormal basis, 
it is easy to see
that there is an element of $SO(3)$ mapping $(e_1,e_2)$ to $(x,y)$.
In fact this element is unique as any element of $SO(3)$
mapping $(e_1,e_2)$ to $(x,y)$ would have to map $e_1 \times e_2 = e_3$
to $x \times y$ and so is determined on a basis. (Here $\times$ stands
for the cross product in $\RR^3$.)

Thus we see that the $SO(3)$ action on $U$ is transitive and free
and hence $U=SO(3)$. Furthermore, the $SO(3)$ action on $U$ corresponds
to the left multiplication action on $SO(3)$ under this correspondence. 

\begin{prop}
\label{pro: gramschmidtS^2}
The inclusion $i: U \rightarrow F_{\Ztwo}(S^2,2)$ is a 
$SO(3)$-equivariant homotopy
equivalence. Furthermore $U=SO(3)$, with $SO(3)$-action given
by left multiplication.
\end{prop}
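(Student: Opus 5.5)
The plan is to exhibit an explicit $SO(3)$-equivariant deformation retraction of $F_{\Ztwo}(S^2,2)$ onto $U$ by Gram--Schmidt orthogonalization. The second sentence of the proposition, that $U=SO(3)$ with the left multiplication action, has already been established in the discussion preceding the statement: the orbit map $\alpha \mapsto (\alpha e_1,\alpha e_2)$ is a continuous bijection from the compact space $SO(3)$ onto the Hausdorff space $U$, hence a homeomorphism, and it is equivariant for left multiplication. So the real content is the homotopy equivalence, and equivariance of $i$ is automatic because $U$ is an invariant subspace.

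Recall that $F_{\Ztwo}(S^2,2)$ is the space of pairs $(x,y)$ of unit vectors with $y\neq \pm x$, that is, of linearly independent unit vectors. For $t\in[0,1]$ I would define
$$
H_t(x,y)=\left(x,\ \frac{y-t\langle x,y\rangle x}{\|y-t\langle x,y\rangle x\|}\right).
$$
The first step is well-definedness. The vector $y-t\langle x,y\rangle x$ is nonzero, since otherwise $y$ would be a multiple of $x$. Moreover, $x$ and $y-t\langle x,y\rangle x$ span the same plane as $x,y$, so the pair stays linearly independent and $H_t$ lands in $F_{\Ztwo}(S^2,2)$. Continuity in $(t,x,y)$ is then clear.

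Next I would check the three properties of a deformation retraction. First, $H_0=\mathrm{Id}$. Second, $H_1$ maps into $U$, because $y-\langle x,y\rangle x$ is orthogonal to $x$. Third, $H_t$ fixes $U$ pointwise for every $t$, since $\langle x,y\rangle=0$ there. Thus $r=H_1$ is a retraction with $r\circ i=\mathrm{Id}_U$, and $i\circ r\simeq \mathrm{Id}$ via $H_t$.

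Finally, equivariance. For $\alpha\in SO(3)$ we have $\langle \alpha x,\alpha y\rangle=\langle x,y\rangle$, and $\alpha$ is linear and norm preserving. Hence $H_t(\alpha x,\alpha y)=\alpha\cdot H_t(x,y)$, so the homotopy itself is $SO(3)$-equivariant and $i$ is an $SO(3)$-equivariant homotopy equivalence, with equivariant homotopy inverse $r$. No step is a serious obstacle; the only point needing care is the nonvanishing of the denominator, and that is exactly where the orbit-configuration condition $y\neq \pm x$ is used.
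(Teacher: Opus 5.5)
Your proposal is correct and follows the paper's proof essentially verbatim: the paper uses the same Gram--Schmidt homotopy $R(x,y,t)=\bigl(x,\,(y-t(x\cdot y)x)/\|y-t(x\cdot y)x\|\bigr)$ and checks the same properties (identity at $t=0$, landing in $U$ at $t=1$, fixing $U$ pointwise, and equivariance, stated there for all of $O(3)$). Your remark that the orbit map $SO(3)\to U$ is a continuous bijection from a compact space onto a Hausdorff space, hence a homeomorphism, fills in a point the paper leaves implicit.
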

\begin{proof}
We have already proven the final sentence of the proposition
 in the preceding paragraph.
It remains only to show that $i$ is a homotopy equivalence. 

We will need a bit of notation. For $x,y \in \RR^n$, let $x \cdot y$ denote
the dot product of $x$ and $y$. Let $\parallel x \parallel = 
\sqrt{x \cdot x}$ denote the norm of $x$.

Now we construct $R: F_{\Ztwo}(S^2,2) \times I \rightarrow F_{\Ztwo}(S^2,2)$
as follows:
$$
R(x,y,t) = (x, \frac{y-t(x \cdot y) x}{\parallel y-t(x \cdot y)x \parallel}).
$$
Notice $R$ is well defined as $y$ is not a multiple of $x$.
The following properties of $R$ follow easily: \\
(a) $R(x,y,0) = (x,y) \text{ for all } (x,y) \in F_{\Ztwo}(S^2,2)$, \\
(b) $R(x,y,1) \in U$, \\
(c) $R(x,y,t) = (x,y) \text{ for all } (x,y) \in U, t \in I$. \\
(d) $R(\alpha x, \alpha y, t) = \alpha R(x,y,t)$ for all $(x,y) \in 
F_{\Ztwo}(S^2,2), t \in I, \alpha \in O(3)$. Here $O(3)$ is acting
diagonally on $F_{\Ztwo}(S^2,2)$ in the natural way.

Properties (a)-(c) tell us that $U$ is a strong deformation retract
of $F_{\Ztwo}(S^2,2)$ which is all we need to finish the proof
of the proposition. Property (d) says in fact, that there is a
strong deformation through $SO(3)$-equivariant maps of $F_{\Ztwo}(S^2,2)$
to itself.

(Notice, that $R$, is in effect, performing a Gram-Schmidt orthogonalization
process globally.)
\end{proof}

\begin{cor}
\label{cor: orbS^22}
If $F_{\Ztwo}(S^2,2)$ is given the diagonal $SO(3)$ action induced
from the natural action of $SO(3)$ on $S^2$ then 
$ESO(3) \times_{SO(3)} F_{\Ztwo}(S^2,2)$ is (weakly) contractible.
\end{cor}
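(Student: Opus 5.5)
The plan is to mimic the proof of proposition~\ref{pro: BSKpi1}, using proposition~\ref{pro: gramschmidtS^2} in place of proposition~\ref{pro: configsphere}. By proposition~\ref{pro: gramschmidtS^2}, the inclusion
$$
i: SO(3) = U \rightarrow F_{\Ztwo}(S^2,2)
$$
is an $SO(3)$-equivariant homotopy equivalence. Here $SO(3)$ acts on itself by left multiplication, and on $F_{\Ztwo}(S^2,2)$ by the diagonal action of lemma~\ref{lem: actiononorbit}.

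The first step is to apply lemma~\ref{lem: CD for Borel} with $\mu = Id: SO(3) \rightarrow SO(3)$ and equivariant map $f = i$. The map $B(Id)$ is the identity, hence a homotopy equivalence, and $i$ is a homotopy equivalence. The lemma therefore gives that
$$
E(Id)\bar{\times} i : ESO(3)\times_{SO(3)} SO(3) \rightarrow ESO(3) \times_{SO(3)} F_{\Ztwo}(S^2,2)
$$
is a weak homotopy equivalence. Concretely, one compares the long exact homotopy sequences of the two bundles over $BSO(3)$ and applies the five lemma (lemma~\ref{lem: 5lem}). The only point needing care is the low-dimensional end of those sequences. Both fibers are path connected ($SO(3)$, and a space homotopy equivalent to it), and $BSO(3)$ is path connected, so the $\pi_0$ and $\pi_1$ terms cause no difficulty.

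The second step uses remark~\ref{rem: basborel}: $ESO(3)\times_{SO(3)} SO(3) = ESO(3)$, which is contractible. It follows that every homotopy group of $ESO(3)\times_{SO(3)} F_{\Ztwo}(S^2,2)$ vanishes, i.e.\ this space is weakly contractible.

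I expect no real obstacle. The substantive input is the equivariance of the deformation retraction, and that was already established as property~(d) in the proof of proposition~\ref{pro: gramschmidtS^2}. The only thing to check is that the identification $U = SO(3)$ carries the diagonal action on $U$ to left multiplication, which the paragraph preceding proposition~\ref{pro: gramschmidtS^2} verifies.
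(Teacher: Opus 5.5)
Your proposal is correct and follows the paper's proof exactly: apply Lemma~\ref{lem: CD for Borel} with $\mu = Id$ to the equivariant inclusion $i\colon U = SO(3) \to F_{\Ztwo}(S^2,2)$, then use $ESO(3)\times_{SO(3)} SO(3) = ESO(3) \simeq *$. One small remark: the lemma only needs $i$ to be an equivariant map that is an ordinary (weak) homotopy equivalence, so property (d), the equivariance of the deformation retraction, is not actually required.
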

\begin{proof}
Proposition~\ref{pro: gramschmidtS^2} shows that 
$i: U=SO(3) \rightarrow F_{\Ztwo}(S^2,2)$ is a $SO(3)$-equivariant
homotopy equivalence. 
By lemma~\ref{lem: CD for Borel}, it follows that
$$
Id \bar{\times} i: ESO(3) \times_{SO(3)} U \rightarrow
ESO(3) \times_{SO(3)} F_{\Ztwo}(S^2,2)
$$
is a (weak) homotopy equivalence.

On the other hand,
$$
	ESO(3) \times_{SO(3)} U = ESO(3) \times_{SO(3)} SO(3) 
= ESO(3) \simeq * 
$$

\end{proof}

This was the crucial ingredient to the following proposition:

\begin{prop}
\label{pro: orbitS2Kpi1}
If $F_{\Ztwo}(S^2,k)$ is given the diagonal $SO(3)$-action induced
from the natural $SO(3)$ action on $S^2$ then
$ESO(3) \times_{SO(3)} F_{\Ztwo}(S^2,k)$ is a $K(\pi,1)$-space
for all $k \geq 2$.
\end{prop}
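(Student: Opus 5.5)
The plan is to mimic the proof of proposition~\ref{pro: BSKpi1}, with corollary~\ref{cor: orbS^22} playing the role that the homotopy equivalence $SO(3) \simeq F(S^2,3)$ played there. The case $k=2$ is exactly corollary~\ref{cor: orbS^22}: a (weakly) contractible space is trivially a $K(\pi,1)$ with trivial $\pi$. So assume $k>2$.

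By theorem~\ref{thm: orbit fib}, the projection onto the first two coordinates
$$
\pi: F_{\Ztwo}(S^2,k) \rightarrow F_{\Ztwo}(S^2,2)
$$
is a fibration with fiber $F_{\Ztwo}(S^2-O_2,k-2)$. The diagonal $SO(3)$ action commutes with the antipodal map, so it acts on both spaces, and $\pi$ is $SO(3)$-equivariant. Applying the Borel construction to this equivariant fibration should give a fibration
$$
F_{\Ztwo}(S^2-O_2,k-2) \rightarrow ESO(3)\times_{SO(3)} F_{\Ztwo}(S^2,k) \rightarrow ESO(3)\times_{SO(3)} F_{\Ztwo}(S^2,2).
$$
To justify this I would compare the two Borel fibrations over $BSO(3)$ coming from the lemma on Borel constructions. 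Concretely, $E\times_G X \rightarrow E\times_G Y$ is a fibration whenever $X\rightarrow Y$ is a $G$-equivariant fibration; this is the standard fact that the Borel construction preserves fibrations. If this point becomes delicate, there is a more concrete route. Using proposition~\ref{pro: gramschmidtS^2}, restrict to $U=SO(3)$. Since $SO(3)$ acts freely and transitively on $U$, we get $\pi^{-1}(U) \cong SO(3)\times F_{\Ztwo}(S^2-O_2,k-2)$ equivariantly, by translating the fiber over $(e_1,e_2)$. Then the inclusion $\pi^{-1}(U)\subseteq F_{\Ztwo}(S^2,k)$ is an equivariant weak equivalence, because it is a map of fibrations over the equivalence $U\simeq F_{\Ztwo}(S^2,2)$ that is the identity on fibers; the five lemma gives this. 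Lemma~\ref{lem: CD for Borel} then identifies the Borel construction with
$$
ESO(3)\times_{SO(3)}(SO(3)\times Y)\simeq Y, \qquad Y=F_{\Ztwo}(S^2-O_2,k-2).
$$
I prefer this route, since it is exactly parallel to proposition~\ref{pro: BSKpi1}.

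Either way, the total space is weakly equivalent to $F_{\Ztwo}(S^2-O_2,k-2)$. By lemma~\ref{lem: orbitKpi1} with $n=2$, this is a $K(\pi,1)$, in fact with polyfree fundamental group. That finishes the proof.

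The main obstacle is the step establishing that $\pi^{-1}(U)\cong SO(3)\times Y$ equivariantly and that its inclusion is a homotopy equivalence. The rest is bookkeeping with lemma~\ref{lem: CD for Borel} and the five lemma.
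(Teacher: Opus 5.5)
Your argument is correct, but it follows a different route from the paper's proof.

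\textbf{The paper's route.} The paper never trivializes anything over $U$. It observes that the fiber $F_{\Ztwo}(S^2-O_2,k-2)$ is aspherical and $\pi_2(SO(3))=0$. Hence the projection $\pi$ is an isomorphism on $\pi_i$ for all $i\geq 2$. It then compares the two Borel bundles over $BSO(3)$, with the identity on the base. The monomorphism half of the refined five lemma shows that $Id\bar{\times}\pi$ is injective on $\pi_i$ for $i\geq 2$. Since the target $ESO(3)\times_{SO(3)}F_{\Ztwo}(S^2,2)$ is weakly contractible, the higher homotopy of the source vanishes.

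\textbf{Your route.} Yours instead mirrors the $k>3$ case of proposition~\ref{pro: configsphere} and proposition~\ref{pro: BSKpi1}. The free transitive action on $U=SO(3)$ gives the equivariant homeomorphism $\pi^{-1}(U)\cong SO(3)\times F_{\Ztwo}(S^2-O_2,k-2)$. The restricted fibration over $U\simeq F_{\Ztwo}(S^2,2)$, with connected fibers, shows the inclusion is an equivariant weak equivalence. Lemma~\ref{lem: CD for Borel} then collapses the Borel construction to $Y$.

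\textbf{Comparison.} Your approach needs a bit more geometric setup: the explicit trivialization over $U$ and a second five-lemma comparison. In return it gives more. It identifies the Borel construction, up to weak equivalence, with $F_{\Ztwo}(S^2-O_2,k-2)$. So its fundamental group is the polyfree group of lemma~\ref{lem: orbitKpi1}, of cohomological dimension $k-2$, which the paper's homotopy-group bookkeeping does not record. Your alternative first route also works and gives the same identification: the homotopy fiber of the map of Borel constructions is the fiber of $\pi$, and the base is weakly contractible.
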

\begin{proof}
We will prove this by induction on $k$. The case $k=2$ was
proven in corollary~\ref{cor: orbS^22} so assume $k>2$ and
that we have proven the proposition for smaller numbers.

Let $\pi: F_{\Ztwo}(S^2,k) \rightarrow F_{\Ztwo}(S^2,2)$
be projection onto the first $2$ factors. Notice that $\pi$
is $SO(3)$-equivariant. 

By 
theorem~\ref{thm: orbit fib}, $\pi$ is a fibration with fiber
$F_{\Ztwo}(S^2-O_2,k-2)$ which is a $K(\pi,1)$-space by 
lemma~\ref{lem: orbitKpi1}.
Thus $\pi$ induces an isomorphism in $\pi_i$ for $i \neq 1,2$. 
However $\pi_2(F_{\Ztwo}(S^2,2))=\pi_2(SO(3))=0$ so in fact
in the long exact sequence for the fibration above, the boundary
map from $\pi_2(F_{\Ztwo}(S^2,2))$ to $\pi_1(fiber)$ necessarily
vanishes and $\pi$ is an isomorphism on the $\pi_2$ level also.

By lemma~\ref{lem: CD for Borel}, we have the following commutative
diagram
$$
\begin{CD}
F_{\Ztwo}(S^2,k) @>>> ESO(3) \times_{SO(3)} F_{\Ztwo}(S^2,k) @>>>
BSO(3) \\
@VV \pi V @VV Id \bar{\times} \pi V @VV Id V \\
F_{\Ztwo}(S^2,2) @>>> ESO(3) \times_{SO(3)} F_{\Ztwo}(S^2,2) @>>>
BSO(3)
\end{CD}
$$
where each row is a fibration.
In the preceding paragraph, we saw that $\pi$ induced isomorphisms
in $\pi_i$ for all $i \neq 1$. Thus by the refined form of the
five lemma given in lemma~\ref{lem: 5lem}, we see
that $Id \bar{\times} \pi$ induces a monomorphism in $\pi_i$ for
all $i \neq 1$. Since $\pi_i(ESO(3) \times_{SO(3)} F_{\Ztwo}(S^2,2))=0$
for all $i$ this lets us conclude that
$\pi_i(ESO(3) \times_{SO(3)} F_{\Ztwo}(S^2,k))=0$ for all $i \neq 1$
which is what we set out to prove.
\end{proof}

Now we are finally ready to study the configuration space
$F(\RR P^2,k)$. Recall we had the covering map:
$$
(\Ztwo)^k \rightarrow F_{\Ztwo}(S^2,k) \overset{\pi}{\rightarrow} F(\RR P^2,k).
$$
As before, $SO(3)$ acts diagonally on $F_{\Ztwo}(S^2,k)$ using the
natural action of $SO(3)$ on $S^2$. Of course, $SO(3)$ also acts
naturally on $\RR P^2$ viewed as the space of lines in $\RR^3$.
Hence, $SO(3)$ acts diagonally on $F(\RR P^2,k)$.
Furthermore, it is easy to see, that the covering map $\pi$ above
is $SO(3)$-equivariant with respect to these actions. (For $k=1$, $\pi$
is just the map taking a unit vector, to the line it spans.) 

Being a covering map, $\pi$ induces isomorphisms in $\pi_i$ for all
$i \neq 1$, and a monomorphism in $\pi_1$.
We have the usual commutative diagram

$$
\begin{CD}
F_{\Ztwo}(S^2,k) @>>> ESO(3) \times_{SO(3)} F_{\Ztwo}(S^2,k)
@>>> BSO(3) \\
@VV \pi V @VV Id \bar{\times} \pi V @VV Id V \\
F(\RR P^2,k) @>>> ESO(3) \times_{SO(3)} F(\RR P^2,k) @>>> BSO(3)
\end{CD}
$$
where each row is a fibration.

>From the five lemma (lemma~\ref{lem: 5lem}), it follows easily
that $Id \bar{\times} \pi$ induces an epimorphism in $\pi_i$ for
$i \neq 1$. Thus from proposition~\ref{pro: orbitS2Kpi1}, it follows that
$ ESO(3) \times_{SO(3)} F(\RR P^2,k)$ is a $K(\pi,1)$-space. 
This is the result we wanted in this section and we state it as:

\begin{thm}
\label{thm: BorelconfRP2Kpi1}
$ESO(3) \times_{SO(3)} F(\RR P^2,k)$ is a $K(\pi,1)$-space if $k \geq 2$ and
furthermore
$$
ES^3 \times_{S^3} F(\RR P^2,k) = K(PB_k(\RR P^2),1) 
$$
and 
$$
ES^3 \times_{S^3} SF(\RR P^2,k) = K(B_k(\RR P^2),1).
$$ 
Here, the $SO(3)$ action is induced from the natural action of
$SO(3)$ on $\RR P^2$ viewed as the space of lines in $\RR^3$.
The $S^3$ action is obtained from the $SO(3)$ action using that
$S^3$ is the universal covering group of $SO(3)$. 
\end{thm}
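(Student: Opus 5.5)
The first assertion has essentially been established in the paragraphs preceding the statement, so I will assemble that argument and then transfer it to $S^3$. Fix $k \geq 2$. The covering map $\pi : F_{\Ztwo}(S^2,k) \rightarrow F(\RR P^2,k)$ is $SO(3)$-equivariant, so Lemma~\ref{lem: CD for Borel} gives the ladder of fibrations over $BSO(3)$ displayed above. Since $\pi$ is a covering, it is an isomorphism on $\pi_i$ for $i \geq 2$ and injective on $\pi_1$. Lemma~\ref{lem: 5lem}(b) applied at each $\pi_i$, $i \geq 2$, then shows that $Id\bar{\times}\pi$ is surjective on $\pi_i$; one needs surjectivity at the fiber and base terms and injectivity at the next base-side term, and the identity on $BSO(3)$ supplies the latter. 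By Proposition~\ref{pro: orbitS2Kpi1} the source has vanishing $\pi_i$ for $i \geq 2$, so the target does as well. Path-connectedness of the target follows from that of $F(\RR P^2,k)$ and of $BSO(3)$. This proves that $ESO(3)\times_{SO(3)} F(\RR P^2,k)$ is a $K(\pi,1)$.

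For the $S^3$ statements, apply Lemma~\ref{lem: compareBorel} with $G=SO(3)$, $\tilde G = S^3$ and $X = F(\RR P^2,k)$. That lemma gives isomorphisms on $\pi_n$ for $n \neq 1,2$ and a monomorphism on $\pi_2$ into the $SO(3)$-Borel construction. Since the $SO(3)$-Borel construction has vanishing higher homotopy, $ES^3\times_{S^3}F(\RR P^2,k)$ has vanishing $\pi_n$ for all $n\geq 2$. The same lemma identifies $\pi_1(ES^3\times_{S^3}F(\RR P^2,k)) \cong \pi_1(F(\RR P^2,k)) = PB_k(\RR P^2)$. Connectedness follows from the fibration over the simply connected space $BS^3$ with connected fiber.

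For the unordered version, use Remark~\ref{rem: Borelcover} with $G=S^3$. This gives a $\Sigma_k$-covering $ES^3\times_{S^3}F(\RR P^2,k)\rightarrow ES^3\times_{S^3}SF(\RR P^2,k)$, so the base also has vanishing higher homotopy and is a $K(\pi,1)$. To identify its fundamental group, apply the fibration $SF(\RR P^2,k)\rightarrow ES^3\times_{S^3}SF(\RR P^2,k)\rightarrow BS^3$ together with $\pi_1(BS^3)=\pi_2(BS^3)=0$, exactly as in the proof of Lemma~\ref{lem: compareBorel}. This gives $\pi_1 \cong \pi_1(SF(\RR P^2,k)) = B_k(\RR P^2)$.

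The only delicate point is the bookkeeping in the five lemma at the $\pi_2$ and $\pi_1$ levels of the first ladder. At $\pi_2$, surjectivity of $Id\bar\times\pi$ uses injectivity at the $\pi_1$ fiber term (the covering map) and at the $\pi_1(BSO(3))$ term (the identity). This is exactly the hypothesis of Lemma~\ref{lem: 5lem}(b) shifted by one position, so I expect no real obstacle.
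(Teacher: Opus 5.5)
Your proposal is correct and follows the paper's argument. The paper likewise uses the $SO(3)$-equivariant covering $F_{\Ztwo}(S^2,k)\to F(\RR P^2,k)$ and the five lemma on the ladder over $BSO(3)$, together with Proposition~\ref{pro: orbitS2Kpi1}; it then passes to $S^3$ and to the unordered case via Lemma~\ref{lem: compareBorel} and Remark~\ref{rem: Borelcover}.

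One small bookkeeping slip: in Lemma~\ref{lem: 5lem}(b), surjectivity on $\pi_i$ of the total spaces requires injectivity at $\pi_{i-1}$ of the fibers. That injectivity comes from the covering map, not from the identity on $BSO(3)$, as your last paragraph correctly says; no argument is affected.
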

\begin{proof}
Most of the theorem was proven in the preceding paragraph. The
only thing remaining is the statement about the $S^3$ Borel
constructions which follows immediately from lemma~\ref{lem: compareBorel}
and remark~\ref{rem: Borelcover}. 
\end{proof}

\begin{rem}
>From the fibration, 
$$
F(\RR P^2,k) \rightarrow ES^3 \times_{S^3} F(\RR P^2,k) \rightarrow
BS^3
$$
and the fact that $ES^3 \times_{S^3} F(\RR P^2,k)$ is a $K(\pi,1)$-space,
we see easily that
$$
\pi_n(F(\RR P^2,k)) \cong \pi_{n+1}(BS^3) \cong \pi_{n}(S^3)
$$
for $n \geq 2$.

Thus, $F(\RR P^2,k)$ has the same higher homotopy as the 3-sphere.
\end{rem}

This completes our analysis of $F(M,k)$ for 2-manifolds $M$
without boundary.
The conclusion is that these are always $K(PB_k(M),1)$-spaces except
when $M$ is $S^2$ or $\RR P^2$. However in these cases,
there is an associated Borel construction which is a $K(PB_k(M),1)$.

For now, these results might seem to be a pretty formal analysis 
of the homotopy type of the configuration spaces of surfaces - however we
will see their true power when we begin talking about labeled
configuration spaces, and
the ``classical'' connection of these labeled configuration spaces with
loop spaces.

Furthermore, Borel constructions will provide a crucial tool in connecting
the braid groups we have studied, to mapping class groups, another important
class of groups associated to surfaces, which we will introduce
and study later on in these notes.

\section{Mapping class groups}

\vskip .2in

Let $M$ denote an orientable surface and let $Top(M)$ be the group of
orientation preserving homeomorphisms of M. We make it a 
topological group by giving it the compact open topology. 

Let $Q_k$ be a 
set of $k$ distinct points in $M$ and let $Top(M, Q_k)$ be the 
topological subgroup 
of $Top(M)$ which leaves the set $Q_k$ invariant. 

Recall the standard isotopy lemma, (see \cite{GP})

\begin{lem}[Isotopy Lemma]
\label{lem: isotopy}
Let $M$ be a connected smooth manifold of dimension strictly bigger 
than one. Then for $(x_1,\dots,x_k), (y_1,\dots,y_k) \in F(M,k)$, 
there exists a diffeomorphism $\phi: M \to M$ such that 
$\phi$ is isotopic to the identity and $\phi(x_i)=(y_i)$ for 
$1 \leq i \leq k$.
\end{lem}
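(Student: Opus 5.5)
The strategy is to build $\phi$ by composing finitely many diffeomorphisms, each compactly supported in a small coordinate ball and each isotopic to the identity through compactly supported diffeomorphisms. Such a composition is again isotopic to the identity. Two reductions organise the argument. First, it suffices to move one point at a time while fixing the others. Second, a single point can be moved along a path in a manifold that has been punctured at the other points. Connectedness of that punctured manifold is exactly where the hypothesis $\dim M \geq 2$ enters.

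The local step comes first. Let $B$ be an open coordinate ball, identified with the open unit ball in $\RR^n$, and let $a,b$ lie in a smaller closed ball $D \subset B$. I will construct a diffeomorphism $h$ of $M$ with $h(a)=b$ and support in $B$. Take a smooth bump function $\rho$ equal to $1$ on $D$ and supported in $B$. Let $V$ be the vector field $\rho(x)(b-a)$, extended by zero outside $B$. It has compact support, so its flow $\Phi_t$ exists for all $t$, and $\Phi_1(a)=b$ because the segment from $a$ to $b$ lies in the convex set $D$, where the field is constant. The path $t\mapsto \Phi_t$ for $t\in[0,1]$ is an isotopy from the identity to $h=\Phi_1$. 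Since $h$ is the identity outside $B$, it fixes every point outside $B$.

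Next comes the one-point step. Fix a finite set $P\subset M$ and let $x,y\in M-P$. I claim there is a diffeomorphism isotopic to the identity, fixing $P$ pointwise, and sending $x$ to $y$. Define $x \sim y$ if such a diffeomorphism exists. This is an equivalence relation on $M-P$, using compositions and inverses. Each equivalence class is open: around any $x$ choose a coordinate ball $B$ missing $P$ and apply the local step. The complement of a class is a union of classes, so it is open as well. Hence it remains to show that $M-P$ is connected. Locally, removing finitely many points from $\RR^n$ with $n\geq 2$ does not disconnect a ball. So any path in $M$ between two points of $M-P$ can be perturbed, inside small coordinate balls, to avoid $P$. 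I expect this small topological fact to be the only real point of the argument, and the place where dimension one fails. To handle it, cover the path by finitely many balls and reroute around each puncture within a ball, where the ball minus finitely many points is path connected.

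Finally, induct on $k$. Suppose $\psi$ is isotopic to the identity and satisfies $\psi(x_i)=y_i$ for $i<k$. Then $\psi(x_k)$ and $y_k$ both lie outside $P=\{y_1,\dots,y_{k-1}\}$, since $\psi$ is injective and the $y_i$ are distinct. Apply the one-point step to get $\eta$ fixing $P$ with $\eta(\psi(x_k))=y_k$, and set $\phi=\eta\circ\psi$. Then $\phi(x_i)=y_i$ for all $i\le k$. Moreover $\phi$ is isotopic to the identity, because the product of the two isotopies is an isotopy. The base case $k=1$ is the one-point step with $P=\emptyset$, which uses only that $M$ is connected.
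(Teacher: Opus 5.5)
Your proof is correct: the compactly supported flow in a coordinate ball, the open equivalence classes in the connected set $M-P$ (which is where $\dim M \ge 2$ enters), and the induction on $k$ all work as you describe. The paper gives no proof of its own and only cites Guillemin--Pollack; your argument is essentially the standard homogeneity/isotopy-lemma proof behind that citation, extended to $k$ points by induction on the punctured manifold.
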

 
From the isotopy lemma, there is an orientation preserving 
homeomorphism of $M$ carrying one set of $k$ distinct points 
$Q_k$ to any other set of $k$ distinct points $Q_k'$, 
and thus $Top(M,Q_k)$ is conjugate to $Top(M,Q_k')$ in $Top(M)$
and so we will sometimes write $Top(M,k)$ when the points are understood.

We will also look at $PTop(M,Q_k)$, the topological subgroup of $Top(M,Q_k)$ 
consisting of homeomorphisms which actually fixe the points of $Q_k$ 
pointwise. Thus $PTop(M,Q_k)$ is the kernel of the natural 
homomorphism from $Top(M,Q_k) \to \Sigma_k$ induced by 
sending a homeomorphism leaving the set $Q_k$ invariant to the 
permutation it induces on those points. Again, we will write 
$PTop(M,k)$ when the points $Q_k$ are understood.

Recall that if $M$ is
a surface then the inclusion of the group of orientation
preserving diffeomorphisms $Diff^+(M) \to\ Top(M)$ is a homotopy
equivalence. Some of the motivation for
considering $PTop(M,k)$ is described next.

\vskip .2in

Properties of these subgroups frequently correspond to a lifting
question relating branched covering spaces. Namely given a
branched cover $N \to M$ which is branched over a finite set $Q_k$, 
information about $Top(M,k)$ gives information about
$Top(N)$. That is, homeomorphisms of $M$ which leave the branch set invariant
lift (in possibly more than one way) to homeomorphisms of $N$.

\vskip .2in

One direction of these notes is to obtain information
concerning $Top(M,k)$ in order to obtain information about
$Top(N)$. This approach is carried out in a few cases in these
notes for the specific examples of surfaces of genus 0,1, and 2.
This approach dates back to Hurwitz, and has been exploited in
\cite{BH}; these classical methods have proven to be
useful also for cohomological analysis.

\vskip .2in

In a few of the applications, pointed versions of these
constructions are useful. For example, the cohomology of the
"pointed mapping class group" for a genus 1 surface with marked
points (to be defined below) has a clean cohomological
description. The analogous description without the assumption of
"pointed maps" has a technically more complicated description.
This structure is analogous to the behavior of certain function
spaces. That is, the space of pointed maps from a circle to a space
X, the loop space $\Omega X$ is frequently accessible from a
homological point of view. At the same time, the space of free
maps from a circle to X, the free loop space $\Lambda X$, 
is frequently more delicate.

Mapping class groups, in some cases, reflect similar behavior as
we will see later when we study the
pointed mapping class group for punctured copies of genus zero, one,
and two surfaces.

\vskip .2in
Recall given a topological group $G$, the path components $pi_0(G)$ 
form a (discrete) group where the group structure is induced from $G$.

\begin{defn}
Let $M$ be a closed orientable surface of genus $g$. 
The mapping class group 
$\Gamma_g^k=\pi_0(Top(M,Q_k))$. 

The pure mapping class group $P\Gamma_g^k$ is
the kernel of the natural homomorphism $\Gamma_g^k  \to\
\Sigma_k$, induces by sending a homeomorphism to the permutation 
it induces a the points $Q_k$.
\end{defn}

\begin{defn}
Let $M$ be a closed orientable surface of genus $g$, $Q_k$ a 
set of $k$ distinct points of $M$ and $p$ a fixed point in $M-Q_k$.

The pointed mapping class group 
$\Gamma_g^{k,1}$ is the group of
path components of the orientation preserving homeomorphisms which
(1) preserve the point $p$, and \\
\noindent
(2) leave the set $Q_k$ invariant. 

The pure pointed
mapping class group $P\Gamma_g^{k,1}$ is the kernel of the natural
homomorphism $\Gamma_g^{k,1}  \to\ \Sigma_k$.
\end{defn}

\vskip .2in

The group $Top(M)$ acts on the configuration space of points in $M$,
$F(M,k)$, diagonally. A "folk theorem" that has been useful gives
(1) there are natural $K(\pi,1)'s$ obtained from the associated
Borel construction (homotopy orbit spaces) for groups acting on
configuration spaces, and \\
\noindent
(2) these configuration spaces are
analogous to homogeneous spaces in the sense that they are
frequently homeomorphic to a quotient of a topological group by a
closed subgroup.

\vskip .2in

Namely, let $G$ be a subgroup of $Top(M)$, and consider the diagonal
action of $G$ on $F(M,k)$ together with the homotopy orbit spaces

$$
EG \times_G F(M,k)
$$ 
and
$$
EG \times_G {F(M,k)/ \Sigma_k}.
$$

\vskip .2in

\begin{lem}
\label{lem: closedsubgroup}
Assume that $M$ is a nonempty manifold of dimension at least 1. Then
$Top(M,k)$ and $PTop(M,k)$ are closed subgroups of $Top(M)$.
\end{lem}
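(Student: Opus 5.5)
The plan is to realize both subgroups as preimages of closed sets under continuous maps built from the evaluation maps at the points of $Q_k=\{q_1,\dots,q_k\}$.

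For each $i$, consider the evaluation map $ev_i: Top(M) \to M$, $ev_i(h)=h(q_i)$. In the compact open topology, evaluation at a fixed point is continuous. To see this, note that if $h(q_i)\in V$ with $V$ open, then the subbasic open set $\{f : f(\{q_i\})\subseteq V\}$ contains $h$ and is mapped into $V$, since $\{q_i\}$ is compact. Assembling these, we get a continuous map
$$
ev: Top(M) \to M^k, \qquad ev(h)=(h(q_1),\dots,h(q_k)).
$$

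\textbf{The pure subgroup.} By definition, $PTop(M,k)=ev^{-1}(\{(q_1,\dots,q_k)\})$. Since $M$ is Hausdorff (it is a manifold, and we assume Hausdorff throughout), $M^k$ is Hausdorff, so points are closed. Hence $PTop(M,k)$ is closed. Its being a subgroup is clear.

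\textbf{The full subgroup.} A homeomorphism $h$ lies in $Top(M,k)$ exactly when $h(Q_k)\subseteq Q_k$. Because $h$ is injective and $Q_k$ is finite, this forces $h(Q_k)=Q_k$, and then $h^{-1}$ also preserves $Q_k$, so $Top(M,k)$ is indeed a subgroup. The condition $h(Q_k)\subseteq Q_k$ says $ev(h)\in (Q_k)^k$. The set $(Q_k)^k$ is a finite subset of the Hausdorff space $M^k$, hence closed. Therefore $Top(M,k)=ev^{-1}\bigl((Q_k)^k\bigr)$ is closed.

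Equivalently, $Top(M,k)$ is the union, over $\sigma\in\Sigma_k$, of the closed sets $ev^{-1}(q_{\sigma(1)},\dots,q_{\sigma(k)})$. Each of these is a coset of $PTop(M,k)$, so this presentation also shows $PTop(M,k)$ has finite index in $Top(M,k)$.

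The only point needing care is the continuity of evaluation in the compact open topology. The version needed here, at a single fixed point rather than joint continuity of $(h,x)\mapsto h(x)$, is immediate from the definition of the subbasis, so no local compactness hypothesis is needed. The dimension hypothesis plays no role beyond ensuring $M$ is a Hausdorff manifold.
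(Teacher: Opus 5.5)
Your argument is correct and is essentially the paper's. The paper shows the complement of $Top(M,k)$ is open: for $f \notin Top(M,k)$ some $m_i \in Q_k$ is sent into the open set $M - Q_k$, and the subbasic set of maps doing this is a neighbourhood of $f$ missing $Top(M,k)$. That set is exactly $ev_i^{-1}(M-Q_k)$ in your notation, so you have repackaged the same argument as the preimage of the closed set $(Q_k)^k$ under the continuous map $ev$.

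One small correction to your side remark: only the \emph{nonempty} sets $ev^{-1}(q_{\sigma(1)},\dots,q_{\sigma(k)})$ are cosets of $PTop(M,k)$. For $M=\RR$ or $S^1$, some permutations are not realized by orientation-preserving homeomorphisms. This plays no role in the lemma.
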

\begin{proof}
Given any point $f$ in the complement of $Top(M,k)$ in $Top(M)$,
there is at least one $m_i \in Q_k$ that is taken to a point outside 
$Q_k$.
The open set of continuous functions 
that carry $m_i$ to $M-Q_k$ in the complement of
$Top(M,k)$ in $Top(M)$ is a neighbourhood of $f$. 
Thus it follows that the complement of $Top(M,k)$ in $Top(M)$ 
is open and the lemma follows for $Top(M,k)$. 
The proof for $PTop(M,k)$ is similar.
\end{proof}

\vskip .2in

Consider the natural evaluation map $Top(M)  \to\  F(M,k)$ given
by $Top(M)$ acting on the point $(m_1, m_2, ... , m_k)$. 
Since the isotropy group of the $Top(M)$ action on $F(M,k)$ 
at $(m_1,\dots,m_k)$ 
is the group $Top(M,k)$, there is an induced map 
$$
\rho: Top(M)/Top(M,k)  \to\  F(M,k).
$$
By the isotopy lemma, $\rho$ is onto and hence a continuous bijection.

In the next theorem, we will show among other things, 
that in fact the map $\rho$ above 
is a homeomorphism. Thus configuration spaces of surfaces are 
``homogeneous spaces'' of suitable topological groups. 
(The quotation marks about the word homogeneous space is due 
to the fact that $Top(M)$ is only a topological group, not a 
Lie group). 

The theorem will be proven from a construction in Steenrod's
book "The topology of fibre bundles". Namely, consider 
$$ 
H \to\ G \to\  G/H 
$$ 
where $H$ is a closed subgroup of $G$, and the map $G
\to\  G/H$ admits "local cross-sections".  Then the induced map 
$BH \to\ BG$ is
the projection map in a fibre bundle with fibre given by the space
of left cosets $G/H$. [Steenrod, page 30]

The definition of "local cross-sections" is given as follows: Let
$H$ be a closed subgroup of $G$ with
natural quotient map $p: G \to\ G/H$ and let $x \in G/H$. 
A local cross-section of $p: G \to G/H$ at $x$ 
is a continuous function $f: V \to G$ where $V$ is an 
open neighborhood of $x$ in
$G/H$ satisfying $pf(x)=x$ for all $x \in V$. [Steenrod, page
30].

\begin{thm}
\label{thm: Topfibrations}
Assume that $M$ is an orientable surface without boundary.
Then
\begin{enumerate}
\item There is a principal fibration
$$
Top(M,k)  \to\ Top(M)  \to\  Top(M)/Top(M,k).
$$
\item The map 
$$
\rho: Top(M)/Top(M,k)  \to\  F(M,k)
$$
is a homeomorphism.
\item The homotopy theoretic fibre of the natural map
$BTop(M,k)  \to\ BTop(M)$ is $F(M,k)$, and $ETop(M) \times_{Top(M)} F(M,k)$
is homotopy equivalent to $BTop(M,k)$.
\end{enumerate}
\end{thm}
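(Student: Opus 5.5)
The whole argument rests on one construction: local cross-sections of the evaluation map. Fix $Q_k=(m_1,\dots,m_k)$ and let $e: Top(M)\to F(M,k)$ be $e(f)=(f(m_1),\dots,f(m_k))$. I would show that $e$ admits local sections. Once it does, all three parts of Theorem~\ref{thm: Topfibrations} follow from Steenrod's results.

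\textbf{Building the sections.} Given $(y_1,\dots,y_k)\in F(M,k)$, choose pairwise disjoint closed coordinate discs $D_i\cong D^2$ with $y_i$ in the interior of $D_i$. For each disc I want a continuous map $\theta_i:\mathrm{int}(D_i)\to Top(M)$ with two properties:
\begin{itemize}
\item $\theta_i(z)(y_i)=z$;
\item $\theta_i(z)$ is the identity outside $D_i$ and is orientation preserving.
\end{itemize}
The standard formula is the radial "push" homeomorphism of the unit disc sending $0$ to $z$. It is the identity on the boundary and linear on each ray from the boundary to $z$. It depends continuously on $z$ in the compact-open topology, extends by the identity to $M$, and is isotopic to the identity.

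Put $V=\prod_i \mathrm{int}(D_i)\cap F(M,k)$. The supports of the $\theta_i$ are disjoint, so $s(z_1,\dots,z_k)=\theta_1(z_1)\circ\cdots\circ\theta_k(z_k)$ is a continuous local section of $z\mapsto (z_1,\dots,z_k)$ near $(y_1,\dots,y_k)$. Since composition is continuous in $Top(M)$, $s$ is continuous. By the isotopy lemma, pick $g\in Top(M)$ with $e(g)=(y_1,\dots,y_k)$. Then $\sigma(z)=s(z)\circ g$ satisfies $e\circ\sigma=\mathrm{id}_V$.

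\textbf{Deducing the three parts.} For (2), the map $\rho$ is already a continuous bijection, and Lemma~\ref{lem: closedsubgroup} shows $Top(M,k)$ is closed. Let $p:Top(M)\to Top(M)/Top(M,k)$ be the quotient map. On $V$ the composite $p\circ\sigma$ is continuous and inverts $\rho$, so $\rho^{-1}$ is continuous locally and hence everywhere. Thus $\rho$ is a homeomorphism.

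For (1), $p\circ\sigma\circ\rho$ gives local cross-sections of $p$ in Steenrod's sense. By Steenrod, $Top(M)\to Top(M)/Top(M,k)$ is then a principal $Top(M,k)$-bundle.

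For (3), Steenrod's theorem quoted before the statement says $BTop(M,k)\to BTop(M)$ is a fibre bundle with fibre $Top(M)/Top(M,k)\cong F(M,k)$. Alternatively, form $ETop(M)\times_{Top(M)}\bigl(Top(M)/Top(M,k)\bigr)$. This equals $ETop(M)/Top(M,k)$, which is a model for $BTop(M,k)$ because $ETop(M)$ is a contractible free $Top(M,k)$-space. Using (2), this identifies $ETop(M)\times_{Top(M)}F(M,k)\simeq BTop(M,k)$. The first lemma on Borel constructions then exhibits $F(M,k)$ as the fibre over $BTop(M)$.

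\textbf{Main obstacle.} The delicate point is verifying continuity of $z\mapsto\theta_i(z)$ into $Top(M)$, including continuity of inverses if $Top(M)$ carries the topology making inversion continuous. Compact-open continuity of $z\mapsto\theta_i(z)$ is immediate from joint continuity of $(z,x)\mapsto\theta_i(z)(x)$, and the same holds for $(z,x)\mapsto\theta_i(z)^{-1}(x)$ since the explicit formula is invertible. There is a second, smaller issue: $ETop(M)$ should admit numerable local trivializations, so that $ETop(M)/Top(M,k)$ really is a classifying space. Local sections give exactly this, so I would rely on Steenrod's construction for it.
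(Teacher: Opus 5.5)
Your proposal is correct and follows essentially the same route as the paper. The paper's Lemma~\ref{lem: diskmap} builds local sections of the evaluation map from a disc-pushing map $\theta$ and deduces that $\rho$ is a homeomorphism; it then quotes Steenrod both for the principal bundle $Top(M,k)\to Top(M)\to Top(M)/Top(M,k)$ and for the bundle $BTop(M,k)\to BTop(M)$ with fibre $F(M,k)$. Your version is slightly more careful, since your push sends the centre to $z$ and you precompose with $g$ so that $e\circ\sigma=\mathrm{id}_V$ genuinely holds (the paper's $\theta$ sends the moving point to the centre instead); note only that the local cross-section of $p$ is $\sigma\circ\rho$ on $\rho^{-1}(V)$, and $p\circ\sigma\circ\rho$ is the identity there.
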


\vskip .2in

The proof of Theorem~\ref{thm: Topfibrations} 
depends on the next lemma. Here, let
$D^n$ denote the standard $n$-disk, i.e., 
the points in $\RR^n$ of euclidean norm at
most $1$ with interior denoted $D^{o,n}$ with $(0,0,...,0)$ the
origin in $D^n$. The map $\theta$ in the next lemma was quite
useful in the article by Fadell and Neuwirth \cite{FN} while the formula
here was written explicitly in \cite{X}.

\vskip .2in

\begin{lem}
\label{lem: diskmap}
\begin{enumerate}

\item There is a continuous map

$ \theta: D^{o,n} \times D^n \to\  D^n $

such that $ \theta(x,-)$ fixes the boundary of  $D^n$ pointwise,
and $ \theta(x,x) = (0,0,...,0)$ for every x in $D^{o,n}$.

\item If $M$ is a surface without boundary, then
there exists a basis of open sets $U$ for the topology of $F(M,k)$
together with local sections

$\phi: U  \to \ Top(M)$ such that the composite

$$ 
U  \overset{\phi}{\rightarrow} Top(M) \to\ Top(M)/Top(M,k)  
\overset{\rho}{\rightarrow}  F(M,k)
$$ 
is a homeomorphism onto $U$.

\item The natural map $\rho: Top(M)/Top(M,k)  \to\  F(M,k)$ is a
homeomorphism.
\end{enumerate}
\end{lem}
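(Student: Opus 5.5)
For part (1) I would write down an explicit formula for $\theta$. Since the map need only be continuous, a radial-interpolation construction suffices. For $x\in D^{o,n}$ and $y\in D^n$, write $y$ uniquely as $y = x + s(z-x)$ with $z\in\partial D^n$ and $s\in[0,1]$; this uses the segment from $x$ through $y$ to the boundary. Then set
$$\theta(x,y)=s z .$$
That is, $\theta(x,-)$ maps each segment $[x,z]$ linearly onto the segment $[0,z]$. This is the map that sends $x$ to the origin and fixes the boundary. It is clearly a homeomorphism of $D^n$ for each fixed $x$, with inverse $w\mapsto x+|w|(w/|w|-x)$ and value $x$ at $w=0$. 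Its inverse therefore also depends continuously on $x$, which I will use in part (2). To handle continuity of $\theta$ jointly in $(x,y)$, I would avoid the case $y=x$ (where $z$ is undefined) by using the inverse formula $\theta^{-1}(x,w)=w+(1-|w|)x$. This formula is manifestly continuous in $(x,w)$ and is a homeomorphism in $w$ for each fixed $x$, so I would define $\theta(x,-)$ as its inverse. Joint continuity of the inverse then follows from the continuity of the family together with compactness of $D^n$. One could also argue via properness of $(x,w)\mapsto (x,\theta^{-1}(x,w))$ as a continuous bijection of $D^{o,n}\times D^n$ onto itself that is closed over compact sets in $x$. Finally I would check that $\theta^{-1}(x,0)=x$ and that $\theta^{-1}(x,w)=w$ when $|w|=1$.

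For part (2), fix $(m_1,\dots,m_k)\in F(M,k)$ and choose disjoint closed coordinate disks $D_i\subset M$, each identified with $D^2$, with $m_i$ corresponding to the origin. Take $U=\prod_i D_i^{o}\cap F(M,k)$, which equals $\prod D_i^o$ since the disks are disjoint. Shrinking the disks shows these sets form a basis of neighbourhoods. Define $\phi(y_1,\dots,y_k)$ to be the homeomorphism that equals $\theta(y_i,-)^{-1}$ on each $D_i$ and the identity off $\bigcup D_i$. It is well defined because $\theta^{-1}$ fixes the boundary, and it is orientation preserving because it is isotopic to the identity through the family $ty$. It sends $m_i$ to $y_i$, so $\rho\circ p\circ\phi=\mathrm{id}_U$. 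To show $\phi$ is continuous into the compact-open topology, I would use that joint continuity of $(y,w)\mapsto \theta^{-1}(y,w)$, together with continuity of inverses, gives continuity into $Top(M)$ with the compact-open topology. For homeomorphism groups one should check inverses as well, but here the inverse is $\theta(y_i,-)$, which is also jointly continuous. The composite is then the identity of $U$, hence a homeomorphism onto $U$.

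For part (3), $\rho$ is a continuous bijection by the isotopy lemma and the identification of isotropy groups. It remains to show $\rho$ is open, or equivalently that $\rho^{-1}$ is continuous. On each basic $U$ I have $\rho^{-1}|_U=p\circ\phi$, which is continuous. Since the sets $U$ cover $F(M,k)$, $\rho^{-1}$ is continuous. I expect the main obstacle to be the continuity claims: joint continuity of $\theta$ at points where $y=x$, and continuity of $\phi$ into $Top(M)$ in the compact-open topology. The explicit inverse formula $w+(1-|w|)x$ is designed to make these checks routine.
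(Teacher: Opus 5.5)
Your overall architecture matches the paper's proof. You build a disc-pushing map $\theta$, obtain local sections $\phi$ by applying it on disjoint coordinate discs, and get continuity of $\rho^{-1}$ from $\rho^{-1}|_U=p\circ\phi$.

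The one real difference is part (1). The paper sets $\theta(q,y)=\alpha^{-1}(\alpha(y)-\alpha(q))$ with $\alpha(x)=x/(1-|x|)$, which is translation in $\RR^n$ transported to the open disc, and takes $\theta(q,y)=y$ on $\partial D^n$; it asserts continuity at the boundary without checking it. Your coning map has the single global inverse $w+(1-|w|)x$, so continuity up to the boundary is automatic, and continuity of $\theta$ itself follows from your compactness argument. The paper's version gives $\theta$ and $\theta^{-1}$ by formulas of the same explicit shape, so it needs no inversion argument. Your section $\theta(y_i,-)^{-1}$ is also the correct one; the paper's displayed $H(x)=\theta(p_i,x)$ does not depend on $y$ at all.

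There is one step that fails as written. Your discs are centred at $(m_1,\dots,m_k)$, the configuration at which $\rho$ evaluates, and $\rho\circ p\circ\phi=\mathrm{id}_U$ depends on exactly that. So all your sets $U$ are neighbourhoods of this single point. They do not form a basis for the topology of $F(M,k)$, as part (2) requires, and the sentence ``the sets $U$ cover $F(M,k)$'' in part (3) is false. If you instead centre the discs at another point $c=(c_1,\dots,c_k)$, the resulting $\phi_c(y)$ satisfies $\phi_c(y)(c_i)=y_i$ but moves the $m_i$ incorrectly, so it is not a section of $\rho\circ p$.

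The fix is short, and the paper omits it too. Use the isotopy lemma to pick $g\in Top(M)$ with $g(m_i)=c_i$, and take $y\mapsto\phi_c(y)\circ g$. This is still continuous, because precomposition with a fixed homeomorphism is continuous in the compact-open topology. For the inverses, $(\phi_c(y)\circ g)^{-1}=g^{-1}\circ\phi_c(y)^{-1}$, and postcomposition with $g^{-1}$ is likewise continuous. Equivalently, use the $Top(M)$-equivariance of $\rho$ to carry local continuity of $\rho^{-1}$ from the base point to every point.

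Finally, the stabiliser of the ordered configuration $(m_1,\dots,m_k)$ is $PTop(M,k)$, not the setwise stabiliser $Top(M,k)$. So for $k\ge 2$, $\rho$ is only well defined after that substitution, or with target $SF(M,k)$. Your ``identification of isotropy groups'' has to be read that way; this slip comes from the paper.
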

\begin{proof}

Define $ \alpha: D^{o,n} \to\ R^n $ by the formula $ \alpha(x) =
x/(1-|x|)$, and so $\alpha^{-1}(z) = z/(1+ |z|)$.

\vskip .2in

For a fixed element q in $ D^{o,n} $, define $\gamma_q: D^n \to\
D^n$ by the formula

\begin{enumerate}

\item  $\gamma_q(y) = y$ for y in the boundary of $D^n$, and

\item  $\gamma_q(y) = \alpha^{-1}( y/(1-||y||) - q/(1-|q|)$ for y
in $ D^{o,n} $.

\end{enumerate}

\vskip .2in

Define $ \theta: D^{o,n} \times D^n \to\  D^n $ by the formula $
\theta(q,y) = \gamma_q(y) $. Notice that $ \theta $ is continuous,
and $ \theta(q,q) = (0,0,...,0)$, and so part (1) of the lemma
follows.

\vskip .2in

To prove part (2), consider a point $(p_1, p_2, ... , p_k)$ in
$F(M,k)$ together with disjoint open discs  $ D^{o,2}(p_1),
D^{o,2}(p_2),....,D^{o,2}(p_k)$ where $ D^{o,2}(p_i)$ is a disc
with center $p_i $. ( There is a choice of homeomorphism in the
identification of each open disc with an open coordinate patch of
M; this choice will be suppressed here.) Let U be the open set in
$F(M,k)$ given by the product
$D^{o,2}(p_1) \times D^{o,2}(p_2)\times ...\times D^{o,2}(p_k)$.
The sets U give a basis for the topology of $F(M,k)$ which depend
on the choice of the discs $ D^{o,2}(p_i) $. Define $\phi: U  \to\
Top(M) $ by the formula $ \phi((y_1, y_2,...,y_k)) = H$ for $H$ in
$ Top(M) $ where $(y_1, y_2,...,y_k)$ is in $U = D^{o,2}(p_1)
\times D^{o,2}(p_2)\times ...\times D^{o,2}(p_k)$, and H is the
homeomorphism of M given as follows.

\begin{enumerate}

\item H(x) = x if x is in the complement in the union of the
$ \coprod_ {1 \leq i \leq k }D^{o,2}(p_i)$, and

\item  $ H(x)$ =  $\theta(p_i,x)$ if x is in $ D^{o,2}(p_i)$.

\end{enumerate}

\vskip .2in

Clearly $H$ is in $Top(M)$. Notice that $\phi$ is continuous if an
only if the adjoint of $\phi$, $adj(\phi): U \times M  \to\ M $ is
continuous as all spaces here are locally compact, and Hausdorff.
But then continuity follows at once from the first part of the
lemma  as $ adj(\phi) (x,y) = y $ for y in the boundary of any of
the discs $D^{o,2}(p_i)$. The second part of the lemma follows.

\vskip .2in

To finish the third part of the lemma, it must be checked that the
natural map $\rho: Top(M)/Top(M,k)  \to\  F(M,k)$ is a
homeomorphism.  Notice that part 2 of the lemma gives local
sections $\phi: U  \to\ Top(M) $. Thus consider the composite
$\lambda: U  \to\ Top(M)/Top(M,k)$ given by the composite $p\phi$
where $ p: G \to\ G/H$ is the  natural quotient map. Notice that
$\lambda: U \to\  \lambda(U)$ is a continuous bijection, and
$\lambda(U) = \phi^{-1}(U)$. Thus $\lambda(U)$ is open, and the
map $\phi$ is open.  Thus $\rho$ is open, and hence a
homeomorphism. The lemma follows.

\end{proof}

\vskip .2in

Next, the proof of Theorem~\ref{thm: Topfibrations} is given.

\begin{proof}

By Lemmas~\ref{lem: diskmap}, and \ref{lem: closedsubgroup}, 
$Top(M,k)$ is a closed subgroup of
$Top(M)$, and local sections exist for $Top(M)  \to\
Top(M)/Top(M,k)$ . Thus there is a principal fibration $Top(M,k)
\to\ Top(M)  \to\ Top(M)/Top(M,k)$, and the first part of the
theorem follows.

\vskip .2in

Furthermore, the natural evaluation map $Top(M)  \to\ F(M,k)$
factors through the quotient map $Top(M) \to\ Top(M)/Top(M,k)$.
There is the induced map $\rho:Top(M)/Top(M,k) \to\  F(M,k)$
which, by lemma 1.4 , is a homeomorphism, and part 2 of the
theorem follows.

\vskip .2in

The third statement in the theorem follows from a construction in
N.  Steenrod's book \cite {S}, at the foot of page 30. Namely, consider
$ H \to\ G  \to\  G/H $ where $H$ is a
closed subgroup of $G$, and the map $G  \to\  G/H $ admits "local
cross-sections", then $BH \to\ BG$ is the projection map in a
fibre bundle with fibre given by the space of left cosets $G/H$.
\end{proof}

As a consequence of Theorem~\ref{thm: Topfibrations}, 
we can describe some of the Borel constructions 
we have looked at in previous sections as $K(\pi,1)$ spaces 
where the group $\pi$ is given by certain mapping class groups.

\vskip .2in

\begin{thm}

\begin{enumerate}
\item If $ q \geq 3$, the spaces $ESO(3)\times_{SO(3)} F(S^2,q)$,
and $ESO(3)\times_{SO(3)} F(S^2,q)/{\Sigma_q}$ are respectively
$K( P\Gamma_0^q,1 )$, and  $K( \Gamma_0^q,1 )$.

\item If M = $S^1 \times S^1$, and  $q \geq 2 $, the spaces
$ETop(M) \times_{Top(M)} F(M,q)$, and $ETop(M) \times_{Top(M)}
F(M,q)/{\Sigma_q}$ are respectively $K( P\Gamma_1^q,1 )$, and  $K(
\Gamma_1^q,1 )$. Furthermore, in these cases $ETop(M)
\times_{Top(M)} F(M,q)$ is homotopy equivalent to $ESL(2,Z)
\times_{SL(2,Z)} F(S^1 \times S^1 -\{(1,1)\},q-1)$ where $SL(2,Z)$
acts on $S^1 \times S^1 -\{(1,1)\}$  by the formula

\begin{equation}
\begin{pmatrix}a&b\\
c&d\end{pmatrix}\begin{pmatrix}u\\v\end{pmatrix} = (u^av^b,u^cv^d)
\end{equation}

for

\begin{equation}
\begin{pmatrix}u \\ v\end{pmatrix}
\end{equation}

in $S^1 \times S^1$.

\item  If M is a surface of genus g without boundary ( possibly with punctures ),
and $ g \geq 2 $, the spaces $ETop(M) \times_{Top(M)} F(M,q)$ and
$ETop(M) \times_{Top(M)} F(M,q)/{\Sigma_q}$ are respectively $K(
P\Gamma_g^q,1 )$, and  $K( \Gamma_g^q,1 )$.

\item Let M be a closed orientable surface of genus g with $x$ a
point of M, and N = $M-\{x\}$. The spaces $ETop(N) \times_{Top(N)}
F(N,q)$, and $ETop(N) \times_{Top(N)} F(N,q)/{\Sigma_q}$ are
respectively $K( P\Gamma_g^{q,1},1 )$, and $K( \Gamma_g^{q,1},1)$.
\end{enumerate}
\end{thm}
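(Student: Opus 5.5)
The common strategy for all four parts is Theorem~\ref{thm: Topfibrations}(3): $ETop(M)\times_{Top(M)} F(M,q) \simeq BTop(M,q)$, and likewise, dividing by the free $\Sigma_q$ action (Remark~\ref{rem: Borelcover}), $ETop(M)\times_{Top(M)} F(M,q)/\Sigma_q \simeq BTop(M,Q_q)$ with $Q_q$ unordered. The pure case uses the analogous fibration $PTop(M,q)\to Top(M)\to F(M,q)$, whose quotient is identified with $F(M,q)$ by the same local-section argument of Lemma~\ref{lem: diskmap}. So it suffices to show that the relevant homeomorphism groups $Top(M,Q_q)$ and $PTop(M,Q_q)$ have contractible path components. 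Then $BTop(M,q)\simeq B\pi_0 Top(M,q)=K(\Gamma_g^q,1)$, and similarly in the pure case. For this I would invoke the Earle--Eells/Hamstrom theorem, announced in the section heading: for a surface $S$ with $\chi(S)<0$, the identity component of $Top(S)$ (equivalently of $Diff^+(S)$) is contractible. We apply it to $S=M-Q_q$, using $Top(M,Q_q)\cong Top(M-Q_q)$ (extension over punctures). The hyperbolicity condition $\chi(M-Q_q)<0$ is exactly $q\ge 3$ for $g=0$, $q\ge1$ for $g=1$, and always for $g\ge2$. This gives parts (3) and (4) directly, with the extra fixed point $x$ in (4) treated as an additional puncture.

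For part (1), Topfibrations would give $BTop(S^2,q)$, but the paper's statement uses $SO(3)$. I would first use Smale's theorem that $SO(3)\to Top(S^2)$ is a homotopy equivalence. Then Lemma~\ref{lem: CD for Borel}, with $\mu$ this inclusion and $f=Id$, shows that $ESO(3)\times_{SO(3)}F(S^2,q)\to ETop(S^2)\times_{Top(S^2)}F(S^2,q)$ is a weak equivalence. The latter is $BPTop(S^2,q)$, and by the previous paragraph this is a $K(P\Gamma_0^q,1)$ for $q\ge3$. This is consistent with Proposition~\ref{pro: BSKpi1}, which already shows that the left side is aspherical. The $\Sigma_q$ quotient case follows the same way.

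For part (2), with $M=T^2$, I would use the Earle--Eells fact that $Top_0(T^2)\simeq T^2$ (translations), so the components of $Top(T^2)$ are not contractible. Instead, I would fiber $F(T^2,q)\to T^2$ over the first point and use the translation action. By the argument of Proposition~\ref{pro: configsphere}, this gives a $Top(M)$-equivariant identification relative to translations: $F(T^2,q)\cong T^2\times F(T^2-\{(1,1)\},q-1)$, via $(x_1,\dots)\mapsto (x_1, x_1^{-1}x_2,\dots)$. Moreover $Top(T^2)\simeq T^2\rtimes SL(2,\ZZ)$, since the identity component is $\simeq T^2$ and $\pi_0$ is $SL(2,\ZZ)$, with the linear maps giving a splitting. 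Apply Lemma~\ref{lem: CD for Borel} to $T^2\rtimes SL(2,\ZZ)\to Top(T^2)$. Then, as in Remark~\ref{rem: basborel}, take the Borel construction in stages: the translation factor acts freely and transitively on the first coordinate, so it cancels. This leaves $ESL(2,\ZZ)\times_{SL(2,\ZZ)}F(T^2-\{(1,1)\},q-1)$, with $SL(2,\ZZ)$ acting by the displayed formula, which fixes $(1,1)$. Asphericity follows from Theorem~\ref{thm: Kpi1}, since the fiber is aspherical and $BSL(2,\ZZ)$ is a $K(\pi,1)$. The identification of $\pi_1$ with $P\Gamma_1^q$ again comes from Topfibrations and contractibility of the components of $Top(T^2-Q_q)$ for $q\ge1$.

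The main obstacle is the input: the contractibility results of Earle--Eells/Hamstrom and Smale are not proved in the paper and must be quoted. A second, technical obstacle is making the homotopy equivalence $Top(T^2)\simeq T^2\rtimes SL(2,\ZZ)$ genuinely compatible with the actions, so that Lemma~\ref{lem: CD for Borel} applies. I would handle this by working only with the subgroup $T^2\rtimes SL(2,\ZZ)$ throughout and comparing Borel constructions via that lemma.
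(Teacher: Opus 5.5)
Your proposal is correct. It identifies $\pi_1$ exactly as the paper does, via Theorem~\ref{thm: Topfibrations}, but it gets asphericity from a different source.

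The paper works with the fibration $F(M,q)\to ETop(M)\times_{Top(M)}F(M,q)\to BTop(M)$:
\begin{itemize}
\item For (3) it needs Earle--Eells only for $M$ itself, combined with Theorem~\ref{thm: Kpi1}.
\item For (1) it uses Smale's theorem plus Proposition~\ref{pro: BSKpi1}.
\item For (2) it fibres over $EG\times_G T^2\simeq K(SL(2,\ZZ),1)$ with fibre $F(T^2-Q_1,q-1)$. This is close to what you do.
\end{itemize}

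You instead put everything on the stabilizer and quote the marked-point form of the Earle--Eells--Hamstrom theorem. That is a stronger input. For closed $M$ it can be recovered from the closed-surface theorems and the paper's results on $F(M,q)$, via the long exact sequence of $PTop(M,q)\to Top(M)\to F(M,q)$. That is just the paper's argument looped once.

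In exchange, your route is uniform, and the single criterion $\chi(M-Q_q)<0$ makes the hypotheses transparent. It shows that (2) already holds for $q=1$. It also shows that (4) with $g=0$ needs $q\ge 2$. Indeed, the paper's claim that $BTop(N)$ is a $K(\pi,1)$ fails for $N=\RR^2$, where $Top(\RR^2)\simeq SO(2)$. You should state this restriction explicitly rather than saying (4) follows directly.

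Your argument for (2) also actually proves the ``Furthermore'' equivalence, which the paper only asserts. It uses the affine group $G=T^2\rtimes SL(2,\ZZ)$ and the homeomorphism $(x_1,\dots,x_q)\mapsto(x_1,x_1^{-1}x_2,\dots,x_1^{-1}x_q)$. It then uses the identification $T^2\times Y\cong G\times_{SL(2,\ZZ)}Y$ of $G$-spaces, where $Y=F(T^2-\{(1,1)\},q-1)$.

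One caution: $Top(M,Q_q)\cong Top(M-Q_q)$ is only an isomorphism of groups, since the compact-open topologies differ near the punctures. Quote the contractibility statement directly for homeomorphisms of $M$ preserving $Q_q$, rather than transporting it from $M-Q_q$. You are also right to use $PTop(M,q)$ as the stabilizer of an ordered configuration, a point the paper blurs.
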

\begin{proof}

Notice that Lemma 2 gives that the fundamental group of $ETop(M)
\times_{Top(M)} F(M,q)$ is isomorphic to $\pi_0(Top(M,k)$ =
$P\Gamma_g^k$. Similarly, the fundamental group of $ETop(M)
\times_{Top(M)} F(M,q)/{\Sigma_q}$ is $\Gamma_g^k$ The main task
is now to determine when $ETop(M) \times_{Top(M)} F(M,q)$  is a
$K(\pi,1)$. If the genus of M is at least 2, then the resulting
space is a $K(\pi,1)$, however, if the genus is 0, or 1, there are
some small modifications described below.

\vskip .2in

A theorem of Smale \cite {Smale} gives that the natural map
$SO(3) \to\ Diff^+(S^2)$ is a homotopy equivalence. Thus the
natural maps $SO(3) \to\ Diff^+(S^2) \to Top(S^2) $ as well as the
maps induced on the level of the Borel constructions
$ESO(3)\times_{SO(3)} F(S^2,q) \to ESO(3)\times_{Top(S^2)}
F(S^2,q)$ are homotopy equivalences. Since $ESO(3)\times_{SO(3)}
F(S^2,q)$ is a $K(\pi,1$ for $ q \geq 3 $ ( See section ?? in
these notes . ), the result for part (1) follows from the above
lemma.

\vskip .2in

To prove part(2) of the theorem, notice that a result of Earle,
and Eells \cite {EE}, notice that there is an
exact sequence of groups $ 1 \to\ Diff_0(S^1\times S^1) \to\
Diff^+(S^1\times S^1) \to\ SL(2,Z) \to\ 1$ where $
Diff_0(S^1\times S^1) $ is homotopy equivalent to $S^1 \times S^1
$ and so $ BDiff_0(S^1\times S^1) $ is homotopy equivalent to $
CP^{\infty} \times CP^{\infty} $. Furthermore, the map $S^1 \times
S^1 \to\ Diff_0(S^1\times S^1)$ given by rotations is a homotopy
equivalence by \cite {EE}. Thus the Borel construction $EG \times_G S^1\times S^1$ is
$K(SL(2,Z),1)$ where G = $Diff^+(S^1\times S^1)$.

\vskip .2in

Consider the Borel construction $EG \times_G F(S^1 \times S^1,q)$
together with the projection to $EG \times_G F(S^1 \times S^1,1)$
having fibre $ F(S^1 \times S^1 - Q_1,q-1)$. Recall that $EG
\times_G F(S^1 \times S^1,1)$ is $K(SL(2,Z),1)$, and observe that
the action of $SL(2,Z)$ on $ F(S^1 \times S^1 - \{(1,1)\},q-1)$ is
the diagonal natural action of $SL(2,Z)$ on $S^1 \times S^1 -
\{(1,1)\}$ where $\{(1,1)\}$ is the identity element in $S^1
\times S^1$ with the following action:

\begin{equation}
\begin{pmatrix}a&b\\
c&d\end{pmatrix}\begin{pmatrix}u\\v\end{pmatrix} =
(u^av^b,u^cv^d).
\end{equation}

\vskip .2in

Thus, if $q \geq 2 $, the fibre and base of the projection $EG
\times_G F(S^1 \times S^1,q) \to\ EG \times_G F(S^1 \times S^1,1)$
are $K(\pi,1)'s$. Thus $EG \times_G F(S^1 \times S^1,q)$ is also a
$K(\pi,1)$ if $ q \geq 2 $, and part (3) follows.

\vskip .2in

To prove part (3), notice that by \cite {EE},each path component of $ Diff^+(M) $ is
contractible if M is of genus $ g \geq 2 $. Thus both $BTop(M)$,
and $F(M,k)$ are Eilenberg-Mac Lane spaces of type $K(\pi, 1)$.

\vskip .2in

Part(4) is analogous as $BTop(M)$ is a $K(\pi,1)$ where N =
$M-\{x\}$. The theorem follows.

\end{proof}

\vskip .2in

Recall the definition of the pointed mapping class group:
The pointed mapping class group $\Gamma_g^{k,1}$ is the group of
path components of the orientation preserving homeomorphisms which
(1) preserve the point p in the surface, and (2) leave a set of k other distinct
points in M invariant, $\pi_0(Top(M,\{p\},Q_k)$. The pure pointed
mapping class group $P\Gamma_g^k$ is the kernel of the natural
homomorphism $\Gamma_g^{k,1}  \to\ \Sigma_k$.

\vskip .2in

\begin{cor}

\begin{enumerate}

\item If $q \geq 1$, then the fundamental group of

$ETop(M) \times_{Top(M)} {F(M,q+1)}/ \{1\ \times \Sigma_q \}$

is isomorphic to $\Gamma_g^{q,1}$.

\item If M is of genus zero, and $q \geq 2$, then
$ETop(M) \times_{Top(M)} {F(M,q+1)}/\{1\ \times \Sigma_q\}$
is a $K(\Gamma_0^{q,1},1)$.

\item If M is of genus one, and $q \geq 1$, then
$ETop(M) \times_{Top(M)} {F(M,q+1)}/\{1\ \times \Sigma_q\}$
is a $K(\Gamma_g^{q,1},1)$.

Furthermore if $q \geq 1$, then $ETop(M)\times_{Top(M)} F(M,q)$ is
homotopy equivalent to $ESL(2,Z)
\times_{SL(2,Z)} F(S^1 \times S^1 -\{(1,1)\},q-1)$ where $SL(2,Z)$
acts on $S^1 \times S^1 -\{(1,1)\}$  by the formula

\item If M is of genus at least two, and $q \geq 1$, then
$ETop(M) \times_{Top(M)} {F(M,q+1)}/\{1\ \times \Sigma_q\}$
is a $K(\Gamma_g^{q,1},1)$.

\end{enumerate}

\end{cor}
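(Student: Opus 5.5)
The plan is to treat $ETop(M)\times_{Top(M)} F(M,q+1)/\{1\times\Sigma_q\}$ as a covering space of $ETop(M)\times_{Top(M)} F(M,q+1)$, and then read off both the fundamental group and the asphericity from the preceding theorem.

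\textbf{Fundamental group.} The subgroup $1\times\Sigma_q\subset\Sigma_{q+1}$ permutes the last $q$ coordinates. It acts freely on $F(M,q+1)$ and commutes with the diagonal $Top(M)$ action. So, exactly as in Remark~\ref{rem: Borelcover}, we obtain a $\Sigma_q$-covering
$$ETop(M)\times_{Top(M)} F(M,q+1)\to ETop(M)\times_{Top(M)} F(M,q+1)/\{1\times\Sigma_q\}.$$
To compute $\pi_1$, I would redo the argument of Theorem~\ref{thm: Topfibrations} with $Top(M)$ acting on the orbit $F(M,q+1)/(1\times\Sigma_q)$ of $(p,Q_q)$. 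The isotropy group of this point is $Top(M,\{p\},Q_q)$. The local sections of Lemma~\ref{lem: diskmap} are built from the disc maps $\theta$ around each point, so they descend to this quotient. Hence the quotient is homeomorphic to $Top(M)/Top(M,\{p\},Q_q)$, and the Borel construction is homotopy equivalent to $BTop(M,\{p\},Q_q)$.

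For the remaining homotopy we use the fibration $BTop(M,\{p\},Q_q)\to BTop(M)$. On $\pi_1$ it gives $\pi_1(BTop(M,\{p\},Q_q))=\pi_0(Top(M,\{p\},Q_q))=\Gamma_g^{q,1}$. This is valid provided the $\pi_1$ of the fibre, as seen through the connecting maps, behaves as in the preceding theorem's proof. This proves (1).

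\textbf{Asphericity.} Since a covering map induces isomorphisms on $\pi_n$ for $n\ge 2$, parts (2)--(4) reduce to showing that $ETop(M)\times_{Top(M)}F(M,q+1)$ is a $K(\pi,1)$. The preceding theorem supplies this in each case.
\begin{itemize}
\item For genus zero, Smale's theorem lets us replace $Top(S^2)$ by $SO(3)$. The space is then $ESO(3)\times_{SO(3)}F(S^2,q+1)$, which is aspherical once $q+1\ge 3$, by Proposition~\ref{pro: BSKpi1}. This is where the hypothesis $q\ge 2$ is used.
\item For genus one we need $q+1\ge 2$, which is part (2) of the theorem.
\item For genus at least two, part (3) of the theorem applies.
\end{itemize}
The extra claim in (3) follows by restricting to the first coordinate. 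The Earle--Eells argument in the theorem's proof identifies $EG\times_G F(M,q)$ with $ESL(2,\ZZ)\times_{SL(2,\ZZ)}F(S^1\times S^1-\{(1,1)\},q-1)$. The point is that after translating the first point to $(1,1)$ using the torus action, the residual symmetry is $SL(2,\ZZ)$.

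\textbf{Main obstacle.} The delicate step is verifying that the coset space $Top(M)/Top(M,\{p\},Q_q)$ really is the quotient $F(M,q+1)/(1\times\Sigma_q)$ with compatible local sections. I would handle this by transporting the sections of Lemma~\ref{lem: diskmap} through the finite covering, choosing discs small enough that the covering is trivial over them.
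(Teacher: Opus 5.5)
Your argument is correct, but it is organized differently from the paper's. The paper projects $ETop(M)\times_{Top(M)}F(M,q+1)/(1\times\Sigma_q)$ onto the first coordinate. It identifies the base $ETop(M)\times_{Top(M)}M$ with $BTop(M,1)$, so the space becomes a Borel construction for $Top(M,1)$ acting on $F(M-Q_1,q)/\Sigma_q$. It then reads off $\pi_1=\Gamma_g^{q,1}$ and asphericity from the preceding theorem.

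You never fibre over the distinguished point. You realize the partial quotient $F(M,q+1)/(1\times\Sigma_q)$ directly as $Top(M)/Top(M,\{p\},Q_q)$ by pushing the disc sections through the covering. This works because the $\Sigma_q$-translates of a product of pairwise disjoint discs are disjoint. As a result, (1) holds uniformly in every genus with no asphericity input. You then get asphericity for free from the finite covering by the ordered Borel construction. That case is already handled by the preceding theorem, and in genus zero by Smale together with Proposition~\ref{pro: BSKpi1}.

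This buys you something real in genus zero. There $BTop(S^2,1)\simeq BSO(2)$ is not aspherical, so the paper's fibration needs an extra check: that $\pi_2$ of the base injects into $\pi_1$ of the fibre. This holds because the full twist in $B_q$ has infinite order for $q\ge 2$, and your covering argument sidesteps it entirely. In exchange, the paper's route keeps the marked point visible as the base of a fibration, which matches the definition of the pointed mapping class group.

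One small point: your caveat about the fibration $BTop(M,\{p\},Q_q)\to BTop(M)$ is unnecessary. The isomorphism $\pi_1(BH)\cong\pi_0(H)$ holds for any topological group $H$, directly from $H\to EH\to BH$.
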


\begin{proof}
Consider $ETop(M) \times_{Top(M)} {F(M,q+1)}/{\{1\ \times \Sigma_q}$
together with the natural projection to $ETop(M) \times_{Top(M)}M$
with fibre $F(M - Q_1,q)/\Sigma_q$. Thus the space
$ETop(M) \times_{Top(M)}$ is $BTop(M,1)$ by
Theorem 1.2 . Furthermore, the $Top(M,1)$-action on $F(M - Q_1,q)/\Sigma_q$
is induced by the natural diagonal action on $M - Q_1$. Hence the
fundamental group is $\Gamma_g^{q,1}$. The hypotheses on the
number of points $q$ gives that the resulting spaces are
$K(\pi,1)'s$ by Theorem 1.2.

\vskip .2in

To finish the proof of the corollary, it suffices to notice that
$ETop(M)\times_{Top(M)} F(M,q)$ is
homotopy equivalent to $ESL(2,Z)
\times_{SL(2,Z)} F(S^1 \times S^1 -\{(1,1)\},q-1)$ where $SL(2,Z)$
acts on $S^1 \times S^1 -\{(1,1)\}$ where M is of genus 1 by the above remarks.

\end{proof}

\section{The Thom construction}

\subsection{Basic definitions}
In this section, we will be working in the category of pointed
spaces and maps. Recall the basic definitions:

\begin{defn}
A pointed space is a space $X$ together with a fixed basepoint $* \in X$.
A map $f: X \rightarrow Y$ between two pointed spaces is said to be
a pointed map if $f(*)=*$. A homotopy $F$ between two pointed maps
$f,g: X \rightarrow Y$ is said to preserve basepoints if 
$F(* \times I)=*$ and we write $f \simeq_* g$. If $f$ and $g$
are homotopic but not necessarily by a homotopy that preserves
basepoints, we write $f \simeq g$ and say $f$ is freely homotopic
to $g$. 
\end{defn}

We recall some basic constructions. See \cite{Bred} for 
proofs of the basic properties of these constructions.

\begin{defn}
Given two pointed spaces $X$ and $Y$, $X \times Y$ can be made
a pointed space by taking $(*,*)$ as basepoint. Define
$$
X \vee Y = \{ (x,y) \in X \times Y \text{ such that either }
x=* \text{ or } y=* \}.
$$
$X \vee Y$ is called the wedge product of $X$ and $Y$ and is a pointed space
consisting of the spaces $X=X \times *$ and $Y=* \times Y$ 
attached at the basepoint
$(*,*)$.
\end{defn}

\begin{defn}
Given two pointed spaces $X$ and $Y$,
$$
X \wedge Y = X \times Y / X \vee Y
$$
is called the smash product of $X$ and $Y$ and is 
itself a pointed space where we set the equivalence
class of $X \vee Y$ as the basepoint.
\end{defn}

The following lemma collects some useful facts about 
the wedge and smash products. The proof of these
facts is left to the reader and can be found in introductory
texts like \cite{Bred} or 
\cite{Span}.

\begin{lem}
\label{lem: basicwedgesmash}
Let $Top_*$ denote the category of pointed spaces and maps.
We will write $X=_*Y$ if $X$ and $Y$ are isomorphic in this
category. Then \\
\noindent
(a) $-\vee-$ and $-\wedge-$ are functors from $Top_*$ to itself, which
are covariant in both entries. If $f: X_1 \rightarrow X_2$ and
$g: Y_1 \rightarrow Y_2$ are two pointed maps we will denote the
maps given by these functors as
$f \vee g: X_1 \vee Y_1 \rightarrow X_2 \vee Y_2$ and
$f \wedge g: X_1 \wedge Y_1 \rightarrow X_2 \wedge Y_2$.
\\
\noindent
(b) $-\vee-$ equips the isomorphism classes of objects in $Top_*$ 
with the structure of a
commutative monoid with identity where the identity is the point space $*$.
In other words: \\
(i) $X \vee Y =_* Y \vee X$ \\
(ii) $(X \vee Y) \vee Z =_* X \vee (Y \vee Z)$ \\
(iii) $X \vee * =_* X$. \\
\noindent
(c) $-\wedge-$ equips the isomorphism classes of objects in
$Top_*$ with a commutative multiplication structure, 
which together with $\vee$
gives a commutative semiring structure on $Top_*$. (A semiring is something 
which satisfies
all the axioms of a ring except the existance of additive inverses.)
In other words: \\
(i) $X \wedge Y =_* Y \wedge X$ \\
(ii) $(X \wedge Y) \wedge Z =_* X \wedge (Y \wedge Z)$ \\ 
(iii) $X \wedge S^0 =_* X$ where $S^0=\{0,1\}$ with 
$0$ as the basepoint. \\ 
(iv) $(X \vee Y) \wedge Z = (X \wedge Z) \vee (Y \wedge Z)$. \\ 
\noindent
(d) $\vee$ and $\wedge$ are homotopy functors, i.e., if $f \simeq_*
f'$ and $g \simeq_* g'$ then we have $f \vee g \simeq_*
f' \vee g'$ and $f \wedge g \simeq_* f' \wedge g'$.
\end{lem}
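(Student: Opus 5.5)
The plan is to verify each item directly from the definitions $X \vee Y \subseteq X \times Y$ and $X \wedge Y = X\times Y/X\vee Y$. The one topological subtlety, the interaction of products with quotient maps, will be isolated and handled separately.

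For (a), given pointed maps $f,g$, the product $f\times g$ is pointed and carries $X_1\vee Y_1$ into $X_2\vee Y_2$: if $x=*$ then $f(x)=*$, and similarly for $y$. Restricting $f\times g$ gives $f\vee g$. Passing to quotients, the composite $X_1\times Y_1\to X_2\times Y_2\to X_2\wedge Y_2$ is constant on $X_1\vee Y_1$, so it factors through $X_1\wedge Y_1$ and gives $f\wedge g$. Functoriality, namely identities and composites, is inherited from that of $\times$.

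For (b), the homeomorphisms are the evident ones: the coordinate swap, the reassociation, and $(x,*)\mapsto x$. Each restricts to the wedge subspaces.

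For (c)(i), the swap on $X\times Y$ preserves $X\vee Y$ and so descends to the quotient. For (iii), the subspace $X\times\{1\}$ maps bijectively onto $X\wedge S^0$ minus the basepoint, and $X\times\{0\}$ collapses to the basepoint. This gives a homeomorphism because $X\times S^0 = X\sqcup X$. For (iv), $(X\vee Y)\times Z$ is a closed subspace of $(X\times Z)\vee(Y\times Z)$-type pieces glued along $*\times Z$. Collapsing the appropriate subspace on each piece identifies both sides with the union of $X\times Z$ and $Y\times Z$ along $*\times Z$, with $X\vee Z$ and $Y\vee Z$ collapsed. A continuous bijection exists, and it is a homeomorphism because both sides are quotients of the same disjoint union $X\times Z\sqcup Y\times Z$ by the same relation.

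The main obstacle is (c)(ii), the associativity of $\wedge$, and also the homotopy part of (d). In both, one must know that a map of the form $q\times \mathrm{id}_Z$ (or $q\times \mathrm{id}_I$) is still a quotient map when $q$ is a quotient map. This holds when $Z$ (respectively $I$) is locally compact Hausdorff, by Whitehead's lemma. I will invoke this lemma, and for general spaces I will either restrict to compactly generated spaces as in \cite{Bred} or assume local compactness. With it in hand, both $(X\wedge Y)\wedge Z$ and $X\wedge(Y\wedge Z)$ are quotients of $X\times Y\times Z$ by collapsing the fat wedge $\{(x,y,z):$ some coordinate is $*\}$, so they are canonically homeomorphic.

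For (d), let $F$ and $G$ be based homotopies. Then $F\vee G$, defined by $((x,y),t)\mapsto (F(x,t),G(y,t))$, is continuous on $(X\vee Y)\times I$. Similarly, the map $(X\times Y)\times I\to X_2\wedge Y_2$ descends through the quotient $(X\times Y)\times I\to (X\wedge Y)\times I$, which is a quotient map because $I$ is compact. This produces the required based homotopies $f\vee g\simeq_* f'\vee g'$ and $f\wedge g\simeq_* f'\wedge g'$.
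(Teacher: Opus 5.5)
Your proposal is correct and is exactly the routine verification the paper leaves to the reader: the paper gives no argument, only pointers to Bredon and Spanier, so there is no competing route to compare. You should make three points explicit.

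\begin{itemize}
\item The restriction in (c)(ii) is forced, not optional. Associativity of $\wedge$ genuinely fails in $Top_*$ even for Hausdorff spaces (e.g.\ with $\mathbb{Q}$ and discrete $\mathbb{N}$), so the lemma as literally stated needs your hypothesis.
\item Under the local-compactness option, you need it for both outer factors $X$ and $Z$, not just $Z$. One bracketing uses $q\times\mathrm{id}_Z$ and the other uses $\mathrm{id}_X\times q'$.
\item In (iv), no local compactness is needed, though your justification there is thin. The quotient claims follow because closed basepoints (the paper's Hausdorff convention) make $X\times\{*\}$ and $\{*\}\times Y$ a finite closed cover of $X\vee Y$. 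The same holds for the closed basepoints of $X\wedge Z$ and $Y\wedge Z$.
\end{itemize}
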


\begin{rem}
The reader should be careful in interpreting the semiring structure in 
lemma~\ref{lem: basicwedgesmash} since the objects of $Top_*$ do not
form a set. However, in practice, this is not a problem as we
can always confine ourselves to a suitable set of objects if we wish to use
the semiring structure. The reader can check for example, that the
``subsemiring'' generated by $S^0$ is naturally identified with
the semiring of natural numbers. In fact, if we confine ourselves
to objects in $Top_*$ where the Euler characteristic $\chi$ is defined, 
and where we can identify $\tilde{H}_*(X \wedge Y)$ as $H_*(X \times Y, X 
\vee Y)$, 
then the map taking $X$ to $\chi(X)-1$ is a semiring morphism to the
integers.
\end{rem}

\begin{defn}
\label{rem: sigmakactiononwedge}
We use $X^{(k)}$ to stand for the $kth$ power of the pointed space $X$
under the smash product multiplication. It is understood then that
$X^{(0)}=S^0$ and $X^{(1)}=X$. Notice that, for $k \geq 1$, we have 
$$
X^{(k)} = X^k/F
$$
where $F=\{ (x_1,\dots,x_k) \in X^k \text{ such that } x_i = *
\text{ for some } i \}$, is the so called fattened wedge. 
Notice that the symmetric group $\Sigma_k$ acts on $X^k$
by permuting coordinates and that this action preserves $F$.
Thus this induces a natural action of $\Sigma_k$ on $X^{(k)}$
which fixes the basepoint.
\end{defn}

\begin{defn}
Given a space $X$, the unreduced suspension of $X$,
denoted $SX$, is the quotient space obtained from $X \times I$
by identifying $X \times \{0\}$ to a point and
$X \times \{1\}$ to another point. \\
Given a pointed space $X$, the reduced suspension of $X$,
is defined as
$$
\Sigma X = X \times I/(X \times \{0\} \cup * \times I \cup
X \times \{1\}).
$$
and is given the equivalence class of $* \times I$ as the basepoint.
\end{defn}

\begin{rem}
We will identify the 1-sphere $S^1$ as the quotient space formed
from $[0,1]$ where we identify $0$ and $1$ to form a common basepoint.
With this, it is easy to see that
$$
S^1 \wedge X =_* \Sigma X.
$$
Thus $(S^1)^{(k)}=S^k$ for all $k \geq 0$.
\end{rem}

\begin{defn}
A well pointed space $X$ is a pointed space where the inclusion
$* \rightarrow X$ is a cofibration. Recall that for such a space,
the reduced and unreduced suspensions are homotopy equivalent. (See
\cite{Bred} or \cite{Span}). Any $CW$-complex $X$ is well pointed
if we take the basepoint to be an element of the zero skeleton
$X_{(0)}$.
\end{defn}

\subsection{The Thom construction}

We now introduce an important construction.
Let $M$ be a free right $\Sigma_k$-space and $X$ be a well pointed space.
Then from the final remark in definition~\ref{rem: sigmakactiononwedge},
there is a natural $\Sigma_k$ action on $X^{(k)}$ which fixes the basepoint.
Thus we can form
$$
M \times_{\Sigma_k} X^{(k)} = M \times X^{(k)}/\sim,
$$ 
where $(m \sigma, \bar{x}) \sim (m, \sigma \bar{x})$ for all
$\sigma \in \Sigma_k$, $\bar{x} \in X^{(k)}$ and $m \in M$.

As in the Borel construction, one can easily show that as
the $\Sigma_k$ action on $M$ is free, the map 
$$
\pi: M \times_{\Sigma_k} X^{(k)} \rightarrow M/\Sigma_k,
$$
obtained by projecting on the first factor, is a fiber bundle with
fiber $X^{(k)}$.
Furthermore $\pi$ has a section $\sigma: M/\Sigma_k \rightarrow
M \times_{\Sigma_k} X^{(k)}$ defined by 
$\sigma(\bar{m})=(m,*)$ where $*$ is the basepoint of $X^{(k)}$.
(This section is well defined as $*$ is fixed under the $\Sigma_k$
action on $X^{(k)}$.)

Given a field $\FF$, we will now set out to find $H^*(\MsigX; \FF)$. 
To do that using the spectral sequence for the fiber bundle described 
above, we see that we first need to describe $H^*(X^{(k)}; \FF)$.
So let us do that now.

Recall that if the inclusion of $A$ into $X$ is a cofibration, then 
we have a natural isomorphism 
$H^*(X,A; \FF) \cong \bar{H}^*(X/A; \FF)$ where the bar signifies reduced 
cohomology. (See for example \cite{Bred}).

Now if $X$ and $Y$ are well pointed spaces, 
then the inclusion of $X \vee Y$ into $X \times Y$ is a cofibration?

Thus $H^*(X \times Y, X \vee Y; \FF) \cong \bar{H}^*(X \wedge Y ; \FF)$.
On the other hand, it is easy to argue that the long exact sequence in 
reduced cohomology for the pair $(X \times Y, X \vee Y)$ degenerates 
into split short exact sequences:

$$
0 \rightarrow H^*(X \times Y, X \vee Y; \FF) \rightarrow 
\bar{H}^*(X \times Y; \FF) \overset{i^*}{\rightarrow} 
\bar{H}^*(X \vee Y; \FF) \rightarrow 0
$$
 
Thus $\bar{H}^*(X \wedge Y; \FF)$ is algebra isomorphic to 
the kernel of $i^*$. We are able to get a complete description of this 
kernel via the K$\ddot{u}$nneth Theorem in the case when one of the spaces has 
finite dimensional $\FF$-cohomology in each dimension. (We need this 
condition, to apply the cohomology version of K$\ddot{u}$nneth's theorem - see 
\cite{Bred}).

Using this, one easily describes the cohomology algebra of the smash product 
in terms of the cohomology algebras of the original spaces. We 
state the result as a lemma and leave the completion of the details of its 
proof to the reader.

\begin{lem}
\label{lem: wedge}
If $X, Y$ are path connected, well pointed spaces and at least one of them 
has finite dimensional $\FF$-cohomology in each dimension, then 
$\bar{H}^*(X \wedge Y; \FF) \cong \bar{H}^*(X; \FF) \otimes \bar{H}^*(Y; \FF)$
as $\FF$-algebras (without identity element). 
\end{lem}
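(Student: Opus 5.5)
We follow the outline already sketched in the paragraphs before the statement. The plan is to identify $\bar{H}^*(X \wedge Y;\FF)$ with $H^*(X \times Y, X \vee Y;\FF)$, identify the latter with the kernel of $i^*: \bar{H}^*(X\times Y;\FF) \to \bar{H}^*(X \vee Y;\FF)$, and finally compute this kernel with the K\"unneth theorem.

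\textbf{Step 1: the pair $(X \times Y, X \vee Y)$.} Since $X$ and $Y$ are well pointed, $X \vee Y = X \times * \cup * \times Y$ is a cofibration in $X \times Y$. This follows from the product theorem for NDR pairs: $(X,*)\times(Y,*)$ is an NDR pair. Hence
$$H^*(X \times Y, X \vee Y;\FF) \cong \bar{H}^*(X \wedge Y;\FF).$$
This isomorphism is induced by the quotient map $q: X \times Y \to X \wedge Y$, so it respects cup products; the relative cup product makes the relative group a (non-unital) algebra.

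\textbf{Step 2: splitting the long exact sequence.} Reduced cohomology of the wedge is additive: $\bar{H}^*(X \vee Y) \cong \bar{H}^*(X) \oplus \bar{H}^*(Y)$, using well-pointedness and Mayer--Vietoris. The map $i^*$ is split surjective. Indeed, the projections $p_X, p_Y$ of $X\times Y$ give a map $p_X^* + p_Y^*: \bar{H}^*(X)\oplus\bar{H}^*(Y) \to \bar{H}^*(X\times Y)$, and composing it with $i^*$ gives the identity, because $p_X \circ i$ restricts to the identity on $X$ and to the constant map on $Y$. Therefore the long exact sequence breaks into the split short exact sequences displayed above, and $\bar{H}^*(X\wedge Y)\cong \ker i^*$ as algebras. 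Multiplicativity is inherited from $q^*$, which is injective by exactness.

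\textbf{Step 3: K\"unneth.} We now use the finiteness hypothesis on one factor, say $Y$. The cohomology K\"unneth theorem over a field gives an algebra isomorphism
$$\times: H^*(X)\otimes H^*(Y) \to H^*(X\times Y), \qquad a\otimes b\mapsto p_X^*a \smile p_Y^*b.$$
Since $X$ and $Y$ are path connected, $H^0 = \FF$, and we decompose
$$H^*(X)\otimes H^*(Y) = (\FF\otimes\FF) \oplus (\bar{H}^*(X)\otimes 1)\oplus(1\otimes\bar{H}^*(Y))\oplus (\bar{H}^*(X)\otimes\bar{H}^*(Y)).$$
Under $i^*$, the summand $\bar{H}^*(X)\otimes 1$ maps isomorphically onto $\bar{H}^*(X)$, and similarly for $Y$. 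The last summand maps to zero: restricting $p_X^*a \smile p_Y^*b$ to $X \times *$ gives $a \smile (\text{restriction of } b \text{ to a point}) = 0$, and likewise on $*\times Y$. Comparing dimensions degreewise, the kernel of $i^*$ on $\bar{H}^*(X \times Y)$ is exactly the image of $\bar{H}^*(X)\otimes\bar{H}^*(Y)$. The cross product is multiplicative, with the usual sign $(a\otimes b)(a'\otimes b')=(-1)^{|b||a'|}aa'\otimes bb'$. It therefore gives the claimed isomorphism of $\FF$-algebras without unit.

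\textbf{Main obstacle.} The only delicate point is Step 1: the cofibration property of $X\vee Y\subset X\times Y$ and the compatibility of the excision isomorphism with relative cup products. I would cite the standard NDR product lemma from \cite{Bred} or \cite{Span}, rather than reprove it.
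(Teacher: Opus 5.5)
Your proposal is correct and follows the same outline the paper sketches before the lemma: the cofibration $X\vee Y\subset X\times Y$, the splitting of the long exact sequence so that $\bar{H}^*(X\wedge Y;\FF)\cong\ker i^*$ as algebras, and then K\"unneth to identify that kernel; you fill in the details the paper leaves to the reader, including the NDR-product justification for the cofibration that the paper only queries. One small point: in Step 3 you should not ``compare dimensions degreewise'', since only one factor is assumed to be of finite type, but no counting is needed anyway. Because $i^*$ maps $(\bar{H}^*(X)\otimes 1)\oplus(1\otimes\bar{H}^*(Y))$ isomorphically onto $\bar{H}^*(X\vee Y)$ and kills $\bar{H}^*(X)\otimes\bar{H}^*(Y)$, the kernel is exactly that last summand.
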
 

It follows from lemma~\ref{lem: wedge} that for a path connected, well pointed 
space $X$, one has $\bar{H}^*(X^{(k)};\FF)$ is isomorphic to 
the tensor product of $k$ copies of the algebra $\bar{H}^*(X;\FF)$ with 
itself. 

\begin{defn}
Given a graded $\FF$-algebra $\mathfrak{A}$ with $\mathfrak{A}^0=0$ 
(so $\mathfrak{A}$ does not have an identity element), we define 
$T_k(\mathfrak{A})$ to be the $\FF$-algebra with identity obtained in 
the following way. We first take the tensor product of $k$-copies of 
$\mathfrak{A}$. This has nothing in degree zero. Finally, we include 
a 1-dimensional vector space in degree zero generated by an identity element.
\end{defn}

Thus we can restate our results as 
$H^*(X^{(k)}; \FF) \cong T_k(\bar{H}^*(X; \FF))$.

Fix a label space $X$. 
Now as we mentioned before, there is a natural left $\Sigma_k$ action on 
$X^{(k)}$ induced from the left $\Sigma_k$ action on $X^k$ given by 
$\sigma \cdot (x_1, \dots, x_k)=(x_{\sigma(1)}, \dots, x_{\sigma(k)})$.

Since cohomology is a contravariant functor, it is easy to check that we 
get a right $\Sigma_k$ action on $H^*(X^k; \FF)$. Let us describe this 
action using the K$\ddot{u}$nneth theorem to identify 
$H^*(X^k; \FF)$ as the tensor product of $k$ copies of $H^*(X; \FF)$.
It is easy to check that an element of the form $1 \otimes \dots \alpha 
\dots \otimes 1$ where $\alpha$ is in the $i$th spot gets taken 
under $\sigma \in \Sigma_k$  
to a similar element where $\alpha$ is in the $\sigma^{-1}(i)$th spot.
(To check this note that such elements correspond nicely to elements 
in the cohomology of the $k$-fold wedge product of $X$ where the statement 
is clear.)

This describes the right $\Sigma_k$ action on $H^*(X^k; \FF)$ completely 
as elements of the form $1 \otimes \dots \alpha \dots \otimes 1$ 
generate this cohomology as an algebra and $\Sigma_k$ acts via 
algebra maps. 

It is convenient, to switch this right $\Sigma_k$ action to a left 
$\Sigma_k$ action via a standard procedure. Given a right action of 
a group $G$ on some object $X$, one obtains a left action of $G$ on $X$ 
by defining
$$ g \cdot x \equiv x \cdot g^{-1} .$$ 

Doing this the left action of $\Sigma_k$ on $H^*(X^k; \FF)$ sends  
$1 \otimes \dots \alpha \dots \otimes 1$ where $\alpha$ is in the $i$th 
spot to the corresponding element where $\alpha$ is in the 
$\sigma(i)$th spot under $\sigma \in \Sigma_k$.

Thus we have in general that 
$$
\sigma \cdot (\alpha_1 \otimes \dots \otimes \alpha_k) 
= \pm \alpha_{\sigma(1)} \otimes \dots \otimes \alpha_{\sigma(k)}.
$$ 
The $\pm$ sign occurs because of the grading as the following 
example illustrates:

Let $k=2$ and $\sigma=(1,2)$, then 
\begin{align*}
\begin{split}
\sigma \cdot (\alpha \otimes \beta) &= \sigma \cdot (\alpha \otimes 1 \cup 
1 \otimes \beta) \\
&= (\sigma \cdot ( \alpha \otimes 1)) \cup (\sigma \cdot (1 \otimes \beta)) \\
&= (1 \otimes \alpha) \cup (\beta \otimes 1) \\
&= (-1)^{|\alpha||\beta|}(\beta \otimes 1 ) \cup (1 \otimes \alpha) \\
&= (-1)^{|\alpha||\beta|}(\beta \otimes \alpha) 
\end{split}
\end{align*}

Of course, this description of the left $\Sigma_k$ action on 
$H^*(X^k; \FF)$ restricts to a description of the left 
$\Sigma_k$ action on  
$H^*(X^{(k)}; \FF) \cong T_k(\bar{H}^*(X; \FF))$. 

One of the main cases we will be looking at is when $X$ is $S^d$, the 
$d$-sphere. In this case, we can make the sign in the left $\Sigma_k$ 
action on $H^*(X^{(k)}; \FF)$ explicit. 

\begin{defn}
A representation of $\Sigma_k$ on a $\FF$-vector space is said to 
be trivial if every element acts as the identity map of the vector 
space.

The sign representation of $\Sigma_k$ is a one dimensional 
$\FF$-vector space where the even elements of $\Sigma_k$ act as 
multiplication by $1$ while the odd elements of $\Sigma_k$ act 
as multiplication by $-1$. 

Notice that if the characteristic of $\FF$ is two, then the 
sign representation is actually trivial.
\end{defn}

First note, that the reduced 
cohomology of $S^d$ is zero except in degree $d$ where it is 
one dimensional generated by $\alpha$ say. 

Thus 
$\bar{H}^*((S^d)^{(k)}; \FF)$ is zero except in degree $kd$ 
where it is one dimensional generated by $T=\alpha \otimes \dots \otimes 
\alpha$. (In fact in this case we know $(S^d)^{(k)}=S^{dk}$ but we will not 
use that.)

Each element in $\Sigma_k$ takes $T$ to $ \pm T$. It is easy to 
see that if $d$ is even, $\sigma(T)=T$ for all $\sigma \in \Sigma_k$ 
while if $d$ is odd, $\sigma(T)=(-1)^{\epsilon(\sigma)}T$ 
where $\epsilon$ is the sign representation of $\Sigma_k$ into 
$\{-1,+1\} \subset \FF^*$. ($\FF^*$ stands for the group of nonzero 
elements of $\FF$.)

Thus we have shown the following useful proposition:

\begin{prop}
As a left $\Sigma_k$-module, 
$\bar{H}^*((S^d)^{(k)}; \FF)$ is concentrated in degree $kd$ and 
in that degree it is \\
(a) A trivial one dimensional $\Sigma_k$-module if $d$ is even. \\
(b) The one dimensional sign representation if $d$ is odd.  
\end{prop}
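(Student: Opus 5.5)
The plan is to reduce everything to the sign that appears when $\sigma \in \Sigma_k$ permutes the tensor factors of $T=\alpha\otimes\cdots\otimes\alpha$, where $\alpha$ generates $\bar{H}^d(S^d;\FF)$. First, the concentration statement. By lemma~\ref{lem: wedge}, applied inductively (each $S^d$ is path connected, well pointed, and has finite dimensional cohomology), we have $\bar{H}^*((S^d)^{(k)};\FF)\cong \bar{H}^*(S^d;\FF)^{\otimes k}$. Since $\bar{H}^*(S^d;\FF)$ is one dimensional and sits in degree $d$, the tensor power is one dimensional, spanned by $T$ in degree $kd$, and zero in all other degrees. Any $\sigma\in\Sigma_k$ acts by a degree-preserving linear automorphism, so it sends $T$ to a nonzero scalar multiple of itself. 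By the formula $\sigma\cdot(\alpha_1\otimes\cdots\otimes\alpha_k)=\pm\alpha_{\sigma(1)}\otimes\cdots\otimes\alpha_{\sigma(k)}$ established above, that scalar is $\pm 1$.

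Next I would pin down the sign. The action is a group homomorphism $\Sigma_k\to\FF^*$, and $\Sigma_k$ is generated by adjacent transpositions $\tau=(i,i+1)$, so it suffices to compute $\tau\cdot T$. I would write $T=\alpha_1\cup\cdots\cup\alpha_k$, where $\alpha_j=1\otimes\cdots\otimes\alpha\otimes\cdots\otimes 1$ has $\alpha$ in slot $j$. The action is by algebra maps, and $\tau$ sends $\alpha_i\mapsto\alpha_{i+1}$ and $\alpha_{i+1}\mapsto\alpha_i$ while fixing the other $\alpha_j$. Hence $\tau\cdot T$ is the same cup product with the two adjacent degree-$d$ factors interchanged. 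Graded commutativity then gives $\tau\cdot T=(-1)^{d\cdot d}T=(-1)^dT$, exactly as in the $k=2$ example computed earlier.

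Finally, since $\tau\mapsto(-1)^d$ on every transposition, we get $\sigma\cdot T=T$ for all $\sigma$ when $d$ is even, giving the trivial representation. When $d$ is odd we get $\sigma\cdot T=(-1)^{\epsilon(\sigma)}T$, which is the sign representation; in characteristic two this coincides with the trivial one, consistent with the remark above.

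The only delicate point I expect is the legitimacy of passing from $X^k$ to the smash product, namely that the $\Sigma_k$ action on $\bar{H}^*(X^{(k)})$ is the restriction of the action on $H^*(X^k)$. That holds because the quotient map $X^k\to X^{(k)}$ is $\Sigma_k$-equivariant, so the injection $\bar{H}^*(X^{(k)})\hookrightarrow H^*(X^k)$, the kernel of $i^*$ from the split short exact sequence above, is $\Sigma_k$-equivariant. The cup product computation can therefore be carried out inside $H^*((S^d)^k;\FF)$.
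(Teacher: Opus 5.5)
Your proposal is correct and follows essentially the paper's argument: Lemma~\ref{lem: wedge} makes $\bar{H}^*((S^d)^{(k)};\FF)$ one dimensional, spanned by $T=\alpha\otimes\cdots\otimes\alpha$ in degree $kd$, and the sign is read off from graded commutativity of cup products in $H^*((S^d)^k;\FF)$, exactly as in the paper's $k=2$ computation. You additionally make explicit the step the paper dismisses as ``easy to see'' (reducing to adjacent transpositions, each acting by $(-1)^{d^2}=(-1)^d$), and you use the $\Sigma_k$-equivariance of the quotient map $(S^d)^k\to(S^d)^{(k)}$ to justify restricting the action to the smash product, which the paper does without comment.
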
 

Now let us calculate the cohomology of $\MsigX$ where $X=S^d$.
First recall that we had a fiber bundle 
$\pi: \MsigX \rightarrow M/\Sigma_k$ with fiber $X^{(k)}$. 
Thus we have a Serre spectral sequence with 
$$
E^{p,q}_2 = H^p(M/\Sigma_k; H^q(X^{(k)};\FF))
$$
abutting to $H^{p+q}(\MsigX; \FF)$.
The reader is warned that the coefficients in the $E_2$-term are twisted. 
In fact from the cover $M \rightarrow M/\Sigma_k$ we get a short exact 
sequence of groups 
$$
\pi_1(M) \rightarrow \pi_1(M/\Sigma_k) \overset{\lambda}{\rightarrow} 
\Sigma_k.
$$ 
The action of $\pi_1(M/\Sigma_k)$ on the cohomology of the fiber 
is easily checked to be given by 
the composition of $\lambda$ and the $\Sigma_k$-action on $H^*(X^{(k)}; \FF)$
described in the previous paragraphs.

Recall that the fiber bundle $\pi$ had a section $\sigma$. This means that in 
this spectral sequence, no differentials will hit the horizontal line 
$q=0$. Thus for general $X$, $E^{p,0}_2=E^{p,0}_{\infty}$ in this 
spectral sequence. 

Now for $X=S^d$, we have $E_2$ is concentrated on the horizontal lines 
$q=0$ and $q=kd$. Since we know no differential can hit the line $q=0$, 
we conclude all diffenrentials are zero in this spectral sequence 
and $E_2^{*,*}=E_{\infty}^{*,*}$. Thus we conclude for $X=S^d$, 
we have an isomorphism of vector spaces
$$
H^*(\MsigSd; \FF) \cong H^*(M/\Sigma_k; \FF) \oplus H^{*-kd}(M/\Sigma_k; 
H^{kd}((S^d)^{(k)}; \FF))
$$
where again recall that the last summand has a twisted coefficient.

The first summand above, is the part of $H^*(\MsigX; \FF)$ coming 
from the image of $\pi^*$ or in other words coming from the image of our 
section. It can be shown that the section above is a cofibration, so if 
we form $\MtsigX$ the space obtained from $\MsigX$ by collapsing the 
image of the section to a basepoint, we conclude that 

$$
\bar{H}^*(\MtsigSd; \FF) \cong H^{*-kd}(M/\Sigma_k; H^{kd}((S^d)^{(k)}; \FF))
$$
as algebras. (Here we notice that in the $E_{\infty}$ term above, 
once we collapse the image of the section to a basepoint, we lose 
the row $q=0$ and hence there are no lifting problems anymore since 
everything is concentrated in the row $q=kd$.)

This shows that the algebra structure of $\bar{H}^*(\MtsigSd; \FF)$ is 
trivial, i.e., the product of any two elements is zero.

\section{Lie Algebras and Kohno-Falk-Randell Theory}

The purpose of this section is to consider the functor from
groups to Lie algebras given by sending a group $G$ to the Lie algebra
obtained from the descending central series for $G$. 

The Lie
algebras associated in this way to some pure braid groups as well as the
fundamental groups of orbit configuration spaces appear in several
different mathematical contexts. 

In this section, a very useful
tool for the analysis of these Lie algebras, obtained by T. Kohno 
\cite{KO}, and
Falk, and Randell \cite{FR} is described. 
They used this tool to analyze the beautiful case where $G$ is the
$k$th pure Artin braid group. This general theory is described below
along with several examples related to braid groups and elliptic
curves.

We begin by defining some preliminary group theoretic concepts.

\begin{defn}
Given two subgroups $H, K$ of a group $G$, we define 
$[H,K]$ to be the subgroup of $G$ generated by commutators 
$[h,k]=h^{-1}k^{-1}hk$ where $h$ ranges over the elements of $H$ 
and $k$ ranges over the elements of $K$.

It is easy to check that $[H,K]$ is normal (characteristic) 
in $G$ if $H$ and $K$ 
are normal (characteristic) in $G$.
(Recall a subgroup of $G$ is called characteristic if it is invariant 
under every automorphism of $G$.)
\end{defn}

\begin{defn}
We define the descending central series of $G$, 
$$
G=\Gamma^1(G) \geq \Gamma^2(G)\geq ....\geq \Gamma^n(G) \geq \dots
$$ 
inductively by $G = \Gamma^1(G)$ and
$\Gamma^n(G) = [G, \Gamma^{n-1}(G)]$ for $n>1$.
\end{defn}

The following proposition collects some elementary facts about 
this series, the proofs of which are easy and left to the reader:
[references: see Magnus, Karass, Solitar, or Michael Vaughn-Lee]

\begin{prop}
\label{pro: assgrad} 
Let $\Gamma^n(G)$ denote the $n$th term in the descending central 
series, then: \\
(1) The group $\Gamma^{n}(G)$ is a normal (in fact characteristic) 
subgroup of $G$. \\
(2) The quotient group $E_0^n(G) = \Gamma^{n}(G)/ \Gamma^{n+1}(G)$ 
is abelian for all $n \geq 1$. \\
(3) The map of sets given by the commutator
$<,>: G \times G \to\ G$ by defining 
$$
<x,y> = x^{-1} y^{-1} xy
$$ 
induces a 
well-defined bilinear pairing
$$
[\cdot,\cdot]: E_0^n(G) \otimes_{\ZZ} E_0^m(G)  \to\ E_0^{n+m}(G)
$$
which is alternating, i.e., 
$$
[a,a] = 0$$ 
for all $a$,
and satisfies the 
Jacobi identity, i.e., 
$$
[[a,b],c] + [[b,c],a] + [[c,a],b]=0
$$ 
for all $a, b, c$. \\
(4) Thus $E_0^*(G)$ is a quasi-graded Lie algebra in the  
sense that if $a$ has degree $n$ and $b$ has degree $m$, then 
$[a,b]$ has degree $n+m$. The reader is warned that this notion 
of quasi-graded Lie algebra is different from the notion of a 
graded Lie algebra typically used in topology 
which is defined below for completeness. 
\end{prop}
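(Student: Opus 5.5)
We need to prove Proposition~\ref{pro: assgrad}: normality of the $\Gamma^n(G)$, abelian quotients, a well-defined bilinear alternating pairing satisfying Jacobi, and hence a Lie algebra.

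The plan rests on a handful of standard commutator identities with the convention $<x,y>=x^{-1}y^{-1}xy$ and $x^y=y^{-1}xy$. These are $<xy,z>=<x,z>^y<y,z>$, $<x,yz>=<x,z><x,y>^z$, and the Hall--Witt identity
$$
<<x,y^{-1}>,z>^y\,<<y,z^{-1}>,x>^z\,<<z,x^{-1}>,y>^x=1 .
$$

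\textbf{Step 1: parts (1) and (2).} For (1), induct on $n$. Suppose $\Gamma^{n-1}(G)$ is characteristic. An automorphism $\phi$ sends each generator $<g,h>$ of $\Gamma^n(G)$ to $<\phi(g),\phi(h)>$, which again lies in $\Gamma^n(G)$, so $\Gamma^n(G)$ is characteristic, hence normal. For (2), note $\Gamma^{n+1}(G)=[G,\Gamma^n(G)]\supseteq[\Gamma^n(G),\Gamma^n(G)]$, so the quotient $\Gamma^n/\Gamma^{n+1}$ is abelian.

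\textbf{Step 2: $[\Gamma^n,\Gamma^m]\subseteq\Gamma^{n+m}$.} Induct on $m$ for all $n$ at once. The case $m=1$ is the definition of the series. For $m>1$, the group $\Gamma^m$ is generated by the elements $<g,c>$ with $c\in\Gamma^{m-1}$. Using the three subgroup lemma (itself a consequence of Hall--Witt together with normality of $\Gamma^{n+m}$), we get
$$
[\Gamma^n,[G,\Gamma^{m-1}]]\subseteq [G,[\Gamma^{m-1},\Gamma^n]]\,[\Gamma^{m-1},[\Gamma^n,G]] .
$$
By induction both factors on the right lie in $\Gamma^{n+m}$. Passing from generators to arbitrary elements uses the product expansions of commutators together with normality.

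\textbf{Step 3: the bracket is well defined and bilinear.} Define $[x\Gamma^{n+1},y\Gamma^{m+1}]=<x,y>\Gamma^{n+m+1}$ for $x\in\Gamma^n$ and $y\in\Gamma^m$. By Step 2, $<x,y>$ lies in $\Gamma^{n+m}$. Conjugation acts trivially modulo $\Gamma^{n+m+1}$, since $a^y=a<a,y>$ and $<a,y>\in\Gamma^{n+m+1}$. Given that, the product expansions show the map is multiplicative in each variable modulo $\Gamma^{n+m+1}$. Replacing $x$ by $xu$ with $u\in\Gamma^{n+1}$ changes the value by $<u,y>\in\Gamma^{n+m+1}$, again by Step 2, so the bracket depends only on cosets. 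Alternation is immediate from $<a,a>=1$.

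\textbf{Step 4: Jacobi.} Apply Hall--Witt to $x\in\Gamma^n$, $y\in\Gamma^m$, $z\in\Gamma^p$ and reduce modulo $\Gamma^{n+m+p+1}$. The conjugations disappear, and inversion inside a commutator just negates it by bilinearity. The identity then becomes $[[a,-b],c]+[[b,-c],a]+[[c,-a],b]=0$, which is the Jacobi identity up to an overall sign. Bilinearity and alternation together give antisymmetry $[a,b]=-[b,a]$, and (4) is simply a summary of (1)--(3).

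The main obstacle is Step 2 via the three subgroup lemma: the inclusion it yields is only modulo a normal subgroup, and the bookkeeping of conjugates has to be handled carefully there. The remaining steps are routine once the commutator identities are in hand.
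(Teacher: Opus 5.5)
Your proof is correct. The paper gives no argument of its own: it leaves the proposition to the reader and points to Magnus--Karrass--Solitar and Vaughan-Lee. Your route is essentially the standard proof found there: commutator expansion identities, $[\Gamma^n,\Gamma^m]\subseteq\Gamma^{n+m}$ via the three subgroup lemma, and Jacobi from Hall--Witt reduced modulo $\Gamma^{n+m+p+1}$.

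One small point. You derive antisymmetry $[a,b]=-[b,a]$ from alternation, but alternation within a single degree does not give it for homogeneous elements of different degrees. That case is needed so that $[a,a]=0$ holds for non-homogeneous $a$ in $E_0^*(G)$. It follows immediately from $<x,y>^{-1}=<y,x>$.
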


\begin{defn}
A graded Lie algebra is a graded abelian group $E^*=\oplus_{i \in \ZZ} E^i$
together with a bilinear bracket $[\cdot,\cdot]: E^* \otimes E^* \to E^*$ 
satisfying: \\
$$
[a,b]=(-1)^{|a||b|}[b,a]
$$ 
and 
$$
(-1)^{|a||c|}[[a,b],c] + (-1)^{|b||a|}[[b,c],a] + (-1)^{|c||b|}[[c,a],b] 
= 0
$$
for all homogeneous $a,b,c \in E^*$. (Here $|a|$ denotes the degree of $a$ 
etc.)
\end{defn}

The motivation for the definition of a graded Lie algebra given above  
is that the homotopy groups $\{ \pi_n(X) | n \geq 2 \}$ 
of a well-pointed space $X$ fit together to give a
graded Lie algebra under the Whitehead product. 
(See \cite{Bred}.)

Consider the Lie algebra $E_0^*(G)$ obtained from the descending central
series for the group $G$ as described in proposition~\ref{pro: assgrad}. 
For each positive integer $q$,
there is a canonical graded Lie algebra
$E_0^*(G)_q$ obtained from $E_0^*(G)$, 
which will be useful in the next section,
and which is defined as follows.

\begin{defn}
Fix a positive integer $q$ and let $\Gamma^n(G)$ denote the $n$th
stage of the descending central series for $G$.
Define
$$
E_0^{i}(G)_q = \begin{cases} 
E_0^n(G) \text{ if } i=2nq \\
0 \text{ if }  i \neq 0 \text{ mod } 2q
\end{cases}
$$
Finally define the Lie bracket on $E_0^*(G)_q$ to be 
that induced from the one of $E_0^*(G).$
\end{defn}

We are now ready to look at the main theorem of this section.

\begin{thm}[T. Kohno, Falk-Randell]
\label{thm: Falkrandell}
Let
$$ 
1 \to\  A  \to\   B  \to\ C \to\ 1 
$$
be a split short exact sequence of groups such that
the conjugation action of C on $H_1(A)$ is trivial.

Then there is a short exact sequence of Lie algebras
$$ 
0 \to\  E_0^*(A)  \to\  E_0^*(B)  \to\  E_0^*(C)  \to\  0 
$$
which is split as a sequence of abelian groups. 
(Thus there is an isomorphism of abelian
groups  $E_0^n(A) \oplus E_0^n(C)  \cong E_0^n(B)$ but 
this isomorphism need not preserve the Lie algebra structure.)
\end{thm}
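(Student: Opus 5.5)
The plan is to prove exactness level by level, using the splitting $s: C \to B$ to write $B = A \rtimes C$. The central auxiliary claim is
$$
\Gamma^n(B) = \Gamma^n(A) \rtimes \Gamma^n(C) \quad\text{for all } n\ge 1,
$$
meaning $\Gamma^n(B)\cap A = \Gamma^n(A)$ and $\Gamma^n(B)$ is generated by $\Gamma^n(A)$ and $s(\Gamma^n(C))$. Granting this, the rest is formal. The maps $E_0^n(A)\to E_0^n(B)\to E_0^n(C)$ are induced by functoriality, so they preserve brackets. Surjectivity onto $E_0^n(C)$ is clear, and $s$ induces a splitting as abelian groups. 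Injectivity on the left is equivalent to $\Gamma^{n+1}(B)\cap \Gamma^n(A)=\Gamma^{n+1}(A)$. Exactness in the middle follows because $\Gamma^n(B)=\Gamma^n(A)\cdot s\Gamma^n(C)$.

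\textbf{The easy inclusion.} Certainly $\Gamma^n(A)\subseteq \Gamma^n(B)\cap A$. Projecting onto $C$ shows that $\Gamma^n(B)$ maps onto $\Gamma^n(C)$ with kernel $\Gamma^n(B)\cap A$.

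\textbf{The hard inclusion.} It remains to show $\Gamma^n(B)\cap A\subseteq\Gamma^n(A)$. For this I will introduce the filtration
$$
A_1=A,\qquad A_{n+1}=[A,A_n][C,A_n],
$$
that is, the subgroup generated by commutators of $A_n$ with $A$ and with $s(C)$. This is the smallest $C$-stable central-type filtration.

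First, an induction on $n$ gives $\Gamma^n(B)=A_n\rtimes \Gamma^n(C)$. The step expands $[a c, a' c']$ using the commutator identities $[xy,z]=[x,z]^y[y,z]$, and the Hall–Witt identity handles terms such as $[A,\Gamma^n(C)]$.

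The key claim is then $A_n=\Gamma^n(A)$, i.e.\ $[C,\Gamma^n(A)]\subseteq \Gamma^{n+1}(A)$. I will prove this by induction on $n$. The base case $n=1$ is exactly the hypothesis that $C$ acts trivially on $H_1(A)=A/\Gamma^2(A)$: for each $c$, $[c,a]\in\Gamma^2(A)$.

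For the inductive step, take $\Gamma^{n+1}(A)$, generated by $[a,x]$ with $x\in\Gamma^n(A)$. Conjugation by $c$ gives
$$
[a,x]^c=[a^c,x^c]=[a\,u,\;x\,v], \qquad u\in\Gamma^2(A),\ v\in\Gamma^{n+1}(A).
$$
Expanding, $[a u, x v]\equiv[a,x]$ modulo $[\Gamma^2(A),\Gamma^n(A)]\cdot[A,\Gamma^{n+1}(A)]\subseteq\Gamma^{n+2}(A)$. Here one uses the standard fact $[\Gamma^i,\Gamma^j]\subseteq\Gamma^{i+j}$ from the three-subgroup lemma. Since $\Gamma^{n+2}(A)$ is characteristic, hence $C$-stable, the congruence on generators extends to products. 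So $C$ acts trivially on $E_0^{n+1}(A)$, which is the inductive step.

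\textbf{Expected obstacle.} The main difficulty is the bookkeeping in the first induction, $\Gamma^n(B)=A_n\rtimes\Gamma^n(C)$. The delicate point is that mixed commutators $[a,c]$ with $a\in A_i$ and $c\in\Gamma^j(C)$ land in $A_{i+j}$. I would try to prove this by a double induction on $j$ via the three-subgroup lemma, writing $\Gamma^j(C)=[C,\Gamma^{j-1}(C)]$. This should give $[A_i,\Gamma^j(C)]\subseteq A_{i+j}$, since each $A_m$ is normal in $B$.
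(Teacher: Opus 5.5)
Your proposal is correct and is essentially the paper's argument. The paper's function $\tau(b)=b\,\sigma(p(b))^{-1}$ is just the $A$-coordinate in $B=A\rtimes C$, so its claim $\tau(\Gamma^n(B))\subseteq\Gamma^n(A)$ is your $\Gamma^n(B)\cap A=\Gamma^n(A)$, and its inclusion $[\Gamma^n(B),\Gamma^m(A)]\subseteq\Gamma^{n+m}(A)$ packages your two commutator facts $[C,\Gamma^n(A)]\subseteq\Gamma^{n+1}(A)$ and $[A_i,\Gamma^j(C)]\subseteq A_{i+j}$; the difference is only organizational, since your filtration $A_n$ separates the hypothesis-free semidirect-product bookkeeping from the step using trivial action on $H_1(A)$, and you supply the three-subgroup-lemma inductions that the paper merely asserts.
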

\begin{proof}

This proof follows that of Falk-Randell \cite{FR} and 
Xicot\'entcatl \cite{X}.
Consider the extension
$$ 
1 \to\ A  \overset{j}{\rightarrow}   B  \overset{p}{\rightarrow} C \to\ 1 
$$
and let $\sigma: C \to\ B$ be a splitting for $p$.
Observe that $p(b \sigma(p(b^{-1}))) = 1$ for all $b \in B$, so there
exists a unique element $a \in A$ with $j(a) = b(\sigma (p(b^{-1})))$ 
for all $b \in B$.
Thus, there is a well-defined function (which is not necessarily a
homomorphism) $\tau: B \to\ A $ defined by the formula
$ \tau(b) = j^{-1}(b\sigma (p(b^{-1})))$.

Notice that the trivial action of $C$ on $H_1(A)$ gives

\vskip 0.2in

\begin{enumerate}
\item $cac^{-1} = ax$ for a in A, and $x$ in $[A,A]$,
\item  $[B,A]$ is a subgroup of $[A,A]$, and
\item  $[\Gamma^n(B),\Gamma^m(A)]$ is a
subgroup of $\Gamma^{m+n}(A)$, and
\item  $\tau(\Gamma^n(B))$ is contained in $\Gamma^n(A)$.
\end{enumerate}

\vskip 0.2in

Since $\tau(\Gamma^{n}(B))$ is contained in $\Gamma^{n}(A)$,
for all n, there is a well-defined induced map of sets
$\tau: E_0^n(B) \to\ E_0^n(A)$ where $\tau$ is defined on an
equivalence class of b by the formula
$\tau([b])= \tau(b)$ ( as $\tau(bv)$ = $bv.\sigma p((bv)^{-1})$= 
$ \tau(b). \Gamma $ where $v$ and $\Gamma $ are in  $\Gamma^{n+1}(B)$.

Furthermore if b is in $ker(p)\cap \Gamma^n(B)$, then
$\tau(b) = b$. Thus $ \tau$ restricts to a function
$ \tau |_{ker(p)\cap \Gamma^n(B) }:ker(p)\cap \Gamma^n(B) \to\
\Gamma^n(A)$, and the homomorphism $ j: \Gamma^n(A) \to\ ker(p)\cap =
\Gamma^n(B)$ is
a group isomorphism. Thus, there is an exact sequence of groups
$ 1 \to\ \Gamma^n(A)  \to\   \Gamma^n(B)  \to\ \Gamma^n(C) \to\ 1$
which is split by the existence of $\sigma$.

Furthermore, if $[b]$ is in the kernel of
$E_0^n(p):E_0^n(B) \to\ E_0^n(C)$, then $E_0^n(\tau[b]) = [b]$.
Hence, there is a split short exact sequence
$ 0 \to\  E_0^n(A)  \to\  E_0^n(B)  \to\  E_0^n(C)  \to\  0 $.
The theorem follows from the above.

\end{proof}

The additive decomposition in theorem~\ref{thm: Falkrandell} 
may not necessarily preserve
the underlying Lie algebra structure. The Lie product is sometimes
"twisted", and quite interesting, as shall be seen in examples below. 
We will now look at some examples 
which demonstrate that both hypotheses on the theorem
are required (The existance of the 
splitting on the sequence of groups and the trivial action on homology.) 

\vskip 0.2in

{\bf Example 1:}
Let $F[S]$ denote the free group on a set $S$. 
Then $E_0^*(F[S])$ is isomorphic to the free Lie algebra
on $S$, denoted by $L[A_S]$. This is defined by letting 
$A_S$ be the free abelian group
with basis $S$, and then defining $L[A_S]$ to be the smallest 
sub-Lie algebra of the tensor algebra $T[A_S]$ containing 
$A_S$. [P. Hall, W. Magnus, J. P. Serre ].

\vskip 0.2in

{\bf Example 2:}
Let $G$ denote the $k$th pure Artin braid group $P_k$.
Fix a free abelian group $V_k$ with basis given by elements
$B_{i,j}$ for $1 \leq i < j \leq k$. Let $\mathcal{L}_k$
denote the quotient of the free Lie algebra $L[V_k]$ generated
by $V_k$, modulo the following ``infinitesimal braid relations'':

\begin{enumerate}
\item  $[B_{i,j}, B_{s,t}] =0 $ if $ \{ i,j\} \cap \{ s,t\} = \phi$,
\item  $[B_{i,j}, B_{i,t} + B_{t,j}] =0$ for $ 1 \leq j < t < i
\leq k$, and
\item  $[B_{t,j}, B_{i,j} + B_{i,t}] =0$ for $ 1 \leq j < t < i \leq k$.
\end{enumerate}

Then $E_0^*(P_k)$ is isomorphic to $ \mathcal{L}_k$. [T. Kohno,
Falk-Randell].

\vskip 0.2in

{\bf Example 3:}
Consider the orbit configuration space 
$F_G(M,k)$ where $M = \CC$, the complex numbers.
Let $G$ be the standard integral lattice 
$L=\ZZ + i\ZZ$, acting by translation on $\CC$.
Then $F_{L}(\CC,k)$ is a $K(\pi,1)$ which is studied in
[Cohen,Xicot\'encatl]. Let $F_{L}(\CC,k)$ be defined as above.
Picking a parametrized lattice $\ZZ + \omega \ZZ$ gives an analogous orbit
configuration space associated to an elliptic curve. These will be addressed
elsewhere.

\begin{enumerate}
\item The symmetric group $\Sigma_k$ acts on $F_{L}(\CC,k)$ and
the orbit space $F_{L}(\CC,k)/\Sigma_k  $ is homeomorphic to
the subspace of monic polynomials of degree $k$, $p(z) \in \CC[z]$, with
the property that the difference of any two roots of $p(z)$, 
$\alpha_i$, $\alpha_j$,
lies outside of the Gaussian integers.
\item It is the complement on $\CC^k $ of the infinite (affine) hyperplane
arrangement
$$ 
\mathcal{A} =
 \{ H^{\sigma}_{i,j}  \mid 1\leq j<i\leq k, \;\sigma\in L \}
$$
where
$ H^{\sigma}_{i,j} = \ker(z_i-z_j -\sigma) $
and $\sigma$ ranges over the lattice $L$.

\item It is an $L$-cover of the ordinary configuration
space of $k$ points in the torus $T = S^1 \times S^1$. This
is a special case of the results in Xicot\'encatl's Ph.D. thesis \cite{X} 
where it is proven that there exists a principal bundle
$$ L^k \to\  F_{L}(\CC,k) \to\  F(T, k).$$

\item The space $F_{L}(C,k)$ is a $K(\pi,1)$-space.
\item The fibration $F_{L}(C,k) \to\ F_{L}(C,k-1)$
has \\
(i) trivial local coefficients in homology, and \\
(ii) a cross-section.
\item Thus by theorem~\ref{thm: Falkrandell}, 
the Lie algebra attached to the descending central series of
$\pi_1(F_{L}(\CC,k))$ is additively isomorphic to the direct sum
$\bigoplus_{ 1 < i \leq k} L[i]$ where $L[i]$ is the free Lie algebra 
generated by elements
$B_{i,j}^{\sigma}$ for fixed i with $ 1 \leq j < i \leq k $, and 
$\sigma$ runs over the
elements of the lattice  $L$.
\item The relations are

$[B_{i,j}^{\sigma}, B_{k,i}^{\tau}] = [B_{k,i}^{\tau}, B_{k,j}^{\tau
+ \sigma}]$

$[B_{i,j}^{\sigma}, B_{k,j}^{\tau}] = [B_{k,j}^{\tau}, B_{k,i}^{\tau -
\sigma}]$
\end{enumerate}

\vskip 0.2in

The next list of examples gives short exact sequences of groups
such that the conclusion of the Kohno-Falk-Randell theorem fails, and where
one of the hypotheses in the theorem does not hold.

\vskip 0.2in 

{\bf Example 4:} 
Consider a split short exact sequence of groups 
$$ 
1 \to\  A  \to\   B  \to\ C \to\ 1 
$$
where both $A$ and $C$ are abelian. 

Thus if $n \geq 2$, both $E_0^n(A)$, and $E_0^n(C)$ are
trivial. However, it may well be the case that $B$ is not abelian, for 
example if $B$ is a nontrivial semi-direct product of $A$ and $C$, then 
$E_0^2(B)$ will be non-trivial and hence is not the sum of $E_0^2(A)$ 
with $E_0^2(C)$, thus spoiling
the conclusion of the Kohno-Falk-Randell theorem. 
Notice here that in this case $C$ will act nontrivially on $H_1(A)=A$.

The simplest example of this sort is 
given by taking the symmetric group on $3$ letters 
$\Sigma_3$ as $B$. If we take $A$ to be the normal Sylow-$3$ group 
of order $3$ and $C$ to be the group of order $2$, 
the group extension formed by $A, B$ and $C$ is split, but the
action of $C$ on the first homology group of $A$ is non-trivial. 
In fact $E_0^2(\Sigma_3)=\ZZ/3\ZZ$.

\vskip 0.2in

{\bf Example 5:}
Next consider group extensions

$ 1 \to\  A  \to\   B  \to\ C \to\ 1 $ where $A = \ZZ/2\ZZ$. Since
the only automorphism of $\ZZ/2\ZZ$ is the identity, the action of $C$ on 
$H_*(A)$ is always trivial. If this extension fails to
split, then the conclusion of the Kohno-Falk-Randell theorem
may be spoiled. For example we may take $A=C=\ZZ/2\ZZ$ and 
$B=\ZZ/4\ZZ$. Then $E_0^1(B) \neq E_0^1(A) \oplus E_0^1(C)$.

Another less trivial example is provided by non-abelian
extraspecial $2$-groups where  $C = (\ZZ/2\ZZ)^n $. In this case
$E_0^2(B)$ is again isomorphic to $\ZZ/2\ZZ$, but $E_0^2(A) \oplus
E_0^2(C)$ is the trivial group. Two more specific examples where
$B$ is non-abelian, and $C =  (\ZZ/2\ZZ)^2 $ are given by $D_8$, the dihedral
group of order $8$, and $Q_8$, the quaternion group of order $8$.

\vskip 0.2in

{\bf Example 6:} 
Assume that the group $C$ 
in the extension $ 1 \to\  A  \to\   B  \to\ C \to\ 1 $
is free. Then this extension is split. This setting gives infinite examples
where the conclusion of the Kohno-Falk-Randell theorem
may be spoiled. A specific example is
given by $C$ 
the free group generated by a finite set $S$ of cardinality $n$,
$B$ is the free group generated by the coproduct of the two non-empty sets
$ S \amalg T $ where $T$ has cardinality 1 with B = $ F[S \amalg T] $
and where the map $p: B  \to\ C$ given
by  the natural projection.

Then the above extension is split, but $E_0^1(A)$ is a countably infinitely
generated free abelian group, while $E_0^1(B)$ is a free abelian
group of rank n+1. Thus the natural map $E_0^1(A) \to\ E_0^1(B)$
has a kernel.

\vskip 0.2in

{\bf Example 7:}
If M is a punctured surface of genus greater than 0, and $ k \geq 2$
then the fibrations $ F(M,k) \to\ F(M,k-1) $  have sections, but
the local coefficient system is non-trivial ( as can be seen by
inpsection of the relevant Dehn twist ). Thus the Lie algebras
attached to the descending central series for the pure braid
groups of these surfaces is not clear. One case above addresses
this structure by considering the group that is the kernel of the
map

$ \pi_1F(M,k) \to\ \pi_1M^k $ that is induce by the inclusion map.

When M is a torus, the kernel of this last map satisfies the hypotheses
for the Kohno-Falk-Randell theorem. The resulting Lie algebra is given above 
[CX].

When M is any closed surface of genus at least one, it seems
likely that this kernel always satisfies the hypotheses of the
Falk-Randell-Kohno theorem.

\vskip 0.2in

Let $H$ denote the upper 1/2-plane, and let $\Gamma $ be a subgroup of
$SL(2,Z)$ that acts properly discontinuously on $H$. One might conjecture
that the fundamental groups of the orbit configuration spaces
$ \pi_1 F_{\Gamma}(H,k) $ satisfy the hypotheses of the
Kohno-Falk-Randell theorem, and thus the conclusion.

By \cite{X}, there is a short exact sequence of groups
$$ 
1 \to\  \pi_1 F_{\Gamma}(H,k)  \to\  \pi_1 F(M,k)  \to\ \pi_1(M^k) \to\ 1 
$$
where $H/\Gamma = M$. The point of this is that the orbit
configuration space has a ``nice'' associated Lie algebra. The above
extension tweezes apart two different phenomona in these Lie
algebras.

\newcommand{\colimn}
{\underset{\overset{\longrightarrow}{n}}{\hbox{ colim}}}
\newcommand{\colimg}{\underset{\overset
{\longrightarrow}{g}}{\hbox{ colim}}}
\newcommand{\lra}{\longrightarrow}

\section{Loop spaces of configuration spaces, and Lie algebras}

This section is about 2 possibly different constructions which are
in fact the same. The subject of this section is loop spaces of configuration
spaces and their relationship to the Lie algebra attached to the descending central
series of the pure braid group as described in the previous section.
The purpose of this section is to show that these Lie algebras, apart from
a formal degree shift, are given by the homotopy groups of configuration spaces
for points in complex n-space, $n > 1$, modulo torsion. This theorem, first proven
in \cite {FH1}, and subsequently in \cite {CG} is a
special case of a more general result , and which applies to
further analogues of pure braid groups. One example, the space of
monic polynomials where the differences of the roots lie ouside of
the Gaussian integers provides another example, as well as certain
choices of orbit configuration spaces are also described below.

\vskip .2in
Some additional discussion ( no proofs !) are given concerning
other related constructions which exhibit properties like braid
groups, and appear in the space of loops on a configruation space.
These loop spaces may be thought of as braids on a manifold, or as
trajectories of distinct particles moving through a time parameter
that start and quit in the same position.

\vskip .2in

These constructions also "fit" into several dfferent contexts. One
of which is that these spaces admit interpretations in terms of Vassiliev invariants
of braids, and knots. This subject will
not be addressed here; some information is given in
\cite {CG}. Indeed, one motivation for including this information
here is that loop spaces of configuration spaces keep track of
paths of distinct particles parametrized by time as they move through a manifold,
and are essentially braids on a manifold. There is a rich homological
structure attached to these paths, as well as a close connection to invariants of knots.

\vskip .2in

Recall the regrading of Lie algebras $E_0^*(G)_{m}$ for $ m > 0$
given in the previous section where $E_0^*(G)$ denotes the Lie
algebra attached to the descending central series of a discrete
group G.

\vskip .2in
\begin{thm}

If $m \geq 1$, then the homology of the loop space of the
configuration space $\Omega F(R^{2m+2},k)$ is isomorphic to the
universal enveloping algebra of the graded Lie algebra
$E_0^*(P_k)_{m}$. Furthermore,

\begin{enumerate}
\item the image of the classical
Hurewicz homomorphism $\pi_*(\Omega F(R^{2m+2},k))  \to\
H_*(\Omega F(R^{2m+2},k))$ is isomorphic to $E_0^*(P_k)_{m}$,
\item the Hurewicz homomorphism induces an isomorphism of graded
Lie algebras

$\pi_*(\Omega F(R^{2m+2},k))/torsion  \to\ PrimH_*(\Omega F(R^{2m+2},k))$

where Prim(.) denotes the module of
primitive elements, and the Lie algebra structure of the source is
given by the classical Samelson product.
\end{enumerate}
\end{thm}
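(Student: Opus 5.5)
The plan is to prove the statement by induction on $k$, using the Fadell--Neuwirth fibrations (Theorem~\ref{thm: fib}) for $M=\RR^{2m+2}$. The projection $F(\RR^{2m+2},k)\to F(\RR^{2m+2},k-1)$ has fibre $\RR^{2m+2}-Q_{k-1}\simeq \bigvee_{k-1}S^{2m+1}$. It also admits a cross-section, obtained by adding a new point far out along a fixed direction, beyond the norms of the existing points.

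\textbf{Step 1: the loop fibration splits.} Looping the fibration gives
\[
\Omega(\textstyle\bigvee_{k-1}S^{2m+1})\to\Omega F(\RR^{2m+2},k)\to\Omega F(\RR^{2m+2},k-1),
\]
and the section makes it a trivial fibration up to homotopy. Using the loop multiplication, $\Omega F(\RR^{2m+2},k)\simeq \Omega F(\RR^{2m+2},k-1)\times\Omega(\bigvee_{k-1}S^{2m+1})$ as spaces, though not as $H$-spaces.

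\textbf{Step 2: homology additively.} By Bott--Samelson, $H_*\Omega(\bigvee_{k-1}S^{2m+1})$ is the tensor algebra on $k-1$ classes of degree $2m$. This tensor algebra is the universal enveloping algebra of the free graded Lie algebra on those classes, which is torsion-free. Since everything is free abelian, the K\"unneth theorem shows that $H_*\Omega F(\RR^{2m+2},k)$ is torsion-free, with Poincar\'e series $\prod_{j=1}^{k-1}(1-jt^{2m})^{-1}$.

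\textbf{Step 3: identify the Lie algebra.} Let $B_{i,j}\in\pi_{2m}\Omega F(\RR^{2m+2},k)$ be the adjoint of the class of point $i$ orbiting point $j$. These classes generate the homology as an algebra: by induction and Step 2, the fibre classes together with the images of the section generate. I would then verify the infinitesimal braid relations of Example~2 among the Samelson products of the $B_{i,j}$.
\begin{itemize}
\item Disjoint pairs commute, because the two orbits can be performed in disjoint regions.
\item The three-strand relations $[B_{i,j},B_{i,t}+B_{t,j}]=0$ hold because the corresponding classes pull back from $F(\RR^{2m+2},3)\simeq S^{2m+1}\times\bigvee_2 S^{2m+1}$. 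Alternatively, they follow by Poincar\'e duality from the Arnold-type relation in $H^*F(\RR^{2m+2},3)$.
\end{itemize}
This yields a map of graded Lie algebras $E_0^*(P_k)_m\cong\mathcal L_k$ (regraded) into $\pi_*(\Omega F)\otimes$, and hence a Hopf algebra map $U(E_0^*(P_k)_m)\to H_*\Omega F(\RR^{2m+2},k)$. This map is onto, since the $B_{i,j}$ generate.

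\textbf{Step 4: compare sizes.} By Theorem~\ref{thm: Falkrandell}, applied to the split extensions $P_{k-1}$, free group, $P_k$ with trivial action on $H_1$, $\mathcal L_k$ is additively $\bigoplus_j L[j\text{ generators}]$. By Poincar\'e--Birkhoff--Witt, $U(E_0^*(P_k)_m)$ therefore has the same Poincar\'e series as in Step 2. Both sides are free abelian and finitely generated in each degree, so the surjection is an isomorphism.

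For (1) and (2), the primitives of a primitively generated torsion-free Hopf algebra $U(L)$ in characteristic zero are $L$. Over $\ZZ$, because $L$ is a free abelian direct sum of free Lie algebras, I would argue that the primitives are exactly $L$. The Hurewicz image contains the $B_{i,j}$ and is closed under Samelson products, which map to commutators. Conversely, $\pi_*$ of the product decomposition is $\pi_*\Omega F(k-1)\oplus\pi_*\Omega(\bigvee S^{2m+1})$. By Hilton--Milnor, this is, modulo torsion, the free Lie algebra summand, and it maps injectively onto $L$. This gives both the image statement and the isomorphism modulo torsion.

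I expect the main obstacle to be the integral statement about primitives and the Hurewicz image, since Hilton--Milnor involves higher homotopy of spheres that must be discarded as torsion. I would first work rationally and then use torsion-freeness of homology to descend. Verifying the infinitesimal braid relations with correct signs is the secondary difficulty.
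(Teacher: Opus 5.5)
Apart from one step, your route is viable and differs from the paper's. The paper cites [CG], [FH1] for $H_*\Omega F(\RR^{2m+2},k)\cong U(E_0^*(P_k)_m)$ and for generation by the $B_{i,j}$, and uses Hilton--Milnor for the primitives. You instead prove the isomorphism from a surjection plus a rank count (section splitting, Bott--Samelson and K\"unneth on one side; Falk--Randell and PBW over $\ZZ$ on the other). You get the primitives from $L$ being a direct summand of $U(L)$ together with a rational Hurewicz argument. One caution: define the Lie map into $H_*\Omega F$ with the commutator bracket, or into $\pi_*\otimes\mathbb{Q}$, but not into $\pi_*\Omega F$ itself. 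Under $\pi_{i,j}$, the Samelson square of $B_{i,j}$ maps to the adjoint of $[\iota_{2m+1},\iota_{2m+1}]$, which is nonzero for $2m+1\neq 3,7$.

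The gap is the nested three-strand relations; without them there is no map $U(\mathcal{L}_k)\to H_*\Omega F$ at all. You justify them by $F(\RR^{2m+2},3)\simeq S^{2m+1}\times\bigvee_2 S^{2m+1}$, but this fails integrally for $2m+1\neq 3,7$.
\begin{itemize}
\item Rationally, the only candidate for the central sphere is $Z=B_{2,1}+B_{3,1}+B_{3,2}$.
\item Using the tori below, its Whitehead product with $pB_{2,1}+qB_{3,1}+rB_{3,2}$ is $p[B_{2,1},B_{2,1}]+q[B_{3,1},B_{3,1}]+r[B_{3,2},B_{3,2}]$.
\item The projections $\pi_{i,j}$ detect this as nonzero unless $p,q,r$ are all even, so $Z$ cannot be completed to a basis by classes commuting with it.
\end{itemize}
More basically, even an abstract splitting would not tell you which class is central, and that identification is the entire content of the relations. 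The appeal to ``Poincar\'e duality from the Arnold relation'' does not supply it either. Passing from relations in $H^*F$ to commutator relations in $H_*\Omega F$ is a Koszul-type duality that needs its own proof.

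The missing idea is the paper's nested version of your disjoint-regions trick. Take the maps $\gamma_1(u,v)=(0,u,2v)$ and $\gamma_2(u,v)=(0,2u,v)$ from $S^{n-1}\times S^{n-1}$ to $F(\RR^n,3)$, with $n=2m+2$. Pulling back $A_{2,1},A_{3,1},A_{3,2}$ shows:
\begin{itemize}
\item the two axis spheres of $\gamma_1$ represent $B_{2,1}$ and $B_{3,1}+B_{3,2}$;
\item those of $\gamma_2$ represent $B_{2,1}+B_{3,2}$ and $B_{3,1}$ (no sign appears since $n$ is even).
\end{itemize}
Since $\pi_{n-1}F(\RR^n,3)\cong H_{n-1}$, these are the homotopy classes. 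Each pair extends over the product, so their Samelson products vanish. This gives $[B_{2,1},B_{3,1}+B_{3,2}]=0$ and $[B_{3,1},B_{3,2}+B_{2,1}]=0$. Placing the remaining points far away transports these relations to all $1\le j<t<i\le k$.
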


Namely, the homotopy groups of the loop space of the configuration
space of k points in an even dimensional euclidean space $R^{2m+2}$,
modulo torsion, admits the structure of a graded Lie algebra
induced by the classical Samelson product. That Lie algebra is
isomorphic to $E_0^*(P_k)_m$ as described in the previous section.

\vskip .2in

Similar results apply to other analogues of pure braid groups.
Further work of \cite {X}, \cite {DC}, and the first
author show that an analogous theorem holds for some other groups
that are "close" to braid groups arising from some fibred
$K(\pi,1)$ hyperplane arrangements. Analogous results hold
for "orbit configuration spaces" for groups acting freely on the
upper 1/2-plane, and for some lattices acting on $C$ \cite {CX}.

\vskip .2in

Here, consider $F_G(M,k)$ in the case when $M = C $, the complex numbers,
and $G$ is the integral lattice $L$ = $Z + iZ$, acting by translation on $C$.
One of the consequences of the theorem below is that the Lie algebra obtained
from the fundamental group of the associated orbit configuration space also
gives the Lie algebra obtained form the higher homotopy groups of the
"higher dimensional analogues" of this arrangement. Two examples illustrating this
behavior are \cite {X}, \cite {DC}, and are described here as well as general theorem
about analogous spaces.

\vskip .2in

\begin{thm}

Let $F_{L}(C,k)$ be defined as above.

\begin{enumerate}
\item The symmetric group $\Sigma_k$ acts on $F_{L}(C,k)$ and
the orbit space $F_{L}(C,k) / \Sigma_k  $ is homeomorphic to
the subspace of monic polynomials of degree $k$, $p(z) \in C[z]$, with
the property that the difference of any two roots of $p(z)$, $\alpha_i$, $\alpha_j$,
lies outside of the Gaussian integers.
\item It is the complement in $C^k $ of the infinite (affine) hyperplane
arrangement
$$ \mathcal{A} =
 \{ H^{\sigma}_{i,j}  \mid 1\leq j<i\leq k, \;\sigma\in L \}$$
where
$  H^{\sigma}_{i,j} = \ker(z_i-z_j -\sigma) $.
\item It is an ${L}^k$-cover of the ordinary configuration
space of $k$ points in the torus $T = S^1 \times S^1$. This
is a special case of the results in \cite{X}, which gives
the existence of a principal bundle
$$ L^k \longrightarrow F_{L}(C,k) \longrightarrow F(T, k) .$$
\item The space $F_{L}(C,k)$ is a $K(\pi,1)$.
\item The fibration $F_{L}(C,k) \to\ F_{L}(C,k-1)$
has (i) trivial local coefficients in homology, and (ii) a cross-section.
\item Thus the Lie algebra given by the associated graded for
the descending central series of
$ \pi_1(F_{L}(C,k))$ is additively isomorphic to the direct sum
$\bigoplus_{ 1 < i \leq k} L[i]$ where $L[i]$ is the free Lie algebra generated by elements
$B_{i,j}^{\sigma}$ for fixed i with $ 1 \leq j < i \leq k $, and $\sigma$ runs over the
elements of the lattice $L$.

\item The relations are

$[B_{i,j}^{\sigma}, B_{k,i}^{\tau}] = [B_{k,i}^{\tau},
B_{k,j}^{\tau + \sigma}]$

$[B_{i,j}^{\sigma}, B_{k,j}^{\tau}] = [B_{k,j}^{\tau},
B_{k,i}^{\tau - \sigma}]$

\item The integral homology of $F_{L}(C, k)$ is additively given by
$$ H_* F_{L}(C, \ell) \cong  H_* (C_1) \otimes H_* (C_2) \otimes
\dots \otimes H_* (C_{k-1})
$$
where $C_i$ is the infinite bouquet of circles $\bigvee_{|Q_i^{L}|} S^1$
and $Q_i^{L}$ as defined in the beginning of next section.

\end{enumerate}
\end{thm}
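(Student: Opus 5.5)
We establish the eight assertions of the theorem in roughly their stated order. Most are formal consequences of the orbit configuration machinery developed earlier. The substantive work lies in items (5)--(8).

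For (1) and (2), unwind the definition of $F_L(\CC,k)$ directly. The condition that $z_i,z_j$ lie in distinct $L$-orbits means $z_i-z_j\notin L$. This says exactly that the point avoids every hyperplane $H^{\sigma}_{i,j}=\ker(z_i-z_j-\sigma)$, which gives (2). Passing to $\Sigma_k$-orbits and sending an unordered $k$-tuple to $\prod_i (z-z_i)$ identifies $F_L(\CC,k)/\Sigma_k$ with the stated space of monic polynomials, by the usual homeomorphism between $SF(\CC,k)$-type spaces and polynomials with distinct roots; this gives (1). For (3), note that $L$ acts freely and properly discontinuously on $\CC$, so $\CC\to \CC/L=T$ is a principal $L$-bundle. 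Proposition~\ref{pro: orbittonormalconfig} then gives the principal $L^k$-bundle $F_L(\CC,k)\to F(T,k)$. Statement (4) follows from (3) and Theorem~\ref{thm: Kpi1}: the torus has genus $1$, so $F(T,k)$ is a $K(\pi,1)$, and a covering space of a $K(\pi,1)$ is again a $K(\pi,1)$. Alternatively, induct on $k$ using the orbit fibration theorem, since each fiber $\CC-O_{k-1}$ is a plane minus infinitely many points and hence a $K(F,1)$.

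For (5), use the orbit Fadell--Neuwirth fibration $\CC-O_{k-1}\to F_L(\CC,k)\to F_L(\CC,k-1)$. Although the orbit fibration theorem was stated for finite $G$, Dan Cohen's pullback proof adapts because $L$ acts properly discontinuously; one can alternatively see the bundle directly from the affine arrangement structure. For the cross-section, choose $z_k=\max_i |z_i|+$ a point far out, for example $z_k=1/2+i\,(1+\sum_{j<k}|z_j|)(\text{const})$ adjusted so that $z_k-z_j\notin L$. The cleanest choice is to use real parts: set $z_k=1/2+\operatorname{Re}(\cdot)$ offsets so that the difference has non-integer imaginary part. I would write an explicit continuous formula of this kind. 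The trivial action on $H_1$ of the fiber is then checked on generators. The fiber's $H_1$ is free on loops $B^{\sigma}_{k,j}$ around the punctures $z_j+\sigma$, and each pure-braid generator of the base moves punctures around one another without changing the homology class of a small loop around each puncture, since $H_1$ of a punctured plane is generated by these local loops. This step, giving a careful geometric verification that monodromy fixes each puncture class, is the main obstacle I expect.

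Given (5), item (6) follows from Theorem~\ref{thm: Falkrandell} applied inductively to the split extensions $1\to F[B^{\sigma}_{k,j}]\to\pi_1F_L(\CC,k)\to\pi_1F_L(\CC,k-1)\to1$, together with Example 1 identifying $E_0^*$ of a free group with the free Lie algebra. For (7), compute the commutator relations in the semidirect product by conjugating a generator $B^{\tau}_{k,i}$ by the lift of $B^{\sigma}_{i,j}$, and read off the image in $E_0^2$, as in the Kohno computation of the infinitesimal braid relations after translating by lattice elements. For (8), the trivial local coefficients and the cross-section make the Serre spectral sequence of each fibration collapse with untwisted $E_2=H_*(\text{base})\otimes H_*(\text{fiber})$. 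The fiber is an infinite bouquet of circles, so its homology is free and there are no extension problems; inducting on $k$ then yields the tensor product decomposition.
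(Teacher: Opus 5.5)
The genuine gap is the cross-section in (5)(ii), and the rest of your argument depends on it. The Kohno--Falk--Randell theorem you invoke in (6) requires the extension to split, and in (8) it is the section that kills $d^2\colon E^2_{p,0}\to E^2_{p-2,1}$. Neither of your suggested constructions can be completed.

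First, the forbidden set $\bigcup_{j<k}(z_j+L)$ is $L$-periodic, so there is no ``far out'' region that avoids it. Second, the variant you call cleanest cannot work for $k\ge 3$: choosing $z_k$ so that every difference $z_k-z_j$ has non-integer imaginary part is impossible by continuity. Consider the path $(z_1,z_2)=(0,\tfrac12+ic)$, $0\le c\le 1$, in $F_L(\CC,2)$. The continuous function $y(c)=\operatorname{Im} z_3$ must avoid $\ZZ$, so it stays in some interval $(n,n+1)$. Then $y(c)-c=\operatorname{Im}(z_3-z_2)$ lies in $(n,n+1)$ at $c=0$ and in $(n-1,n)$ at $c=1$, so it equals $n$ somewhere. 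The path $(0,c+\tfrac{i}{2})$ rules out controlling real parts in the same way. So no ``explicit continuous formula of this kind'' exists.

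What does work is to push $z_1$ off in a fixed direction by less than half its distance to the other forbidden points. Let $d(z)=\min_{2\le j\le k-1}\operatorname{dist}(z_1-z_j,L)$, which is continuous and positive on $F_L(\CC,k-1)$. Put $\epsilon=\tfrac12\min\{1,d(z)\}$ (so $\epsilon=\tfrac12$ when $k=2$), and set $z_k=z_1+\epsilon$. Then $z_k-z_1=\epsilon\in(0,\tfrac12]$ is not in $L$. For $2\le j\le k-1$ and $\sigma\in L$ one has $|z_k-z_j-\sigma|\ge d-\epsilon\ge\epsilon>0$. With this section your arguments for (6) and (8) go through.

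Your monodromy argument for (5)(i), which you expected to be the obstacle, is fine. The monodromy fixes each puncture $z_j+\sigma$ individually and preserves orientation, so it fixes every small-loop generator of $H_1$ of the fiber.

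Two smaller remarks:
\begin{enumerate}
\item The finite-group pullback proof of the orbit fibration does not literally adapt to infinite $L$, since it maps into $F(X,|G|k)$. The fibration property still follows by factoring $F_L(\CC,k)\to F_L(\CC,k-1)$ as a covering map onto $F_L(\CC,k-1)\times_{F(T,k-1)}F(T,k)$, followed by the pullback of the Fadell--Neuwirth fibration $F(T,k)\to F(T,k-1)$.
\item The paper states this theorem without proof, attributing it to Xicot\'encatl and Cohen--Xicot\'encatl. For relations like (7), its analogous higher-dimensional argument uses maps of tori such as $(z,w)\mapsto(q_1,\,q_1+\sigma+z/8,\,q_1+\tau+w/16)$. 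The two circle factors give commuting elements of $\pi_1$, which is quicker than computing conjugation actions in the semidirect product.
\end{enumerate}
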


\vskip .2in

Consider the "orbit configuration space" $F_{L}(C \times R^n,k)$
where $L$ operates diagonally on $C \times R^n$, and trivially on
$R^n$.

\vskip .2in
\begin{thm} Assume that $ q \geq 1 $.

\begin{enumerate}
\item The loop space $\Omega F_{L}(C \times R^{2q},k)$ is homotopy
equivalent to the product
$$ \prod_{1 \leq i \leq k-1} \Omega (C \times R^{2q} -Q_i^{L}) $$
( although this product decomposition is not multiplicative ).

\item The integral homology of $\Omega F_{L}( C \times R^{2q},k)$ is
isomorphic to

$$ \bigotimes_{1 \leq i \leq k-1} H_*(\Omega ( C \times R^{2q}-Q_i^{L}))$$
as a coalgebra.

\item The Lie algebra of primitives is isomorphic to the Lie
algebra given by  \\
$\pi_*(\Omega F_{L}(C \times R^{2q},k))/torsion$.

\item The Lie algebra of  of primitive elements in the homology of
$\Omega F_{L}( C \times R^{2q},k)$ is a direct sum of free (graded)
Lie algebras $\bigoplus_{ 1 < i \leq k} L[i]$ where $L[i]$ is the free
graded Lie algebra generated by elements
$B_{i,j}^{\sigma}$ of degree 2q for fixed i with $ 1 \leq j < i \leq k $,
and $\sigma$ runs over the elements of the lattice  ${L}$.
The relations are

$[B_{i,j}^{\sigma}, B_{k,i}^{\tau}] = [B_{k,i}^{\tau}, B_{k,j}^{\tau +
\sigma}]$

$[B_{i,j}^{\sigma}, B_{k,j}^{\tau}] = [B_{k,j}^{\tau}, B_{k,i}^{\tau -
\sigma}]$

\item The Lie algebras $\pi_*(\Omega F_{L}(C^q,k))$ modulo torsion,
and  $E_0^*(F_{L}(C,k))_q$ are isomorphic as Lie algebras.

\end{enumerate}

\end{thm}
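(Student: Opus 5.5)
The plan is to treat $F_L(\CC\times\RR^{2q},k)$ exactly as $F(\RR^{2m+2},k)$ is treated, using the orbit Fadell--Neuwirth fibrations (Theorem~\ref{thm: orbit fib} adapted to the properly discontinuous free action of $L$ on $\CC\times\RR^{2q}$; the pullback argument given there only needs the fibration of Theorem~\ref{thm: fib} for ordinary configuration spaces, applied to finite stages, so we will redo it for the countable orbit set via the local triviality of Lemma~\ref{lem: diskmap}). Projection to the first $k-1$ coordinates gives a tower of fibrations
$$
\CC\times\RR^{2q}-Q_{k-1}^{L} \to F_L(\CC\times\RR^{2q},k)\to F_L(\CC\times\RR^{2q},k-1),
$$
with fiber the complement of $k-1$ lattice orbits, a space homotopy equivalent to an infinite wedge of spheres $S^{2q+1}$ (one for each point of $Q_{k-1}^L$). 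The first key step is to construct cross-sections of these fibrations, e.g.\ by placing the new point far out in the $\RR^{2q}$ direction, $x_k=x_1+ (0,R(x)e)$ with $R$ chosen larger than all other $\RR^{2q}$-coordinates; this is continuous and avoids all orbits since $L$ acts trivially on $\RR^{2q}$.

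Given sections, after looping each fibration becomes a split fibration of loop spaces, hence $\Omega E\simeq \Omega B\times \Omega F$ (using the loop multiplication, not multiplicatively), and inducting on $k$ gives part (1). Part (2) follows from (1) by K\"unneth, as each $H_*(\Omega(\bigvee S^{2q+1}))$ is a free tensor algebra (Bott--Samelson), torsion free, and the product decomposition is a coalgebra map. For (3) and (4): the long exact homotopy sequences split into short exact sequences $0\to\pi_*\Omega F\to\pi_*\Omega E\to \pi_*\Omega B\to 0$, and rationally Milnor--Moore identifies primitives with $\pi_*\otimes\QQ$; since the homology is torsion free and generated by spherical classes $B^\sigma_{i,j}$ of degree $2q$ (Hurewicz images of the fiber spheres), primitives are exactly $\pi_*/\mathrm{torsion}$. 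The additive identification as $\bigoplus L[i]$ comes from the fact that $\pi_*\Omega(\bigvee S^{2q+1})/\mathrm{torsion}$ is a free graded Lie algebra on the bottom classes (Hilton--Milnor), and the section makes the sum additive.

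The main obstacle is the relations in (4), i.e.\ computing Samelson products between classes coming from different fibers. I would compute them by the standard trick of restricting to $k=3$ points with specified lattice translates: the relevant brackets live in the configuration of three points in $\CC\times \RR^{2q}$ modulo $L$, and I would derive the identities $[B_{i,j}^{\sigma},B_{k,i}^{\tau}]=[B_{k,i}^{\tau},B_{k,j}^{\tau+\sigma}]$ by the same ``infinitesimal braid'' argument as for $F(\RR^{n},3)$ (the sum $B_{k,i}^\tau+B_{k,j}^{\tau+\sigma}+B^\sigma_{i,j}$ is central in the relevant sub-Lie algebra because it comes from the degree-$2q$ class of the diagonal circle-action-like loop in the three-point fiber), taking care to track the lattice labels through the translation $x\mapsto x+\sigma$. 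Finally part (5) follows by comparing with the earlier theorem for $E_0^*(\pi_1F_L(\CC,k))$: both Lie algebras have the same generators (relabelled degrees $1\mapsto 2q$), the same relations, and the same additive ranks by Theorem~\ref{thm: Falkrandell} applied to the split extensions with trivial action on $H_1$, so the evident map on generators is an isomorphism.
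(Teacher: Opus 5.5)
Your architecture is the paper's. It also runs the tower $F_L(\CC\times\RR^{2q},k)\to F_L(\CC\times\RR^{2q},k-1)$ with fibre $\CC\times\RR^{2q}-Q^L_{k-1}$, splits the loop spaces using sections, and gets (1)--(3) from K\"unneth, Bott--Samelson and the Hurewicz map onto primitives. It then proves the relations on three points.

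Your explicit section needs a small fix. With $x_k=x_1+(0,Re)$ you need $R>2\max_i\|y_i\|$, where $y_i$ are the $\RR^{2q}$-coordinates. The bound $R>\max_i\|y_i\|$ still allows $y_1+Re=y_2$.

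The real gap is in the step you flag as the main obstacle. Your justification is that $B^\sigma_{i,j}+B^\tau_{k,i}+B^{\tau+\sigma}_{k,j}$ is central because it comes from a ``circle-action-like'' class. That is the $\pi_1$ full-twist argument, and it has no literal analogue in degree $2q$:
\begin{enumerate}
\item $F_L(\CC\times\RR^{2q},3)$ carries no rotation action, because rotations of the $\CC$-factor do not commute with $L$.
\item Rotating a small three-point cluster by $SO(2q+2)$ only realizes even multiples of that sum unless $2q+1\in\{3,7\}$. This is because $S^{2q+1}\to SO(2q+2)\to S^{2q+1}$ has even degree otherwise.
\end{enumerate}
A Gottlieb-element argument could salvage the rotation idea modulo torsion, but that is not what you sketch.

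What the paper actually uses, and what also underlies the $F(\RR^n,3)$ relations via $\gamma_1,\gamma_2$, is maps of a \emph{product} of spheres $S^{2q+1}\times S^{2q+1}\to F_L(\CC\times\RR^{2q},3)$:
\begin{itemize}
\item $F_1(z,w)=(q_1,q_1+\sigma+z/8,q_1+\tau+w/16)$,
\item $F_2(z,w)=(q_1,q_1+\sigma+z/8,q_1+\sigma+\tau+z/8+w/16)$.
\end{itemize}
The two bottom classes of $H_*\Omega(S^{2q+1}\times S^{2q+1})$ commute. Reading off the winding of $x_a$ about $x_b+\rho$, $F_2$ sends them to $B^\sigma_{2,1}+B^{\sigma+\tau}_{3,1}$ and $B^\tau_{3,2}$. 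Likewise $F_1$ sends them to $B^\sigma_{2,1}+B^{\tau-\sigma}_{3,2}$ and $B^\tau_{3,1}$. These give exactly the two listed relations.

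Your ``track the lattice labels through translation'' remark can also be made into a clean alternative. For $\epsilon<1/2$ the map $F(B_\epsilon,3)\to F_L(\CC\times\RR^{2q},3)$, $(y_1,y_2,y_3)\mapsto(y_1,y_2+\sigma,y_3+\sigma+\tau)$, is well defined. It sends $B_{21},B_{31},B_{32}$ to $B^\sigma_{2,1},B^{\sigma+\tau}_{3,1},B^\tau_{3,2}$. The classical infinitesimal braid relations then push forward to the required ones.

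In (5), the rank comparison cannot give injectivity. Since $L$ is infinite, each $E_0^n(\pi_1F_L(\CC,k))$ is free of countably infinite rank. So is each corresponding homotopy group of $\Omega F_L(\CC\times\RR^{2q},k)$ modulo torsion. A surjection between such groups need not be injective. Argue instead as follows:
\begin{itemize}
\item The map is compatible with the split extensions coming from $k\mapsto k-1$.
\item On the kernels it restricts to the map $L[k]\to L[k]$ of free Lie algebras that is bijective on free generators, hence an isomorphism.
\item Induction on $k$ and the five lemma then finish.
\end{itemize}
You must also say why the ``evident map on generators'' is a Lie homomorphism at all. For this, use that the source is an iterated semidirect product of the free Lie algebras $L[i]$, with actions determined on generators by the relations. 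So it is presented by the $B^\sigma_{i,j}$ and those relations.
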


\vskip .2in

Some of these Lie algebras occur for comparatively general reasons
as exemplified by the next theorem. The following general theorem
does not specifiy the structure constants for the underlying Lie
algebra, but shows that the Lie algebras addressed above
fit in a wider context.

\vskip .2in


\begin{thm} Assume that $n \geq 3 $.
Let $X(R^n,k) \to\ X(R^n,k-1)$ be a fibration which satisfies the
following properties:
\begin{enumerate}
\item The fibre of  $X( R^n,k) \to\ X( R^n,k-1)$ is $R^n - S_k$
where $S_k$ is a discrete subspace of $R^n$ of fixed (not neccessarily finite)
cardinality depending on $k$.
\item Each fibration $X(R^n,k) \to\ X(R^n,k-1)$ admits a cross-section.
\item The space $X(R^n,1)$ is $R^n $ with $n \geq 3 $.
\end{enumerate}
Then
\begin{enumerate}
\item  There is a homotopy equivalence
$\Omega X(R^n,k)  \to\  \prod_{1 \leq  i \leq k-1} \Omega(R^n - S_i)$.
\item The homology of $\Omega X(R^n,k)$ is torsion free, and is
isomorphic to \\
$ \bigotimes_{1 \leq  i \leq k-1} H_*(\Omega(R^n - S_i))$ as a
coalgebra.
\item The module of primitives in the integer homology
of  $\Omega X(R^n,k)$ is isomorphic to \\
$E_0^*(\pi_*(\Omega X(R^n, k)))$ modulo torsion as a Lie algebra.
\end{enumerate}
\end{thm}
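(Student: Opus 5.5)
The approach is to induct on $k$, using the tower of fibrations $X(\RR^n,k)\to X(\RR^n,k-1)$ with fibres $\RR^n-S_k$, and to split each stage after looping via the given cross-sections.

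\textbf{Step 1: loop splitting.} Suppose $p:E\to B$ is a fibration with fibre $F$ and cross-section $s$. Then $\Omega p$ has the section $\Omega s$. Since $\Omega E$ is a group-like $H$-space, the map
$$\Omega F\times \Omega B\to \Omega E,\qquad (\alpha,\beta)\mapsto \Omega i(\alpha)\cdot \Omega s(\beta),$$
induces an isomorphism on all homotopy groups. This is checked with the long exact sequence of the looped fibration, which splits because of the section. All spaces have the homotopy type of CW complexes, so Whitehead's theorem makes this map a homotopy equivalence (not multiplicative).

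\textbf{Step 2: induction.} The base case is $X(\RR^n,1)=\RR^n$, whose loop space is contractible. Inductively,
$$\Omega X(\RR^n,k)\simeq \Omega(\RR^n-S_k)\times\prod_{i<k}\Omega(\RR^n-S_i).$$
After reindexing to match the statement's convention, this gives part (1).

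\textbf{Step 3: homology and coalgebra structure (part (2)).} Since $S_i$ is discrete, $\RR^n-S_i$ is homotopy equivalent to a wedge $\bigvee S^{n-1}$, so it is simply connected when $n\ge3$. By Bott--Samelson, $H_*(\Omega\bigvee S^{n-1})$ is the tensor algebra on free generators in degree $n-2$. This is torsion free, and of finite type when the cardinality of $S_i$ is finite; in general one passes to a colimit over finite subwedges. The Künneth theorem then gives
$$H_*\Omega X(\RR^n,k)\cong \bigotimes_i H_*\Omega(\RR^n-S_i)$$
additively and torsion free. The equivalence of Step 1 is a product of maps of spaces, and the diagonal is natural, so this is an isomorphism of coalgebras.

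\textbf{Step 4: primitives versus homotopy (part (3)).} This is the main obstacle. First I would show that the Hurewicz map restricted to each factor $\pi_*\Omega(\RR^n-S_i)$ lands onto the primitives there. By Hilton--Milnor these homotopy groups, modulo torsion, form the free Lie algebra on the degree $n-2$ classes, and the primitives of a tensor algebra in characteristic zero are exactly the free Lie algebra. Over $\ZZ$ one uses the torsion-free version, as in Cohen--Gitler.

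Next I would argue that the full Lie algebra of primitives of $H_*\Omega X(\RR^n,k)$ is, as a module, the direct sum of these primitive pieces. This holds because primitives of a tensor product of connected coalgebras are the sum of the primitives of the factors. The Hurewicz map is a morphism of Lie algebras from Samelson products to commutators, and it is injective modulo torsion by a rational Milnor--Moore argument. Hence it induces a Lie algebra isomorphism from $\pi_*(\Omega X)/\text{torsion}$ onto the primitives. The multiplicative twisting between factors only affects the structure constants of the bracket, not this identification.

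The delicate point is integrality: one must show that the image of Hurewicz is all of the primitives integrally, not just rationally. I would handle this by checking it factor by factor using the Bott--Samelson and Hilton--Milnor descriptions, and then using the fact that the section maps are $H$-maps after looping.
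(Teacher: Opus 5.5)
Your proof of parts (1) and (2) follows the paper's route. You loop the fibration, combine the cross-section with the loop multiplication to get $\Omega X(\RR^n,k)\simeq\Omega X(\RR^n,k-1)\times\Omega(\RR^n-S_k)$, induct, and finish with Bott--Samelson and K\"unneth. Your outline of part (3) is also the paper's: Hurewicz onto primitives factor by factor via the sections, and injectivity modulo torsion from rational Milnor--Moore. The problem is the integrality step you single out at the end. The factor-by-factor check fails when $n$ is odd. So does your claim that $\pi_*\Omega(\RR^n-S_i)$ modulo torsion is the free Lie algebra on the bottom classes, with Hurewicz image equal to all the primitives.

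For odd $n$ the bottom classes have odd degree $n-2$. Hilton--Milnor then produces factors $\Omega S^{2j}$ attached to basic products $v$ of odd weight and odd degree $2j-1$. For such $v$, the element $v^2=\tfrac12[v,v]$ is an integral primitive. However, the Hurewicz image in degree $4j-2$ coming from that factor is generated by $H(\alpha)\,v^2$, where $\alpha$ generates $\pi_{4j-1}(S^{2j})$ modulo torsion. The Hopf invariant $H(\alpha)$ is $\pm 2$ unless $2j\in\{2,4,8\}$.

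The smallest example satisfies every hypothesis: $X(\RR^7,2)=F(\RR^7,2)\simeq S^6$. There $\mathrm{Prim}\,H_*(\Omega S^6;\ZZ)=\ZZ x\oplus\ZZ x^2$ with $|x|=5$ and $[x,x]=2x^2$. On the other side, $\pi_*(\Omega S^6)/\mathrm{torsion}=\ZZ\iota\oplus\ZZ w$ with $w=\pm\langle\iota,\iota\rangle$, adjoint to the Whitehead square, which generates $\pi_{11}(S^6)\cong\ZZ$. So the Hurewicz image in degree $10$ is $2\ZZ x^2$, and the two Lie rings are not even abstractly isomorphic. The same failure occurs for $n=3$ once some $S_i$ has at least two points, through the $\Omega S^6$ factor of $\Omega(S^2\vee S^2)$ belonging to a weight-$5$ basic product.

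The paper's proof asserts the same ``well-known'' isomorphism for $\Omega(\RR^n-S_{k-1})$ with no parity hypothesis. So part (3) is defective as stated, not just your plan. Your argument is complete for $n$ even, which is the case $n=2m+2$ the paper actually uses. There every Hilton--Milnor factor is the loop space of an odd sphere. Moreover $\mathrm{Prim}\,T_\ZZ(V)=L_\ZZ(V)$, because PBW over $\ZZ$ makes the free Lie ring a direct summand of $T_\ZZ(V)$. For odd $n$, the Hopf invariant $2$ means that at best a version with $2$ inverted can hold.
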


\vskip .2in

In many examples which arise from fibre type arrangements, a similar conclusion
holds as that given in Theorem 1.1, and Theorem 1.3 part (5) above. Namely,
there are $K(G,1)'s$ in "good" cases where
$\pi_*(\Omega X(R^n, k))$ modulo torsion is isomorphic
to $E_0^*(G)_q$. The last section of this article contains
some speculation as to where, and how these structures fit.
It is a general theorem that if X is a 1-connected CW complex,
there is a functor $\Theta(X)$ = $K(G_X,1)$ where $G_X$ is a
filtered group such that the associated graded Lie algebra
tensored with the rational numbers gives the so-called
"homotopy Lie algebra" for the loop space of the rationalization
of X. \cite {CS}. This theorem admits some overlap with work of
T. Kohno, and T. Oda \cite {KO} on the descending central series
of the pure braid group of an algebraic curve.

\vskip .2in

\section{Proof of the theorem 1.4}

Recall that a multiplicative fibration with section is homotopy
equivalent to a product. Thus $\Omega X(R^n,k)$ is homotopy
equivalent to $\Omega X(R^n,k - 1)\times \Omega(R^n - S_{k-1})$, and
the first part of the theorem follows by induction.

\vskip .2in

The second part of the theorem follows from the {\it K\"unneth}
theorem, and part 1 of the theorem.

\vskip .2in

Since $R^n - S_{k-1}$ has the homotopy type of a (possibly
infinite ) bouquet of $(n-1)$-spheres, the homology of its loop
space follows from the Bott-Samelson theorem. In this case, it is
well-known that there are isomorphisms of Lie algebras
$\pi_*(\Omega(R^n - S_{k-1})/torsion  \to\ PrimH_*(\Omega(R^n - S_{k-1})$.
\vskip .2in

Furthermore, the existence
of sections implies that the Hurewicz homomorphism
$\pi_*(\Omega X(R^n,k) \to\ Prim H_*(\Omega X(R^n,k))$
is a surjection. Since this map is an injection, the theorem follows.

\vskip .2in

\section{Proof of Theorem 1.3}

\vskip .2in

Consider the fibration with section
$ F_{L}(C \times R^{2q},k) \to\   F_{L}(C \times R^{2q},k-1)$.
The fibre of this map is $C \times R^{2q} -Q_{k-1}^{L}$. By
Theorem 1, the conclusions of Theorem 3 all follow except possibly
the last two which state the precise extension of Lie algebras.

\vskip .2in
To finish, it suffices to prove parts (4), and (5) of the theorem
by a direct comparison of the two Lie algebras
$\pi_*(\Omega F_{L}(C \times R^{2q},k))/torsion$,
and  $E_0^*(F_{L}(C,k))_q$.

\vskip .2in

Thus define maps analogous to those in the proof of the relations for
the Lie algebra attached to the descending central series above:
$ F_i: S^{2q+1} \times  S^{2q+1}  \to\  F_{L}(C \times R^{2q},3)$
by the formula
\begin{align*}
F_1(z, w) & = (q_1, q_1 + \sigma + \frac{z}{8}, q_1+\tau+\frac{w}{16}) \\
F_2(z, w) & = (q_1, q_1+\sigma+\frac{z}{8}, q_1+\sigma+\tau+\frac{z}{8}+\frac{w}{16}).
\end{align*}

\vskip .2in

Consider the loopings of these maps
$ \Omega(F_i): \Omega( S^{2q + 1} \times  S^{2q + 1})  \to\  \Omega( F_{L}(C \times R^{2q},3)).$
Notice that the fundamental cycles in degree $2q$ for the
integer homology of $\Omega( S^{2q+1} \times  S^{2q+1})$ commute.
Thus it suffices to calculate the image of the fundamental cycles
in the homology of $\Omega( F_{L}(C \times R^{2q},3))$. This gives the
precise relations as stated in parts (4)-(5) of Theorem 3 which
follows at once.

\vskip .2in

\vskip .2in

Consider the Lie algebra obtained from the descending central
series for the group G. For each strictly positive integer q,
there is a canonical (and trivially defined) graded Lie algebra
$E_0^*(G)_q$ attached to the one obtained from the descending
central series for G, and which is defined as follows.

\vskip .2in

\begin{enumerate}

\item Fix a strictly positive integer q.
\item Let $\Gamma^n(G)$ denote the n-th
stage of the descending central series for G.
\item $E_0^{2nq}(G)_q = \Gamma^n(G) /  \Gamma^{n+1}(G),$
\item $E_0^{i}(G)_q = \{0\}$, if i is non-zero modulo $2q$,  and
\item the Lie bracket is induced by that for the associated
graded for the $\Gamma^n(G)$.

\end{enumerate}

The main theorem here is an interpretation of the results in [CG], and
[FH1] concerning the homology of the loop space of configuration spaces.

\vskip .2in

\begin{thm}

If $m \geq 1$, then the homology of the loop space of the
configuration space $\Omega F(R^{2m+2},k)$ is isomorphic to the
universal enveloping algebra of the graded Lie algebra
$E_0^*(P_k)_{2m}$. Furthermore, the following are satisfied.

\begin{enumerate}

\item  The image of the classical
Hurewicz homomorphism $\pi_*(\Omega F(R^{2m+2},k))  \to\
H_*(\Omega F(R^{2m+2},k))$ is isomorphic to $E_0^*(P_k)_{2m}$,
\item the Hurewicz homomorphism induces an isomorphism of graded
Lie algebras where where Prim(.) denotes the module of
primitive elements with the Lie algebra structure of the source
induced by the classical Samelson product:

$\pi_*(\Omega F(R^{2m+2},k))/torsion  \to\ PrimH_*(\Omega
F(R^{2m+2},k))$

\item If  $ q\geq 1 $, the Euler-Poincare' series
for the homology of $H_*( \Omega F(R^{q+2},n);\Bbb Z)$ is given as
follows:

$[(1-t^q)(1-2t^q)....(1-(n-1)t^q)]^{-1}$.

\end{enumerate}
\end{thm}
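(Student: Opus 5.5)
The argument builds everything from the Fadell--Neuwirth fibrations (Theorem~\ref{thm: fib})
$$
F(\RR^{2m+2}-Q_{k-1},1)\to F(\RR^{2m+2},k)\overset{\pi_{k,k-1}}{\longrightarrow} F(\RR^{2m+2},k-1),
$$
whose fibre $\RR^{2m+2}-Q_{k-1}$ is homotopy equivalent to a bouquet of $k-1$ spheres $S^{2m+1}$. These fibrations have cross-sections: add a new point far out along a ray, e.g.\ $(x_1,\dots,x_{k-1})\mapsto (x_1,\dots,x_{k-1}, (1+\sum|x_i|)e_1)$. Looping gives multiplicative fibrations with sections, hence splittings. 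This is the same mechanism as in the proof of the general theorem in the section on loop spaces, where $\Omega X(\RR^n,k)$ was treated, with $n=2m+2\ge 4$ and $S_i=Q_i$ of cardinality $i$. So
$$
\Omega F(\RR^{2m+2},k)\simeq \prod_{i=1}^{k-1}\Omega\Bigl(\bigvee_{i} S^{2m+1}\Bigr).
$$
By Bott--Samelson, $H_*(\Omega\bigvee_i S^{2m+1})$ is the tensor algebra on $i$ generators of degree $2m$. It is torsion free, and its primitives are the free graded Lie algebra on those generators, which is also $\pi_*/\text{torsion}$ via Hurewicz. By K\"unneth, $H_*(\Omega F(\RR^{2m+2},k))$ is torsion free and, as a coalgebra, is the tensor product of these tensor algebras. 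This already gives part (3): with generators in degree $q$ (replacing $2m+2$ by $q+2$), the Poincar\'e series is $\prod_{i=1}^{n-1}(1-it^q)^{-1}$. The same argument works for any $q\ge1$, since the fibre is a bouquet of $(q+1)$-spheres and, for $q\ge 1$, the base is simply connected enough for the splitting.

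Next I identify the algebra structure. Let $B_{i,j}\in\pi_{2m}(\Omega F(\RR^{2m+2},k))$ be the adjoint of the sphere in which point $j$ orbits point $i$. These classes generate the primitives, since the sections make the Hurewicz map onto $\bigoplus_i \mathrm{Prim}$ of the factors, as in the earlier proof. Mirroring the maps $F_1,F_2$ used in the proof of Theorem~1.3, I would write explicit maps $S^{2m+1}\times S^{2m+1}\to F(\RR^{2m+2},3)$ (or into $F(\RR^{2m+2},4)$ for disjoint pairs). Commutativity of the fundamental classes in $H_*(\Omega(S^{2m+1}\times S^{2m+1}))$ then forces the infinitesimal braid relations $[B_{i,j},B_{s,t}]=0$ for disjoint index sets, and $[B_{i,j},B_{i,t}+B_{t,j}]=0$, $[B_{t,j},B_{i,j}+B_{i,t}]=0$, now as Samelson products. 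This yields a Lie algebra map $\mathcal{L}_k\to \mathrm{Prim}H_*$ with the grading regraded so that weight $n$ lands in degree $2mn$, i.e.\ a map from $E_0^*(P_k)$ in the regrading of the previous section, using Example~2 (Kohno, Falk--Randell). It induces $U(\mathcal L_k\text{ regraded})\to H_*(\Omega F(\RR^{2m+2},k))$.

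To show this map is an isomorphism I compare ranks. Theorem~\ref{thm: Falkrandell} applies to $1\to F_{k-1}\to P_k\to P_{k-1}\to1$: the extension is split and the action on $H_1(F_{k-1})$ is trivial. So $E_0^*(P_k)$ is additively $\bigoplus_{i<k}L[\ZZ^{i}]$, the direct sum of free Lie algebras, and it has the same ranks as $\mathrm{Prim}H_*$ computed above. By Poincar\'e--Birkhoff--Witt over $\ZZ$ (everything is torsion free and the Lie algebras are free abelian), $U$ has Poincar\'e series $\prod(1-it^{2m})^{-1}$. A surjective map between free abelian groups of equal finite rank in each degree is an isomorphism. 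Surjectivity holds because the $B_{i,j}$ generate $H_*$ as an algebra, by the product decomposition. Parts (1) and (2) follow, since the Hurewicz image is exactly the primitives generated by the $B_{i,j}$ under Samelson products and $\pi_*/\text{torsion}$ injects, as in the earlier argument.

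I expect the main obstacle to be the sign and degree bookkeeping needed to make the Samelson relations match the infinitesimal braid relations exactly. This means getting the correct sign conventions for graded brackets in even degree $2m$. It also means checking that the map from $\mathcal L_k$ is well defined integrally, not just rationally. I would first verify the case $k=3$ by hand from the explicit maps and then induct on $k$ via the split fibrations.
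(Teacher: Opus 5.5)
Your proposal is correct, but it follows a different route from the paper.

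\textbf{What the paper does.} The paper does not actually prove the identification of $H_*(\Omega F(\RR^{2m+2},k))$ with a universal enveloping algebra; it cites \cite{CG}, \cite{FH1} for the classes $B_{i,j}$ and for the fact that they generate the homology as a Hopf algebra. It then obtains the infinitesimal braid relations from $\gamma_1(u,v)=(0,u,2v)$ and $\gamma_2(u,v)=(0,2u,v)$ by computing $\gamma_i^*A_{s,t}$ and dualizing. Finally it gets (1)--(2) by applying Hilton--Milnor to each bouquet of $(2m+1)$-spheres.

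\textbf{What you do instead.} You prove the main isomorphism by a counting argument.
\begin{itemize}
\item The section splitting (the mechanism of the paper's theorem on $X(\RR^n,k)$) gives the Poincar\'e series $\prod_{i<k}(1-it^{2m})^{-1}$. It also shows that the $B_{i,j}$ generate $H_*$ as an algebra, so once the relations hold there is a surjection $U(\mathcal{L}_k)\to H_*$.
\item The Falk--Randell additive splitting of $E_0^*(P_k)$, together with PBW over $\ZZ$, shows that $U(\mathcal{L}_k)$ is free abelian with the same Poincar\'e series.
\item A surjection between free abelian groups of equal finite rank in each degree is an isomorphism.
\end{itemize}

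\textbf{Comparison.} Your route is self-contained modulo results already stated in the paper plus integral PBW, and it gives (3) for every $q\ge 1$ directly. The price is that you need the group-theoretic input $E_0^*(P_k)\cong\mathcal{L}_k$ and the freeness of $\mathcal{L}_k$. The cited topological computation is independent of these, and can in fact be used to deduce them.

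\textbf{Points to make explicit when you write it up.}
\begin{itemize}
\item For (2) you still need, for each bouquet factor, that the kernel of the Hurewicz map is exactly the torsion. This is where the paper uses Hilton--Milnor; alternatively use Milnor--Moore rationally.
\item Identifying the Hurewicz image with $\mathcal{L}_k$ uses $P(U_{\ZZ}\mathfrak{g})=\mathfrak{g}$ for $\mathfrak{g}$ free abelian. This follows from the integral PBW basis, since $U_{\ZZ}\mathfrak{g}\cap\mathfrak{g}\otimes\mathbb{Q}=\mathfrak{g}$.
\end{itemize}

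\textbf{Grading and signs.} Your regrading, with weight $n$ in degree $2mn$, is $E_0^*(P_k)_m$ in the paper's convention $E_0^{2nq}(G)_q=E_0^n(G)$. The subscript $2m$ in the statement would put the $B_{i,j}$ in degree $4m$, so your reading is the consistent one. Also, since $2m+2$ is even, the sign $(-1)^n$ in the paper's relation $[B_{i,j},B_{i,t}+(-1)^nB_{t,j}]=0$ equals $+1$. The sign bookkeeping you expect to be the main obstacle is therefore harmless here.
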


Namely, the homotopy groups of the loop space of the configuration
space of k points in an even dimensional euclidean space $R^{2m + 2}$,
modulo torsion, admits the structure of a graded Lie algebra
induced by the classical Samelson product. That Lie algebra is
isomorphic to $E_0^*(P_k)_m$, the Lie algebra that is "universal"
for the Yang-Baxter-Lie relations.

\vskip .2in

The theorem above appears to be related to beautiful results of T. Kohno and others
[H,K,K2,FR,W] who consider the relationship between Vassiliev invariants of braids
as well as other structures. Kohno has recently considered
the homology of the loop space for configurations in $R^3$.
In particular, the universal enveloping algebra of $E_0^*(P_k)$
regarded as a graded abelian group has Euler-Poincare' series given
by $[(1-t)(1-2t)....(1-(n-1)t)]^{-1}$ while the Euler-Poincare' series
for the homology of $H_*( \Omega F(R^{q+2},n);\Bbb Z)$ is
$[(1-t^q)(1-2t^q)....(1-(n-1)t^q)]^{-1}$.

\vskip .2in

Given a fibre type arrangement $X(k,R^n)$ with fibrations
$X(k,R^n) \to\ X(k-1,R^n)$ having sections with fibre given by
$R^n - S$ where S is a discrete set, consider the graded Lie algebra
$E_0^*(\pi_1(X(k,R^2)))_q$. A conjecture stated in [CX]
suggests that $E_0^*(\pi_1(X(k,R^2)))_q$ is isomorphic to the Lie
algebra of primitive elements in the homology of $\Omega
X(k,R^{2q+2})$ for many choices of $X(k,R^n)$.

\vskip .2in

There is more to this story. Interesting examples are given by the
pure braid groups for "orbit configuration spaces" in $C^*$. In
particular, the "orbit configuration space" $F_G(M,k)$ is the space
of ordered k tuples of points in M that lie on distinct orbits of a
free G action on M. Work of M. Xicot\'encatl [X], and D. Cohen [C] imply this
conjecture for the associated pure braid groups for
"orbit configuration spaces" in $C^*$.

\vskip .2in

Namely, this conjecture is correct for spaces $F_{Z/qZ}(C^n-\{0\},k) $
where $ Z/qZ $ is a finite cyclic group acting freely by
rotations on $ C^n-\{0\}$. If $ n > 1 $, the Lie algebra obtained from
the homotopy groups modulo torsion of the loop spaces for these choices of
"orbit configuration spaces" is isomorphic to the Lie algebra obtained
from the descending central series for $\pi_1F_{Z/qZ}(C^n-\{0\})$
[C],[X].

\vskip .2in

Further work of M. Xicot\'encatl, D. Cohen, and the first
author shows that an analogous theorem holds for some other groups
that are "close" to braid groups, and arise from some fibred
$K(\pi,1)$ hyperplane arrangements. Some similar results hold
for "orbit configuration spaces" for groups acting freely on the
upper 1/2-plane, and for some lattices acting on $C$ [CX].

\vskip .2in

There are related groups that share some common properties here.
Let $ Hom^{coalg}( T[v],H)$ denote the set of coalgebra
morphisms with source given by the tensor algebra over the
integers with a single primitive algebra generator v in degree 1.
Furthermore, the target H is a Hopf algebra with conjugation
(antipode). Recall that this set is naturally a group with
multiplication induced by the coproduct for the source and product
for the target with inverses induced by the conjugation in H [MM].

\vskip .2in

There are groups $ Hom^{coalg}(T[v],H_*\Omega F(M,k))$ where
M is a manifold for which the homology of its' loop space is
torsion free. When M is euclidean space, this group is not
isomorphic to the pure braid group, but in this case the associated Lie
algebra resembles $E_0^*(P_k)_q$. The first approximation in this
direction is as follows.

\vskip .2in

\begin{thm}{[S]}

If H is isomorphic to a tensor algebra generated by a rational
vector space of dimension q concentrated in a fixed even degree
that is strictly greater than 0, S is a set of cardinality q,
and $F[S]_M $ is the Mal\^cev completion of F[S],
then there is an isomorphism of groups
$$F[S]_M \to\ Hom^{coalg}(T[v], H).$$

\end{thm}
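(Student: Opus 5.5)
The isomorphism $F[S]_M \to Hom^{coalg}(T[v],H)$ will be built by first identifying the target group concretely and then realizing it as a complete filtered group whose associated graded Lie algebra is the free Lie algebra on $S$. This is the universal property that characterizes the Mal\v{c}ev completion.

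Write $H = T[V]$, where $V$ is a $q$-dimensional rational vector space in a fixed even degree $2d>0$, with $V$ primitive. Since $T[v]$ is the tensor Hopf algebra on one primitive generator of degree $1$, its underlying coalgebra is the tensor coalgebra (divided-power-like) with basis $v^n$ in degree $n$. A coalgebra morphism $f: T[v]\to H$ is therefore determined by a sequence $f(v^n)\in H_n$ satisfying $\Delta f(v^n)=\sum_{i+j=n} f(v^i)\otimes f(v^j)$ (up to the appropriate binomial/sign conventions, which I will fix first). Since $H$ is concentrated in degrees divisible by $2d$, only $n\equiv 0 \bmod 2d$ contributes. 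Packaging $f$ as the formal sum $\Phi_f=\sum_n f(v^n)$ in the completion $\hat H=\prod_n H_n$, the coalgebra condition says exactly that $\Phi_f$ is grouplike in $\hat H$, i.e. $\Delta\Phi_f=\Phi_f\hat\otimes\Phi_f$. Moreover, the convolution product on $Hom^{coalg}$ corresponds to multiplication in $\hat H$, and the antipode gives inverses. So the first step identifies $Hom^{coalg}(T[v],H)$ with the group of grouplike elements $\mathcal{G}(\hat H)$ of the completed tensor algebra $\hat T[V]$ over $\mathbb{Q}$. The degree shift introduced by $v$ being in degree $1$ is handled by reindexing; I expect this bookkeeping to be routine but it must be checked carefully, since $T[v]$ with $|v|=1$ odd has $v^2$ not primitive-free. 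If the signs obstruct, I would replace $T[v]$ by its underlying coalgebra and argue only with degrees $\equiv 0 \bmod 2d$, where everything is even and sign-free.

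Second, I identify $\mathcal{G}(\hat T[V])$ with $F[S]_M$. Choose a basis $x_s$, $s\in S$, of $V$ and define $F[S]\to\mathcal{G}(\hat T[V])$ by $s\mapsto \exp(x_s)$; this is the Magnus-type embedding. By the standard Hopf-algebraic fact over $\mathbb{Q}$ (Quillen/Serre), $\exp$ and $\log$ give a bijection between the primitives of $\hat T[V]$, namely the completed free Lie algebra $\hat L[V]$, and the grouplikes. The grouplikes thus form the Baker--Campbell--Hausdorff group on $\hat L[V]$. Filtering by degree yields a complete filtered uniquely divisible group whose associated graded is $L[V]$. This is the free Lie algebra on $S$ over $\mathbb{Q}$, which by Example~1 is $E_0^*(F[S])\otimes\mathbb{Q}$.

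Third, I invoke the universal property. The Mal\v{c}ev completion $F[S]_M=\varprojlim (F[S]/\Gamma^n)\otimes\mathbb{Q}$ is the unique complete, uniquely divisible, pronilpotent group receiving $F[S]$ with the rational associated graded of the descending central series. The map above is compatible with filtrations ($\Gamma^n(F[S])$ lands in degrees $\ge n\cdot 2d$, since commutators of $\exp(x_s)$ start with iterated brackets). It therefore extends to $F[S]_M\to\mathcal{G}(\hat T[V])$, and on associated gradeds it is the identity of $L[V]$. By completeness, an isomorphism on associated gradeds lifts to an isomorphism of the complete groups, via a five-lemma induction on $F/\Gamma^n$ in the spirit of Lemma~\ref{lem: 5lem} followed by passage to the limit.

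The main obstacle is the first step: making the correspondence between coalgebra maps out of $T[v]$ and grouplike elements precise, with correct degrees and signs and with convolution matching multiplication. The rest is the classical Quillen--Mal\v{c}ev theory.
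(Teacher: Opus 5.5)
The paper gives no argument for this statement (it is only attributed to [S]), so there is no internal proof to compare against. Your three-step route is the standard one and it works. You identify $Hom^{coalg}(T[v],H)$ with the grouplike elements of the completion $\hat H=\prod_n H_n$. You identify those with $\exp(\hat L[V])$. You then recognize that group as the Mal\v{c}ev completion via $s\mapsto\exp(x_s)$ and a comparison of associated graded objects. However, your first step is incorrect as written.

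The coalgebra underlying $T[v]$ is not the tensor (deconcatenation) coalgebra. Because $v$ is primitive of odd degree, the Koszul sign cancels the cross terms, so $\Delta(v^2)=v^2\otimes 1+1\otimes v^2$. Hence $w=v^2$ is primitive of degree $2$, and $T[v]\cong\Lambda[v]\otimes\ZZ[w]$ as coalgebras, with $\Delta(w^m)=\sum_i\binom{m}{i}w^i\otimes w^{m-i}$. Since $H$ lives in even degrees, a degree-preserving coalgebra map $f$ kills every $vw^m$. The coalgebra condition on those elements is then automatic, because each term of $\Delta(vw^m)$ has an odd tensor factor. On $w^m$ the condition reads $\Delta f(w^m)=\sum_i\binom{m}{i}f(w^i)\otimes f(w^{m-i})$.

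So the right packaging is $\Phi_f=\sum_{m\geq 0}f(v^{2m})/m!$, not $\sum_n f(v^n)$. With it:
\begin{itemize}
\item $f\mapsto\Phi_f$ is a bijection onto the grouplike elements of $\hat H$ (this uses that $H$ is rational);
\item the convolution formula $(f*g)(w^m)=\sum_i\binom{m}{i}f(w^i)g(w^{m-i})$ becomes $\Phi_{f*g}=\Phi_f\Phi_g$;
\item $\chi_H\circ f$ corresponds to $\Phi_f^{-1}$.
\end{itemize}
There is no degree shift to reindex: the division by $m!$ is the whole content of the ``conventions'' you deferred. Your formula is correct for the deconcatenation coalgebra, which over $\mathbb{Q}$ is isomorphic to the even part of $T[v]$ via $w^m\mapsto m!\,\gamma_m$. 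That is why the rest of your argument goes through unchanged once this is fixed.

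Two further points should be made explicit.
\begin{itemize}
\item The generating space $V$ is automatically primitive, since $H$ vanishes in degrees strictly between $0$ and $2d$. The statement only asserts an algebra isomorphism with a tensor algebra, so this needs saying.
\item Because $V$ sits in the single degree $2d$, the degree completion $\hat H$ equals the word-length completion $\prod_n V^{\otimes n}$, with weight $n$ in degree $2dn$. This is what lets the graded completion produced by $Hom^{coalg}$ be compared with the lower central series filtration in your third step. There, $\Gamma^n(F[S])$ maps into $\exp(\hat L^{\geq n})$, and the induced map $E_0^n(F[S])\otimes\mathbb{Q}\to L_n[V]$ is the Magnus--Witt isomorphism of Example 1.
\end{itemize}
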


A similar calculation gives the following.

\vskip .2in

\begin{thm} {[CS]}
If $m \geq 1 $, the group $ Hom^{coalg}( T[v],H_*\Omega
F(R^{2m + 2 },k))$ is filtered such that the associated graded is
a graded Lie algebra which when tensored with $ \Bbb Q $ is isomorphic
to $E_0^*(P_k)_{2m} \otimes \Bbb Q $
\end{thm}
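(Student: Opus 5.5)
The plan is to reduce the group-level statement to a rational computation, in the same way the Serre-type theorem preceding it identifies $F[S]_M$ with $Hom^{coalg}(T[v],H)$ when $H$ is a tensor algebra. Write $H=H_*(\Omega F(\RR^{2m+2},k);\ZZ)$. By the main theorem of this section, $H$ is torsion free and isomorphic to the universal enveloping algebra $U(E_0^*(P_k)_{2m})$. Moreover, by the product decomposition $\Omega F(\RR^{2m+2},k)\simeq\prod_{1\le i\le k-1}\Omega(\RR^{2m+2}-Q_i)$ obtained from the multiplicative fibrations with sections, $H$ is additively (as a coalgebra) a tensor product of tensor algebras on generators of degree $2m$.

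First, define the filtration on $G=Hom^{coalg}(T[v],H)$. A coalgebra map $f:T[v]\to H$ is determined by the sequence $f(v^n)\in H_n$; only degrees $n\equiv 0$ mod $2m$ contribute, and all other components vanish. Set $G_s=\{f : f(v^n)=0 \text{ for } 0<n<2ms\}$. This is the filtration by the augmentation ideal powers of the Hopf algebra. One checks directly from convolution that $G_s$ is a normal subgroup, that $[G_s,G_t]\subseteq G_{s+t}$, and that $G_s/G_{s+1}$ embeds into the primitives of $H$ in degree $2ms$: the leading nonzero component of a coalgebra map is primitive. Thus the associated graded $\bigoplus_s G_s/G_{s+1}$ is a graded Lie algebra, and the group commutator induces the commutator of primitives in $H$, which equals the Samelson bracket.

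Second, tensor with $\Bbb Q$. By Milnor--Moore, $H\otimes\Bbb Q\cong U(P)$ with $P=Prim(H\otimes\Bbb Q)\cong E_0^*(P_k)_{2m}\otimes\Bbb Q$, using part (2) of the main theorem. Rationally, coalgebra maps $T[v]\to U(P)$ correspond to grouplike elements of the completion $\widehat{U(P)}[[t]]$, namely $\exp$ of $\widehat{P}$. So every primitive in degree $2ms$ occurs as the leading term of some element of $G_s\otimes\Bbb Q$; this is exactly the argument used for the tensor algebra case in [S]. Hence $gr(G)\otimes\Bbb Q\cong P$ as Lie algebras, and therefore $gr(G)\otimes \Bbb Q\cong E_0^*(P_k)_{2m}\otimes\Bbb Q$.

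The main obstacle is the surjectivity onto primitives. Integrally, the divided-power issues of $\exp$ prevent realizing every primitive as a leading term, which is why the statement is only made after tensoring with $\Bbb Q$. I would handle this by working throughout with $H\otimes\Bbb Q$, or by noting that a multiple of each primitive is realized. I would also verify that the filtration quotients are compatible with rationalization, so that $(G_s/G_{s+1})\otimes\Bbb Q$ matches the rational computation. I expect the precise bookkeeping here, rather than the Lie identification, to require the most care.
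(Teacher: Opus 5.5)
The paper gives no argument for this statement beyond citing [CS] and saying that a calculation similar to the tensor-algebra case [S] applies. Your direct route (degree filtration, leading terms, Milnor--Moore) is therefore a genuinely different and reasonable one. It needs only torsion-freeness of $H$ and Milnor--Moore, rather than assembling the group from the free pieces along the split fibrations, though it does not show the group is an iterated extension of Malcev completions. However, two of its steps are wrong or missing as written.

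\textbf{The bracket.} As a coalgebra $T[v]\cong E[v]\otimes P[w]$ with $w=v^2$ primitive, so $\Delta(v^{2j})=\sum_i\binom{j}{i}v^{2i}\otimes v^{2j-2i}$. Hence $\Phi(f)=\sum_j f(v^{2j})/j!$ (dividing by $j!$, not $(2j)!$) is an injective homomorphism from $G$ into the grouplike elements of the completion of $H\otimes\mathbb{Q}$; it is injective because $H$ is torsion free. This is the precise form of your exponential picture. Suppose $f\in G_s$ and $g\in G_t$ have leading terms $x=f(v^{2ms})$ and $y=g(v^{2mt})$. Then $\Phi(f)=1+x/(ms)!+\cdots$ and $\Phi(g)=1+y/(mt)!+\cdots$, so
\[
[f,g](v^{2m(s+t)})=\binom{m(s+t)}{ms}[x,y],
\]
not $[x,y]$; already for $m=s=t=1$ the factor is $2$. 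So $f\mapsto f(v^{2ms})$ does not carry the group commutator to the commutator of primitives, and the claim in your second paragraph is false. The Lie homomorphism is the normalized leading term $f\mapsto f(v^{2ms})/(ms)!$, which is what your third paragraph silently uses. Over $\mathbb{Q}$ this rescaling intertwines the two brackets, so the conclusion survives once the normalization is made consistent.

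\textbf{Surjectivity.} $G_s$ is non-abelian, so ``$G_s\otimes\mathbb{Q}$'' has no meaning. Also, $Hom^{coalg}(T[v],H\otimes\mathbb{Q})$ is a different group from the one in the statement, so ``working throughout with $H\otimes\mathbb{Q}$'' proves the wrong thing. What you need is that the integral group has enough elements, and this is easy to prove directly. For $x\in\mathrm{Prim}(H)_{2ms}$, set
\[
f(v^{2msl})=\frac{(msl)!}{((ms)!)^l\,l!}\,x^l,
\]
and $f=0$ in all other degrees. The coefficient is an integer, since it counts partitions of an $msl$-element set into $l$ blocks of size $ms$. The coalgebra condition reduces to the identity $\binom{msl}{msi}\frac{(msi)!}{((ms)!)^i\,i!}\frac{(ms(l-i))!}{((ms)!)^{l-i}(l-i)!}=\frac{(msl)!}{((ms)!)^l\,l!}\binom{l}{i}$.

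Thus $G_s/G_{s+1}\cong\mathrm{Prim}(H)_{2ms}$ already over $\mathbb{Z}$, and your diagnosis of the obstacle is off: surjectivity never fails. The reason the statement needs $\otimes\mathbb{Q}$ is the binomial twist of the bracket, together with Milnor--Moore. To finish, use $\mathrm{Prim}(H)\otimes\mathbb{Q}=\mathrm{Prim}(H\otimes\mathbb{Q})$ (flatness of $\mathbb{Q}$) and the identification of the primitives of $U(E_0^*(P_k)\otimes\mathbb{Q})$.

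One notational point: with the paper's convention $E_0^{2nq}(G)_q=E_0^n(G)$, degree $2ms$ corresponds to subscript $m$, not $2m$. The subscript in the statement is an inconsistency in the paper, not a problem with your argument.
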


The group $ Hom^{coalg}( T[v],H_*\Omega(X))$ accepts
homomorphisms from the group of homotopy classes of pointed maps
$[\Omega S^2, \Omega(X)]$. The point is that this construction
provides a group theoretic analogue of the classical Hurewicz
homomorphism, and that these groups are more primitive versions of
homotopy groups. The actual Hurewicz map is the induced map on the
level of associated graded groups. In addition, one obtains
further braid-like groups by replacing euclidean space by other
manifolds M.

\section{Proof of Theorem 1.1}

The main theorem is essentially proven in \cite {CG}, and \cite {FH1}
except for the statement about the module of primitives. In that article, there
are maps constructed

$ B_{i,j} : S^{n-2}  \to\  \Omega F(R^n,k)$

for $ k \geq i > j \geq 1$ such that the image of the fundamental
cycles in the homology of $\Omega F(R^n,k)$ are (1) non-zero, and
(2) the homology of $\Omega F(R^n,k)$ is generated by these
classes as a Hopf algebra. Furthermore, Samelson products of the
generators map to the analogous Lie element in the homology of
$\Omega F(R^n,k)$ by the above construction.

\vskip .2in
These relations arise as follows.
Define maps

$$ \gamma_i : S^{n-1} \times S^{n-1} \to F(\RR^n, 3)$$

by the following formulas where $\| u \| = \| v \| =1$.

\begin{description}
\item[(i)] $\gamma_1(u,v) = (0,u,2v)$, and
\item[(ii)] $\gamma_2(u,v) = (0,2u,v)$.
\end{description}

\vskip .2in

Next, recall that the class $A_{i,j}$ is defined by the equation
$$A_{i,j} = \pi^*_{i,j} (\iota)$$ where $\pi_{i,j} : F(\RR^n, k)
\to F(\RR^n,2)$ denotes projection on the $(i,j)$ coordinates and
$\iota$ is a fixed fundamental cycle for $S^{n-1}$ \cite{CS, CG}.

\begin{lem} If $ n \geq 2$, then
\begin{enumerate}
\item $\gamma_1^*A_{2,1} = \iota \otimes 1,$
\item $\gamma_1^*A_{3,1} = 1 \otimes \iota,$
\item $\gamma_1^*A_{3,2} = 1 \otimes \iota,$
\item $\gamma_2^*A_{2,1} = \iota \otimes 1,$
\item $\gamma_2^*A_{3,1} = 1 \otimes \iota, and$
\item $\gamma_2^*A_{3,2} = \iota \otimes 1.$
\end{enumerate}

\end{lem}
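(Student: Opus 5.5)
The plan is to compute each pullback from the composite $\pi_{i,j}\circ\gamma_s : S^{n-1}\times S^{n-1}\to F(\RR^n,2)\to S^{n-1}$. Here the last map is the standard equivalence $(x_i,x_j)\mapsto (x_i-x_j)/\|x_i-x_j\|$ through which $\iota$ is pulled back. This is the convention forced by item (1): $\gamma_1(u,v)=(0,u,2v)$ gives $x_2-x_1=u$, hence $\iota\otimes 1$.

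For each of the six cases I will write down the difference vector as a function of $(u,v)$. I then deform it, by a straight-line homotopy that avoids the origin, to a map depending on only one variable. Since $H^{n-1}(S^{n-1}\times S^{n-1})$ is free on $\iota\otimes 1$ and $1\otimes\iota$ for $n\ge 2$, the pullback of $\iota$ under a map that factors through a single projection is $\iota\otimes 1$ or $1\otimes\iota$. Its sign is the degree of the resulting self-map of $S^{n-1}$.

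\emph{Cases whose difference vector is already one variable.} For $\gamma_1$ the differences are $u$, $2v$ and $2v-u$. For $\gamma_2$ the first two are $2u$ and $v$.

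\emph{Case $\gamma_1^*A_{3,2}$.} The homotopy $2v-tu$, $t\in[0,1]$, never vanishes because $\|2v\|=2>\|tu\|$. So $2v-u$ deforms to $v$, giving $1\otimes\iota$.

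\emph{Cases $\gamma_2^*A_{2,1}$ and $\gamma_2^*A_{3,1}$.} The differences $2u$ and $v$ normalize to the identity on the appropriate factor. This yields items (4) and (5).

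\emph{Item (6), the main obstacle.} For $\gamma_2(u,v)=(0,2u,v)$ the relevant vector is $x_3-x_2=v-2u$. The homotopy $tv-2u$ never vanishes, since $\|tv\|\le 1<2$, so $v-2u$ deforms to $-u$. The naive computation therefore produces the pullback of $\iota$ under the antipodal map, which has degree $(-1)^n$. The stated equality $\gamma_2^*A_{3,2}=\iota\otimes 1$ must come from the paper's normalization of $A_{i,j}$ for the pair $(3,2)$, not from this computation.

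To obtain the statement exactly, I will use the relation $A_{i,j}=(-1)^nA_{j,i}$, which holds because swapping the two points composes the difference map with the antipodal map. I will check against the conventions of \cite{CS, CG} that $A_{3,2}$ is defined there via the oriented difference $x_2-x_3$, or equivalently via the fundamental class oriented compatibly with that swap. With that normalization the vector for item (6) is $2u-v$. The homotopy $2u-tv$ deforms it to $u$, which gives exactly $\iota\otimes 1$.

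If the convention turns out instead to be $x_i-x_j$ uniformly, then (6) as written holds only with mod $2$ coefficients, or for $n$ even. Pinning down this sign convention is the step I expect to be the real obstacle. Every other step is a routine nonvanishing check for a linear homotopy.
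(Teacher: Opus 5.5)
Your method is the paper's: compose $\gamma_s$ with $\pi_{i,j}$ and the identification $F(\RR^n,2)\simeq S^{n-1}$, then homotope to a coordinate projection. The paper writes out one case and says the rest are similar. Your straight-line homotopies are correct. So is your computation that $v-2u\simeq -u$, giving $\gamma_2^*A_{3,2}=(-1)^n\,\iota\otimes 1$ with the difference $x_3-x_2$.

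The step that fails is your proposed rescue by renormalizing $A_{3,2}$. Items (3) and (6) concern the same class. Suppose $A_{3,2}$ is pulled back along $\epsilon(x_3-x_2)/\|x_3-x_2\|$ with $\epsilon=\pm1$. Then $\gamma_1^*A_{3,2}$ comes from $\epsilon(2v-u)\simeq\epsilon v$, and $\gamma_2^*A_{3,2}$ comes from $\epsilon(v-2u)\simeq-\epsilon u$. So the two coefficients always differ by the factor $(-1)^n$, whatever the convention. Using $x_2-x_3$, as you suggest, makes (6) true but turns (3) into $(-1)^n\,1\otimes\iota$ (homotopy $tu-2v$). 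Hence for odd $n$ no normalization gives (3) and (6) simultaneously over $\ZZ$. Moreover, the paper defines $A_{i,j}=\pi_{i,j}^*(\iota)$ through one fixed identification, and item (1) forces that identification to be $x_i-x_j$ for every pair.

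So there is no convention to look up. Your fallback sentence is the correct conclusion, and it is not a matter of convention: item (6) as printed is off by $(-1)^n$. The right statement is $\gamma_2^*A_{3,2}=(-1)^n\,\iota\otimes 1$; the printed version holds only for $n$ even or mod $2$. This signed form is what the paper actually uses next. Dualizing $\gamma_2$ with the sign gives $[B_{3,1},B_{2,1}+(-1)^nB_{3,2}]=0$, which is relation (2) of the following lemma, $(-1)^n$ included. Without the sign in (6), that relation would come out unsigned. The paper's ``all cases are similar'' is exactly where this sign slips through. Replace your last step with the sign-corrected item (6) rather than a change of convention for $A_{3,2}$.
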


\vskip .2in
\begin{proof}
Since all the cases above are similar, one will be worked out.
Notice that  $\gamma_1$ composed with the projection $\pi_{1,2}$
is homotopic to first coordinate projection from the product
$S^{n-1} \times S^{n-1}$ to $S^{n-1}$. This suffices.
\end{proof}
\vskip .2in
Direct dualization gives the following lemma with the details of
proof omitted.

\begin{lem} If $ n \geq 2$, then

\begin{enumerate}
\item  $[B_{i,j}, B_{s,t}] =0 $ if $ \{ i,j\} \cap \{ s,t\} = \phi$,
\item  $[B_{i,j}, B_{i,t} + (-1)^n B_{t,j}] =0$ for $ 1 \leq j < t < i
\leq k$, and
\item  $[B_{t,j}, B_{i,j} + B_{i,t}] =0$ for $ 1 \leq j < t < i \leq k$.
\end{enumerate}
\end{lem}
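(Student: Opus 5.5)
The relations come from maps $S^{n-1}\times S^{n-1}\to F(\RR^n,k)$ built from $\gamma_1,\gamma_2$. After looping, the two sphere classes commute in the source, so their images have vanishing Samelson product in $H_*(\Omega F(\RR^n,k))$. The preceding lemma identifies those images in terms of the $B_{i,j}$.

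\emph{Step 1: detecting the $B_{i,j}$ by the $A_{i,j}$.} Since $F(\RR^n,k)$ is $(n-2)$-connected, the homology suspension
$$\sigma: H_{n-2}(\Omega F(\RR^n,k)) \to H_{n-1}(F(\RR^n,k))$$
is an isomorphism. It is compatible with the Hurewicz maps. The group $H_{n-1}(F(\RR^n,k))$ is free with basis dual to the classes $A_{i,j}$, $i>j$. So a class $x$ of degree $n-2$ is determined by the integers $\langle A_{i,j},\sigma(x)\rangle$, and $B_{i,j}$ is the class with $\langle A_{s,t},\sigma(B_{i,j})\rangle=\delta_{(i,j),(s,t)}$. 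For a map $f:S^{n-1}\to F(\RR^n,k)$, the adjoint $S^{n-2}\to\Omega F(\RR^n,k)$ therefore carries the fundamental class to $\sum_{i>j}\langle f^*A_{i,j},[S^{n-1}]\rangle B_{i,j}$.

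Reversing the order of a pair is induced by the antipodal map of $S^{n-1}$, which has degree $(-1)^n$. Hence $A_{j,i}=(-1)^nA_{i,j}$. This convention is the source of the sign in relation (2).

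\emph{Step 2: the commutation principle.} Let $f:S^{n-1}\times S^{n-1}\to F(\RR^n,k)$ and let $x,y\in H_{n-2}(\Omega(S^{n-1}\times S^{n-1}))$ be the two sphere classes. Since $\Omega(S^{n-1}\times S^{n-1})\simeq\Omega S^{n-1}\times\Omega S^{n-1}$ as loop spaces, the Samelson product of the two inclusions is null. Hence $[x,y]=0$, and so $[\Omega f_*x,\Omega f_*y]=0$ because $\Omega f$ is an $H$-map. In each case, Step 1 turns $\Omega f_*x$ and $\Omega f_*y$ into explicit combinations of the $B$'s.

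\emph{Step 3: the three relations.}

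For (1), with $\{i,j\}\cap\{s,t\}=\emptyset$, take points $p_m$ far apart. Define $f(u,v)$ to place particle $i$ at $p_j+u$, particle $t$ at $p_s+v$, and every other particle at its $p_m$. The only nonzero pullbacks are $A_{i,j}$ on the first factor and $A_{s,t}$ on the second, up to the sign of Step 1, which is irrelevant for a vanishing bracket. This gives $[B_{i,j},B_{s,t}]=0$.

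For (2) and (3), with $j<t<i$, embed $\gamma_1$ and $\gamma_2$ by placing particles $(j,t,i)$ in the roles $(1,2,3)$ and fixing all other particles far away. The preceding lemma then shows the following.
\begin{itemize}
\item $\gamma_1$ sends $x\mapsto B_{t,j}$ and $y\mapsto B_{i,j}+B_{i,t}$. This gives $[B_{t,j},B_{i,j}+B_{i,t}]=0$, which is (3).
\item $\gamma_2$ sends $x\mapsto B_{t,j}+B_{i,t}$ and $y\mapsto B_{i,j}$. Bracketing with $B_{i,j}$ and converting to the indexing of (2), that is $B_{i,t}$ versus $B_{t,i}$, via Step 1 introduces the factor $(-1)^n$. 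This gives $[B_{i,j},B_{i,t}+(-1)^nB_{t,j}]=0$.
\end{itemize}

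\emph{Expected main obstacle.} The geometry is routine. The delicate point is the sign and index bookkeeping:
\begin{itemize}
\item matching the orientation conventions for $A_{i,j}$ versus $A_{j,i}$ with the suspension isomorphism;
\item tracking the graded antisymmetry $[a,b]=\pm[b,a]$ of Samelson products in degree $n-2$, so that exactly one $(-1)^n$ survives and it sits in relation (2).
\end{itemize}
I would first verify the signs when $n$ is even, where all classes have even degree, and then redo the computation keeping the antipodal-map degree explicit.
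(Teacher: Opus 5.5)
Your strategy is the paper's own. You loop $\gamma_1,\gamma_2$ (with particles $j,t,i$ in the roles $1,2,3$), use that the two bottom classes commute in $H_*(\Omega(S^{n-1}\times S^{n-1}))$, and dualize the pullbacks of the $A_{i,j}$. Relations (1) and (3) go through as you describe. The derivation of (2), however, has a genuine sign gap.

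\textbf{The gap in (2).} Quoting the preceding lemma ($\gamma_2^*A_{3,2}=\iota\otimes 1$), you get $x\mapsto B_{t,j}+B_{i,t}$, hence $[B_{i,j},B_{i,t}+B_{t,j}]=0$. You then produce the $(-1)^n$ by ``converting $B_{i,t}$ versus $B_{t,i}$''. No such conversion occurs: (2) is stated for the same class $B_{i,t}$, larger index first, exactly as you normalized the $B$'s in Step 1.

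Worse, for odd $n$ the unsigned relation is false. Combined with (2) it forces $2[B_{i,j},B_{i,t}]=0$, hence $[B_{3,1},B_{3,2}]=0$ in the torsion-free group $H_*(\Omega F(\RR^n,3))$. But $B_{3,1},B_{3,2}$ generate a tensor algebra there: they come from the loop homology of the fibre $\RR^n-Q_2$, which injects because $F(\RR^n,3)\to F(\RR^n,2)$ has a section.

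\textbf{The fix.} The sign is geometric and must be computed, not quoted. For $\gamma_2(u,v)=(0,2u,v)$ one has $\pi_{3,2}\gamma_2(u,v)=(v-2u)/\|v-2u\|$. The straight-line homotopy $sv-2u$ never vanishes and deforms this to $-u$. Hence $\gamma_2^*A_{3,2}=(-1)^n\,\iota\otimes 1$, while $\gamma_2^*A_{2,1}=\iota\otimes 1$ and $\gamma_2^*A_{3,1}=1\otimes\iota$. So item (6) of the preceding lemma is correct only up to $(-1)^n$. Under the opposite orientation convention for the $A_{i,j}$, items (1)--(5) acquire the $(-1)^n$ instead; only the relative sign matters. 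Dualizing gives $x\mapsto B_{t,j}+(-1)^nB_{i,t}$ and $y\mapsto B_{i,j}$. Multiplying the first class by $(-1)^n$ gives exactly (2).

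\textbf{Range of Step 1.} Step 1 requires $n\geq 3$. For $n=2$:
\begin{itemize}
\item $\Omega F(\RR^2,k)\simeq P_k$ is discrete, and $\tilde H_0(\Omega F(\RR^2,k))$ is the augmentation ideal $I\subset\ZZ[P_k]$.
\item The suspension $I\to H_1(F(\RR^2,k))$ has kernel $I^2$, so your computed images are correct only modulo $I^2$. For example, the loop in which particle $i$ encircles both $j$ and $t$ gives $g-1$ with $g$ a product of two generators, not a sum of two such terms.
\item Consequently (2) and (3) hold only in the associated graded, as Kohno's relations for $E_0^*(P_k)$, and not literally in $H_0$.
\end{itemize}
This is irrelevant for the application, where $n=2m+2\geq 4$, but your argument proves the lemma only for $n\geq 3$.
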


\vskip .2in
The relations in Lemma 4.2 above are called the (graded)
infinitesimal braid relations.
\vskip .2in

If n = 2m+2 with $ n > 2 $, the loop space of of a finite bouquet
of (n-1) spheres is homotopy equivalent to a product of loop
spaces of odd dimensional spheres by the Hilton-Milnor theorem.
Furthermore, the module of primitives for $\Omega S^{2k+1}$ is
given by a copy of the integers in degree 2k. Thus by the
Hilton-Milnor theorem, the module of primitives is given by the
Lie algebra generated by the $B_{i,j}$.

The Lie algebra generated by the $B_{i,j}$ is in the Hurewicz
image as the Samelson product of two elements x, and y in homotopy
map to the bracket $[\phi(x), \phi(y)]$ in homology where $\phi$
denotes the Hurewicz homomorphism. Thus the Hurewicz homomorphism
surjects to the module of primitives, and restricts to a
monomorphism on the torsion free summand of homotopy groups. The
kernel is precisely the torsion in the homotopy as the homology is
torsion free.

The theorem follows.

\vskip .2in

Remarks. There are analogous relations satisfied for the homology
of the loop spaces of many other configuration spaces of ordered
k tuples of points in certain manifolds M.

Define the "extended infinitesimal braid relations" as
follows:

\begin{enumerate}

\item  $[B_{i,j}, x_s] =0 $ if $ \{ i,j\} \cap \{
s \} = \phi$,
\item  $[B_{i,j}, x_i + x_j] =0$.
\end{enumerate}

\vskip .2in
These relations are satisfied in the homology of the loop space of the configuration
space based on the manifold M given by the product $R^1 \times N$. In this
case, the loop space splits as product where one factor is $(\Omega
N)^k$, and the classes $x_i$ above arise from a class in the
$i-th$ factor of $\Omega N$. Details are given in \cite {CG}.

\end{document}